\newtheorem{prop}{Proposition}[subsection]
\newtheorem{coro}[prop]{Corollary}
\newtheorem{lem}[prop]{Lemma}
\newtheorem*{lemm*}{Lemma}
\theoremstyle{definition}
\newtheorem{empt}[prop]{}
\newtheorem{dfn}[prop]{Definition}
\newtheorem{rem}[prop]{Remark}
\newtheorem{ntn}[prop]{Notation}
\newtheorem*{rem*}{Remark}
\theoremstyle{thm}
\newtheorem{thm}[prop]{Theorem}
\newtheorem*{thm*}{Theorem}
\newtheorem*{lem*}{Lemma}
\newtheorem*{cor*}{Corollary}
\newtheorem*{prop*}{Proposition}
\theoremstyle{dfn}
\newtheorem*{dfn*}{Definition}
\numberwithin{equation}{prop}
\newcommand{\riso}{ \overset{\sim}{\longrightarrow}\, }
\newcommand{\Spec}{\mathrm{Spec}\,}
\newcommand{\Spf}{\mathrm{Spf}\,}
\newcommand{\gr}{\mathrm{gr}}
\newcommand{\gp}{\mathrm{gp}}
\newcommand{\FF}{{\mathcal{F}}}
\newcommand{\B}{{\mathcal{B}}}
\newcommand{\E}{{\mathcal{E}}}
\newcommand{\G}{{\mathcal{G}}}
\renewcommand{\H}{{\mathcal{H}}}
\newcommand{\D}{{\mathcal{D}}}
\newcommand{\I}{{\mathcal{I}}}
\newcommand{\PP}{{\mathcal{P}}}
\renewcommand{\O}{{\mathcal{O}}}
\newcommand{\V}{\mathcal{V}}
\renewcommand{\S}{\mathcal{S}}
\newcommand{\Y}{\mathcal{Y}}
\newcommand{\ZZ}{\mathcal{Z}}
\newcommand{\X}{\mathfrak{X}}
\newcommand{\U}{\mathfrak{U}}
\newcommand{\A}{\mathbb{A}}
\newcommand{\F}{\mathbb{F}}
\newcommand{\DD}{\mathbb{D}}
\renewcommand{\L}{\mathbb{L}}
\newcommand{\R}{\mathbb{R}}
\newcommand{\Q}{\mathbb{Q}}
\newcommand{\Z}{\mathbb{Z}}
\newcommand{\N}{\mathbb{N}}
\newcommand{\hdag}{  \phantom{}{^{\dag} }    }
\def\debrom{
\makeatletter
\renewcommand{\theenumi}{(\roman{enumi})}
\renewcommand{\labelenumi}{\theenumi}
\makeatother\begin{enumerate}}
\def\finrom{\end{enumerate}}
\begin{document}

\title{Logarithmic $p$-bases and arithmetical differential modules}
\author{Daniel Caro and David Vauclair}
\date{}

\maketitle

\begin{abstract}

We introduce the notion of log $p$-smoothness which weakens that of log-smoothness and that of having locally $p$-bases.
We extend Berthelot's construction of arithmetic $\D$-modules and some properties in this context.

\end{abstract}

\tableofcontents

\bigskip

\section*{Introduction}
Berthelot's theory of arithmetic $\D$-modules in the context of varieties over a perfect field $k$ of characteristic $p>0$
gives a $p$-adic cohomology satisfying properties analogous to that in Grothendieck's theory of $l$-adic étale cohomology. 
For instance, we have the  stability under six operations of the overholonomicity with Frobenius structures (see \cite{caro-Tsuzuki})
and a theory of weights (see \cite{Abe-Caro-weights}) which are respectively the analogue of Grothendieck's stability of constructible $l$-adic sheaves
and of  Deligne's theory of weights of constructible $l$-adic complexes.
However, we also need a theory of arithmetic $\D$-modules in a wider context. 
For instance, 
nearby cycles or vanishing cycles 
require to work over certain schemes which are not varieties over $k$ (e.g. the spectrum of a henselian ring).
In relation to the $p$-adic local monodromy theorem
(see \cite{andreHA}, \cite{Kedlaya-localmonodromy}, \cite{mebkhout-monodr-padique}),
Crew studied the case of $k [[t]]/k$
(e.g. he proved the holonomicity of $F$-isocrystals on the bounded Robba ring in \cite{crew-arith-D-mod-curve}).
More recently, D. Pigeon in \cite{these_Pigeon} extended the construction of Berthelot's sheaf of differential operators
in the context of schemes having locally $p$-bases. 
Roughly speaking, the point is to extend in the theory the notion of étale morphisms by that of relative perfect morphisms.
For instance, since $k [[t]]/k[t]$ is relatively perfect, Pigeon's context also ``englobes'' Crew's case $k [[t]]/k$.
In fact, to be more precise, the objects in Crew's construction 
live over $\Spf \V [[t]]$ endowed with the $(p,t)$-adic topology whereas in Pigeon's context (and in our work here)
we only consider the $p$-adic topology. In other words, objects are living over the topological space of $\Spec k [[t]]$ in 
Pigeon or our context whereas in Crew's one they are living
over that of $\Spec k$. Taking global sections, one might consider some comparison theorems between both contexts ; 
but this is not the purpose of the paper.
On the other hand, it would also be useful (e.g. to be able to use Kedlaya's semistable reduction theorem (\cite{kedlaya-semistableIV}))
to work with log schemes
as in C. Montagnon's thesis
(see \cite{these_montagnon}).
This is the goal of  the present paper where we
introduce the notion of log $p$-smoothness that generalizes at the same time the notions of having $p$-bases locally and of log smoothness.
We extend in this work Berthelot's construction of arithmetic $\D$-modules in this context which generalizes Berthelot, 
Montagnon or Pigeon's one.
Over Laurent series fields, it would be interesting to compare our constructions with the theory of rigid cohomology as developed by
C. Lazda and A. P\'al
 (see \cite{Lazda-Pal-book}), which would be a stimulating continuation of the present paper.
 Moreover, we might hope to check some finiteness properties by using the framework of arithmetic $\D$-modules.

Let us describe the content of the paper.
In the first chapter, we introduce the notions of log $p$-étaleness (of level $m$), 
log $p$-basis (of level $m$), log $p$-smoothness and log relative perfectness.
These notions are some generalizations of the notions of (log-)étaleness, (log) basis, (log) smoothness and relative perfectness
see (\ref{let-lpet}, \ref{smooth=>psmooth}).
We also propose a notion of (finite) log $p$-basis which is suited in the context of $\D$-modules when we look at local descriptions. 
This latter notion is related with Tsuji's notion of $p$-basis defined in \cite[1.4]{Tsuji-nearby-log-smooth} (see remarque \ref{Tsuji-havingpbases}).
In the first chapter, we also retrieve  the usual local description
of $m$-PD-envelopes (see \cite{Be1} or \cite{these_Pigeon}) in the context of log $p$-smoothness (of level $m$).
In the second chapter we extend Berthelot's constructions of arithmetic $\D$-modules to log $p$-smooth fine log schemes.
Namely, we define and study $\D ^{(m)} _{X/S}$ (the sheaf of differential operators of level $m$)
and we put a canonical right  $\D ^{(m)} _{X/S}$-module structure on the dualizing sheaf $\omega _{X/S}$.
A standard elegant proof of this fact needs Grothendieck's extraordinary pullback as defined in \cite{HaRD}. 
Since we do not know a suitable generalization of Grothendieck's extraordinary pullback in our context,
we construct the canonical right  $\D ^{(m)} _{X/S}$-module structure
on $\omega _{X/S}$ by glueing local computations. 
In  the third chapter we move to log $p$-adic formal schemes by taking limits
as  in \cite{Be1}.
We conclude by defining extraordinary pull-backs, pushforwards and duals similarly to Berthelot in his theory of arithmetic $\D$-modules.

\subsection*{Acknowledgment}
The first author acknowledges Ambrus P\'al and Christopher Lazda for
an invitation to a conference in London where their talks were related to this paper.
The first author was supported by the IUF.

\subsection*{Convention, notation of the paper}
Let $\V$ be a complete discrete valued ring of mixed characteristic $(0,p)$, $\pi$ be a uniformizer,
$K$ its field of fractions,
$k$ its residue field which is supposed to be perfect.
If $X \to Y$ is a morphism of log schemes, we denote by
$\underline{f}\colon \underline{X} \to  \underline{Y}$ the underlying morphism of schemes.
A scheme means a log schemes endowed with the trivial log structure. 
By convention, a fine log scheme $X$ is noetherian means that $\underline{X}$ is noetherian as a scheme. 
A fs log scheme means a fine saturated log scheme.
The formal scheme $\Spf \, \V$ is meant for the $p$-adic topology.
A formal $\V$-scheme means a $p$-adic formal scheme over $\Spf \, \V$.
When we say ``etale locally'' this means we use Definition
\cite[IV.6.3]{sga3-1} of the etale topology (see also proposition \cite[IV.6.3.1(iv)]{sga3-1} for an alternative definition).
In the category of fine log schemes, 
replacing the morphisms of schemes by the strict morphisms of fine log schemes,
we get the similar notion of étale locality (see the remark \ref{convention}.3).

Unless otherwise stated
fiber products of fine log schemes (resp. fine formal log schemes) are always computed in the category of
fine log schemes (resp. fine formal log schemes). 
The sheaves of monoids are by convention sheaves for the étale topology.  Notice that since we are working with fine log schemes, 
from \cite[II.1.2.11]{Ogus-Logbook}, the reader who prefers to work with sheaves of monoids for the Zarisky topology can switch to it without any problem.
The remark \ref{rem-cohorfine?} should convince the reader why it is too pathological to work with  coherent log structures instead of fine log structures.

\section{Log $p$-étaleness, relative log perfection, $m$-PD envelope}
Let $i$ be an integer and
 $S$ be a fine log scheme over the scheme $\Spec (\Z / p ^{i+1}\Z)$.

\subsection{Formal log etaleness, $n$th infinitesimal neighborhood}
\label{subsection11}
In this subsection \ref{subsection11}, the reader might notice that we could consider the case where
$S =\Spec (\Z / p ^{i+1}\Z)$ without loss of generality. 
But, we will  need in the subsection \ref{subsection12} the case where  $S$ is any fine log scheme over $\Spec (\Z / p ^{i+1}\Z)$.
In order to keep notation as homogenous as possible, $S$ will remain any fine log scheme over $\Spec (\Z / p ^{i+1}\Z)$.
\begin{ntn}
If $X$ is a log scheme over $\Spec (\Z / p ^{i+1}\Z)$,
then for any integer $0\le k\le i$ we put
$X _{k} := X \times _{\Z / p ^{i+1}\Z} \Z/p^{k+1}\Z$.
If $f \colon X \to Y$ is a morphism of $\Spec (\Z / p ^{i+1}\Z)$-log schemes,
then for any integer $0\le k\le i$
we set 
$f _k \colon X _k \to Y _k$ the morphism canonically induced by $f$.

Let $\I$ be a quasi-coherent sheaf (for the Zariski topology) on a log scheme $X$. 
The preasheaf which associates $\I ( U) ^{(n)}$ (the subideal of $\I ( U)$ generated by $n$th powers of 
elements of $\I ( U)$)
 to an affine open set $U$ of $X$
is a quasi-coherent sheaf. 
We denote it by $\I ^{(n)}$.
Similarly, using \cite[A.1.5.(ii)]{Be2},  when $\I$ is endowed with an $m$-PD structure, 
we get a quasi-coherent sheaf 
$\I ^{\{ n\} _{(m)}}$ (which depends on the $m$-PD structure of $\I$) such that 
for an affine open set $U$ of $X$, 
$\I ^{\{ n\} _{(m)}} (U ) = 
\I (U)  ^{\{ n\} _{(m)}}$.

If $X$ is a fine log scheme over $\F _p$,
we denote by $F _X \colon X \to X$ the absolute Frobenius of $X$ as defined by Kato in
\cite[4.7]{Kato-logFontaine-Illusie}.

\end{ntn}

\begin{dfn}
\label{dfn-imm}
Let
$u \colon Z\to X$ be a morphism of log-schemes.
\begin{enumerate}
\item We say that $u$ is an immersion (resp. closed immersion) if
$\underline{u}$ is an immersion (resp. a closed immersion) of schemes  and if
$u ^{*} M _X  \to M _Z$ is surjective (here $u^*$ means the pullback of log structures \cite[1.4]{Kato-logFontaine-Illusie}).
An immersion (resp. a closed immersion) is exact if and only if 
$u ^{*} M _X  \to M _Z$ is an isomorphism.

\item We say that $u$ is an open immersion if $u$ is an exact immersion such that 
$\underline{u}$ is an open immersion of schemes.

\item Let $n$ be an integer.
A ``log thickening of order $(n)$'' (resp. ``log thickening of order $n$'') is an {\it exact} closed immersion
$u \colon U \hookrightarrow T$ such that $\I ^{(n)}=0$ (resp. such that $\I ^{n+1}= 0$),
where $\I$ is the ideal associated with the closed immersion $\underline{u}$.
The convention of the respective case is that of \cite[IV.2.1.1]{Ogus-Logbook} and is convenient when we are dealing with
$n$-infinitesimal neighborhood. 

\item Let $ a\in \N$. A ``$(p)$-nilpotent log thickening of order $a$'' is a log thickening of order $(p ^{a+1})$.
A ``$(p)$-nilpotent log thickening" is a $(p)$-nilpotent log thickening of order $a$ for some $a\in \N$ large enough.
\end{enumerate}

An $S$-immersion (resp. $S$-log thickening) is an immersion (resp. log thickening) which is an $S$-morphism.
  \end{dfn}

\begin{rem}
\label{rem-immersion}
\begin{enumerate}
\item If $u \colon Z \to X$ and $f \colon X \to Y$ are two $S$-morphisms of log schemes  such that
$f \circ u$ is an $S$-immersion, then so is $u$ (use \cite[5.3.13]{EGAI}).

\item If $u \colon Z \to X$ and $f \colon X \to Y$ are two $S$-morphisms of log schemes  such that
$f \circ u$ is a closed $S$-immersion and $\underline{f}$ is separated, then 
$u$ is a closed immersion (use \cite[5.4.4]{EGAI}).

\item \label{rem-immersion2} We  can decompose a (strict) $S$-immersion $u$ into
$u = u _1 \circ u _2$, where $u _1$ is an open $S$-immersion and $u _2$ is a (strict) closed $S$-immersion.

\item Let $u \colon U \hookrightarrow T$ be an $S$-log thickening of order $(p ^{a})$ for some integer $a$.
Since $p$ is nilpotent in $\O _T$,
by applying finitely many times the functor $\I \mapsto \I ^{(p)}$ to the ideal defining $\underline{u}$
we obtain the zero ideal, which justifies the definition of ``$(p)$-nilpotent log thickening''.
This also implies that $u$ is the composition of several $S$-log thickenings of order $(p) $.

\end{enumerate}

\end{rem}

\begin{dfn}
\label{dfnC}
\begin{enumerate}

\item
We denote by $\mathscr{C}$ the category
whose objects are $S$-immersions of fine log-schemes
and whose morphisms
$u'\to u$
are commutative diagrams of the form
\begin{equation}
\label{morpCpre}
\xymatrix{
{X'}
\ar[r] ^-{f}
&
{X}
\\
{Z'}
\ar@{^{(}->}[u] ^-{u'}
\ar[r]
&
{Z.}
\ar@{^{(}->}[u] ^-{u}}
\end{equation}
We say that
$u' \to u $ is strict (resp. 
flat, resp. cartesian)
if $f$ is strict (resp. $\underline{f}$ is flat, resp. 
the square \ref{morpCpre} is cartesian).

\item Let $n \in \N$.
We denote by $\mathscr{C} _{(n)}$ (resp. $\mathscr{C} _{n}$, resp. $\mathscr{T}hick _{(p)}$) the full subcategory
of $\mathscr{C}$ whose objects are
$S$-log thickenings of order $(n)$ (resp. $S$-log thickening of order $n$, resp. $(p)$-nilpotent $S$-log thickenings).

\item Let $u $ be an object of $\mathscr{C}$.
 An ``log thickening of order $(n)$ (resp. of order $n$) induced by $u$'' is
 an object $u'$ of $\mathscr{C} _{(n)}$ (resp. $\mathscr{C} _{n}$)
endowed with a morphism
$u' \to u$ of $\mathscr{C}$ 
satisfying the following universal property:
for any object $u''$ of $\mathscr{C} _{(n)}$ (resp. $\mathscr{C} _{n}$) endowed with a morphism
$f \colon u''\to u$ of $\mathscr{C} $
there exists a unique
morphism $u''\to u'$
of $\mathscr{C} _{(n)}$
(resp. $\mathscr{C} _{n}$)
whose
composition 
with $u' \to u$ is $f$.
The unicity up to canonical isomorphism
of the log thickening of order $(n)$ (resp. of order $n$) induced by $u$ is obvious. 
We will denote by 
$P ^{(n) }(u)$ (resp. $P ^{n} (u)$) the log thickening of order $(n)$ (resp. of order $n$) induced by $u$. 
We also say that $P ^{n} (u)$ is the $n$th infinitesimal neighbourhood of $u$ (see \cite[5.8]{Kato-logFontaine-Illusie}).
The existence is checked below (see \ref{PDenvelope} and 
\ref{PDenvelopebis}).

\item We denote by $\mathscr{C} ^{\mathrm{sat}} $  (resp. $\mathscr{C} _{(n)}^{\mathrm{sat}}$, resp. $\mathscr{C} _{n}^{\mathrm{sat}}$, resp. $\mathscr{T}hick _{(p)}^{\mathrm{sat}}$)
the full subcategory of
$\mathscr{C} $  (resp. $\mathscr{C} _{(n)}$, resp. $\mathscr{C} _{n}$, resp. $\mathscr{T}hick _{(p)}$)
whose objects are also morphisms of fs log-schemes.

\end{enumerate}

\end{dfn}

\begin{rem}
\label{fibprodC}
\begin{enumerate}
\item If $u' \to u$ is a strict cartesian morphism of $\mathscr{C}$
with $u \in \mathscr{C} _{(n)}$ (resp. $u\in \mathscr{C} _{n}$, resp. $u\in \mathscr{T}hick _{(p)}$),
then $u' \in \mathscr{C} _{(n)}$ (resp. $u' \in\mathscr{C} _{n}$, resp. $u' \in\mathscr{T}hick _{(p)}$).
Indeed, the corresponding square of the form \ref{morpCpre} of $u' \to u$ remains cartesian after applying the forgetful functor
from the category of fine log schemes to the category of schemes (to check this fact, we need a priori the strictness of $u'\to u$).
\item The category $\mathscr{C}$ has fibered products.
More precisely, let $u \colon Z \hookrightarrow X$,
$u '\colon Z' \hookrightarrow X'$,
$u ''\colon Z'' \hookrightarrow X''$ be some objects of
$\mathscr{C}$ ;
let  $u ' \to u$ and $u'' \to u$ be two morphisms of
$\mathscr{C}$. Then $u ' \times _{u} u ''$ is the immersion
$Z ' \times _{Z} Z '' \hookrightarrow     X ' \times _{X } X''$.
If $u' \to u$ is moreover cartesian, then so is 
the projection $u ' \times _{u} u '' \to u''$.

\end{enumerate}

\end{rem}

In order to be precise, let us clarify the standard definitions.
\begin{dfn}
\label{dfn-etale}
Let $f\colon X \to Y$ be an $S$-morphism of fine log schemes.
\begin{enumerate}

\item \label{1}  We say that $f$ is ``fine formally log étale'' (resp. ``fine formally log unramified'') if it satisfies the following property:
for any commutative diagram of fine log schemes of the form
\begin{equation}
\label{dfn-petale-squarepre}
\xymatrix{
{U}
\ar[r] ^-{u _0}
\ar@{^{(}->}[d] ^-{\iota}
&
{X}
\ar[d] ^-{f}
\\
{T}
\ar[r] ^-{v}
&
{Y}
}
\end{equation}
such that $\iota$ is an object of $\mathscr{C} _{1}$,
there exists a unique morphism (resp. there exists at most one morphism)
$u \colon T \to X$ such that $u\circ \iota = u _0$ and $f \circ u = v$.

\item Replacing $\mathscr{C} _1$ 
by $\mathscr{C} ^{\mathrm{sat}} _{1}$
we get the notion of
``fs formally log étale'' morphism
and of 
``fs formally log unramified'') morphism.

\item We say that $f$ is ``log étale'' if $f$ is fine formally log étale and if $\underline{f}$ is locally of finite presentation. 

\item We say that $f$ is ``étale'' if $f$ is log étale and strict (which is equivalent to saying that 
$\underline{f}$ is étale and $f$ is strict). 
\end{enumerate}
\end{dfn}

\begin{rem}

\begin{enumerate}
\label{convention}
\item The definitions appearing in \ref{dfn-etale} (or \ref{dfn-petale}) do not depend on the choice of 
 the fine log scheme $S$ over $\Z / p ^{i+1}\Z$.
 More precisely, let $f\colon X \to Y$ be an $S$-morphism of fine log schemes.
 Then we can consider $f$ as a $\Spec (\Z / p ^{i+1}\Z)$-morphism of fine log schemes. 
 We notice that the properties satisfied by $f$ can be checked
 equivalently when $S$ is equal to  $\Spec (\Z / p ^{i+1}\Z)$
 (i.e. we replace $S$ by $\Spec (\Z / p ^{i+1}\Z)$ 
 in the corresponding categories defined in \ref{dfnC}).

\item Let $f\colon X \to Y$ be an $S$-morphism of fine log schemes.
The notion of etaleness of Kato appearing in \cite[3.3]{Kato-logFontaine-Illusie}
is what we have defined as "log etaleness". We distingue by definition
``log etale'' and ``etale'' morphisms in order to avoid confusion when we say for instance
``etale locally''.

\item There exists in the literature a notion of etale morphism of log schemes with coherent log structures 
(see in  Ogus's book at \cite[IV.3.1.1]{Ogus-Logbook}). 
This notion is compatible with Kato's notion of etale morphism of fine log schemes.
Indeed, both notions have the same characterization when we focus on morphisms of fine log schemes 
(see respectively Theorem  \cite[3.5]{Kato-logFontaine-Illusie} and Theorem  \cite[IV.3.3.1]{Ogus-Logbook}). 

To avoid confusion with the etale notion in the classical sense, we will call such a morphism 
a ``log etale'' morphism of log schemes with coherent log structures (instead of ``etale morphism'').

Moreover, let 
 $f\colon X \to Y$ be a morphism of log schemes with coherent log structures.
From \cite[IV.3.1.12]{Ogus-Logbook}, 
$X ^{\mathrm{int}} \to X$ and $Y ^{\mathrm{int}} \to Y$
are log etale
(see \cite[II.2.4.5.2]{Ogus-Logbook} concerning the functor $X \mapsto X ^{\mathrm{int}}$). 
Hence, using Remark \cite[IV.3.1.2]{Ogus-Logbook},
we check that $f$ is log etale if and only if $f ^{\mathrm{int}}$ is log etale.

\end{enumerate}

\end{rem}

\begin{empt}
\label{ex-cl-imm}
We recall in the paragraph how we can exactify an immersion. 
Let $u \colon Z \hookrightarrow X$ be an $S$-immersion of fine log-schemes.
Let $\overline{z}$ be a geometric point of $Z$.
Using the proof of 
\cite[4.10.1]{Kato-logFontaine-Illusie}
and Proposition \cite[IV.18.1.1]{EGAIV4}, we check that there exists a commutative diagram of the form
\begin{equation}
\notag
\xymatrix{
{\widetilde{X}}
\ar[r] ^-{f}
&
{X'}
\ar[r] ^-{g}
\ar@{}[dr]|{\square}
&
{X}
\\
&
{Z'}
\ar@{_{(}->}[u] ^-{u'}
\ar@{^{(}->}[lu] ^-{v'}
\ar[r] ^-{h}
&
{Z}
\ar@{^{(}->}[u] ^-{u}}
\end{equation}
such that the square is cartesian, $f$ is log étale,
$\underline{f}$ is affine,
$g$ is étale,
$v'$ is an exact closed $S$-immersion
and $h$ is an étale neighborhood of
$\overline{z}$ in $Z$.

\end{empt}

\begin{lem}\label{prebasecgt-flat-env}
Let $u' \to u$ be a strict cartesian morphism of $\mathscr{C}$.
Suppose
that the $P ^{(n) }(u)$ (resp. $P ^{n} (u)$), 
the log thickening of order $(n)$ (resp. of order $n$) induced by $u$,
exists. 
Then the log thickening of order $(n)$ (resp. of order $n$) induced by $u'$ exists and we have 
$P ^{(n) }(u') = P ^{(n) }(u)  \times _{u} u'$
(resp. $P ^{n}(u') = P ^{n}(u)  \times _{u} u'$).
\end{lem}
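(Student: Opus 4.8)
The plan is to build $P^{(n)}(u)\times_u u'$ inside $\mathscr{C}$ and to verify directly that it has the universal property characterizing $P^{(n)}(u')$; everything is formal once one invokes the two stability facts recorded in Remark \ref{fibprodC}. I will only treat the order-$(n)$ case, the order-$n$ case being verbatim the same with $\mathscr{C}_n$ in place of $\mathscr{C}_{(n)}$.

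First I would write $P := P^{(n)}(u)$, equipped with its structural morphism $\pi\colon P\to u$ of $\mathscr{C}$, and recall that $P\in\mathscr{C}_{(n)}$. By Remark \ref{fibprodC}.2 the fibered product $P\times_u u'$ exists in $\mathscr{C}$ (it is the $S$-immersion formed componentwise), so it remains to see that it lies in $\mathscr{C}_{(n)}$ and that it enjoys the required universal property. For the first point I would consider the projection $q\colon P\times_u u'\to P$: since $u'\to u$ is cartesian, the last assertion of Remark \ref{fibprodC}.2 shows that $q$ is cartesian; and since $u'\to u$ is strict, $q$ is strict as well, its top arrow being the base change of the strict top arrow of $u'\to u$ along $\pi$ and strictness of morphisms of fine log schemes being stable under base change. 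Thus $q$ is a strict cartesian morphism of $\mathscr{C}$ with target $P\in\mathscr{C}_{(n)}$, whence $P\times_u u'\in\mathscr{C}_{(n)}$ by Remark \ref{fibprodC}.1.

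For the universal property, I would take an arbitrary $u''\in\mathscr{C}_{(n)}$ together with a morphism $f\colon u''\to u'$ of $\mathscr{C}$. Composing with $u'\to u$ gives a morphism $u''\to u$, so the universal property of $P=P^{(n)}(u)$ produces a unique morphism $g\colon u''\to P$ of $\mathscr{C}$ over $u$. Since $\pi\circ g$ and $f$ have the same composite $u''\to u$, the pair $(g,f)$ determines, by the universal property of the fibered product in $\mathscr{C}$, a unique morphism $h\colon u''\to P\times_u u'$ with $q\circ h=g$ and $\mathrm{pr}_{u'}\circ h=f$; as $\mathscr{C}_{(n)}$ is a full subcategory and $u''\in\mathscr{C}_{(n)}$, this $h$ is automatically a morphism of $\mathscr{C}_{(n)}$. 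Conversely, any morphism $h'\colon u''\to P\times_u u'$ of $\mathscr{C}$ with $\mathrm{pr}_{u'}\circ h'=f$ has $q\circ h'$ lying over $u$, hence $q\circ h'=g$ by uniqueness of $g$, and then $h'=h$ by uniqueness in the fibered product. Therefore $P\times_u u'$ satisfies exactly the universal property defining $P^{(n)}(u')$, which proves both its existence and the formula $P^{(n)}(u')=P^{(n)}(u)\times_u u'$. The only step that is not a pure diagram chase is checking that $q$ is strict cartesian so that Remark \ref{fibprodC}.1 applies, and even this reduces at once to the stability of strictness under base change.
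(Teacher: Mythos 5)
Your proof is correct and follows the same route as the paper's own (much terser) argument: use Remark \ref{fibprodC} to see that the projection $P^{(n)}(u)\times_u u'\to P^{(n)}(u)$ is strict cartesian, deduce that the fibered product lies in $\mathscr{C}_{(n)}$, and then verify the universal property by a direct diagram chase. You have simply made explicit the steps the paper leaves to the reader.
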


\begin{proof}
Using the remarks of \ref{fibprodC}, 
since $u' \to u$ is strict and cartesian, then so is
the projection 
$P ^{(n) }(u)  \times _{u} u' \to P ^{(n) }(u) $
and then  
$P ^{(n) }(u)  \times _{u} u' \in \mathscr{C} _{(n)}$.
Hence, we check easily that 
$P ^{(n) }(u)  \times _{u} u'$ endowed with the projection
$P ^{(n) }(u)  \times _{u} u'\to u'$ satisfies the corresponding universal property
of $P ^{(n) }(u')$.
We check similarly the respective case. 
\end{proof}

\begin{lem}
\label{prePDenv-lrp}
Let $n\in \N$, $f \colon X \to Y$ be a fine formally log étale morphism of fine log $S$-schemes,
$u \colon Z \hookrightarrow X$ and $v \colon Z \hookrightarrow Y$ be two $S$-immersions of fine log schemes
such that $v= f\circ u$.
If $P ^{n }(u)$ exists, then 
$P ^{n}(v)$ exists and we have 
$P ^{n}(u)=P ^{n}(v)$.
\end{lem}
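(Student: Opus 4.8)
The plan is to verify directly that $P^{n}(u)$, equipped with a suitable morphism $P^{n}(u)\to v$ in $\mathscr{C}$, satisfies the universal property characterizing $P^{n}(v)$; uniqueness then gives the claimed identification. First I would produce the structural morphism $P^{n}(u)\to v$: by definition $P^{n}(u)$ comes with a morphism $P^{n}(u)\to u$ in $\mathscr{C}$, i.e. a commutative square whose bottom arrow is $Z\to Z$ (the identity, since $u\in\mathscr{C}$ and $P^{n}(u)$ induces $u$) and whose top arrow is some $P^n(u)^{\mathrm{top}}\to X$; composing with $f\colon X\to Y$ and using $v=f\circ u$ gives a morphism $P^{n}(u)\to v$ of $\mathscr{C}$. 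Note $P^{n}(u)\in\mathscr{C}_{n}$ by construction, so it is a legitimate test object.

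Next comes the heart of the argument: given any object $u''\colon U''\hookrightarrow T''$ of $\mathscr{C}_{n}$ together with a morphism $g\colon u''\to v$ of $\mathscr{C}$ (a square with bottom arrow $U''\to Z$ and top arrow $T''\to Y$), I must produce a unique factorization $u''\to P^{n}(u)$ through $P^{n}(u)\to v$. The point is to first \emph{lift} $g$ to a morphism $u''\to u$ of $\mathscr{C}$, i.e. to lift the top arrow $T''\to Y$ to a map $T''\to X$ compatibly with $U''\hookrightarrow Z\xrightarrow{u} X$. This is exactly where fine formal log étaleness of $f$ enters: the square
\begin{equation}
\notag
\xymatrix{
{U''} \ar[r] \ar@{^{(}->}[d] ^-{u''} & {X} \ar[d] ^-{f} \\
{T''} \ar[r] & {Y}
}
\end{equation}
has left vertical arrow $u''\in\mathscr{C}_{1}$ after reduction — more precisely, since $u''\in\mathscr{C}_{n}$ its defining ideal $\I$ satisfies $\I^{n+1}=0$, and one filters the closed immersion $U''\hookrightarrow T''$ by the intermediate thickenings defined by $\I,\I^{2},\dots$, each step being an object of $\mathscr{C}_{1}$; applying the lifting property of Definition \ref{dfn-etale}(\ref{1}) successively (the top map $U''\to X$ being fixed, coming from $U''\to Z\xrightarrow{u}X$) yields a unique $T''\to X$. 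Thus $g$ factors uniquely as $u''\to u$ in $\mathscr{C}$ followed by $f$, and the factoring morphism $u''\to u$ lies in $\mathscr{C}_{n}$. Then the universal property of $P^{n}(u)$ produces a unique $u''\to P^{n}(u)$ over $u$; one checks this is also the unique morphism over $v$, using on one hand that any such morphism over $v$ composed appropriately recovers $g$, and on the other that the lift $u''\to u$ was itself unique so no ambiguity is introduced.

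Finally, having shown $P^{n}(u)$ satisfies the universal property of $P^{n}(v)$, the uniqueness up to canonical isomorphism noted in Definition \ref{dfnC} gives $P^{n}(v)=P^{n}(u)$. I expect the main obstacle to be the bookkeeping in the lifting step: one must be careful that the successive square-zero thickenings $U''\hookrightarrow T''_{j}$ (with $T''_j$ defined by $\I^{j+1}$) genuinely belong to $\mathscr{C}_{1}$ — in particular that they are \emph{exact} closed immersions of fine log schemes and $S$-morphisms — and that iterating the unramified part of the étaleness hypothesis gives uniqueness of the global lift $T''\to X$, not merely of each infinitesimal step. The compatibility of all this with the log structures (surjectivity/isomorphism of the relevant $M$-pullbacks) is routine but needs to be invoked rather than glossed over. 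Note that this lemma is stated only for the $\mathscr{C}_{n}$ (order $n$) version and not for $\mathscr{C}_{(n)}$, precisely because the filtration-by-powers argument needs honest powers $\I^{j}$ rather than the divided-power-flavored $\I^{(n)}$.
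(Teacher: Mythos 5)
Your argument is correct and is exactly the standard universal-property argument that the paper leaves implicit (its entire proof reads ``Abstract nonsense''): the composite $P^{n}(u)\to u\to v$ is shown to corepresent $\mathscr{C}_{n}$-morphisms into $v$ because fine formal log étaleness, applied along the square-zero dévissage $T''_j\hookrightarrow T''_{j+1}$ of an order-$n$ thickening, gives a unique lift of any test morphism $u''\to v$ to a morphism $u''\to u$. Your closing observation about why the order-$(p^n)$ analogue (Lemma \ref{prePDenv-lrpbis}) needs the stronger log $p$-étaleness hypothesis is also accurate.
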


\begin{proof}
Abstract nonsense.
\end{proof}

\begin{prop}
\label{PDenvelope}
 For any integer $n$,
 the inclusion functor $For  _n \colon \mathscr{C} _n \to \mathscr{C}$
 has a right adjoint functor which we will denote by
$P ^{n }
\colon \mathscr{C} \to \mathscr{C}  _n $.
Let  $u\colon Z \hookrightarrow X$ be an object of
$\mathscr{C}$.
Then $Z$ is also the source of $P ^{n} (u) $.
Moreover,  denoting abusively by $P ^{n} (u)$ the target of the arrow
$P ^{n} (u)$, 
the underlying morphism of schemes of
$P ^{n} (u) \to X$ is affine.
We denote by $\PP ^{n} (u)$ the quasi-coherent $\O _X$-algebra so that
$\underline{P ^{n} (u)} = \mathrm{Spec} (\PP ^{n} (u))$.
If $X$ is noetherian, then so is $P ^{n} (u)$.
\end{prop}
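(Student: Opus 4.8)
The plan is to construct $P^n$ locally, glue, and verify the adjunction. First I would reduce to the affine, strict-closed-immersion case: by Remark \ref{rem-immersion}.\ref{rem-immersion2} a (strict) $S$-immersion factors as an open $S$-immersion followed by a (strict) closed $S$-immersion, and $n$th infinitesimal neighbourhoods commute with passing to an open subscheme of the target in an evident way (the universal property localizes), so it suffices to treat a (strict) closed $S$-immersion $u\colon Z\hookrightarrow X$ with $X=\Spec A$ affine; but in fact the more delicate case is a non-strict exact closed immersion, so I would use the exactification recalled in \ref{ex-cl-imm} together with Lemma \ref{prePDenv-lrp}: since the morphism $f$ there is fine formally log étale, an $n$th infinitesimal neighbourhood of the exact immersion $v'$ will serve, étale-locally, as an $n$th infinitesimal neighbourhood of $u$. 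By Lemma \ref{prebasecgt-flat-env} the construction is compatible with strict cartesian base change, which is what allows the étale-local pieces to be glued into the quasi-coherent $\O_X$-algebra $\PP^n(u)$.

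**The local construction.**

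For an exact closed $S$-immersion with ideal $\I\subseteq\O_X$, the natural candidate is $\underline{P^n(u)}=\Spec(\O_X/\I^{n+1})$ on the level of schemes, endowed with the log structure pulled back from $X$ (exactness of the immersion is what makes this log structure the correct one, so that $u^*M_X\to M_Z$ stays an isomorphism after the thickening — here I would invoke the convention of \cite[IV.2.1.1]{Ogus-Logbook} cited in Definition \ref{dfn-imm}). One checks $\I^{n+1}$ defines an object of $\mathscr{C}_n$, i.e. $(\I/\I^{n+1})^{n+1}=0$, trivially. The universal property is then the standard one: given $u''\colon Z''\hookrightarrow X''$ in $\mathscr{C}_n$ with a morphism $f\colon u''\to u$ in $\mathscr{C}$, the underlying ring map $A\to B$ sends $\I$ into the ideal $\I''$ of $\underline{u''}$, hence $\I^{n+1}$ into $(\I'')^{n+1}=0$, so it factors uniquely through $A/\I^{n+1}$; on log structures one uses that $u''$ is exact to see the induced map of log structures is forced and unique. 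This simultaneously shows $Z$ is the source of $P^n(u)$ (the closed immersion $Z\hookrightarrow\Spec(\O_X/\I^{n+1})$ is still a nilpotent — in particular exact closed — immersion, and it is the universal one), and that $\underline{P^n(u)}\to\underline{X}$ is affine since it is a closed immersion étale-locally, hence affine. Noetherianness of $P^n(u)$ when $X$ is noetherian is immediate, $\O_X/\I^{n+1}$ being a quotient of $\O_X$.

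**Gluing and the adjunction.**

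The étale-local constructions agree on overlaps by the uniqueness in the universal property (any two solutions are canonically isomorphic, and these isomorphisms are compatible by uniqueness again), and by Lemma \ref{prebasecgt-flat-env} they are compatible with the étale base changes relating an exactification over one geometric point to one over another; the resulting descent datum is effective for the quasi-coherent algebra $\PP^n(u)$ (descent along the étale cover $\underline{X}'\to\underline{X}$), and the log structure descends likewise. Functoriality of $P^n$ in $\mathscr{C}$, and the fact that it is right adjoint to the inclusion $For_n\colon\mathscr{C}_n\to\mathscr{C}$, are then just the universal property reread: $\mathrm{Hom}_{\mathscr{C}_n}(u'',P^n(u))=\mathrm{Hom}_{\mathscr{C}}(For_n(u''),u)$ naturally in $u''$ and $u$. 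The main obstacle I anticipate is not the strict case, which is essentially the classical construction of infinitesimal neighbourhoods, but rather making the \emph{non-strict} (merely exact) case precise: one must check that the exactification diagram of \ref{ex-cl-imm} genuinely lets the $n$th neighbourhood computed downstairs represent the functor upstairs — i.e. that $P^n(v')$ with its map to $u$ satisfies the universal property of $P^n(u)$ étale-locally — which is where Lemma \ref{prePDenv-lrp} and the fine formal log étaleness of $f$ do the real work, and then that these local representatives glue despite the exactification not being canonical. Handling the interaction of the log-structure exactness with the nilpotent thickening, and verifying everything is independent of the auxiliary choices, will be the fiddly part.
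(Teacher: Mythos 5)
Your proposal is correct and follows essentially the same route as the paper: reduce étale-locally via Lemma \ref{prebasecgt-flat-env}, exactify the immersion as in \ref{ex-cl-imm}, take the exact closed immersion cut out by $\I^{n+1}$ on the exactification, and transfer the universal property back along the (fine formally) log étale morphism via Lemma \ref{prePDenv-lrp}. The explicit verification of the universal property for $\O_X/\I^{n+1}$ and the gluing discussion are just spelled-out versions of what the paper leaves implicit, so there is nothing to object to.
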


\begin{proof}
The construction of $P ^{n }$ is given in \cite[5.8]{Kato-logFontaine-Illusie}.
Since Proposition \ref{PDenvelope} is slightly more precise than the existence of $P ^{n }$, 
for the reader convenience, let us give a detailled proof. 
Let $u\colon Z \hookrightarrow X$ be an $S$-immersion of fine log-schemes.
Using \ref{prebasecgt-flat-env}, the existence of $P ^{n} (u) $ (and then the whole proposition)
is étale local on $X$ (i.e. following our convention, this is local for the Zariski topology and 
we can proceed by 
descent of a finite covering with étale quasi-compact morphisms). 
Hence, by \ref{ex-cl-imm}, we may thus assume that
there exists a commutative diagram of the form
\begin{equation}
\notag
\xymatrix{
{\widetilde{X}}
\ar[r] ^-{f}
&
{X}
\\
&
{Z}
\ar@{_{(}->}[u] ^-{u}
\ar@{^{(}->}[lu] ^-{\widetilde{u}}
}
\end{equation}
such that $f$ is log étale, $\underline{f}$ is affine and $\widetilde{u}$ is an exact closed $S$-immersion.
Let $\I$ be the ideal defining $\widetilde{u}$.
Let $P ^{n}\hookrightarrow \widetilde{X}$
be the exact closed immersion which is induced by 
$\I ^{n+1}$.
Using 
\ref{prePDenv-lrp}, we check that 
$P ^{n}(u)$
is the exact closed immersion 
$Z \hookrightarrow  P ^{n}$.
When $X$ is noetherian, then so are $\widetilde{X}$
and $P ^n$.

\end{proof}

\begin{prop}
\label{formally log unramified}
Let $f\colon X \to Y$ be an $S$-morphism of fine log schemes and $\Delta _{X/Y}\colon X \hookrightarrow X \times _{Y}X$ (as always the product
is taken in the category of fine log schemes) be the
diagonal $S$-immersion.
The following assertions are equivalent : 
\begin{enumerate}
\item the morphism $P ^{1} (\Delta _{X/Y})$ is an isomorphism ;
\item the morphism $f$ is fine formally log unramified ; 
\item the morphism $f$ is formally log unramified (this notion is defined at \cite[IV.3.1.1]{Ogus-Logbook}).
\end{enumerate}
\end{prop}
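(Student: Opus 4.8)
The plan is to prove the cycle of implications $(1)\Rightarrow(2)\Rightarrow(3)\Rightarrow(1)$, the middle one being essentially a matter of unwinding the two definitions and the outer ones being the content of the statement.

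\textbf{Step 1: $(2)\Leftrightarrow(3)$.} First I would check that ``fine formally log unramified'' (Definition \ref{dfn-etale}) and ``formally log unramified'' in the sense of \cite[IV.3.1.1]{Ogus-Logbook} agree for a morphism $f$ of fine log schemes. Both ask that, in a lifting square against a thickening of order $1$, there be at most one filler; the only subtlety is that Ogus allows test objects $T$ in a larger category, while Definition \ref{dfn-etale} restricts to objects of $\mathscr{C}_1$, i.e. \emph{exact} closed immersions $\iota$ with square-zero ideal that are moreover $S$-morphisms. One checks this restriction is harmless: given any first-order thickening $\iota\colon U\hookrightarrow T$ in Ogus's sense one factors/exactifies it using \ref{ex-cl-imm} (or simply notes it is already exact in the relevant formulation), and the $S$-structure is automatic since everything lives over $S$; conversely the $\mathscr{C}_1$ test objects are a subclass. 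So this equivalence is ``abstract nonsense'' comparing two formulations, cf. the style of \ref{prePDenv-lrp}.

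\textbf{Step 2: $(1)\Rightarrow(2)$.} Suppose $P^1(\Delta_{X/Y})$ is an isomorphism. Given a lifting square as in \eqref{dfn-petale-squarepre} with $\iota\in\mathscr{C}_1$ and two fillers $u,u'\colon T\to X$ with $u\circ\iota=u'\circ\iota=u_0$ and $f\circ u=f\circ u'=v$, the pair $(u,u')$ defines a morphism $(u,u')\colon T\to X\times_Y X$ over $Y$. Because $u$ and $u'$ agree after composing with $\iota$, this morphism factors through the first infinitesimal neighbourhood of the diagonal: indeed $(u,u')\circ\iota$ factors through $\Delta_{X/Y}$, and $\iota\in\mathscr{C}_1$, so by the universal property of $P^1(\Delta_{X/Y})$ (Definition \ref{dfnC}) the morphism $(u,u')$ factors through $P^1(\Delta_{X/Y})$. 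But $P^1(\Delta_{X/Y})\xrightarrow{\sim}X$ by hypothesis, and this factorization composed with the two projections recovers $u$ and $u'$; since the composite $P^1(\Delta_{X/Y})\to X\times_Y X\to X$ through either projection equals the isomorphism $P^1(\Delta_{X/Y})\xrightarrow{\sim}X$ followed by... more precisely, both projections $X\times_Y X\to X$ restrict on $P^1(\Delta_{X/Y})\cong X$ to the identity, hence $u=u'$.

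\textbf{Step 3: $(2)\Rightarrow(1)$.} Conversely assume $f$ is fine formally log unramified. By construction (Proposition \ref{PDenvelope}) $P^1(\Delta_{X/Y})$ has source $X$ and its target sits in an exact closed immersion $X\hookrightarrow P^1(\Delta_{X/Y})$ with square-zero ideal, forming an object of $\mathscr{C}_1$, together with a morphism of $\mathscr{C}$ to $\Delta_{X/Y}$; in particular the structural map $\pi\colon P^1(\Delta_{X/Y})\to X\times_Y X$ followed by either projection gives two maps $p_1\pi,p_2\pi\colon P^1(\Delta_{X/Y})\to X$ that agree after precomposition with $X\hookrightarrow P^1(\Delta_{X/Y})$ (both give the identity of $X$) and that become equal after composing with $f$ (both equal the structural map to $Y$). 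The thickening $X\hookrightarrow P^1(\Delta_{X/Y})$ is an object of $\mathscr{C}_1$, so the unramifiedness of $f$ forces $p_1\pi=p_2\pi$; this means $\pi$ factors through the diagonal $\Delta_{X/Y}\colon X\hookrightarrow X\times_Y X$, yielding a section $s\colon P^1(\Delta_{X/Y})\to X$ of the closed immersion $X\hookrightarrow P^1(\Delta_{X/Y})$. A section of an exact closed immersion defined by a square-zero ideal, together with the fact that $s$ restricts to the identity on $X$, forces that ideal to vanish, so $X\hookrightarrow P^1(\Delta_{X/Y})$ is an isomorphism, i.e. $P^1(\Delta_{X/Y})$ is an isomorphism as a morphism of $\mathscr{C}$.

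\textbf{Main obstacle.} The genuinely delicate point is bookkeeping the log structures and exactness throughout Step 3 — that ``a section of a square-zero exact closed immersion restricting to the identity kills the ideal'' requires checking the statement both on underlying schemes (classical) and on the monoid sheaves, where exactness of the immersion is exactly what makes $u^*M_T\to M_U$ an isomorphism and lets the scheme-theoretic argument go through. The comparison in Step 1 with Ogus's definition is conceptually routine but demands care that the class of test thickenings $\mathscr{C}_1$ is cofinal/sufficient among all first-order log thickenings; here one leans on \ref{ex-cl-imm} and the remark \ref{convention} that the notions are independent of $S$.
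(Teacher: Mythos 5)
There are two genuine gaps. The more serious one is the final step of your Step 3. After obtaining $p_1\pi=p_2\pi$ and hence a map $s\colon P^{1}(\Delta_{X/Y})\to X$ with $\pi=\Delta_{X/Y}\circ s$ and $s\circ i=\mathrm{id}_X$ (where $i\colon X\hookrightarrow P^{1}(\Delta_{X/Y})$ is the structural exact closed immersion), you conclude by the principle that ``a section of a square-zero exact closed immersion restricting to the identity forces the ideal to vanish''. That principle is false already for schemes: $\Spec k\hookrightarrow \Spec k[\varepsilon]/(\varepsilon^{2})$ is a square-zero closed immersion admitting a retraction, and its ideal is nonzero; your ``Main obstacle'' paragraph doubles down on this, so it is not just loose wording. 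What actually closes the argument is not the existence of $s$ but the uniqueness clause in the universal property of $P^{1}(\Delta_{X/Y})$ (Definition \ref{dfnC}): the pair $(i\circ s,\ \mathrm{id}_X)$ is an endomorphism of the object $X\hookrightarrow P^{1}(\Delta_{X/Y})$ of $\mathscr{C}_{1}$ lying over $\Delta_{X/Y}$, because $\pi\circ (i\circ s)=(\pi\circ i)\circ s=\Delta_{X/Y}\circ s=\pi$; by uniqueness it equals the identity, so $i$ is an isomorphism. (Equivalently, by Notation \ref{Omega1pre} the ideal of $\Delta^{1}_{X/Y}$ is $\Omega^{1}_{X/Y}$, generated by the elements $p_1^{*}(a)-p_0^{*}(a)$ and $\mu(m)-1$, all of which die when the two projections agree.)

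The second gap is Step 1. The implication $(3)\Rightarrow(2)$ is indeed trivial (fewer test objects), but $(2)\Rightarrow(3)$ is not abstract nonsense, and the subtlety is not exactness or the $S$-structure: Ogus's definition quantifies over thickenings $T$ that need not be \emph{fine} log schemes, and \ref{ex-cl-imm} does not convert a non-fine test object into a fine one. The paper's proof is organized around exactly this point, and takes a different route from yours throughout: it shows each of (1), (2), (3) is equivalent to $\Omega^{1}_{X/Y}=0$, citing Kato for $(1)\Leftrightarrow(\Omega=0)$ and Ogus IV.3.1.3 for $(3)\Leftrightarrow(\Omega=0)$, and for $(2)\Rightarrow(\Omega=0)$ it runs Ogus's argument against the single test object $T=X\oplus\Omega^{1}_{X/Y}$, verifying that this $T$ is fine (its log structure is $M_X\oplus\Omega^{1}_{X/Y}$) --- the verification your sketch omits. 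Your direct categorical proof of $(1)\Leftrightarrow(2)$ (Steps 2 and 3, once Step 3 is repaired as above) is a legitimate alternative to the paper's computation, but you still need the $\Omega$-argument, or something equivalent, to pass from (2) to (3).
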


\begin{proof}
Following \cite[IV.3.1.3]{Ogus-Logbook}, the last two assertions are equivalent. 
Moreover, by definition, if $f$ is formally log unramified then $f$ is fine formally log unramified.
Using \cite[5.8]{Kato-logFontaine-Illusie} and 
with Notation \ref{Omega1pre}, the property
$\Omega _{X/Y}=0$ is equivalent to say that $P ^{1} (\Delta _{X/Y})$ is an isomorphism. 
Copying the proof of ``if $f$ is formally log unramified then $\Omega _{X/Y}=0$''
of \cite[IV.3.1.3]{Ogus-Logbook} we check in the same way that 
if $f$ is fine formally log unramified then $\Omega _{X/Y}=0$ (indeed, since $X$ fine, 
then the log scheme $T:=X \oplus \Omega _{X/Y}$ is fine because its log structure 
is $M _X \oplus \Omega _{X/Y}$ : see \cite[IV.2.1.6]{Ogus-Logbook}).

\end{proof}

\subsection{Log $p$-étaleness}
\begin{dfn}
\label{dfn-petale}
Let $f\colon X \to Y$ be an $S$-morphism of fine log schemes.
\begin{enumerate}

\item \label{1}  We say that $f$ is ``log $p$-étale'' (resp. ``log $p$-unramified'') if it satisfies the following property:
for any commutative diagram of fine log schemes of the form
\begin{equation}
\label{dfn-petale-square}
\xymatrix{
{U}
\ar[r] ^-{u _0}
\ar@{^{(}->}[d] ^-{\iota}
&
{X}
\ar[d] ^-{f}
\\
{T}
\ar[r] ^-{v}
&
{Y}
}
\end{equation}
such that $\iota$ is an object of $\mathscr{C} _{(p)}$,
there exists a unique morphism (resp. there exists at most one morphism)
$u \colon T \to X$ such that $u\circ \iota = u _0$ and $f \circ u = v$.

\item Replacing $\mathscr{C} _{(p)}$ by 
$\mathscr{C} ^{\mathrm{sat}} _{(p)}$
we get the notion
``fs log $p$-étale'' 
and of 
``fs log $p$-unramified''. 

\item Replacing
``fine log $S$-schemes'' by ``$S$-schemes'' in the definition \ref{dfn-petale}.\ref{1},
we get the notion of ``$p$-étale'' (resp. ``$p$-unramified'') morphism of schemes.
\end{enumerate}
\end{dfn}

\begin{rem}
\label{rem-petale-Thick}
With the last remark of \ref{rem-immersion} in mind, we can replace
$\mathscr{C} _{(p)}$ by $\mathscr{T}hick _{(p)}$
(resp. $\mathscr{C} _{(p)} ^{\mathrm{sat}}$ by $\mathscr{T}hick _{(p)} ^{\mathrm{sat}}$) in the definition of log $p$-étale or log $p$-unramified
(resp. fs log $p$-étale or fs log $p$-unramified).

\end{rem}

\begin{lem}
\label{lem-lrp}
Let $f\colon X \to Y$ be an $S $-morphism of fine log-schemes.
Then $f$ is fs log $p$-étale 
(resp. fs formally log etale, resp. fs log $p$-unramified, resp. fs formally log unramified) 
if and only if so is $f ^{\mathrm{sat}}$.
\end{lem}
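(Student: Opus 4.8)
The plan is to treat the four equivalences simultaneously by recognizing that each of the eight properties involved is a lifting property against a fixed full subcategory of $\mathscr{C}^{\mathrm{sat}}$, and then to deduce the statement from the adjunction defining the saturation functor; in the spirit of Lemma \ref{prePDenv-lrp}, this will be essentially abstract nonsense.

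First I would fix notation. Let $\mathscr{D}$ denote $\mathscr{C}^{\mathrm{sat}} _{(p)}$ in the case of the $p$-�tale and $p$-unramified variants, and $\mathscr{C}^{\mathrm{sat}} _{1}$ in the case of the formally log �tale and formally log unramified variants; in either case $\mathscr{D}$ is a full subcategory of $\mathscr{C}^{\mathrm{sat}}$ all of whose objects $\iota\colon U\hookrightarrow T$ are $S$-log thickenings between \emph{fs} log schemes (Remark \ref{rem-petale-Thick} would let us take $\mathscr{T}hick _{(p)}^{\mathrm{sat}}$ in the $p$-case, but this is not needed). Then ``$f$ is fs log $p$-�tale'' (resp. fs formally log �tale) unwinds to: for every object $\iota$ of $\mathscr{D}$ and every commutative square of $S$-morphisms of fine log schemes of the form \eqref{dfn-petale-square}, there is a unique diagonal filler $u\colon T\to X$; the unramified variants replace ``unique'' by ``at most one''. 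Since $X,Y$ are fine log $S$-schemes whereas $X^{\mathrm{sat}},Y^{\mathrm{sat}}$ carry the $S$-structures induced through the counit maps, it therefore suffices to prove: \emph{for a full subcategory $\mathscr{D}$ of $\mathscr{C}^{\mathrm{sat}}$ as above, $f$ has the unique (resp. at-most-one) lifting property against $\mathscr{D}$ if and only if $f^{\mathrm{sat}}$ does.}

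Next I would invoke the fundamental property of saturation: $X\mapsto X^{\mathrm{sat}}$ is right adjoint to the inclusion of fs log schemes into fine log schemes, with counit $\epsilon _X\colon X^{\mathrm{sat}}\to X$ (see \cite[II.2.4]{Ogus-Logbook}). Giving $X^{\mathrm{sat}}$, $Y^{\mathrm{sat}}$ their $S$-structures via $\epsilon _X$, $\epsilon _Y$ makes $f^{\mathrm{sat}}$ an $S$-morphism and $\epsilon$ a natural transformation of functors to $S$-log schemes (naturality of $\epsilon$ gives $f\circ\epsilon _X=\epsilon _Y\circ f^{\mathrm{sat}}$, whence the $S$-compatibility). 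Consequently, for every fs log $S$-scheme $T$ and every fine log $S$-scheme $X$, composition with $\epsilon _X$ is a bijection $\mathrm{Hom} _S(T,X^{\mathrm{sat}})\riso \mathrm{Hom} _S(T,X)$, natural in $T$ and in $X$.

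Finally I would run the transfer. Given a square \eqref{dfn-petale-square} for $f$ with $\iota\in\mathscr{D}$ (so $U$, $T$ are fs), pull $u _0$ and $v$ back along the inverse of the above bijection to obtain $\widetilde{u _0}\colon U\to X^{\mathrm{sat}}$ and $\widetilde{v}\colon T\to Y^{\mathrm{sat}}$; naturality in $X$ applied to the morphism $f$ gives $f^{\mathrm{sat}}\circ\widetilde{u _0}=\widetilde{v}\circ\iota$, i.e. a square for $f^{\mathrm{sat}}$ with the same $\iota$, and conversely every such square for $f^{\mathrm{sat}}$ arises this way upon postcomposing with $\epsilon _X$, $\epsilon _Y$. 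Moreover, under $\mathrm{Hom} _S(T,X^{\mathrm{sat}})\cong\mathrm{Hom} _S(T,X)$ the diagonal fillers of the $f$-square correspond bijectively to those of the associated $f^{\mathrm{sat}}$-square, since the defining equations $u\circ\iota=u _0$ and $f\circ u=v$ translate termwise into $\widetilde{u}\circ\iota=\widetilde{u _0}$ and $f^{\mathrm{sat}}\circ\widetilde{u}=\widetilde{v}$ by naturality. Hence ``a unique filler exists'' (resp. ``at most one filler exists'') holds for all admissible squares over $f$ if and only if it holds for all admissible squares over $f^{\mathrm{sat}}$, which is the claimed equivalence. I do not expect a genuine obstacle: the only points requiring a little care — the ones that would be dismissed as ``abstract nonsense'' as in Lemma \ref{prePDenv-lrp} — are the $S$-equivariance and bi-naturality of the adjunction bijection, and the observation that replacing $X$ by $X^{\mathrm{sat}}$ leaves the class of admissible test thickenings $\iota$ untouched, because $\mathscr{D}$ is defined purely through the (unchanged) source and target of $\iota$.
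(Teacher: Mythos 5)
Your proposal is correct and is exactly the paper's argument: the authors' proof consists of the single sentence that the statement ``is checked by using the fact that the functor $X \mapsto X ^{\mathrm{sat}}$ is a right adjoint of the inclusion functor from the category of fs log schemes to the category of fine log schemes,'' and your write-up simply fills in the details of how the adjunction bijection $\mathrm{Hom} _S(T,X^{\mathrm{sat}})\cong\mathrm{Hom} _S(T,X)$ for fs test objects $T$ transfers squares and their fillers between $f$ and $f^{\mathrm{sat}}$. Nothing to change.
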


\begin{proof}
This is checked by using the fact that 
the functor $X \mapsto X ^{\mathrm{sat}}$ is a right adjoint of the inclusion functor from the category of fs log schemes to 
the category of fine log schemes
(see \cite[II.2.4.5.2]{Ogus-Logbook}).
\end{proof}

\begin{lem}
\label{logetale-etale}
Let $f \colon X\to Y$ be a strict $S$-morphism of fine log schemes,
$\underline{f}\colon \underline{X} \to \underline{Y} $,
$f ^{\mathrm{sat}}\colon X^{\mathrm{sat}} \to Y^{\mathrm{sat}}$
$\underline{f}^{\mathrm{sat}}\colon \underline{X} ^{\mathrm{sat}}\to \underline{Y} ^{\mathrm{sat}}$ be the induced morphisms
(see \cite[II.2.4.5.2]{Ogus-Logbook} concerning the functor $X \mapsto X ^{\mathrm{sat}}$).

\begin{enumerate}
\item The morphism $f$ is log $p$-étale if and only if $\underline{f}$ is $p$-étale.
\item  The morphism $f$ is fs log $p$-étale 
if and only if $\underline{f} ^\mathrm{sat}$ is $p$-étale.

\end{enumerate}

\end{lem}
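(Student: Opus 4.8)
The plan is to reduce both statements to the known non-logarithmic analogue for the underlying morphisms of schemes, exploiting that $f$ is strict so that the log structures of $X$ and $Y$ are pulled back from a common one, and that the objects of $\mathscr{C}_{(p)}$ (resp. $\mathscr{C}^{\mathrm{sat}}_{(p)}$) are exact closed immersions. First I would recall, via Remark \ref{rem-petale-Thick}, that it suffices to test the lifting property of Definition \ref{dfn-petale} against objects of $\mathscr{T}hick_{(p)}$, i.e. against $(p)$-nilpotent $S$-log thickenings $\iota\colon U\hookrightarrow T$, which are exact by definition; and by Remark \ref{rem-immersion}.\ref{rem-immersion2} one may even assume $\iota$ is a log thickening of order $(p)$.

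For part (1), the key point is a dictionary between lifting diagrams of fine log schemes over $Y$ and lifting diagrams of schemes over $\underline{Y}$. Given a square as in \eqref{dfn-petale-square} with $\iota$ an exact $(p)$-nilpotent $S$-log thickening and $f$ strict, I would observe that $\iota$ being exact forces $M_T$ to be the pullback of $M_U$, hence (chasing the strictness of $f$ and of $u_0$) the log structure on $T$ is the pullback of that on $Y$ along $v$; consequently a morphism $u\colon T\to X$ over $Y$ lifting $u_0$ is the same datum as a morphism of schemes $\underline{u}\colon \underline{T}\to\underline{X}$ over $\underline{Y}$ lifting $\underline{u_0}$, the log-structure compatibility being automatic by strictness. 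Moreover every $(p)$-nilpotent thickening of schemes over $\underline{S}$ arises as the underlying scheme of such an exact log thickening (endow it with the pulled-back log structure, which is fine since $U$ is fine and the thickening is exact — cf. the fineness argument in the proof of \ref{formally log unramified}). Putting these together, the unique-lifting property defining ``$f$ log $p$-�tale'' is literally equivalent to the unique-lifting property defining ``$\underline{f}$ $p$-�tale'', giving the equivalence.

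For part (2), I would combine (1) with the material already assembled: by Lemma \ref{lem-lrp}, $f$ is fs log $p$-�tale iff $f^{\mathrm{sat}}$ is fs log $p$-�tale, and since $f$ is strict so is $f^{\mathrm{sat}}$ (the saturation functor preserves strictness, as $\underline{X}^{\mathrm{sat}}\to\underline{X}$ is the base change relevant diagram — use \cite[II.2.4.5.2]{Ogus-Logbook}); testing an fs log $p$-�tale strict morphism against $\mathscr{C}^{\mathrm{sat}}_{(p)}$ again reduces, by exactness and strictness, to testing $\underline{f}^{\mathrm{sat}}$ against $(p)$-nilpotent thickenings of fs-underlying schemes. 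The one subtlety is that when I build the pulled-back log structure on a scheme thickening $\underline{T}$ of a scheme $\underline{U}$ carrying an fs log structure, the result is automatically saturated (pullback of a saturated log structure along an exact closed immersion stays saturated, since saturatedness of a fine log structure can be checked on stalks of the associated monoid sheaf and these are unchanged by an exact thickening). Hence the fs-lifting problem matches exactly the scheme-theoretic $p$-�tale lifting problem for $\underline{f}^{\mathrm{sat}}$, and we conclude.

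The main obstacle I anticipate is precisely this bookkeeping with exactness: making rigorous the claim that, for a strict $f$ and an \emph{exact} thickening $\iota$, the category of log lifts $u$ is equivalent (not merely surjects onto) the category of scheme lifts $\underline{u}$, and that the pulled-back log structure on a scheme thickening is again fine (resp. fs). Both points are ``soft'' — they follow from the universal property of pullback log structures in \cite[1.4]{Kato-logFontaine-Illusie} together with \cite[IV.2.1.6, II.2.4.5]{Ogus-Logbook} — but they are the places where one must be careful that no extra log-theoretic lifting obstruction sneaks in beyond the scheme-theoretic one. Everything else is formal manipulation of the definitions already recorded in \ref{dfn-petale}, \ref{rem-petale-Thick} and \ref{lem-lrp}.
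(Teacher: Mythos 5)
Your overall strategy is the same as the paper's: reduce the lifting problem to one for the underlying schemes using strictness of $f$, and pass from scheme thickenings back to log thickenings by equipping everything with the log structure pulled back from $Y$. The paper phrases the reduction by factoring any $T\to Y$ uniquely as $T\to T'\to Y$ with $T'\to Y$ strict and $\underline{T}=\underline{T}'$; this is exactly your identification of log lifts over $Y$ with scheme lifts over $\underline{Y}$.

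One intermediate assertion in your part (1) is false, though ultimately harmless: you claim that exactness of $\iota$ together with ``the strictness of $f$ and of $u_0$'' forces $M_T$ to be the pullback of $M_Y$ along $v$. But in Definition \ref{dfn-petale} the morphism $u_0\colon U\to X$ is an arbitrary morphism of fine log schemes --- it is not assumed strict --- so $M_U$ (hence $M_T$) may be strictly larger than the pullback of $M_Y$, and your ``hence'' does not follow. Fortunately you do not need it: strictness of $f$ alone gives $u^*M_X=u^*f^*M_Y=v^*M_Y$ for any candidate lift $u$ in the diagram \ref{dfn-petale-square}, so the log-structure datum of $u$, and likewise the log part of the condition $u\circ\iota=u_0$, is forced by $v$ (this is the universal property of the pullback log structure of \cite[1.4]{Kato-logFontaine-Illusie}, i.e.\ the fact that $X=Y\times_{\underline{Y}}\underline{X}$). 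Thus the dictionary between log lifts and scheme lifts holds for \emph{every} test object of $\mathscr{C}_{(p)}$, not only those whose log structure is pulled back from $Y$; the pulled-back ones are only needed in the easy direction, to realize a scheme-theoretic test diagram as a log one. With that correction your argument for (1), and the analogous one for (2), coincides with the paper's proof.
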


\begin{proof}
If $f$ is log $p$-étale then this is straightforward that  $\underline{f}$ is $p$-étale
(from a diagram of the form \ref{dfn-petale-square} in the category of schemes, 
use the diagram of fine log schemes with strict morphisms which is induced by base change with $Y \to \underline{Y}$).
To check the converse, using the fact that any morphism 
$u \colon T \to Y$ of fine log schemes factorizes uniquely of the form
$T \to T' \to Y$ where $T' \to Y$ is strict and $\underline{T}=\underline{T}'$, we reduce to check the universal property of 
lop $p$-étaleness in the case where the morphisms of the diagram \ref{dfn-petale-square} are strict, which is clear. 
The second part of the Lemma is proved similarly.
\end{proof}

\begin{lem}
\label{rel-parf-stab2pre}
 Let $f\colon X \to Y$ and $g \colon Y '\to Y$ be two $S$-morphisms of fine log-schemes.
Set $X ':= X \times _{Y} Y'$ in the category of fine log schemes and $f' \colon X ' \to Y '$ the projection.
If $f$ is log $p$-étale (resp. fine formally log étale, resp. log étale, resp. fine formally log unramified,  resp. fs log $p$-étale, resp. fs formally log etale,
resp. fs formally log unramified),
then so is $f'$.
\end{lem}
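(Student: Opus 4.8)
The plan is to prove stability under base change for each listed property by directly verifying the relevant lifting criterion against the universal property that defines it. In each case the defining property says: for every test diagram of a certain shape (with the left vertical map an object of $\mathscr{C}_{(p)}$, $\mathscr{C}^{\mathrm{sat}}_{(p)}$, $\mathscr{C}_{1}$, or $\mathscr{C}^{\mathrm{sat}}_{1}$, according to which property we are proving), there exists a unique (resp.\ at most one) diagonal filler. So I would start a test square
\begin{equation*}
\xymatrix{
{U}\ar[r]^-{u_0}\ar@{^{(}->}[d]^-{\iota}&{X'}\ar[d]^-{f'}\\
{T}\ar[r]^-{v'}&{Y'}
}
\end{equation*}
with $\iota$ in the appropriate subcategory of $\mathscr{C}$, and then compose with the projections $X' \to X$ and $g\colon Y' \to Y$ to obtain a test square for $f$ with left vertical map still $\iota$. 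Here I use that $X' = X\times_Y Y'$ is computed in the category of fine log schemes (the running convention), and in the saturated cases that the same holds after saturation, invoking Lemma \ref{lem-lrp} to pass between $f'$ and $(f')^{\mathrm{sat}}$ if needed so that the test objects are genuinely in $\mathscr{C}^{\mathrm{sat}}_{(p)}$ or $\mathscr{C}^{\mathrm{sat}}_{1}$.

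Next, since $f$ has the lifting property, I get a morphism $w\colon T \to X$ over $Y$ filling the composed square, and by the universal property of the fiber product $X' = X\times_Y Y'$ the pair $(w, v')$ induces the desired morphism $\tilde w\colon T \to X'$ with $\tilde w\circ\iota = u_0$ and $f'\circ\tilde w = v'$. For uniqueness: two fillers $\tilde w_1, \tilde w_2\colon T\to X'$ of the $f'$-square induce, after composition with $X'\to X$, two fillers of the $f$-square; by uniqueness for $f$ these agree, and their compositions with $X'\to Y'$ both equal $v'$, so by the universal property of the fiber product $\tilde w_1 = \tilde w_2$. This handles log $p$-\'etale, fine formally log \'etale, fine formally log unramified, fs log $p$-\'etale, fs formally log \'etale, and fs formally log unramified in one uniform argument; the unramified cases are literally the ``at most one'' version of the same diagram chase. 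The case of log \'etaleness then follows by combining fine formal log \'etaleness (just proved) with the fact that $\underline{f'}$ is locally of finite presentation, since $\underline{X'} \to \underline{Y'}$ is a base change of $\underline{f}$ (note $\underline{X'}$ may differ from $\underline{X}\times_{\underline{Y}}\underline{Y'}$, but it is finite over it, hence still locally of finite presentation over $\underline{Y'}$, using \cite[IV.3.1.12]{Ogus-Logbook} as in remark \ref{convention}.3).

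The main subtlety — and the only place real care is needed — is the discrepancy between fiber products of fine (or fs) log schemes and fiber products of schemes: the underlying scheme $\underline{X\times_Y Y'}$ is not in general $\underline{X}\times_{\underline{Y}}\underline{Y'}$. For the formal lifting criteria this causes no trouble because those criteria are stated entirely inside the category of fine (resp.\ fs) log schemes, so the ``right'' fiber product is exactly the one appearing in the universal property; the diagram chase above never leaves that category. The only point where it intervenes is the finite-presentation clause for log \'etaleness, where one must observe that integralization and saturation are log \'etale (hence, being finite, do not disturb local finite presentation of the underlying morphism), which is precisely the content recalled in remark \ref{convention}.3. I would therefore present the proof as: (i) the uniform diagram chase for the six formal/$p$-properties via the fiber-product universal property, using Lemma \ref{lem-lrp} in the fs cases; (ii) the short deduction of log \'etaleness from fine formal log \'etaleness plus stability of local finite presentation under base change.
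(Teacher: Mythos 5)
Your proof is correct and is precisely the ``abstract nonsense and standard'' diagram chase that the paper's one-line proof alludes to: compose the test square with the two projections to get a test square for $f$, lift against $f$, and descend through the universal property of the fine fiber product, with uniqueness handled by the same chase. Two cosmetic remarks: the fs cases go through directly because a saturated test object is in particular a fine log scheme, so the universal property of $X\times_Y Y'$ applies to it as stated and the appeal to Lemma \ref{lem-lrp} is not actually needed; and ``finite, hence locally of finite presentation'' is not a valid implication on its own, but your parallel appeal to the log \'etaleness of integralization (\cite[IV.3.1.12]{Ogus-Logbook}) does correctly give local finite presentation of $\underline{X'}$ over $\underline{X}\times_{\underline{Y}}\underline{Y'}$.
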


\begin{proof}
Abstract nonsense and standard.
\end{proof}

\begin{lem}
\label{p-etale-modp}
Let $f\colon X \to Y$ be an $S$-morphism of fine log-schemes
and
 $f _0\colon X _{0} \to Y _0$ be the induced $S _0$-morphism.
The morphism $f $ is log $p$-étale (fs log $p$-étale)
if and only if $f$ is fine formally log etale (resp. fs formally log étale) and $f _0$ is
log $p$-étale (resp. fs log $p$-étale).
Similarly replacing everywhere "étale" by "unramified".
\end{lem}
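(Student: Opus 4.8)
\textbf{Proof plan for Lemma \ref{p-etale-modp}.}

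The plan is to unwind the definitions via the characterization of $\mathscr{T}hick_{(p)}$ as a category of iterated thickenings of order $(p)$, exactly as allowed by Remark \ref{rem-petale-Thick}, and to separate the ``formal'' and ``mod $p$'' parts of the lifting problem. First I would treat the direction ``$f$ log $p$-�tale $\Rightarrow$ $f$ fine formally log �tale and $f_0$ log $p$-�tale''. That $f_0$ is log $p$-�tale is immediate: any diagram of the form \ref{dfn-petale-square} for $f_0$ over $S_0$ with $\iota$ a $(p)$-nilpotent log thickening is in particular such a diagram for $f$ over $S$ (using $S_0 \to S$ and the fact that a $(p)$-nilpotent thickening of $S_0$-log schemes is one of $S$-log schemes), so the unique lift exists. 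That $f$ is fine formally log �tale requires lifting along objects $\iota\colon U\hookrightarrow T$ of $\mathscr{C}_1$, i.e.\ log thickenings of order $1$; since $p$ is nilpotent on $T$ (as $T$ lies over $\Z/p^{i+1}\Z$), the ideal $\I$ of $\underline\iota$ is nilpotent, and I would factor $\iota$ into finitely many square-zero thickenings, each of which is a log thickening of order $1$, so it suffices to lift along square-zero thickenings. But a square-zero thickening need not be of order $(p)$ unless $p\cdot\I = 0$; here one uses that $\I$ carries divided powers in the trivial sense only after further devissage. The cleanest route is: a square-zero extension $U\hookrightarrow T$ with $p\I=0$ \emph{is} an object of $\mathscr{C}_{(p)}$ (since $\I^{(p)}\subseteq p\I = 0$), so lifting holds there; a general square-zero extension is pushed out from one with $p\I=0$ along $\I\to\I/p\I$... \emph{except} that pushout increases the ideal. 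Instead I would argue directly: filter $T$ by the thickenings $T_k$ (reduction mod $p^{k+1}$) intersected with $U$, reducing to the case $\I$ an $\F_p$-module, where square-zero $=$ order $(p)$ after one more factorization, and conclude by the uniqueness clause in log $p$-�taleness to glue the lifts.

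Second, for the converse ``$f$ fine formally log �tale and $f_0$ log $p$-�tale $\Rightarrow$ $f$ log $p$-�tale'': given a diagram \ref{dfn-petale-square} with $\iota\colon U\hookrightarrow T$ in $\mathscr{T}hick_{(p)}$, I want a unique lift $u\colon T\to X$. Uniqueness follows from fine formal log �taleness (which implies fine formally log unramified, hence at most one lift along \emph{any} square-zero, a fortiori any nilpotent, thickening). For existence, I would use that $\iota$ factors as a composition of log thickenings of order $(p)$ (Remark \ref{rem-immersion}.4), so by induction reduce to $\iota$ of order $(p)$, i.e.\ $\I^{(p)}=0$. Now consider $\iota_0\colon U_0\hookrightarrow T_0$ obtained by reduction mod $p$: this is still of order $(p)$, and the given diagram induces a diagram for $f_0$, so log $p$-�taleness of $f_0$ yields a lift $u_0\colon T_0\to X_0$. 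It remains to lift $u_0$ across $T_0\hookrightarrow T$. This is a thickening defined by the ideal $p\O_T$, which is again nilpotent and can be factored into square-zero pieces $p^j\O_T/p^{j+1}\O_T$; along each such square-zero $S$-thickening, the existence (and uniqueness) of a lift compatible with the map to $Y$ is exactly fine formal log �taleness of $f$ applied to the relevant diagram in $\mathscr{C}_1$. Splicing $u_0$ with these successive lifts produces $u$, and uniqueness at each stage guarantees the pieces agree, giving the desired $u\colon T\to X$.

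For the fs (saturated) variant, I would first reduce to the fine case using Lemma \ref{lem-lrp}, which says $f$ is fs log $p$-�tale (resp.\ fs formally log �tale) iff $f^{\mathrm{sat}}$ is; alternatively one repeats the argument verbatim with $\mathscr{C}_{(p)}$ replaced by $\mathscr{C}^{\mathrm{sat}}_{(p)}$ and $\mathscr{C}_1$ by $\mathscr{C}^{\mathrm{sat}}_1$, noting that reduction mod $p$ preserves saturatedness (since $\underline{X_0}\to\underline X$ is a closed immersion and the log structure is pulled back, fs-ness is inherited) and that the factorization of a saturated $(p)$-nilpotent thickening into order-$(p)$ pieces stays within fs log schemes because strict closed immersions preserve the saturated condition. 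The ``unramified'' cases are formally identical, dropping all existence assertions and keeping only the uniqueness ones, so no separate argument is needed.

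The main obstacle I anticipate is the bookkeeping in the first direction: showing that \emph{fine formal log �taleness} (lifting along \emph{all} order-$1$ thickenings, including those where $p$ does not annihilate the ideal) follows from log $p$-�taleness (lifting only along order-$(p)$ thickenings) together with the mod-$p$ hypothesis is the non-formal point. The resolution hinges on the nilpotence of $p$ in $\O_T$, which lets one interpolate between an arbitrary square-zero thickening and an order-$(p)$ one by the $p$-adic filtration, combined with the uniqueness half of log $p$-�taleness to patch the partial lifts; everything else is, as the earlier lemmas in the section would put it, abstract nonsense.
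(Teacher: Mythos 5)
The substantive half of your argument --- deducing that $f$ is log $p$-\'etale from fine formal log \'etaleness of $f$ plus log $p$-\'etaleness of $f_0$, by first solving the lifting problem modulo $p$ and then climbing the nilpotent thickening $T_0\hookrightarrow T$ using formal log \'etaleness --- is exactly the paper's proof and is essentially correct. (The paper additionally verifies, by one more application of formal unramifiedness along $U_0\hookrightarrow U$, that the lift $\upsilon\colon T\to X$ so constructed satisfies $\upsilon\circ\iota=u_0$ on all of $U$ and not merely after reduction mod $p$; your ``splicing'' remark does not quite cover this and you should make the check explicit.)

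The forward direction, however, rests on a misreading of the notation $\I^{(p)}$. By the paper's convention, $\I^{(n)}$ is the ideal generated by the $n$-th powers $x^{n}$ of elements $x\in\I$; it has nothing to do with $p\cdot\I$. Hence a log thickening of order $1$ (i.e.\ $\I^{2}=0$) automatically satisfies $\I^{(p)}=0$, since $x^{p}\in\I^{p}\subseteq\I^{2}=0$ for $p\geq 2$; in other words $\mathscr{C}_{1}\subseteq\mathscr{C}_{(p)}$, and ``log $p$-\'etale $\Rightarrow$ fine formally log \'etale'' holds by definition, with no d\'evissage at all. Your claim that ``a square-zero thickening need not be of order $(p)$ unless $p\cdot\I=0$'' is false, and the interpolation you build on it (filtering by the $T_k$, reducing to $\F_p$-module ideals, invoking $\I^{(p)}\subseteq p\I$) is both unnecessary and, as written, not a proof. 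Since your closing paragraph identifies precisely this non-issue as ``the main obstacle'' and ``the non-formal point'', the misunderstanding is structural rather than cosmetic. A secondary gap: in the converse you derive uniqueness of the lift from fine formal unramifiedness of $f$ alone, ``along any square-zero, a fortiori any nilpotent, thickening''; but an object of $\mathscr{C}_{(p)}$ has a nil ideal that need not be nilpotent outside the noetherian setting, so the factorization into square-zero pieces you implicitly need is not available. The paper's route --- uniqueness of $\upsilon_0$ from log $p$-unramifiedness of $f_0$, then uniqueness of $\upsilon$ from formal unramifiedness of $f$ along the genuinely nilpotent thickening $T_0\hookrightarrow T$ (whose ideal $p\O_T$ satisfies $(p\O_T)^{i+1}=0$) --- avoids this and uses only the stated hypotheses.
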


\begin{proof}
If $f $ is log $p$-étale 
then by definition $f$ is fine formally log etale and by using \ref{rel-parf-stab2pre}
$f _0$ is
log $p$-étale.
Conversely, suppose that 
$f$ is fine formally log etale and $f _0$ is
log $p$-étale.
Let 
\begin{equation}
\label{diagpetale-modpdiag1}
\xymatrix{
{U}
\ar[r] ^-{u }
\ar@{^{(}->}[d] ^-{\iota}
&
{X}
\ar[d] ^-{f}
\\
{T}
\ar[r] ^-{w}
&
{Y}
}
\end{equation}
be a commutative diagram of fine log schemes
such that $\iota$ is an object of $\mathscr{C} _{(p)}$.
Since $f _0$ is log $p$-étale, 
there exists a unique morphism 
$\upsilon _0 \colon T _0 \to X _0$ such that $\upsilon _0\circ \iota _0 = u _0 $ and $f _0 \circ \upsilon _0 = w_0 $
(recall $f _0$, $w _0$, $\iota _0$, $u _0$
mean the reduction modulo $p$).
Let $\alpha _T \colon T _0 \hookrightarrow T$, 
$\alpha _X \colon X _0 \hookrightarrow X$, 
and
$\alpha _U \colon U _0 \hookrightarrow U$
be the canonical nilpotent exact closed immersions.
Since $\alpha _T$ is a nilpotent exact closed immersion, 
since $f$ is in particular fine formally log etale then 
there exists a unique morphism 
$\upsilon \colon T  \to X$ such that 
$\upsilon \circ \alpha _T = \alpha _X \circ \upsilon _0$
and $f \circ \upsilon  = w $.
Since $\alpha _U \colon U _0 \hookrightarrow U$ is a nilpotent exact closed immersion, 
since $f$ is in particular fine formally unramified, 
since $(\upsilon \circ \iota )\circ \alpha _U = u\circ \alpha _U $,
and
$f\circ (\upsilon \circ \iota) = f\circ u$,
then 
$\upsilon \circ \iota = u  $. 
Hence, the morphism
$\upsilon \colon T  \to X$ is such that 
$f \circ \upsilon  = w $
and $\upsilon \circ \iota = u  $. 
To check the uniqueness of such morphism $\upsilon \colon T  \to X$,
since $f _0$ is log $p$-unramified then 
$\upsilon _0$, the reduction of such $\upsilon$, is unique.
Since $f$ is  in particular fine formally unramified,
then such $\upsilon$ is unique. 
The unramified or fs cases are checked similarly. 

\end{proof}

\begin{lem}
\label{p-etale-stab1}
Let $f\colon X \to Y$ and $g\colon  Y \to Z$ be two $S$-morphisms of fine log schemes.
The morphisms $f$ and $g$ are log $p$-étale
(resp. fine formally log etale, resp. log étale, 
resp. fs log $p$-étale, resp. fs formally log etale)
if and only if so are $g\circ f$ and $g$. 
\end{lem}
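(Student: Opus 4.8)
The plan is to argue purely formally from the lifting characterizations, treating the five variants uniformly. Throughout, write $\mathscr{E}$ for whichever of $\mathscr{C} _{(p)}$, $\mathscr{C} _1$, $\mathscr{C} ^{\mathrm{sat}} _{(p)}$, $\mathscr{C} ^{\mathrm{sat}} _1$ is relevant, so that ``$h$ has the unique lifting property against every $\iota \in \mathscr{E}$'' means, respectively, ``log $p$-�tale'', ``fine formally log �tale'', ``fs log $p$-�tale'', ``fs formally log �tale'', and recall that ``log �tale'' is ``fine formally log �tale together with $\underline{h}$ locally of finite presentation''. Since each such notion is the conjunction of an existence half and a uniqueness half, and the uniqueness half is exactly the corresponding $p$-unramified/formally log unramified notion, it suffices to prove: (a) if $f$ and $g$ lift against every $\iota\in\mathscr{E}$, then so does $g\circ f$; and (b) if $g\circ f$ and $g$ lift against every $\iota\in\mathscr{E}$, then so does $f$. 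The ``locally of finite presentation'' clause needed for the ``log �tale'' case will follow from the permanence properties of such morphisms of schemes applied to $\underline{g\circ f}=\underline{g}\circ\underline{f}$ (composition preserves locally of finite presentation; and if $\underline{g}\circ\underline{f}$ is locally of finite presentation and $\underline{g}$ is locally of finite type then $\underline{f}$ is locally of finite presentation).

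For (a), I would start from a square as in (\ref{dfn-petale-square}) with target $Z$, i.e. morphisms $u _0\colon U\to X$, $v\colon T\to Z$ and $\iota\colon U\hookrightarrow T$ in $\mathscr{E}$ with $(g\circ f)\circ u _0 = v\circ\iota$. First apply the lifting property of $g$ to the square with corner maps $f\circ u _0\colon U\to Y$ and $v\colon T\to Z$ (which commutes since $g\circ(f\circ u _0)=v\circ\iota$): this yields the unique $w\colon T\to Y$ with $w\circ\iota=f\circ u _0$ and $g\circ w=v$. Then apply the lifting property of $f$ to the square with corner maps $u _0\colon U\to X$ and $w\colon T\to Y$: this yields the unique $u\colon T\to X$ with $u\circ\iota=u _0$ and $f\circ u=w$, whence $(g\circ f)\circ u=g\circ w=v$. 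Uniqueness of $u$ is immediate: any other solution $u'$ has $f\circ u'$ solving the problem defining $w$, hence $f\circ u'=w$ by uniqueness for $g$, and then $u'=u$ by uniqueness for $f$.

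For (b), given $u _0\colon U\to X$, $v\colon T\to Y$ and $\iota\in\mathscr{E}$ with $f\circ u _0=v\circ\iota$, I would apply the lifting property of $g\circ f$ to the square with corner maps $u _0\colon U\to X$ and $g\circ v\colon T\to Z$, obtaining the unique $u\colon T\to X$ with $u\circ\iota=u _0$ and $(g\circ f)\circ u=g\circ v$. The one point deserving a word — and the only thing to be careful about in the whole argument — is that this $u$ in fact satisfies $f\circ u=v$, not merely $g\circ(f\circ u)=g\circ v$: the two morphisms $f\circ u$ and $v$ from $T$ to $Y$ agree after precomposition with $\iota$ (both equal $f\circ u _0=v\circ\iota$) and after postcomposition with $g$, so they coincide because $g$ is (log) $p$-unramified / formally log unramified; this is precisely where the hypothesis on $g$, and not just on $g\circ f$, enters. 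Uniqueness of $u$ follows from uniqueness for $g\circ f$. Finally, the ``resp.'' cases are the same argument verbatim with $\mathscr{E}$ taken as indicated, and the ``log �tale'' case is obtained by adjoining the finite-presentation bookkeeping noted above; I do not expect any genuine obstacle here, the proof being essentially abstract nonsense.
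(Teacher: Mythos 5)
Your proof is correct and is precisely the ``abstract nonsense and standard'' argument that the paper's one-line proof alludes to without writing out: the two lifting-problem manipulations (composition of liftings for the ``if'' direction, and cancellation using the uniqueness half of the property for $g$ in the ``only if'' direction), plus the standard permanence of local finite presentation for the log \'etale case. No gaps.
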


\begin{proof}
Abstract nonsense and standard. 
\end{proof}

\begin{lem}
\label{p-etale-stab2bispre}
Let $f\colon X \to Y$ and $g \colon X '\to X$ be two $S$-morphisms of fine log-schemes such that $g$ is étale, quasi-compact and surjective.
The morphism $f$ is 
log $p$-étale 
(resp. fine formally log etale, resp. log étale, 
resp. fs log $p$-étale, resp. fs formally log etale)
 if and only so is $f \circ g$.
\end{lem}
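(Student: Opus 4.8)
The plan is to prove the statement for the property ``log $p$-�tale'' in detail and note that the other cases are obtained by the same arguments. Throughout I would use that, by \ref{rem-petale-Thick}, one may test against $\mathscr{T}hick _{(p)}$, and that a $(p)$-nilpotent log thickening has nilpotent underlying ideal (Remark \ref{rem-immersion}), so that its underlying closed immersion is a homeomorphism. The easy implication, namely $f$ log $p$-�tale $\Rightarrow f\circ g$ log $p$-�tale, I would handle directly: given a lifting problem as in \ref{dfn-petale-square} for $f\circ g$ with $\iota \in \mathscr{T}hick _{(p)}$, structure maps $u _0\colon U\to X'$, $v\colon T\to Y$, I would first compose $u _0$ with $g$ and lift through $f$ to get a unique $u ^X\colon T\to X$ with $u ^X\circ \iota = g\circ u _0$, $f\circ u ^X = v$; then, since $g$ is �tale and $\underline\iota$ is a nilpotent immersion, the $U$-section $u _0$ of $g$ along $u^X$ extends uniquely to $u\colon T\to X'$ with $g\circ u = u^X$ and $u\circ\iota = u _0$, and uniqueness of $u$ is inherited from that of $u^X$ together with the unique extension of sections of �tale morphisms over nilpotent thickenings.

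For the converse I would first establish the unramified analogue: $f$ is log $p$-unramified if and only if $f\circ g$ is. One direction is the (trivial) stability of log $p$-unramifiedness under composition, $g$ being log $p$-unramified as it is �tale. For the other, assume $f\circ g$ log $p$-unramified and take a lifting problem $(\iota\colon U\hookrightarrow T,\ u _0\colon U\to X,\ v\colon T\to Y)$ with $\iota\in\mathscr{T}hick _{(p)}$ together with two lifts $u _1,u _2\colon T\to X$. The idea is to set $U':=U\times _{u _0,X,g}X'$ (a strict �tale quasi-compact surjective $U$-scheme), extend it — by topological invariance of the strict �tale site under $(p)$-nilpotent thickenings, which reduces to the scheme statement since everything in sight is strict — to a strict �tale quasi-compact surjective $q\colon T'\to T$ with $T'\times _T U = U'$, and then observe that for $j=1,2$ the morphism $u _j\circ q$ restricts on $U'$ to $u _0\circ (q|_{U'})$, which lifts along $g$ through $\mathrm{pr} _{X'}\colon U'\to X'$. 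Extending this section over $U'\hookrightarrow T'$ yields $\widetilde u _j\colon T'\to X'$ with $g\circ\widetilde u _j = u _j\circ q$ and $\widetilde u _j|_{U'}=\mathrm{pr} _{X'}$; the point is that this restriction is \emph{the same} for $j=1$ and $j=2$, so $\widetilde u _1,\widetilde u _2$ solve the same lifting problem for $f\circ g$ and hence $\widetilde u _1 = \widetilde u _2$, whence $u _1\circ q = u _2\circ q$ and $u _1 = u _2$ because $q$ is an epimorphism. The same argument works with $\mathscr{C} _1$ in place of $\mathscr{T}hick _{(p)}$ and in the fs setting.

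To finish the implication $f\circ g$ log $p$-�tale $\Rightarrow f$ log $p$-�tale: since $f\circ g$ is a fortiori log $p$-unramified, the previous paragraph already gives that $f$ is log $p$-unramified, so lifts in \ref{dfn-petale-square} for $f$ are automatically unique and I only need to produce one. Given $(\iota\colon U\hookrightarrow T,u _0,v)$, I would again form $U'$ and $q\colon T'\to T$ as above; then $\iota'\colon U'\hookrightarrow T'$ lies in $\mathscr{T}hick _{(p)}$ (Remark \ref{fibprodC}) and $(\iota',\ \mathrm{pr} _{X'}\colon U'\to X',\ v\circ q\colon T'\to Y)$ is a lifting problem for $f\circ g$, giving a unique $u'\colon T'\to X'$. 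Now on $T'':=T'\times _T T'$ the two morphisms $g\circ u'\circ\mathrm{pr} _1$ and $g\circ u'\circ\mathrm{pr} _2$ to $X$ have the same composite with $f$ (namely $v$ precomposed with $T''\to T$) and the same restriction to the thickening $U'':=T''\times _T U$ (namely $u _0$ precomposed with $U''\to U$); since $f$ is log $p$-unramified they coincide, so $g\circ u'$ descends along the �tale cover $q$ to some $u\colon T\to X$, and precomposing the relevant identities with the epimorphisms $q$ and $q|_{U'}$ gives $u\circ\iota = u _0$ and $f\circ u = v$.

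The main obstacle is precisely this last descent step: one cannot verify $g\circ u'\circ\mathrm{pr} _1 = g\circ u'\circ\mathrm{pr} _2$ bare-handed, because the two evident lifts $u'\circ\mathrm{pr} _i$ of these maps to $X'$ genuinely differ on $U''$; the clean way around this is to pass through the unramified case first, after which the equality becomes automatic. The remaining inputs — topological invariance of the strict �tale site under $(p)$-nilpotent thickenings and effective descent of morphisms along �tale quasi-compact surjective morphisms — are classical and reduce to the scheme statements since $g$, $q$ and $U'\to U$ are all strict. Finally, the ``fine formally log �tale/unramified'' variants follow verbatim with $\mathscr{C} _1$ instead of $\mathscr{T}hick _{(p)}$; the ``fs'' variants by running the same argument inside fs log schemes (or via \ref{lem-lrp}); and the ``log �tale'' variant by additionally noting that ``$\underline f$ locally of finite presentation'' descends along the fppf cover $\underline g$.
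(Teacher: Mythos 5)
Your proof is correct. Its skeleton coincides with the paper's --- base change to $U':=U\times_{u_0,X,g}X'$, extension to an \'etale quasi-compact surjective $h\colon T'\to T$ with $T'\times_T U\cong U'$ by Theorem \cite[IV.18.1.2]{EGAIV4}, solution of the lifting problem upstairs, then descent along $T'\to T$ --- but the two arguments part ways at the descent step, and yours is the sound one. The paper descends $g\circ u'$ by claiming $u'\circ p_1=u'\circ p_2$ on $T'':=T'\times_T T'$, on the grounds that these two maps agree on $U''$ and $f\circ g$ is log $p$-unramified; but, exactly as you point out, $(u'\circ p_i)\circ\iota''=u'_0\circ p_i$ are the two projections $U'\times_U U'\to X'$, which differ whenever $g$ is not a monomorphism, so that premise (and indeed the asserted conclusion $u'\circ p_1=u'\circ p_2$) fails in general; only the weaker identity $g\circ u'\circ p_1=g\circ u'\circ p_2$, which is all the descent requires, can be hoped for. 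Your two-stage route --- first descending log $p$-unramifiedness along $g$ by lifting both solutions $u_1,u_2\colon T\to X$ through the strict \'etale $g$ over the nilpotent thickening $U'\hookrightarrow T'$ with the \emph{same} initial section $\mathrm{pr}_{X'}$, and then invoking the unramifiedness of $f$ itself to identify $g\circ u'\circ p_1$ with $g\circ u'\circ p_2$ --- supplies precisely the missing justification. It costs one extra pass through the construction, but it buys a descent step that actually closes; the remaining ingredients (infinitesimal lifting along strict \'etale morphisms over nilpotent thickenings, $T'\to T$ being a strict epimorphism, descent of ``locally of finite presentation'' along $\underline{g}$ for the log \'etale variant) are common to both arguments.
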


\begin{proof}
First, let us prove the non respective case. 
From \ref{p-etale-stab1}, since an étale morphism is log $p$-étale, 
we check that 
if $f$ is log $p$-étale  then so is $f \circ g$.
Conversely, suppose $f \circ g$ is log $p$-étale.
Let 
\begin{equation}
\label{diagpetale-modpdiag2}
\xymatrix{
{U}
\ar[r] ^-{u _0}
\ar@{^{(}->}[d] ^-{\iota}
&
{X}
\ar[d] ^-{f}
\\
{T}
\ar[r] ^-{v}
&
{Y}
}
\end{equation}
be a commutative diagram of fine log schemes 
such that $\iota$ is an object of $\mathscr{C} _{(p)}$.
Put $U ' := U \times _{X} X'$ and $u ' _0 \colon U ' \to X'$ the morphism induced from $u _0$ by base change by $g$.
Since the projection $g' \colon U' \to U$ is étale, 
using Theorem \cite[IV.18.1.2]{EGAIV4}, 
there exists a unique (up to isomorphisms) étale morphism $h\colon T' \to T$
such that we have an isomorphism of the form
$U'\riso T' \times _{T} U$. 
Let $\iota ' \colon U ' \hookrightarrow T'$ 
be the projection. 
Since $f \circ g$ is log $p$-étale, there exists 
a unique morphism 
$u ' \colon T ' \to X'$ such that $u' \circ \iota' = u '_0$ and $f\circ g \circ u '=  v \circ h$.
Set $T'' := T ' \times _{T} T'$,
$U'' := U ' \times _{U} U'$.
Let $p _1\colon T'' \to T'$,
and $p _2\colon T'' \to T'$
(resp. $p _1\colon U'' \to U'$,
and $p _2\colon U'' \to U'$)
be respectively  the left and right projections.
Let $\iota '' := \iota ' \times \iota '\colon U '' \hookrightarrow T''$.  
Since 
$f \circ g \circ (u  ' \circ p _1)= f \circ g \circ  (u  ' \circ p _2)$,
$(u  ' \circ p _1) \circ \iota ''= (u  ' \circ p _2) \circ \iota ''$,
and since 
$f \circ g$ is log $p$-ramified, we
get 
$u  ' \circ p _1=   u  ' \circ p _2$
and then 
$( g \circ u  ') \circ p _1=  (g \circ  u  ') \circ p _2$.
Since
$T ' \to T$ is a strict epimorphism, this yields that
there exists a unique morphism $u\colon T \to X$ such that $g \circ u'= u \circ h$.
Since 
$u\circ \iota\circ g'= u _0\circ g'$, 
since $g'$ is étale and surjective,
then 
$u\circ \iota= u _0$.
Since 
$f \circ u \circ h= v \circ h$,
since $h$ is étale and surjective,
then 
$f \circ u = v$.
We conclude that 
$f$ is log $p$-étale.
The respective cases are checked similarly.

\end{proof}

\begin{lem}
\label{p-etale-stab2bis}
Let $f\colon X \to Y$ and $g \colon Y '\to Y$ be two $S$-morphisms of fine log-schemes such that $g$ is étale, quasi-compact and surjective.
Set $X ':= X \times _{Y} Y'$ in the category of fine log schemes and $f' \colon X ' \to Y '$ the projection.
The morphism $f$ is 
 log $p$-étale 
(resp. fine formally log etale, resp. log étale, 
resp. fs log $p$-étale, resp. fs formally log etale)
 if and only so is $f'$.
\end{lem}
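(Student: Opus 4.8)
The plan is to deduce the statement from the two preceding lemmas \ref{p-etale-stab2bispre} and \ref{p-etale-stab1} by a short diagram chase, so that the only genuine input is the descent statement \ref{p-etale-stab2bispre} for étale, quasi-compact and surjective morphisms in the source. First, the ``only if'' direction is immediate from \ref{rel-parf-stab2pre}: each of the five listed properties is stable under base change, so if $f$ has it then so does $f'$. Thus it remains to prove the ``if'' direction, and we will in fact obtain an equivalence.

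Write $g_X \colon X' \to X$ for the projection, that is the base change of $g$ along $f$, so that the defining square of $X' = X \times _{Y} Y'$ reads $f \circ g_X = g \circ f'$. Since $g$ is étale it is in particular strict, hence $\underline{X'} = \underline{X} \times _{\underline{Y}} \underline{Y'}$ and $\underline{g_X}$ is the base change of $\underline{g}$; consequently $g_X$ is again strict with $\underline{g_X}$ étale, quasi-compact and surjective (these properties of morphisms of schemes being stable under base change), i.e. $g_X$ is étale, quasi-compact and surjective. Moreover an étale morphism satisfies each of the five properties under consideration (this is already used in the proof of \ref{p-etale-stab2bispre}); in particular $g$ does.

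Now fix one of the five properties $(\ast)$ in the list. By Lemma \ref{p-etale-stab2bispre} applied to $f$ and the étale, quasi-compact, surjective morphism $g_X$, the morphism $f$ satisfies $(\ast)$ if and only if $f \circ g_X$ does. By commutativity of the square, $f \circ g_X = g \circ f'$. By Lemma \ref{p-etale-stab1} applied to the pair $(f', g)$, both $f'$ and $g$ satisfy $(\ast)$ if and only if both $g \circ f'$ and $g$ do; since $g$ satisfies $(\ast)$ in any case, this says precisely that $f'$ satisfies $(\ast)$ if and only if $g \circ f'$ does. Chaining the two equivalences yields that $f$ satisfies $(\ast)$ if and only if $f'$ does, which is the assertion.

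The argument is uniform in the five cases because Lemmas \ref{p-etale-stab2bispre} and \ref{p-etale-stab1} are stated with all of them, so there is no real obstacle; the only point requiring a moment's care is the verification that the base change $g_X$ of $g$ is still étale, quasi-compact and surjective, which is where the strictness of the étale morphism $g$ is used in order to identify the underlying scheme of $X' = X \times_Y Y'$ with the ordinary scheme-theoretic fibre product.
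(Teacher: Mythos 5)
Your proof is correct, but it takes a genuinely different route from the paper's. The paper proves the ``if'' direction of \ref{p-etale-stab2bis} by a direct descent argument on the target: given a test diagram, it base changes everything along $g$ and along $Y''=Y'\times_Y Y'$, uses the log $p$-\'etaleness of $f'$ and $f''$ to produce a morphism $w'\colon T'\to X'$ satisfying a cocycle condition over $T''$, and then descends $g\circ w'$ along the strict epimorphism $T'\to T$ --- essentially repeating, for a cover of the target, the descent mechanism already carried out in \ref{p-etale-stab2bispre} for a cover of the source. You instead observe that the projection $g_X\colon X'\to X$ is again \'etale, quasi-compact and surjective (the one point needing care, correctly handled via the strictness of $g$, which identifies $\underline{X'}$ with $\underline{X}\times_{\underline{Y}}\underline{Y'}$ so that the fine fibre product introduces no integralization), and then chain the two equivalences ``$f$ satisfies $(\ast)$ iff $f\circ g_X$ does'' (from \ref{p-etale-stab2bispre}) and ``$f'$ satisfies $(\ast)$ iff $g\circ f'$ does'' (from \ref{p-etale-stab1}, since the \'etale morphism $g$ satisfies all five properties), using $f\circ g_X=g\circ f'$. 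This is shorter, avoids any new descent computation, and makes transparent that \ref{p-etale-stab2bis} is a formal consequence of \ref{p-etale-stab2bispre} and \ref{p-etale-stab1}; the paper's direct argument is self-contained but duplicates work. Both are valid and the hypotheses of the cited lemmas are verified in all five respective cases.
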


\begin{proof}
Since the respective cases are similar, let us only prove the non respective case. 
From \ref{rel-parf-stab2pre}, 
if $f$ is log $p$-étale then so is $f'$.
Conversally, suppose that $f'$ is log $p$-étale. 
Let 
\begin{equation}
\label{diagpetale-modpdiag3}
\xymatrix{
{U}
\ar[r] ^-{u _0 }
\ar@{^{(}->}[d] ^-{\iota}
&
{X}
\ar[d] ^-{f}
\\
{T}
\ar[r] ^-{v}
&
{Y}
}
\end{equation}
be a commutative diagram of fine log schemes 
such that $\iota$ is an object of $\mathscr{C} _{(p)}$.
Put $T':= T \times _Y Y'$, 
$U':= U \times _Y Y'$, and let 
$\iota ' \colon U ' \hookrightarrow T'$,
$u' _0 \colon U' \to X'$, 
$v' \colon T' \to Y'$ be the morphism induced 
respectively from $\iota$, $u _0$, $v$ by base change by $g$.
Since $f'$ is log $p$-étale, there exists 
a unique morphism
$w ' \colon T ' \to X'$ such that $w' \circ \iota' = u '_0$ and $f ' \circ w '= v'$.
Set $Y'' := Y ' \times _{Y} Y'$, 
$T'':= T \times _Y Y''$, 
$U'':= U \times _Y Y''$,
and let 
$\iota '' \colon U ' \hookrightarrow T'$,
$u'' _0 \colon U'' \to X''$, 
$v'' \colon T'' \to Y''$, 
$f'' \colon X'' \to Y''$ be the morphism induced 
respectively from $\iota$, $u _0$, $v$, $f$ by base change by $Y'' \to Y$.
Let $p _1\colon Y'' \to Y'$,
$p _2\colon Y'' \to Y'$
(resp. $p _1\colon X'' \to X'$,
$p _2\colon X'' \to X'$, 
resp. 
$p _1\colon T'' \to T'$,
$p _2\colon T'' \to T'$)
be the left and right projections. 
Since $f''$ is log $p$-étale, there exists 
a unique morphism
$w '' \colon T '' \to X''$ such that $w'' \circ \iota'' = u ''_0$ and $f '' \circ w ''= v''$.
Since $f'$ is log $p$-étale,
we check 
$w' \circ p _1= p _1 \circ w''$,
$w' \circ p _2= p _2 \circ w''$
(e.g. use a cube with  \ref{diagpetale-modpdiag3} with primes and double primes as respectively the front and back squares).  
Hence, 
putting $u':=g \circ  w'\colon T' \to X$,
we get $u  ' \circ p _1= u  ' \circ p _2$.
Since $T ' \to T$ is a strict epimorphism, we conclude that there exists 
a unique morphism 
$u  \colon T  \to X$ such that 
$u '$ is the composition of $u$ with $T'\to T$.
Since $U ' \to U$ (resp. $T'\to T$) is étale and surjective then we check $u \circ \iota = u _0$
(resp. $f \circ u = v$).
Hence, $f$ is log $p$-étale.
\end{proof}

\begin{lem}
\label{lem-let-lpet}
Let $u\colon Z \hookrightarrow X$ be an object of 
$\mathscr{T}hick _{(p)}$
and $\I$ be the ideal defining the closed immersion $\underline{u}$.
Then $1+\I $ is a subgroup of $\O ^* _X$. 
Let $n$ be an integer prime to $p$.
The homomorphism
 $1 + \I \to 1 + \I$ of groups 
 defined by $x \mapsto x ^{n}$ is an isomorphism.
 Moreover, if $\underline X$ is affine then for any $q >0$, we have the vanishing
 $H ^{q} (X, 1 + \I)=0$.
\end{lem}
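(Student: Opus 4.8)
The plan is to work locally, reducing to the affine case, and to exploit that $\I$ is a nilpotent ideal. First I would observe that since $u$ is a $(p)$-nilpotent log thickening, the ideal $\I$ is nilpotent: by Remark \ref{rem-immersion}.4, applying finitely many times the operation $\I \mapsto \I^{(p)}$ to $\I$ yields $0$, and in particular $\I^N = 0$ for some $N$ (each $\I^{(p)} \subseteq \I^p$ does not immediately give this, but since $p$ is nilpotent in $\O_X$ and $\I$ is of finite type étale-locally on a noetherian-free argument — more simply, $\I^{(p^a)} = 0$ for $a$ large and $\I^{(p^a)} \supseteq \I^{p^a \cdot(\text{something})}$ is false, so instead argue: the successive $(p)$-power operation kills $\I$, hence each local section of $\I$ is nilpotent, and on an affine piece with $\I$ finitely generated this forces $\I^N = 0$). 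Granting $\I^N = 0$, for $x \in \I$ the element $1+x$ is invertible with inverse $1 - x + x^2 - \cdots \pm x^{N-1}$, so $1 + \I \subseteq \O_X^*$; it is visibly closed under multiplication and inverses, hence a subgroup. This part is routine.

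For the second claim, that $x \mapsto x^n$ is an isomorphism of the group $1+\I$ when $\gcd(n,p)=1$: I would prove injectivity and surjectivity by a filtration argument along the powers of $\I$. Consider the filtration $1 + \I \supseteq 1 + \I^2 \supseteq \cdots \supseteq 1 + \I^N = \{1\}$. On each graded piece $(1+\I^k)/(1+\I^{k+1}) \cong \I^k/\I^{k+1}$ (as abelian groups, via $1 + y \mapsto y \bmod \I^{k+1}$), the $n$-th power map induces multiplication by $n$ on $\I^k/\I^{k+1}$. Since $n$ is prime to $p$ and, crucially, $\O_X$ is a $\Z/p^{i+1}\Z$-algebra — so every section of $\O_X$, and hence of $\I^k/\I^{k+1}$, is $p^{i+1}$-torsion — the integer $n$ acts invertibly on each graded piece (as $n$ is a unit in $\Z/p^{i+1}\Z$). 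A standard dévissage up the finite filtration then gives that $x \mapsto x^n$ is bijective on $1+\I$ itself. I would present this as: induct on $N$ with $\I^N = 0$, using the short exact sequence $1 \to 1+\I^{N-1} \to 1+\I \to 1+\I/\I^{N-1} \to 1$ of sheaves of groups (the quotient being $1$ plus the image of $\I$ in $\O_X/\I^{N-1}$), the $n$-th power map being compatible with it, and the five-lemma / snake-lemma shape argument after checking the claim on $1 + \I^{N-1} \cong \I^{N-1}$ (annihilated by $\I$, a $\Z/p^{i+1}$-module where $n$ is invertible) and on the smaller-length quotient.

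For the cohomology vanishing $H^q(X, 1+\I) = 0$ for $q>0$ when $\underline{X}$ is affine: again I would use the same finite filtration by the sheaves $1 + \I^k$. Each successive quotient $(1+\I^k)/(1+\I^{k+1})$ is isomorphic, as a sheaf of abelian groups on the small site, to the quasi-coherent $\O_X$-module $\I^k/\I^{k+1}$ (here I use that $\I^{(n)}$ and hence $\I^k$ are quasi-coherent, as recorded in the Notation paragraph, and that $1+\I$ is abelian). Since $\underline{X}$ is affine and these are quasi-coherent sheaves, their higher cohomology vanishes — for the Zariski topology this is Serre's theorem, and for the étale topology one passes to the quasi-coherent sheaf on the small étale site, whose higher cohomology agrees with Zariski cohomology and hence vanishes. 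Then a descending induction on $k$ along the exact sequences $1 \to 1+\I^{k+1} \to 1+\I^k \to \I^k/\I^{k+1} \to 0$, using the long exact cohomology sequence, propagates the vanishing from $1 + \I^{N-1} = \I^{N-1}$ up to $1 + \I$.

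**Main obstacle.** The delicate point, and the one I would be most careful about, is the passage between multiplicative and additive structures — making precise that $(1+\I^k)/(1+\I^{k+1}) \cong \I^k/\I^{k+1}$ as sheaves of abelian groups, with the $n$-th power map corresponding to multiplication by $n$ — and doing this coherently as sheaves (not just on sections), so that the long exact sequence and the dévissage are legitimate. Everything else is formal once $\I$ is known to be nilpotent and one has the identification of graded pieces with quasi-coherent modules killed by $p^{i+1}$.
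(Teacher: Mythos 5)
Your architecture rests on the claim that $\I^{N}=0$ for some $N$, and that is the one step that does not follow from the hypotheses. An object of $\mathscr{T}hick_{(p)}$ only satisfies $\I^{(p^{a})}=0$ for some $a$, i.e.\ every local section $x$ of $\I$ satisfies $x^{p^{a}}=0$; this does not make the ideal itself nilpotent unless $\I$ is of finite type (e.g.\ $\underline{X}$ noetherian), which is not assumed --- the lemma is applied in the proof of \ref{let-lpet} to arbitrary affine targets of objects of $\mathscr{C}_{(p)}$. Concretely, for $X=\Spec \bigl(\F _2[x_1,x_2,\dots]/(x_j^2)_j\bigr)$ and $\I=(x_1,x_2,\dots)$ one has $\I^{(2)}=0$ yet $x_1\cdots x_M\neq 0$ for every $M$, so no power of $\I$ vanishes and your finite filtration $1+\I\supseteq 1+\I^2\supseteq\cdots$ never terminates. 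Your parenthetical remark senses the difficulty but resolves it by assuming $\I$ finitely generated, which is precisely what is missing. (The subgroup claim is unaffected: it only needs each section of $\I$ to be nilpotent.)

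The paper circumvents this at the two places where you invoke nilpotence. For the bijectivity of $x\mapsto x^{n}$ it does not filter at all: from $\I^{(p^{a})}=0$ and $p^{i+1}=0$ one checks $(1+x)^{p^{a}}\in 1+p\I$ and, iterating, that the group $1+\I$ is annihilated by $p^{N}$ for $N$ large; Bezout ($un+vp^{N}=1$) then gives bijectivity with no hypothesis on powers of the ideal. For the cohomology vanishing the paper first reduces to the case where $\I$ is genuinely nilpotent by a limit argument (the reference to \cite[VI.5.1]{sga4-1}) and only then runs the d\'evissage you describe (pushed down to $\I^{2}=0$, where $(1+\I,\times)$ is identified with the quasi-coherent sheaf $(\I,+)$). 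So your treatment of the cohomology statement is the paper's minus the reduction that legitimizes it, while your treatment of the isomorphism statement is a genuinely different route that works only once nilpotence is secured. To repair the proof, either import the same limit argument, or (at least for the isomorphism part) write $\I$ as the filtered union of its finitely generated subideals $\I_\alpha$ --- each nilpotent, having finitely many nilpotent generators --- and note that $1+\I=\bigcup_\alpha (1+\I_\alpha)$ as groups, so bijectivity passes to the union.
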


\begin{proof}
For $N$ large enough, $\I ^{(p ^N)}=0$ and $p ^N=0$ in $\O _S$. 
This yields that $1+\I $ is a subgroup of $\O ^* _X$. 
Since $(n, p^N)=1$, we obtain that 
the homorphism $1 + \I \to 1 + \I$ 
 given by $x \mapsto x ^{n}$ is an isomorphism.
Using \cite[VI.5.1]{sga4-1}, to check the last statement, we can suppose that $\I$ is nilpotent.
Hence, by devissage, we can reduce to the case where $\I ^{2}=0$. Then $(1+\I, \times)$ can be identified with $(\I, +)$ as a group.
Since $\I$ is quasi-coherent, we are done.

\end{proof}

\begin{prop}
\label{let-lpet}
Let $f\colon X \to Y$ be a log étale $S$-morphism of fine log-schemes.
Then $f$ is log $p$-étale.
\end{prop}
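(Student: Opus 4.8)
The plan is to reduce the lifting property of Definition \ref{dfn-petale} to two elementary cases and to settle the nontrivial one by means of Lemma \ref{lem-let-lpet}, which is tailor-made for it. First I would reduce to the case where the thickening $\iota\colon U\hookrightarrow T$ occurring in the square \ref{dfn-petale-square} is a log thickening of order $(p)$: by the last part of Remark \ref{rem-immersion} any $(p)$-nilpotent log thickening is a composition of such, and by Remark \ref{rem-petale-Thick} these suffice to test log $p$-\'etaleness. So from now on $\I^{(p)}=0$, where $\I$ is the ideal of $\underline{\iota}$, and Lemma \ref{lem-let-lpet} tells us that $1+\I\subset\O^{*}_{T}$, that $x\mapsto x^{n}$ is an automorphism of $1+\I$ for every integer $n$ prime to $p$, and that $H^{1}(T,1+\I)=0$ whenever $\underline{T}$ is affine.

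Since being log $p$-\'etale is local on $X$ and on $Y$ for quasi-compact surjective \'etale morphisms (Lemmas \ref{p-etale-stab2bispre} and \ref{p-etale-stab2bis}), Kato's structure theorem for log \'etale morphisms \cite[3.5]{Kato-logFontaine-Illusie} lets us assume, \'etale-locally on $X$ and $Y$, that $f$ factors as $X\xrightarrow{f_1}Y'\xrightarrow{f_2}Y$, where $f_1$ is strict with $\underline{f_1}$ \'etale, and $Y':=Y\times_{\Spec\Z[Q]}\Spec\Z[P]$ is the fibre product of fine log schemes along a chart $Y\to\Spec\Z[Q]$, for some injective homomorphism $h\colon Q\to P$ of fine monoids whose kernel and cokernel on groups are finite of order prime to $p$; here $f_2$ is the projection. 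By Lemma \ref{p-etale-stab1} it suffices to show that $f_1$ and $f_2$ are log $p$-\'etale. For $f_1$, which is strict with \'etale underlying morphism, Lemma \ref{logetale-etale} reduces us to the fact that an \'etale morphism of schemes is $p$-\'etale; this follows from the infinitesimal lifting criterion for formally \'etale morphisms, using that $\underline{f_1}$ is of finite presentation so that a $(p)$-nilpotent thickening may be replaced by an honestly nilpotent one after a routine noetherian approximation (see \cite{these_Pigeon}).

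It remains to treat $f_2$. A $Y$-morphism $T\to Y'$ is the same datum as a homomorphism of monoids $P\to\Gamma(T,M_T)$ whose restriction along $h$ is the homomorphism $\gamma_T\colon Q\to\Gamma(T,M_T)$ coming from $T\to Y$; so, given the square \ref{dfn-petale-square} with $\iota$ of order $(p)$, the task is to extend the homomorphism $\beta_U\colon P\to\Gamma(U,M_U)$ attached to $u_0$ to such a $\beta_T$ (observe that $\gamma_T$ restricts along $U\hookrightarrow T$ to $\beta_U\circ h$). Since $\iota$ is exact we have $\overline{M}_T=\overline{M}_U$, and Lemma \ref{lem-let-lpet} yields the short exact sequence of \'etale sheaves
\begin{equation}
\notag
1\longrightarrow 1+\I\longrightarrow M_T^{\gp}\longrightarrow M_U^{\gp}\longrightarrow 1.
\end{equation}
Let $n$ be the product of the orders of $\ker(h^{\gp})$ and $\mathrm{coker}(h^{\gp})$; then $n$ is prime to $p$ and $nP^{\gp}\subseteq h^{\gp}(Q^{\gp})$. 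Injectivity of the $n$th power on $1+\I$ first shows that $\gamma_T^{\gp}$ kills $\ker(h^{\gp})$ (a root of unity of order prime to $p$ lying in $1+\I$ is trivial), so $\gamma_T^{\gp}$ is defined on $h^{\gp}(Q^{\gp})$; then for $x\in P^{\gp}$ one chooses $y\in Q^{\gp}$ with $h^{\gp}(y)=nx$ and looks for $\beta_T^{\gp}(x)$ lifting $\beta_U^{\gp}(x)$ with $\beta_T^{\gp}(x)^{n}=\gamma_T^{\gp}(h^{\gp}(y))$. Such a lift exists locally on $T$: lift $\beta_U^{\gp}(x)$ arbitrarily (using $H^{1}(T,1+\I)=0$), then correct the $n$th-power discrepancy, an element of $1+\I$, by the unique $n$th root provided by Lemma \ref{lem-let-lpet}. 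Its uniqueness --- again from injectivity of the $n$th power on $1+\I$ --- shows that the local lifts glue to a global homomorphism $\beta_T^{\gp}$ that extends $\gamma_T^{\gp}$, and the equality $\overline{M}_T=\overline{M}_U$ forces $\beta_T^{\gp}(P)\subseteq M_T$. This produces the required unique $u\colon T\to X$, so $f_2$ is log $p$-\'etale and we are done.

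The step I expect to be the main obstacle is this last one: lifting the log-structure part of $f_2$, i.e.\ extracting $n$th roots in $1+\I$ and patching the local solutions together. This is exactly what Lemma \ref{lem-let-lpet} is designed to make possible --- the automorphism property of $x\mapsto x^{n}$ for $n$ prime to $p$, together with the vanishing $H^{1}(T,1+\I)=0$ --- while the \'etale-local reduction and the purely scheme-theoretic input for $f_1$ are comparatively routine.
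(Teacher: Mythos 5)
Your proof is correct and follows essentially the same route as the paper: étale-localize via Lemmas \ref{p-etale-stab2bispre} and \ref{p-etale-stab2bis}, reduce by Kato's structure theorem to a chart $Q\to P$ with kernel and cokernel of the associated group map finite of order prime to $p$, and then lift the monoid homomorphism through the extension $1\to 1+\I\to M_T^{\gp}\to M_U^{\gp}\to 1$ using Lemma \ref{lem-let-lpet}. The only (cosmetic) difference is in the existence step, where you extract unique $n$th roots in $1+\I$ element by element, while the paper packages the same bijectivity of $x\mapsto x^{n}$ as the vanishing of $\Ext^{1}(\mathrm{coker}(\theta^{\gp}),\Gamma(T,1+\I))$ to split the pulled-back extension.
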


\begin{proof}
Following \ref{p-etale-stab2bispre} and \ref{p-etale-stab2bis}
the log $p$-étaleness is étale local in both $X$ and $Y$.
Hence, 
using \cite[Theorem 3.5]{Kato-logFontaine-Illusie},
we reduce to the case where
$X=A_P$, $Y=A_Q$ and 
where there exists a chart of $f$ subordinate to a morphism $\phi \colon Q\to P$ of fine monoids (see the definition \cite[II.2.1.7]{Ogus-Logbook})
 such that the kernel and cokernel of $\phi ^\mathrm{gp}$ is finite of order prime to $p$ (i.e. is invertible in $\Z /p ^{i+1}\Z$).
Let $\iota \colon U \hookrightarrow T$ be an object of $\mathscr{C} _{(p)}$. A morphism $\iota \to f$ 
can be thought of as commutative diagram
\begin{equation}
\label{let-lpet-diag1}
\notag
\xymatrix{
{Q}
\ar[r] ^-{\theta}
\ar[d] ^-{h}
&
{P}
\ar[d] ^-{g}
\ar@{.>}[ld] ^-{\widetilde{g}}
\\
{\Gamma (T, M_T)}
\ar[r] ^-{i}
&
{\Gamma (U, M _U).}
}
\end{equation}
We need to check the existence and unicity of 
a map 
$\widetilde{g}
\colon 
P \to \Gamma (T, M_T)$
such that 
$i\circ \widetilde{g}=g$
and 
$\widetilde{g} \circ \theta =h$.
Since this is locally étale, 
we can suppose $T$ affine. 
Following 
\cite[IV.2.1.2.4]{Ogus-Logbook},
the natural map 
$M _T \to M _T ^{\mathrm{gr}} \times _{M _U} M _U ^{\mathrm{gr}}$
is an isomorphism.
Moreover, the morphism
$ M_T \hookrightarrow M_T ^{\mathrm{gr}}$ is injective.
Hence, 
we reduce to check there exists a unique 
morphism $\widetilde{g}
\colon 
P ^{\mathrm{gr}} \to \Gamma (T, M_T ^{\mathrm{gr}})$
making commutative the diagram
\begin{equation}
\label{let-lpet-diag2}
\xymatrix{
{Q ^{\mathrm{gr}}}
\ar[r] ^-{\theta ^{\mathrm{gr}}}
\ar[d] ^-{h'}
&
{P ^{\mathrm{gr}}}
\ar[d] ^-{g'}
\ar@{.>}[ld] ^-{\widetilde{g'}}
\\
{\Gamma (T, M_T ^{\mathrm{gr}})}
\ar[r] ^-{i'}
&
{\Gamma (U, M _U ^{\mathrm{gr}}),}
}
\end{equation}
where $h'$, $g'$ are the morphism canonically  induced from $h$, $g$ 
and where $i'$ is induced from the map $M_T ^{\mathrm{gr}} \to M_U ^{\mathrm{gr}}$.
By \cite[IV.2.1.2.2]{Ogus-Logbook}, 
we have
$1+\I = \ker (M _T ^{\mathrm{gr}} \to M _U ^{\mathrm{gr}} )$, 
where 
$\I$ is the ideal defining the closed immersion $\underline{\iota}$.
Using \ref{lem-let-lpet}, since $T$ is affine, we get the exact sequence 
\begin{equation}
\label{let-lpet-ES1}
1 \to \Gamma ( T, 1+ \I) \to 
\Gamma ( T, M _T ^{\mathrm{gr}} )
\to 
\Gamma ( U, M _U ^{\mathrm{gr}} )
\to 1.
\end{equation}
First, let us check the unicity.
Let $\widetilde{g} _1,\widetilde{g} _2
\colon 
P ^{\mathrm{gr}} \to \Gamma (T, M_T ^{\mathrm{gr}})$
be two morphisms making the diagram \ref{let-lpet-diag2} commutative. 
From the exactness \ref{let-lpet-ES1}, 
we get $\widetilde{g} _1\widetilde{g} _2 ^{-1}
\colon 
P ^{\mathrm{gr}} 
\to 
\Gamma ( T, 1+ \I) $.
Since 
$(\widetilde{g} _1\widetilde{g} _2 ^{-1}) \circ \theta ^{\mathrm{gr}}=1$, 
the morphism has a factorization by a morphism of the form 
$\mathrm{coker} (\theta ^{\mathrm{gr}}) \to 
\Gamma ( T, 1+ \I) $. Since 
$\mathrm{coker} (\theta ^{\mathrm{gr}})$ is finite of order prime to $p$, 
since the homomorphism $\Gamma (X, 1 +\I) \to \Gamma (X, 1 +\I)$ 
given by $x \mapsto x ^n$ is a bijection for any integer $n$ prime to $p$ (see Lemma \ref{lem-let-lpet})
then  we check that
such morphism 
$\mathrm{coker} (\theta ^{\mathrm{gr}}) \to 
\Gamma ( T, 1+ \I) $ is $1$, i.e. 
$\widetilde{g} _1 =\widetilde{g} _2 $.

Now, let us prove the existence. 
It is sufficient to copy  in the proof of \cite[IV.3.1.8]{Ogus-Logbook} 
the part corresponding to the implication $3.1.8.(1)\Rightarrow 3.1.8.(2)$. 
For the convenience of the reader, 
let us clarify it. 
Put $E:=  
\Gamma ( T, M _T ^{\mathrm{gr}} )
\times  _{\Gamma ( U, M _U ^{\mathrm{gr}} )}
P ^{\mathrm{gr}}$ (the morphisms used to define the fiber product are those appearing in the diagram \ref{let-lpet-diag2}).
Taking the pullback of  \ref{let-lpet-ES1}  by $g'$, we get the exact sequence 
$1 \to \Gamma ( T, 1+ \I) \to 
E
\overset{\pi}{\longrightarrow} 
P ^{\mathrm{gr}}
\to 1$,
where the first map is given by 
$x \mapsto (x,1)$ and where the map $\pi$
is the projection $(x, y) \to y$.
We put 
$\phi : = ( h', \theta ^{\mathrm{gr}})
\colon Q ^{\mathrm{gr}} \to E $. 
We get the commutative diagram 
\begin{equation}
\notag
\xymatrix{
{1}
\ar[r] ^-{} 
& 
{\ker  (\theta ^{\mathrm{gr}})} 
\ar[r] ^-{} \ar[d] ^-{} 
&
{Q ^{\mathrm{gr}}}
\ar[r] ^-{} \ar[d] ^-{\phi} \ar[rd] ^-{\theta ^{\mathrm{gr}}} 
&
{Q ^{\mathrm{gr}}/\ker  (\theta ^{\mathrm{gr}}}
\ar[r] ^-{} \ar[d] ^-{} 
&
{1}
\\ 
% 			2 ème ligne 
{1} 
\ar[r] ^-{} 
& 
{\Gamma ( T, 1+ \I) } 
\ar[r] ^-{} \ar@{=}[d] ^-{} 
& 
{E}
\ar[r] ^-{\pi} \ar[d] ^-{} 
&
{P ^{\mathrm{gr}}}
\ar[r] ^-{} \ar[d] ^-{} 
&
{1} 
\\
% 			3 ème ligne 
{1} 
\ar[r] ^-{} 
& 
{\Gamma ( T, 1+ \I) } 
\ar[r] ^-{} 
&
{\mathrm{coker} (\phi)}
\ar[r] ^-{} 
&
{\mathrm{coker} (\theta ^{\mathrm{gr}})}
\ar[r] ^-{} 
&
{1} 
} 
\end{equation}
whose top and middle rows are exact. 
Since the homomorphism $\Gamma (X, 1 +\I) \to \Gamma (X, 1 +\I)$ 
given by $x \mapsto x ^n$ is a bijection for any integer $n$ prime to $p$,
since $\ker  (\theta ^{\mathrm{gr}})$ is finite of order prime to $p$, then the morphism 
$\ker  (\theta ^{\mathrm{gr}})
\to \Gamma ( T, 1+ \I) $ is equal to $1$.
Hence, using the snake Lemma to the top and middle rows, 
we check that the bottom row is also exact. 
Since the homomorphism $\Gamma (X, 1 +\I) \to \Gamma (X, 1 +\I)$ 
given by $x \mapsto x ^n$ is a bijection for $n$ equal to the cardinal of $\mathrm{coker}  (\theta ^{\mathrm{gr}})$,
then $\mathrm{Ext} ^1 (\mathrm{coker}  (\theta ^{\mathrm{gr}}),  \Gamma (X, 1 +\I))=1$.
Hence, the bottom  row splits (and even uniquely). 
Let $\tau \colon P ^{\mathrm{gr}}
\to \mathrm{coker} (\phi)$ be the composition 
of $P ^{\mathrm{gr}} \to \mathrm{coker} (\theta ^{\mathrm{gr}})$ with a section 
$\mathrm{coker} (\theta ^{\mathrm{gr}})\to \mathrm{coker} (\phi)$ of the surjection of the bottom surjective map.

We remark that the set of morphisms $\widetilde{g}
\colon 
P ^{\mathrm{gr}} \to \Gamma (T, M_T ^{\mathrm{gr}})$
making commutative the diagram
\ref{let-lpet-diag2}
is equipotent with 
the set of sections
$\sigma \colon P ^{\mathrm{gr}} \to E$ of $\pi$
such that $\sigma \circ \theta ^{\mathrm{gr}} =\phi $
(the bijection is given by
$\widetilde{g} \mapsto (\widetilde{g}, id)$).
Since the middle and bottow rows are exact, 
we get that the square on the bottom right is exact. 
Hence, we get the morphism 
$\sigma := (\tau, \pi) \colon P ^{\mathrm{gr}} \to E$. 
We check that this morphism is a section of $\pi$ 
such that 
$\sigma \circ \theta ^{\mathrm{gr}} =\phi $.
\end{proof}

\begin{lem}
\label{prePDenv-lrpbis}
Let $n\in \N$, $f \colon X \to Y$ be a log $p$-étale morphism of fine log $S$-schemes,
$u \colon Z \hookrightarrow X$ and $v \colon Z \hookrightarrow Y$ be two $S$-immersions of fine log schemes
such that $v= f\circ u$.
If $P ^{(p ^n) }(u)$ exists, then 
$P ^{(p ^n) }(v)$ exists and we have 
$P ^{(p ^n) }(u)=P ^{(p ^n) }(v)$.
\end{lem}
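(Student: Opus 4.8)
The plan is to mimic the proof of Lemma \ref{prePDenv-lrp}, replacing ``fine formally log étale'' by ``log $p$-étale'' and ``log thickening of order $n$'' by ``log thickening of order $(p^n)$'', i.e. objects of $\mathscr{C} _{(p^n)}$. The point of Lemma \ref{prePDenv-lrp} was ``abstract nonsense'': once one knows that $P^n(u)$, as a target of a morphism of $\mathscr{C}$, lands in $\mathscr{C}_n$ and that the canonical arrow $P^n(u)\to X$ is the ``same'' as $P^n(u)\to Y$ after composing with $f$, the universal properties match up because $f$ allows one to lift the structure map uniquely along any thickening in the relevant class. Here the relevant class is $\mathscr{C}_{(p^n)}$, which is contained in $\mathscr{T}hick _{(p)}$ (a log thickening of order $(p^n)$ has $\I^{(p^n)}=0$, and since $p$ is nilpotent, as in Remark \ref{rem-immersion}.\ref{rem-immersion2} one builds it up from order-$(p)$ thickenings): this is exactly the class against which log $p$-étaleness tests, by Remark \ref{rem-petale-Thick}.

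First I would record that $P^{(p^n)}(u)$, viewed as the exact closed immersion $Z\hookrightarrow P^{(p^n)}(u)$ together with its morphism to $u$, is an object of $\mathscr{C}_{(p^n)}$; in particular the structure morphism $P^{(p^n)}(u)\to X$ exhibits a thickening of order $(p^n)$ relative to $X$, hence, composing with $f$, also relative to $Y$ (an order-$(p^n)$ $S$-log thickening is one absolutely, not relative to the base). So $v'\colon Z\hookrightarrow P^{(p^n)}(u)$ together with the composite $P^{(p^n)}(u)\to X\xrightarrow{f} Y$ is a candidate object of $\mathscr{C}_{(p^n)}$ over $v$. Next I would check the universal property: given any $u''\colon Z''\hookrightarrow T''$ in $\mathscr{C}_{(p^n)}$ with a morphism $u''\to v$, i.e.\ a commutative square with $Z''\to Z$ and $T''\to Y$, I must produce a unique factorization through $P^{(p^n)}(u)\to v$. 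Here is where log $p$-étaleness of $f$ enters: the immersion $Z''\hookrightarrow T''$ is (built from) an object of $\mathscr{T}hick_{(p)}$, and the diagram $Z''\to X$ (via $Z''\to Z\xrightarrow{u} X$), $T''\to Y$ (via $v$) forms exactly a lifting problem of the shape \ref{dfn-petale-square}; by log $p$-étaleness there is a unique $T''\to X$ compatible with everything. This turns $u''\to v$ into a morphism $u''\to u$ in $\mathscr{C}$, and now the universal property of $P^{(p^n)}(u)$ (as target of an arrow from $\mathscr{C}_{(p^n)}$ — this is what ``$P^{(p^n)}(u)$ exists'' means) gives the required unique factorization $u''\to P^{(p^n)}(u)$. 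Tracing through, its composite with $P^{(p^n)}(u)\to v$ is the original map, and uniqueness follows from uniqueness in both universal properties plus the uniqueness clause of log $p$-étaleness (equivalently, $f$ being log $p$-unramified).

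The main obstacle I anticipate is purely bookkeeping: being careful that the lifting problem one feeds to the log $p$-étaleness of $f$ genuinely has a thickening of order $(p^n)$ on the left (the strict part is automatic since these are closed immersions, but exactness and the $\I^{(p^n)}=0$ condition must be matched against $\mathscr{C}_{(p)}$ via the ``composition of order-$(p)$ thickenings'' observation of Remark \ref{rem-immersion}.\ref{rem-immersion2}), and that the various commutativities needed to conclude $u''\to u$ is a morphism of $\mathscr{C}$ really hold — in particular that $Z''\to Z$ is compatible with the newly produced $T''\to X$ restricted along $Z''\hookrightarrow T''$. All of this is formal once the setup is pinned down. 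Hence:

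\begin{proof}
The argument is the same as for Lemma \ref{prePDenv-lrp}, using log $p$-étaleness in place of fine formal log étaleness and the class $\mathscr{C}_{(p^n)}\subset \mathscr{T}hick_{(p)}$ in place of $\mathscr{C}_n$. By Proposition \ref{PDenvelope}-type reasoning (applied to the order-$(p^n)$ thickenings, cf.\ \ref{PDenvelopebis}), $P^{(p^n)}(u)$ is the exact closed immersion $Z\hookrightarrow P^{(p^n)}(u)$ with an affine structure morphism to $X$; composing with $f$ and recalling that an $S$-log thickening of order $(p^n)$ is such absolutely, the pair consisting of $Z\hookrightarrow P^{(p^n)}(u)$ and $P^{(p^n)}(u)\to Y$ is an object of $\mathscr{C}_{(p^n)}$ over $v$. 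Given $u''\to v$ with $u''\in\mathscr{C}_{(p^n)}$, the underlying data form a lifting problem of the shape \ref{dfn-petale-square} for $f$ (the left vertical being built from objects of $\mathscr{T}hick_{(p)}$ by Remark \ref{rem-immersion}.\ref{rem-immersion2} and Remark \ref{rem-petale-Thick}); since $f$ is log $p$-étale there is a unique compatible lift, promoting $u''\to v$ to a morphism $u''\to u$ of $\mathscr{C}$. Applying the universal property defining $P^{(p^n)}(u)$ (which holds by hypothesis) yields a unique factorization $u''\to P^{(p^n)}(u)$, and one checks its composite with $P^{(p^n)}(u)\to v$ recovers the original morphism. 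Uniqueness of this factorization follows from the uniqueness in the universal property of $P^{(p^n)}(u)$ together with the log $p$-unramifiedness of $f$. Hence $P^{(p^n)}(v)$ exists and equals $P^{(p^n)}(u)$.
\end{proof}
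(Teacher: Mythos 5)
Your proposal is correct and is exactly the ``abstract nonsense'' argument the paper has in mind (its proof of \ref{prePDenv-lrpbis} literally reads ``Abstract nonsense,'' mirroring \ref{prePDenv-lrp}): log $p$-\'etaleness of $f$, tested against $\mathscr{C}_{(p^n)}\subset\mathscr{T}hick_{(p)}$ via Remarks \ref{rem-immersion} and \ref{rem-petale-Thick}, gives a natural bijection between morphisms $u''\to u$ and $u''\to v$ for $u''\in\mathscr{C}_{(p^n)}$, so the universal property transfers. The only cosmetic point is that you need not invoke \ref{PDenvelopebis} (whose proof uses this very lemma); the hypothesis that $P^{(p^n)}(u)$ exists as an object of $\mathscr{C}_{(p^n)}$ over $u$ is all that is required.
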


\begin{proof}
Abstract nonsense.
\end{proof}

\begin{prop}
\label{PDenvelopebis}
 For any integer $n$,
 the canonical functor 
$\mathscr{C} _{(p ^n)} \to \mathscr{C}$
 has a right adjoint functor which we will denote by
$P ^{(p ^n)} \colon \mathscr{C} \to \mathscr{C}  _{(p ^n)} $.
Let  $u\colon Z \hookrightarrow X$ be an object of
$\mathscr{C}$.
Then $Z$ is also the source of $P ^{(p ^n)}(u)$. 
Moreover,  denoting abusively by $P ^{(p ^n)} (u)$ the target of the arrow
$P ^{(p ^n)}(u)$, 
the underlying morphism of schemes of
$P ^{(p ^n)} (u) \to X$ is affine.
When $X$ is noetherian, then so is $P ^{(p ^n)} (u)$.
\end{prop}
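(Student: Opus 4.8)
The plan is to mimic the proof of Proposition \ref{PDenvelope}, replacing the role of log \'etale morphisms by log $p$-\'etale morphisms and the role of $\mathscr{C}_n$ by $\mathscr{C}_{(p^n)}$. First I would observe, exactly as in \ref{PDenvelope}, that adjointness of $For_{(p^n)}\colon \mathscr{C}_{(p^n)} \to \mathscr{C}$ amounts to the existence of $P^{(p^n)}(u)$ for each object $u$ of $\mathscr{C}$, so it suffices to construct $P^{(p^n)}(u)$ together with its universal arrow. By Lemma \ref{prebasecgt-flat-env} the existence of $P^{(p^n)}(u)$ is stable under strict cartesian base change, hence the construction is \'etale local on $X$ (in the paper's sense: local for the Zariski topology plus finite \'etale quasi-compact descent, using \ref{p-etale-stab2bispre} and \ref{p-etale-stab2bis} if needed to transport log $p$-\'etaleness). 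Therefore, by \ref{ex-cl-imm}, I may assume there is a commutative diagram with $f\colon \widetilde X \to X$ log \'etale, $\underline f$ affine, and $\widetilde u \colon Z \hookrightarrow \widetilde X$ an exact closed $S$-immersion, with $u = f \circ \widetilde u$.

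Next I would reduce the local existence to the exact closed immersion case. Let $\I \subset \O_{\widetilde X}$ be the ideal defining $\underline{\widetilde u}$, and let $P^{(p^n)} \hookrightarrow \widetilde X$ be the exact closed immersion defined by the subideal $\I^{(p^n)}$; this is an object of $\mathscr{C}_{(p^n)}$ with source $Z$, and its underlying morphism to $\widetilde X$ (hence to $X$, since $\underline f$ is affine) is affine. The claim is that the natural arrow $\big(Z \hookrightarrow P^{(p^n)}\big) \to u$ is the universal one, i.e. realizes $P^{(p^n)}(u)$. For the exact closed immersion $\widetilde u$ itself this is essentially immediate: given an object $u''\colon U'' \hookrightarrow T''$ of $\mathscr{C}_{(p^n)}$ and a morphism $u'' \to \widetilde u$, the induced ring map $\O_{\widetilde X} \to \O_{T''}$ kills $\I^{(p^n)}$ because the ideal of $\underline{u''}$ kills its own $(p^n)$-th power subideal, yielding the unique factorization through $P^{(p^n)}$. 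To pass from $\widetilde u$ to $u = f\circ \widetilde u$ with $f$ log \'etale — hence log $p$-\'etale by Proposition \ref{let-lpet} — I would invoke Lemma \ref{prePDenv-lrpbis}, which gives precisely $P^{(p^n)}(u) = P^{(p^n)}(\widetilde u)$ once the right-hand side exists. Finally, noetherianness of $P^{(p^n)}(u)$ when $X$ is noetherian follows because then $\widetilde X$ is noetherian (log \'etale, $\underline f$ affine and of finite presentation) and $P^{(p^n)}$ is a closed subscheme of $\widetilde X$.

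The main obstacle is the gluing/descent step: one must be sure that the locally constructed $P^{(p^n)}(u)$, being characterized by a universal property, glue over an \'etale (in the paper's sense) cover and that the universal property itself is \'etale-local to check. This is handled by Lemma \ref{prebasecgt-flat-env} (compatibility of $P^{(p^n)}$ with strict cartesian base change) exactly as in \ref{PDenvelope}, but one should also note that in the reduction via \ref{ex-cl-imm} the extra log \'etale but non-strict morphism $f$ is absorbed precisely by Lemma \ref{prePDenv-lrpbis}, which is itself "abstract nonsense" but uses the defining property of log $p$-\'etaleness with respect to $\mathscr{C}_{(p)}$ (equivalently $\mathscr{C}_{(p^n)}$, via the last remark of \ref{rem-immersion} and \ref{rem-petale-Thick}). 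I would also remark, as in the previous proposition, that this construction recovers the one of \cite[5.8]{Kato-logFontaine-Illusie} resp. \cite{these_Pigeon} in the classical cases, so that no new compatibility needs to be checked beyond what the universal property already guarantees.

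\begin{proof}
The existence of a right adjoint to $\mathscr{C}_{(p^n)} \to \mathscr{C}$ is equivalent to the existence, for every object $u$ of $\mathscr{C}$, of the log thickening of order $(p^n)$ induced by $u$, that is, of $P^{(p^n)}(u)$. Let $u\colon Z \hookrightarrow X$ be an $S$-immersion of fine log schemes. By Lemma \ref{prebasecgt-flat-env}, the formation of $P^{(p^n)}(u)$ is compatible with strict cartesian base change; hence the existence of $P^{(p^n)}(u)$ (and the remaining assertions of the proposition) is \'etale local on $X$, i.e. local for the Zariski topology and compatible with descent along finite coverings by \'etale quasi-compact morphisms. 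By \ref{ex-cl-imm}, we may therefore assume that there is a commutative diagram of the form
\begin{equation}
\notag
\xymatrix{
{\widetilde{X}}
\ar[r] ^-{f}
&
{X}
\\
&
{Z}
\ar@{_{(}->}[u] ^-{u}
\ar@{^{(}->}[lu] ^-{\widetilde{u}}
}
\end{equation}
where $f$ is log \'etale, $\underline{f}$ is affine and $\widetilde{u}$ is an exact closed $S$-immersion. Let $\I$ be the ideal defining $\underline{\widetilde{u}}$, and let $P^{(p^n)} \hookrightarrow \widetilde{X}$ be the exact closed immersion defined by the subideal $\I^{(p^n)}$; this is an object of $\mathscr{C}_{(p^n)}$ with source $Z$, whose structural morphism to $\widetilde{X}$, hence to $X$, is affine. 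One checks immediately that the exact closed immersion $Z \hookrightarrow P^{(p^n)}$, endowed with the canonical morphism to $\widetilde{u}$, satisfies the universal property of $P^{(p^n)}(\widetilde{u})$: indeed, for any object $u''$ of $\mathscr{C}_{(p^n)}$ and any morphism $u'' \to \widetilde{u}$, the corresponding map $\O_{\widetilde{X}} \to \O_{T''}$ annihilates $\I^{(p^n)}$, since the ideal of $\underline{u''}$ annihilates its own $(p^n)$-th power subideal, giving the required unique factorization. Since $f$ is log \'etale, it is log $p$-\'etale by Proposition \ref{let-lpet}, and $u = f \circ \widetilde{u}$; hence by Lemma \ref{prePDenv-lrpbis}, $P^{(p^n)}(u)$ exists and equals $P^{(p^n)}(\widetilde{u})$. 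In particular $Z$ is the source of $P^{(p^n)}(u)$ and the underlying morphism $\underline{P^{(p^n)}(u)} \to \underline{X}$ is affine. Finally, if $X$ is noetherian, then $\widetilde{X}$ is noetherian (as $\underline{f}$ is affine of finite presentation) and $P^{(p^n)}(u)$, being a closed subscheme of $\widetilde{X}$, is noetherian as well.
\end{proof}
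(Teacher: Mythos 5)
Your proof is correct and follows essentially the same route as the paper's: reduce to the étale-local situation via \ref{prebasecgt-flat-env}, exactify with \ref{ex-cl-imm}, construct $P^{(p^n)}$ from $\I^{(p^n)}$ in the exact closed immersion case, and transfer back along the log étale (hence log $p$-étale, by \ref{let-lpet}) morphism using \ref{prePDenv-lrpbis}. The only difference is that you spell out the universal-property verification and the noetherianity argument that the paper leaves implicit.
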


\begin{proof}
The proof is similar to that of \ref{PDenvelope}:
Let $u\colon Z \hookrightarrow X$ be an $S$-immersion of fine log-schemes.
Using \ref{prebasecgt-flat-env}, the existence of $P ^{(p ^n)} (u) $ (and then the proposition)
is étale local on $X$ (i.e. following our convention, this is local for the Zariski topology and 
we can proceed by 
descent of a finite covering with étale quasi-compact morphisms). 
Hence, by \ref{ex-cl-imm}, we may thus assume that
there exists a commutative diagram of the form
\begin{equation}
\notag
\xymatrix{
{\widetilde{X}}
\ar[r] ^-{f}
&
{X}
\\
&
{Z}
\ar@{_{(}->}[u] ^-{u}
\ar@{^{(}->}[lu] ^-{\widetilde{u}}
}
\end{equation}
such that $f$ is log étale, $\underline{f}$ is affine and $\widetilde{u}$ is an exact closed $S$-immersion.
Let $\I$ be the ideal defining $\widetilde{u}$.
Let $P ^{(p ^n)}\hookrightarrow \widetilde{X}$
be the exact closed immersion which is induced by 
$\I ^{(p ^n)}$.
Using \ref{let-lpet} and \ref{prePDenv-lrpbis}, we check that 
$P ^{(p ^n)}(u)$
is the exact closed immersion 
$Z \hookrightarrow  P ^{(p ^n)}$.
\end{proof}

\subsection{Log relatively perfectness}

The following definition \ref{dfn-lrpFp} will be extended in \ref{dfn-lrp}.
\begin{dfn}
\label{dfn-lrpFp}
Let $f\colon X \to Y$ be an $S _0$-morphism of fine log-schemes.
We say that $f $ is ``fine log relatively perfect''
(resp. ``fs log relatively perfect'' ) if the diagram on the left (resp. on the right)
\begin{equation}
\label{cart-logrp}
\xymatrix{
{X }
\ar[r] ^-{F _{X }}
\ar[d] ^-{f }
&
{X }
\ar[d] ^-{f }
\\
{Y }
\ar[r] ^-{F _{Y }}
&
{Y ,}
}
\xymatrix{
{X ^{\mathrm{sat}}}
\ar[r] ^-{F _{X ^{\mathrm{sat}}}}
\ar[d] ^-{f ^{\mathrm{sat}}}
&
{X ^{\mathrm{sat}}}
\ar[d] ^-{f ^{\mathrm{sat}}}
\\
{Y ^{\mathrm{sat}}}
\ar[r] ^-{F _{Y ^{\mathrm{sat}}}}
&
{Y ^{\mathrm{sat}}}
}
\end{equation}
is cartesian in the category of fine log-schemes (resp. fs log-schemes).
This definition does not depend on the choice of 
 the fine log scheme $S$ over $\Z / p ^{i+1}\Z$.
We remark that $f$ is fs log relatively perfect if and only if so is $f ^{\mathrm{sat}}$.

\end{dfn}

\begin{rem}
\label{varpi}
Let $\iota \colon U \hookrightarrow T$ be a log $S _0$-thickening of order $(p)$.
Let $T \times _{ F _T, T , \iota}U$ be the base change of $F _T$ by $\iota$.
Let 
$p _1 \colon T \times _{ F _T, T , \iota}U \to T$,
and $p _2 \colon T \times _{ F _T, T , \iota}U \to U$
be the projections.   
Since $\iota$ is of order $(p)$ then $p _1$ is an isomorphism.  
Put $\varpi _\iota := p _2 \circ p _1 ^{-1}\colon T \to U$.
We remark that the morphism $\varpi _\iota$ is the unique morphism $T\to U$ making commutative the diagram
\begin{equation}
\label{varpidiag}
\xymatrix{
{U}
\ar[r] ^-{F _U}
\ar@{^{(}->}[d] _-{\iota}
&
{U}
\ar@{^{(}->}[d] ^-{\iota}
\\
{T}
\ar[r] ^-{F _T}
\ar[ur] ^-{\varpi _\iota}
&
{T.}
}
\end{equation}
\end{rem}

\begin{lem}
\label{rlp-fle}
Let $f\colon X \to Y$ be an  $S _0$-morphism of fine log-schemes.
If $f$ is fine log relatively perfect (resp. fs log relatively perfect)
then $f$ is log $p$-étale
(resp. fs log $p$-étale).
\end{lem}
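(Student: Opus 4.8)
The plan is to check directly the unique‑lifting property defining log $p$-�taleness (Definition \ref{dfn-petale}). By Remark \ref{rem-petale-Thick} it suffices to test against objects of $\mathscr{T}hick _{(p)}$, and by the last point of Remark \ref{rem-immersion} every $(p)$-nilpotent log thickening is a finite composition of log thickenings of order $(p)$; since the existence and the uniqueness of a lift both propagate along such a composition, this reduces us to the case of an object $\iota\colon U\hookrightarrow T$ which is a log thickening of order exactly $(p)$. Moreover, in a test square \ref{dfn-petale-square} the morphism $T\to Y$ and the fact that $Y$ lies over $S_0$ (because $f$ is an $S_0$-morphism) force $T$, and hence $U$, over $S_0$, so that the morphism $\varpi _\iota\colon T\to U$ of Remark \ref{varpi}, characterised by $\iota\circ\varpi _\iota=F_T$ and $\varpi _\iota\circ\iota=F_U$, is available. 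So we are given $u_0\colon U\to X$ and $v\colon T\to Y$ with $f\circ u_0=v\circ\iota$, and we must produce a unique $u\colon T\to X$ with $u\circ\iota=u_0$ and $f\circ u=v$.

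Because $f$ is fine log relatively perfect, the left-hand square of \ref{cart-logrp} is cartesian in the category of fine log schemes, i.e. it identifies $X$ with $Y\times _{F_Y,Y,f}X$, the two projections being $f$ and $F_X$. Thus giving a morphism $u\colon T\to X$ amounts to giving a pair $(\alpha\colon T\to Y,\ \beta\colon T\to X)$ with $F_Y\circ\alpha=f\circ\beta$, and then $f\circ u=\alpha$ and $F_X\circ u=\beta$. I would take $\alpha:=v$ and $\beta:=u_0\circ\varpi _\iota$. The required compatibility holds: $f\circ\beta=f\circ u_0\circ\varpi _\iota=v\circ\iota\circ\varpi _\iota=v\circ F_T=F_Y\circ v=F_Y\circ\alpha$, where $v\circ F_T=F_Y\circ v$ by functoriality of the absolute Frobenius (\cite[4.7]{Kato-logFontaine-Illusie}) and $\iota\circ\varpi _\iota=F_T$ by Remark \ref{varpi}. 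This yields a morphism $u$ with $f\circ u=v$ and $F_X\circ u=u_0\circ\varpi _\iota$. To see $u\circ\iota=u_0$, I would compare the pairs they define: $f\circ(u\circ\iota)=v\circ\iota=f\circ u_0$ and $F_X\circ(u\circ\iota)=u_0\circ\varpi _\iota\circ\iota=u_0\circ F_U=F_X\circ u_0$ (again Remark \ref{varpi} and functoriality of Frobenius), so $u\circ\iota$ and $u_0$ have the same image under the isomorphism $X\riso Y\times _{F_Y,Y,f}X$ and hence coincide.

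Uniqueness is equally formal: if $u'$ is another solution then $F_X\circ u'=u'\circ F_T=u'\circ\iota\circ\varpi _\iota=u_0\circ\varpi _\iota$ and $f\circ u'=v$, so $u'$ determines the same pair as $u$, whence $u'=u$. The fs case follows by the same computation carried out inside the category of fs log schemes: one first replaces $f$ by $f^{\mathrm{sat}}$ (allowed since $f$ is fs log relatively perfect iff $f^{\mathrm{sat}}$ is, and by Lemma \ref{lem-lrp} $f$ is fs log $p$-�tale iff $f^{\mathrm{sat}}$ is), so that all schemes occurring are fs and the relevant square of \ref{cart-logrp} is cartesian in fs log schemes. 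I do not expect a genuine obstacle here: once $\varpi _\iota$ and the cartesianness are in hand the proof is pure diagram chasing. The only points deserving care are the reduction to thickenings of order exactly $(p)$ (so that Remark \ref{varpi} applies) and the verification that both halves of the universal property survive this reduction and the passage to $S_0$ -- but this is routine d�vissage.
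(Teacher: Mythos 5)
Your proof is correct and follows essentially the same route as the paper: uniqueness and existence of the lift are both read off from the identification $X\riso Y\times _{F_Y,Y,f}X$ together with the morphism $\varpi _\iota$ of Remark \ref{varpi}, and the fs case is handled by passing to $f^{\mathrm{sat}}$ via Lemma \ref{lem-lrp}. The only superfluous step is your opening dévissage through $\mathscr{T}hick _{(p)}$: Definition \ref{dfn-petale} already tests only against $\mathscr{C}_{(p)}$, i.e.\ thickenings of order $(p)$, so that reduction is a harmless detour.
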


\begin{proof}
Let us check the fine version.
Let
\begin{equation}
\xymatrix{
{U}
\ar[r] ^-{u _0}
\ar@{^{(}->}[d] ^-{\iota}
&
{X}
\ar[d] ^-{f}
\\
{T}
\ar[r] ^-{v}
&
{Y}
}
\end{equation}
be a commutative diagram of fs $S _0$-log schemes
such that $i$ is a log $S _0$-thickening of order $(p)$.
First, let us check the unicity.
Let $u \colon T \to X$ be a morphism
such that $u\circ \iota = u _0$ and $f \circ u = v$.
With the notation of the remark \ref{varpi},
we get
$F  _X \circ u = u \circ F  _T = u \circ \iota \circ \varpi _\iota = u _0 \circ  \varpi _\iota$.
Since we have also $f \circ u = v$, we obtain the uniqueness of $u$ from the cartesianity of \ref{cart-logrp}.
Now, let us check the existence.
We have
$F  _Y \circ v = v \circ F  _T = v \circ \iota \circ \varpi _\iota = f \circ u _0 \circ \varpi _\iota$.
 Hence, via the cartesianity of \ref{cart-logrp}, we get the $Y$-morphism
$u = ( v , u _0 \circ  \varpi _\iota )\colon T \to X= Y \times _{F  _Y, Y,f}X$.
By definition, $f \circ u = v$. Moreover,
$u \circ \iota = ( v \circ \iota, u _0 \circ  \varpi _\iota \circ \iota )
=
( f \circ u _0, u _0 \circ  F  _U)
=
( f \circ u _0, F  _X \circ  u _0 )
=
u _0\colon T' \to X= Y \times _{F  _Y, Y,f}X$.

Suppose now that $f$ is fs log relatively perfect.
Using the lemma \ref{lem-lrp}, we reduce to check that
 $f ^{\mathrm{sat}}$ is fs log $p$-étale.
 Since $f ^{\mathrm{sat}}$ is fs log relatively perfect, 
we can proceed in the same way by replacing ``fine'' by ``fs''.
\end{proof}

\begin{dfn}
\label{dfn-lrp}
Let $f\colon X \to Y$ be an $S$-morphism of fine log-schemes.
We say that $f $ is ``fine log relatively perfect''
(resp. ``fs log relatively perfect'') if $f$ is fine formally log etale (resp. fs formally log etale) and if $f _0$ is
fine log relatively perfect (resp. fs log relatively perfect).
This definition does not depend on the choice of 
 the fine log scheme $S$ over $\Z / p ^{i+1}\Z$.
\end{dfn}

\begin{rem}
From lemma \ref{rlp-fle}, this definition of log relative perfectness of \ref{dfn-lrp} agrees that of \ref{dfn-lrpFp} when $i =0$, i.e. $S =S _0$.
\end{rem}

\begin{lem}
\label{rem-lrplogetale}
Let $f\colon X \to Y$ be an $S $-morphism of fine log-schemes.
Then $f$ is fs log relatively perfect if and only if so is $f ^{\mathrm{sat}}$.
\end{lem}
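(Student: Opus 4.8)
The statement to prove is Lemma \ref{rem-lrplogetale}: for an $S$-morphism $f\colon X\to Y$ of fine log schemes, $f$ is fs log relatively perfect if and only if so is $f^{\mathrm{sat}}$. The plan is to reduce to the characteristic-$p$ case $i=0$ already treated in \ref{dfn-lrpFp}, by unwinding Definition \ref{dfn-lrp}: $f$ is fs log relatively perfect means $f$ is fs formally log étale and $f_0$ is fs log relatively perfect in the sense of \ref{dfn-lrpFp}. So I would split the equivalence into two parts, one for each of these two conditions, and combine them.

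First I would handle the formally-log-étale part: $f$ is fs formally log étale if and only if $f^{\mathrm{sat}}$ is, which is exactly Lemma \ref{lem-lrp}. Next I would observe that the reduction-mod-$p$ functor commutes with saturation, i.e. $(f^{\mathrm{sat}})_0 = (f_0)^{\mathrm{sat}}$ (this is because $X\mapsto X^{\mathrm{sat}}$ is compatible with strict base change along $\Spec(\Z/p\Z)\to\Spec(\Z/p^{i+1}\Z)$, using that saturation is computed on charts and commutes with strict — in particular flat — base change; see \cite[II.2.4.5.2]{Ogus-Logbook}). Granting this, the condition "$f_0$ is fs log relatively perfect" is, by the last sentence of \ref{dfn-lrpFp} (which records that in the $S_0$ case $f_0$ is fs log relatively perfect iff $(f_0)^{\mathrm{sat}}$ is), equivalent to "$(f_0)^{\mathrm{sat}} = (f^{\mathrm{sat}})_0$ is fs log relatively perfect", which is exactly the second defining condition of \ref{dfn-lrp} applied to $f^{\mathrm{sat}}$.

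Combining: $f$ is fs log relatively perfect $\iff$ [$f$ is fs formally log étale and $f_0$ is fs log relatively perfect] $\iff$ [$f^{\mathrm{sat}}$ is fs formally log étale and $(f^{\mathrm{sat}})_0$ is fs log relatively perfect] $\iff$ $f^{\mathrm{sat}}$ is fs log relatively perfect. I expect the only genuine point requiring care to be the compatibility $(f^{\mathrm{sat}})_0 = (f_0)^{\mathrm{sat}}$ of saturation with reduction mod $p$; everything else is a formal bookkeeping of definitions together with the already-proved Lemmas \ref{lem-lrp} and the $i=0$ case \ref{dfn-lrpFp}. One should also note that $f^{\mathrm{sat}}$ is automatically an $S^{\mathrm{sat}}$-morphism, so the statement is well-posed; since $S$ need not be saturated this is where one uses that $X^{\mathrm{sat}}\to X$ and $Y^{\mathrm{sat}}\to Y$ are the universal maps from fs log schemes, giving the induced $f^{\mathrm{sat}}$ as in the hypotheses of \ref{logetale-etale}.
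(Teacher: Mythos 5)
Your proof is correct and follows essentially the same route as the paper's: split Definition \ref{dfn-lrp} into its two conditions, apply Lemma \ref{lem-lrp} to the fs formally log \'etale part, and use $(f^{\mathrm{sat}})_0=(f_0)^{\mathrm{sat}}$ together with the remark at the end of \ref{dfn-lrpFp} for the characteristic-$p$ part. The only difference is that you justify the compatibility $(f^{\mathrm{sat}})_0=(f_0)^{\mathrm{sat}}$ where the paper simply asserts it, which is a reasonable addition.
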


\begin{proof}
From \ref{lem-lrp}, $f$ is fs formally log étale  if and only if so is $f ^{\mathrm{sat}}$.
Moreover, since $(f ^{\mathrm{sat}}) _0 = (f _0 )^{\mathrm{sat}}$, then 
$f _0$ is fs log relatively perfect if and only if so is $(f ^{\mathrm{sat}}) _0$.
\end{proof}

\begin{prop}
\label{rlp-fle-prop}
A fine (resp. fs) log relatively perfect morphism
is  log $p$-étale (resp. fs log $p$-étale).
\end{prop}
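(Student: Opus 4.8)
The plan is to reduce the statement in the general level-$i$ setting to the already-treated case $i=0$, namely Lemma \ref{rlp-fle}, by unwinding the two defining conditions from Definition \ref{dfn-lrp}. So first I would recall that, by Definition \ref{dfn-lrp}, ``$f$ is fine (resp. fs) log relatively perfect'' means precisely that $f$ is fine formally log étale (resp. fs formally log étale) and that the reduction modulo $p$, the morphism $f_0 \colon X_0 \to Y_0$, is fine (resp. fs) log relatively perfect in the sense of Definition \ref{dfn-lrpFp}. This is exactly the shape of hypothesis handled by Lemma \ref{p-etale-modp}, which characterises log $p$-étaleness (resp. fs log $p$-étaleness) as the conjunction of fine formal log étaleness (resp. fs formal log étaleness) and log $p$-étaleness (resp. fs log $p$-étaleness) of the reduction $f_0$.

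The key steps, in order, are then: (1) by Definition \ref{dfn-lrp}, $f$ is fine formally log étale (resp. fs formally log étale); (2) by Definition \ref{dfn-lrp}, $f_0$ is fine (resp. fs) log relatively perfect in the sense of Definition \ref{dfn-lrpFp}, so by Lemma \ref{rlp-fle} applied to $f_0$ we conclude that $f_0$ is log $p$-étale (resp. fs log $p$-étale); (3) now $f$ is fine formally log étale (resp. fs formally log étale) with $f_0$ log $p$-étale (resp. fs log $p$-étale), so Lemma \ref{p-etale-modp} gives that $f$ is log $p$-étale (resp. fs log $p$-étale). That closes the argument. One should note that all three results invoked are stated to be independent of the choice of the fine log scheme $S$ over $\Z/p^{i+1}\Z$, which is consistent with the corresponding independence clause in Definition \ref{dfn-lrp}, so there is no compatibility subtlety to worry about here.

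I do not expect any real obstacle: this is pure bookkeeping, stringing together Definition \ref{dfn-lrp}, Lemma \ref{rlp-fle}, and Lemma \ref{p-etale-modp}. If anything, the only point requiring a moment's care is making sure that the notion of ``log relatively perfect'' being fed into Lemma \ref{rlp-fle} at step (2) is the level-$0$ notion of Definition \ref{dfn-lrpFp} (which it is, by the very wording of Definition \ref{dfn-lrp}), rather than circularly the level-$i$ notion one is trying to treat. Given that, the proof is a one-liner: \emph{``Combine Definition \ref{dfn-lrp}, Lemma \ref{rlp-fle}, and Lemma \ref{p-etale-modp}.''} — though for readability I would spell out the three steps above.
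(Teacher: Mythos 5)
Your proof is correct and is exactly the paper's argument: the paper's proof reads ``This is a consequence of \ref{p-etale-modp} and \ref{rlp-fle}'', and your three steps simply spell out how Definition \ref{dfn-lrp} feeds $f_0$ into Lemma \ref{rlp-fle} and then Lemma \ref{p-etale-modp} closes the argument. No issues.
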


\begin{proof}
This is a consequence of  \ref{p-etale-modp} and \ref{rlp-fle}.
\end{proof}

\begin{lem}
\label{rel-parf-stab2}
 Let $f\colon X \to Y$ and $g \colon Y '\to Y$ be two $S$-morphisms of fine log-schemes.
Set $X ':= X \times _{Y} Y'$ in the category of fine log schemes and $f' \colon X ' \to Y '$ the projection.
If $f$ is fine log relatively perfect (resp. fs log relatively perfect),
then so is $f'$.
\end{lem}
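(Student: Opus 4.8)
The plan is to check separately the two conditions defining fine (resp.\ fs) log relative perfectness in \ref{dfn-lrp}. That $f'$ is fine (resp.\ fs) formally log \'etale is exactly Lemma \ref{rel-parf-stab2pre}, so the only work is to show that $f'_0$ satisfies the condition of \ref{dfn-lrpFp}, namely that its absolute-Frobenius square \ref{cart-logrp} is cartesian. Since $(-)_0 = (-)\times_{\Z/p^{i+1}\Z}\Z/p\Z$ and $(-)\times_Y Y'$ are both base changes, they commute, so $X'_0 = X_0\times_{Y_0}Y'_0$ in the category of fine log schemes and $f'_0$ is the base change of $f_0$ along $g_0\colon Y'_0\to Y_0$. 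Thus I am reduced to the case $i=0$: if $f\colon X\to Y$ is a morphism of fine log schemes over $\F_p$ whose square \ref{cart-logrp} is cartesian, and $g\colon Y'\to Y$ is arbitrary, then the square of $f'=\mathrm{pr}_{Y'}\colon X'=X\times_Y Y'\to Y'$ is cartesian.

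For this I would argue by Yoneda. Using the naturality of Kato's absolute Frobenius (already invoked in the proof of \ref{rlp-fle}, e.g.\ $\mathrm{pr}_X\circ F_{X'}=F_X\circ\mathrm{pr}_X$ and $g\circ F_{Y'}=F_Y\circ g$), for a fine log scheme $T$ one identifies $\mathrm{Hom}(T,X')$ with the set of pairs $(x,y')$, where $x\in\mathrm{Hom}(T,X)$, $y'\in\mathrm{Hom}(T,Y')$ and $f\circ x=g\circ y'$, and one checks that under this identification the comparison map of the Frobenius square of $f'$ becomes the map sending $(x,y')$ to $(F_X\circ x,\,y')$, its target being the set of pairs $(x_1,y_2)$ with $f\circ x_1=F_Y\circ g\circ y_2$. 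Now the cartesianity of the square of $f$ says precisely that for every $T$ and every pair $(e,c)$ with $e\in\mathrm{Hom}(T,X)$, $c\in\mathrm{Hom}(T,Y)$ and $f\circ e=F_Y\circ c$ there is a unique $d\in\mathrm{Hom}(T,X)$ with $F_X\circ d=e$ and $f\circ d=c$. Applying this with $(e,c)=(x_1,\,g\circ y_2)$ yields surjectivity of that map and with $(e,c)=(F_X\circ x,\,f\circ x)$ yields its injectivity; hence it is a bijection for every $T$, and the square of $f'$ is cartesian.

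The fs variant is formally the same. Since $(-)^{\mathrm{sat}}$ is right adjoint to the inclusion of fs log schemes into fine log schemes (\cite[II.2.4.5.2]{Ogus-Logbook}), it preserves fibre products (hence also commutes with $(-)_0$), so $(X'_0)^{\mathrm{sat}}=X_0^{\mathrm{sat}}\times_{Y_0^{\mathrm{sat}}}(Y'_0)^{\mathrm{sat}}$ in the category of fs log schemes and $(f'_0)^{\mathrm{sat}}$ is the fs base change of $(f_0)^{\mathrm{sat}}$ along $(g_0)^{\mathrm{sat}}$; as $f_0$ being fs log relatively perfect forces $(f_0)^{\mathrm{sat}}$ to be so (see \ref{dfn-lrpFp}), one repeats the previous paragraph verbatim with every fibre product taken in fs log schemes. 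The whole argument is essentially formal; the one step that requires care — the ``hard'' part, such as it is — is to verify that the identification of Hom-sets in the second paragraph genuinely carries the canonical comparison map of the Frobenius square to the elementary map $(x,y')\mapsto(F_X\circ x,y')$, which is exactly where the naturality of the absolute Frobenius enters.
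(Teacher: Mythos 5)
Your proof is correct and is essentially the paper's argument: after the same reduction to $i=0$ via Lemma \ref{rel-parf-stab2pre}, the paper observes that the two composite diagrams in \ref{cart-logrp2} have the same outer rectangle (by naturality of the absolute Frobenius) and concludes by the pasting lemma for cartesian squares, which is exactly what your Yoneda computation verifies by hand on $T$-points. The fs case is handled the same way in both.
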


\begin{proof}
Since the fs part is similar, 
let us only consider the fine part.
Using \ref{rel-parf-stab2pre}, we reduce to the case where $i=0$.
Since the outline of both diagrams
\begin{equation}
\label{cart-logrp2}
\xymatrix{
{X'}
\ar[r] ^-{F _{X'}}
\ar[d] ^-{f'}
&
{X'}
\ar[r] ^-{}
\ar[d] ^-{f'}
\ar@{}[rd] ^-{}|\square
&
{X}
\ar[d] ^-{f}
\\
{Y'}
\ar[r] ^-{F _{Y'}}
&
{Y'}
\ar[r] ^-{}
&
{Y},
}
\;
\
\xymatrix{
{X'}
\ar[r] ^-{}
\ar[d] ^-{f'}
\ar@{}[rd] ^-{}|\square
&
{X}
\ar[r] ^-{F _X}
\ar[d] ^-{f}
\ar@{}[rd] ^-{}|\square
&
{X}
\ar[d] ^-{f}
\\
{Y'}
\ar[r] ^-{}
&
{Y}
\ar[r] ^-{F _Y}
&
{Y}
}
\end{equation}
are the same, we conclude.
\end{proof}

\begin{lem}
\label{strict-lrp}
Let $f\colon X \to Y$ be a strict $S$-morphism of fine log schemes
(resp. fs log schemes).
Then $f$ is fine log relatively perfect (resp. fs log relatively perfect)
if and only if
$\underline{f}$ is relatively perfect as defined by Kato in
\cite[1.1]{Kato-explicity-recip91}.

\end{lem}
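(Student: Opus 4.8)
The plan is to unwind both notions of relative perfectness into two conditions apiece and match them term by term, using strictness of $f$ to collapse each log-geometric condition to a scheme-theoretic one. By Definition \ref{dfn-lrp}, $f$ is fine log relatively perfect (resp.\ fs log relatively perfect) if and only if (a) $f$ is fine formally log etale (resp.\ fs formally log etale), and (b) the morphism $f_0$ is fine log relatively perfect (resp.\ fs log relatively perfect) in the sense of \ref{dfn-lrpFp}, i.e.\ the Frobenius square \ref{cart-logrp} attached to $f_0$ is cartesian in the category of fine (resp.\ fs) log schemes. Kato's definition in \cite[1.1]{Kato-explicity-recip91} unwinds in parallel: $\underline f$ is relatively perfect if and only if (a$'$) $\underline f$ is formally etale, and (b$'$) the underlying Frobenius square of $\underline{f_0}$ is cartesian in the category of schemes. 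Since $f$ is strict, so is $f_0$ and so are all the morphisms produced from it by the (strict) base changes entering these conditions; so the lemma reduces to two equivalences for strict morphisms: (i) $f$ is fine formally log etale (resp.\ fs formally log etale) $\iff$ $\underline f$ is formally etale; and (ii) the Frobenius square \ref{cart-logrp} of $f_0$ is cartesian in fine (resp.\ fs) log schemes $\iff$ its underlying square is cartesian in schemes.

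For (i) I would argue as in the proof of \ref{logetale-etale}: any morphism $T\to Y$ of fine log schemes factors uniquely as $T\to T'\to Y$ with $T'\to Y$ strict and $\underline T=\underline{T'}$, and similarly on the source, so the lifting criterion of Definition \ref{dfn-etale} against $\mathscr{C}_1$ may be tested on diagrams of strict morphisms. When $f$ is strict, a strict lifting datum $(U\hookrightarrow T,\ u_0,\ v)$ has all its log structures pulled back from $Y$ (resp.\ from $X$), and the set of log lifts $u\colon T\to X$ over $Y$ is in canonical bijection with the set of scheme lifts $\underline u\colon\underline T\to\underline X$ over $\underline Y$; conversely every order one square-zero thickening of $\underline Y$-schemes underlies an object of $\mathscr{C}_1$, obtained by equipping $T$ and $U$ with the log structures pulled back from $Y$ (the closed immersion becomes strict, hence exact, hence an order one log thickening). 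Thus fine formal log etaleness of $f$ is equivalent to formal etaleness of $\underline f$; the fs variant is identical, replacing $\mathscr{C}_1$ by $\mathscr{C}^{\mathrm{sat}}_1$ and using that a log structure pulled back from an fs log scheme is already saturated.

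For (ii) the key point is that, $g$ being strict, the fibre product $Y_0\times_{F_{Y_0},Y_0,g}X_0$ in the category of fine (resp.\ fs) log schemes is computed naively: its underlying scheme is $\underline Y_0\times_{F_{\underline Y_0},\underline Y_0,\underline g}\underline X_0$ and its log structure is pulled back along the projection to the $Y_0$-factor, no integralization or saturation being needed since a pulled-back fine (resp.\ fs) log structure is fine (resp.\ fs). Hence the canonical comparison morphism $X_0\to Y_0\times_{F_{Y_0},Y_0,f_0}X_0$ is strict: composing it with the projection to the $Y_0$-factor gives $f_0$, so the log structure of its target pulls back to $f_0^*M_{Y_0}=M_{X_0}$. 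Since a strict morphism of fine (resp.\ fs) log schemes is an isomorphism if and only if the underlying morphism of schemes is, this yields (ii). Combining (i), (ii), Definitions \ref{dfn-lrp} and \ref{dfn-lrpFp}, and Kato's definition then gives the lemma in both cases (in the fs case $X_0,Y_0$ are already fs, so the right-hand square of \ref{cart-logrp} coincides with the left-hand one and there is nothing extra to do).

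I expect the main obstacle to be the fibre product bookkeeping in step (ii): one must be careful that the fibre product is the one computed in the category of fine (resp.\ fs) log schemes, and that strictness of $g$ is precisely what forces this fibre product to have the expected underlying scheme and a strict projection, so that an isomorphism of log schemes can be detected on underlying schemes. A secondary point, needed for the termwise matching, is to confirm that Kato's definition in \cite[1.1]{Kato-explicity-recip91} indeed reads ``$\underline f$ formally etale together with a cartesian Frobenius square after reduction modulo $p$''.
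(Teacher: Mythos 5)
Your proposal is correct and follows essentially the same route as the paper: reduce to the case $i=0$ by showing that for strict $f$ fine (resp.\ fs) formal log \'etaleness is equivalent to formal \'etaleness of $\underline{f}$ (as in \ref{logetale-etale}), and then use strictness to identify the underlying scheme of the fibre product $Y\times_{F_Y,Y,f}X$ computed in fine (resp.\ fs) log schemes with the scheme-theoretic fibre product, so that cartesianness of the Frobenius square can be tested on underlying schemes. The only cosmetic difference is that the paper dispatches the direction ``$\underline{f}$ relatively perfect $\Rightarrow f$ log relatively perfect'' by viewing $f$ as the base change of $\underline{f}$ along $Y\to\underline{Y}$ and invoking stability under base change, whereas you match the two conditions termwise in both directions; both arguments rest on the same two facts.
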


\begin{proof}
Since $f$ is strict then $f$ is the base change of $\underline{f}$ by $Y \to \underline{Y}$ in the category of fine log schemes (resp. fs log schemes).
This yields that if $\underline{f}$ is relatively perfect then 
$f$  is fine log relatively perfect (resp. fs log relatively perfect).
Conversely, suppose that 
the morphism $f$ is fine (resp. fs) formally log étale.
Then, similarly to the proof of 
\ref{logetale-etale}, we check that $\underline{f}$  is formally étale.
Hence, we reduce to the case $i=0$.
Moreover, since $f$ is strict
then
$\underline{Y \times _{ F _Y, Y , f} X}
=
\underline{Y} \times _{ F _{\underline{Y}}, \underline{Y} , \underline{f}} \underline{X}$. 
Since $f$ is fine log relatively perfect (resp. fs log relatively perfect), we get
$Y=Y \times _{ F _Y, Y , f} X$. This yields
$\underline{Y}=\underline{Y} \times _{ F _{\underline{Y}}, \underline{Y} , \underline{f}} \underline{X}$ and we are done.
\end{proof}

\begin{lem}
\label{rel-parf-stab1}
Let $f\colon X \to Y$ and $g\colon  Y \to Z$ be two $S$-morphisms of fine log schemes.
The morphisms $f$ and $g$ are 
fine log relatively perfect
(resp. fs log relatively perfect)
if and only if so are $g\circ f$ and $g$. 
\end{lem}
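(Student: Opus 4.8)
Lemma \ref{rel-parf-stab1} asserts a 2-out-of-3 property for (fine or fs) log relative perfectness with respect to composition. Here is how I would prove it.

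The plan is to reduce to the level-$0$ case and then chase the two relevant Frobenius squares. First I would invoke Definition \ref{dfn-lrp}: $f$ is fine log relatively perfect means $f$ is fine formally log étale and $f_0$ is fine log relatively perfect (in the sense of \ref{dfn-lrpFp}), and similarly for $g$ and $g\circ f$. Since being fine formally log étale satisfies the 2-out-of-3 property by Lemma \ref{p-etale-stab1}, and since $(g\circ f)_0 = g_0 \circ f_0$, the statement immediately reduces to the case $i = 0$, i.e. to morphisms over $S_0$, where relative perfectness is the literal cartesianity of the left-hand square in \ref{cart-logrp}. The fs case is handled identically, replacing ``fine'' by ``fs'' throughout and using Lemma \ref{p-etale-stab1} again for the fs formally log étale part.

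For the level-$0$ part, I would work with the composite Frobenius diagram
\begin{equation}
\notag
\xymatrix{
{X}
\ar[r] ^-{F _X}
\ar[d] ^-{f}
&
{X}
\ar[d] ^-{f}
\\
{Y}
\ar[r] ^-{F _Y}
\ar[d] ^-{g}
&
{Y}
\ar[d] ^-{g}
\\
{Z}
\ar[r] ^-{F _Z}
&
{Z,}
}
\end{equation}
whose two rows are the Frobenius squares of $f$ and of $g$, and whose outer rectangle is the Frobenius square of $g\circ f$ (this uses functoriality of the absolute Frobenius, i.e. $F_Y\circ f = f \circ F_X$ and $F_Z\circ g = g\circ F_Y$). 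Then the classical pasting lemma for cartesian squares in the category of fine (resp. fs) log schemes does all the work: if the two inner squares are cartesian, so is the outer rectangle, giving ``$f,g$ perfect $\Rightarrow g\circ f, g$ perfect'' (the $g$ part being trivial); and if the outer rectangle and the bottom square are cartesian, then the top square is cartesian, giving the converse. One small point to be careful about: the pasting lemma in its ``outer + bottom $\Rightarrow$ top'' form needs the bottom square to be cartesian, which is exactly the hypothesis that $g$ is perfect, so the 2-out-of-3 shape is respected.

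The main obstacle — really the only nonroutine point — is checking that the pasting lemma for cartesian squares is valid in the category of fine (resp. fs) log schemes, since fiber products there are \emph{not} computed as in the category of schemes or of all log schemes (they involve the integralization, resp. saturation, functor). However, this is formal: in any category, pasting of pullback squares holds, and here all fiber products are by convention taken in the fixed category of fine (resp. fs) log schemes, so no compatibility with the forgetful functor to schemes is needed. Hence I expect the proof to be short, essentially ``reduce to $i=0$ by \ref{dfn-lrp} and \ref{p-etale-stab1}, then apply the pasting lemma to the composite Frobenius diagram,'' with the fs case verbatim the same.
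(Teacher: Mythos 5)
Your proof is correct and is essentially the paper's own argument: the paper likewise reduces to the case $i=0$ via Lemma \ref{p-etale-stab1} and then dismisses the level-$0$ part as ``abstract nonsense,'' which is exactly the pasting lemma for cartesian squares applied to the composite Frobenius diagram that you spell out.
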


\begin{proof}
Using \ref{p-etale-stab1}, we reduce to the case where $i=0$. 
Then, this is abstract nonsense.
\end{proof}

\begin{lem}
\label{et-flrp}
Let $f\colon X \to Y$ be an étale $S$-morphism of fine log-schemes.
Then $f$ is fine log relatively perfect.
\end{lem}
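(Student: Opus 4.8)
The plan is to unwind Definition \ref{dfn-lrp} and reduce to a classical fact about schemes. First I would observe that an \'etale morphism is in particular log \'etale (Definition \ref{dfn-etale}.\ref{1}), hence fine formally log \'etale; so the first half of the condition in \ref{dfn-lrp} is satisfied for free. It then remains to show that the reduction $f _0\colon X _0 \to Y _0$ is fine log relatively perfect in the sense of \ref{dfn-lrpFp}, i.e. that the Frobenius square \ref{cart-logrp} associated with $f _0$ is cartesian in the category of fine log schemes.

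For this, I would note that $f _0$, being the base change of the \'etale morphism $f$ along $Y _0 \hookrightarrow Y$, is again \'etale, and in particular strict. This puts us exactly in the hypotheses of Lemma \ref{strict-lrp}: $f _0$ is fine log relatively perfect if and only if $\underline{f _0}$ is relatively perfect as a morphism of schemes in the sense of Kato, \cite[1.1]{Kato-explicity-recip91}.

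Finally, since $\underline{X _0}$ and $\underline{Y _0}$ are schemes over $\F _p$ (they lie over $S _0$, a log scheme over $\Spec \F _p$) and $\underline{f _0}$ is \'etale, the relative Frobenius of $\underline{f _0}$ is a morphism of \'etale $\underline{Y _0}$-schemes which is radicial and surjective (it is a universal homeomorphism, being a factor of the absolute Frobenius), hence an isomorphism by \cite[IV.17.9.1]{EGAIV4}; equivalently, $\underline{f _0}$ is relatively perfect. Combined with Lemma \ref{strict-lrp} and with the first paragraph, Definition \ref{dfn-lrp} then yields that $f$ is fine log relatively perfect. I do not expect any genuine obstacle here: the argument is pure bookkeeping of the definitions together with Lemma \ref{strict-lrp}, the only non-formal input being the classical statement that an \'etale morphism of $\F _p$-schemes is relatively perfect.
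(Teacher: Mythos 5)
Your proposal is correct and follows essentially the same route as the paper: reduce via Lemma \ref{strict-lrp} to the statement that an \'etale morphism of schemes is relatively perfect in Kato's sense, which the paper simply declares ``well known.'' The only difference is cosmetic --- you unwind Definition \ref{dfn-lrp} into its two halves and apply \ref{strict-lrp} to $f_0$ rather than directly to $f$, and you supply the classical argument (relative Frobenius between \'etale $Y_0$-schemes is \'etale, radicial and surjective, hence an isomorphism) that the paper leaves implicit.
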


\begin{proof}
Using 
\ref{strict-lrp}, we reduce to check that
an étale morphism of schemes is relatively perfect
as defined by Kato in
\cite[1.1]{Kato-explicity-recip91}, which is well known.
\end{proof}

\begin{lem}
\label{rel-parf-stab2bispre}
Let $f\colon X \to Y$ and $g \colon X '\to X$ be two $S$-morphisms of fine log-schemes such that $g$ is étale, quasi-compact and surjective.
The morphism $f$ is 
fine log relatively perfect 
(resp. fs log relatively perfect) if and only so is $f \circ g$.
\end{lem}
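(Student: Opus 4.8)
The plan is to reduce to the case $i=0$ and there argue with the relative Frobenius. We only treat the fine case; the fs case is proved identically, replacing the quoted results by their fs counterparts and using that $g$ is fs log relatively perfect (an étale morphism being fs log relatively perfect, by \ref{strict-lrp}) and that $g^{\mathrm{sat}}$ is again étale, quasi-compact and surjective. First, by \ref{p-etale-stab2bispre} the morphism $f$ is fine formally log étale if and only if $f\circ g$ is. Since $g_0$ is again étale, quasi-compact and surjective and $(f\circ g)_0 = f_0\circ g_0$, Definition \ref{dfn-lrp} reduces the assertion to the case $i=0$, i.e.\ $S = S_0$. In that case, for an $S_0$-morphism $h\colon Z\to Y$ of fine log schemes write $\phi_h\colon Z\to Y\times_{F_Y,Y,h}Z$ for the canonical (``relative Frobenius'') morphism; by \ref{dfn-lrpFp}, $h$ is fine log relatively perfect exactly when $\phi_h$ is an isomorphism.

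The key step is the identity of morphisms $X'\to Y\times_{F_Y,Y,f\circ g}X'$
\[
\phi_{f\circ g} = \left( \phi_f\times_X \mathrm{id}_{X'}\right)\circ\phi_g ,
\]
where $\phi_f\colon X\to Y\times_{F_Y,Y,f}X$ is regarded as a morphism of $X$-schemes via $F_X$ on the source and the projection on the target, $\phi_f\times_X\mathrm{id}_{X'}$ denotes its base change along $g$, $\phi_g\colon X'\to X\times_{F_X,X,g}X'$, and one uses the canonical identification $\left(Y\times_{F_Y,Y,f}X\right)\times_X X' = Y\times_{F_Y,Y,f\circ g}X'$ (the left-hand fibre product being formed along the projection to $X$ and along $g$). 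Since $g$ is étale, $\phi_g$ is an isomorphism by \ref{et-flrp}, so $\phi_{f\circ g}$ is an isomorphism if and only if $\phi_f\times_X\mathrm{id}_{X'}$ is. Finally, since $g$ is étale, quasi-compact and surjective, faithfully flat descent along $g$ shows that $\phi_f\times_X\mathrm{id}_{X'}$ is an isomorphism if and only if $\phi_f$ is. Combining these equivalences, $f\circ g$ is fine log relatively perfect if and only if $f$ is.

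The main obstacle is the verification of the displayed identity: it is a diagram chase combining the two Frobenius squares attached to $g$ and to $f$ with the base-change identification above, and one must also check that the descent argument is legitimate in the category of fine log schemes. Both are routine once one observes that $g$, being étale, is strict, so that every fine-log fibre product occurring above has underlying scheme the fibre product of the underlying schemes and log structure pulled back; the scheme-level faithfully flat descent and the compatibility of saturation with strict base change then take care of the log-theoretic bookkeeping.
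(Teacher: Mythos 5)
Your proof is correct and follows essentially the same route as the paper's: reduce to $i=0$ via \ref{p-etale-stab2bispre}, identify the relative Frobenius of $f\circ g$ with a base change of that of $f$ (using that the étale morphism $g$ is relatively perfect, \ref{et-flrp}), and descend the isomorphism property along the étale, quasi-compact, surjective $g$. The paper cites [EGA IV, 2.7.1(viii)] for that descent step and treats the easy direction separately via \ref{rel-parf-stab1} and \ref{et-flrp}, whereas you extract both directions from the single factorization identity, but the substance is identical.
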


\begin{proof}
From \ref{rel-parf-stab1} and \ref{et-flrp}, if $f$ is fine log relatively perfect then so is $f \circ g$.
Conversely, suppose $f \circ g$ is fine log relatively perfect.
Using \ref{p-etale-stab2bispre}, we reduce to the case where $i=0$. 
We have to check that the morphism 
$(F _X, f) \colon X \to X \times _{f, Y ,F _Y}Y$ 
is an isomorphism. Since is Zariski local, we can suppose $X$ affine.
Since $f \circ g$ is fine log relatively perfect, 
then $(F _{X'}, f\circ g) \colon X '\to X '\times _{f\circ g, Y ,F _Y}Y$, 
is an isomorphism.
We notice that  $(F _{X'}, f\circ g)$, 
is the base change of $(F _X, f)$ by $g\times id \colon X '\times _{f\circ g, Y ,F _Y}Y \to X \times _{f, Y ,F _Y}Y$
Since $g\times id$ is etale, quasi-compact and surjective, then 
using \cite[IV.2.7.1.(viii)]{EGAIV2}, 
we can conclude.
The fs log relatively perfect case is checked similarly.
\end{proof}

\begin{lem}
\label{rel-parf-stab2bis}
Let $f\colon X \to Y$ and $g \colon Y '\to Y$ be two $S$-morphisms of fine log-schemes such that $g$ is étale, quasi-compact and surjective.
Set $X ':= X \times _{Y} Y'$ in the category of fine log schemes and $f' \colon X ' \to Y '$ the projection.
The morphism $f$ is 
fine log relatively perfect 
(resp. fs log relatively perfect) if and only so is $f'$.
\end{lem}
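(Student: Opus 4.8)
The plan is to mimic closely the proof of \ref{rel-parf-stab2bispre}, the ``source'' version of this descent statement, the essential new ingredient being the correct fibre-product bookkeeping for a base change of the \emph{target} rather than of the source.

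First I would reduce to the case $i=0$. By Definition \ref{dfn-lrp}, $f$ (resp. $f'$) is fine log relatively perfect if and only if it is fine formally log étale and $f_0$ (resp. $f'_0$) is fine log relatively perfect over $S_0$. By \ref{p-etale-stab2bis}, $f$ is fine formally log étale if and only if $f'$ is. Since reduction modulo $p$ commutes with fibre products of fine log schemes, one has $X'_0=X_0\times_{Y_0}Y'_0$, $f'_0=(f_0)'$ is the corresponding projection, and $g_0\colon Y'_0\to Y_0$ is again étale, quasi-compact and surjective. Hence it is enough to treat the case $S=S_0$, where, by the remark following \ref{dfn-lrp}, ``fine log relatively perfect'' means precisely that the left square of \ref{cart-logrp} is cartesian, i.e. that $\gamma:=(F_X,f)\colon X\to Z$ is an isomorphism, where $Z:=X\times_{f,Y,F_Y}Y$ (and similarly for $f'$).

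Assume now $i=0$. If $f$ is fine log relatively perfect then so is $f'$, by \ref{rel-parf-stab2}. Conversely, suppose $\gamma':=(F_{X'},f')\colon X'\to X'\times_{f',Y',F_{Y'}}Y'$ is an isomorphism; following \ref{rel-parf-stab2bispre} we may assume $X$ affine. Let $q_2\colon Z\to Y$ denote the projection onto the source of $F_Y$, so that $q_2\circ\gamma=f$, and set $Z':=Z\times_{q_2,Y,g}Y'$, with projection $Z'\to Z$ the base change of $g$, hence étale, quasi-compact and surjective, and strict since $g$ is étale. The key computation is that, using the naturality of the absolute Frobenius (so that $F_{X'}$ is the morphism into $X'=X\times_Y Y'$ with components $F_X$ and $F_{Y'}$, and $g\circ F_{Y'}=F_Y\circ g$), there is a canonical isomorphism $Z'\simeq X'\times_{f',Y',F_{Y'}}Y'$ identifying $X\times_{\gamma,Z}Z'$ with $X'$ and the base change $\gamma\times_Z\mathrm{id}_{Z'}$ with $\gamma'$. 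Granting this, $\gamma$ is strict because its strict surjective base change $\gamma'$ is an isomorphism, hence strict; therefore $\gamma$ is an isomorphism if and only if $\underline{\gamma}$ is, and $\underline{\gamma'}=\underline{\gamma}\times_{\underline{Z}}\underline{Z'}$ is an isomorphism with $\underline{Z'}\to\underline{Z}$ faithfully flat and quasi-compact, so we conclude by \cite[IV.2.7.1.(viii)]{EGAIV2}.

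Finally, the fs log relatively perfect case follows along the same lines: by \ref{rem-lrplogetale} together with \ref{lem-lrp} one reduces to the analogous statement for $f^{\mathrm{sat}}$, $g^{\mathrm{sat}}$ and $(f')^{\mathrm{sat}}$ in the category of fs log schemes, where the same fibre-product manipulation applies. I expect the main obstacle to be precisely that identification of $\gamma'$ as a base change of $\gamma$ and the check that all the descent data genuinely live in the category of fine (resp. fs) log schemes; once this is in hand the statement is just classical faithfully flat descent of isomorphisms.
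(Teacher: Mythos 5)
Your proposal is correct and follows essentially the same route as the paper: reduce to $i=0$ via \ref{p-etale-stab2bis}, handle the forward direction by \ref{rel-parf-stab2}, and for the converse recognize $(F_{X'},f')$ as the base change of $(F_X,f)$ along the étale, quasi-compact, surjective morphism $X\times_{f,Y,F_Y\circ g}Y'\to X\times_{f,Y,F_Y}Y$ induced by $g$, then conclude by \cite[IV.2.7.1.(viii)]{EGAIV2}. Your extra care in descending strictness of $(F_X,f)$ so that the scheme-level descent statement suffices is a worthwhile detail the paper leaves implicit.
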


\begin{proof}
From \ref{rel-parf-stab2},
 $f$ is fine log relatively perfect then so is $f'$.
Let us check converse: suppose $f'$ is fine log relatively perfect. 
Using \ref{p-etale-stab2bis}, we reduce to the case where $i=0$. 
We have to check that the morphism 
$(F _X, f) \colon X \to X \times _{f, Y ,F _Y}Y$ 
is an isomorphism.
Since $f'$ is fine log relatively perfect then 
the base change of $(F _X, f)$ by $id \times g\colon 
X '\times _{f', Y ',F _{Y'}}Y '
=
X \times _{f, Y ,F _Y\circ g}Y ' 
\to 
X \times _{f, Y ,F _Y}Y
$ is an isomorphism (to check the equality, recall from \ref{et-flrp} that $g$ is relatively perfect).  
Hence, using \cite[IV.2.7.1.(viii)]{EGAIV2}, 
we check that $(F _X, f)$ 
is an isomorphism.
The fs log relatively perfect case is checked similarly.
\end{proof}

\begin{ntn}
\label{ntnAp}
Let $P$ be a monoid.
We denote by $A _P:= (\Spec (\Z / p ^{i+1}\Z[ P ]), M _P)$
 the log formal scheme
whose underlying scheme is $\Spec (\Z / p ^{i+1}\Z [ P ])$ 
and whose log structure is the log structure associated with the pre-log structure
induced canonically by $P \to \Z / p ^{i+1}\Z [ P ]$.

Beware that the notation $A _P:= (\Spec (\Z [ P ]), M _P)$ seems more common in the literature, 
but since we are working over $\Z / p ^{i+1}\Z$ this is much more convenient for us
to put $A _P:= (\Spec (\Z / p ^{i+1}\Z[ P ]), M _P)$.

\end{ntn}

\begin{prop}
\label{let-lrp}
Let $f\colon X \to Y$ be a log étale $S$-morphism of fine log-schemes.
Then $f$ is fs log relatively perfect.
\end{prop}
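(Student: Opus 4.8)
The plan is to reduce the statement to its saturated analogue and then to a local chart computation, using the analogous result \ref{let-lpet} for log $p$-\'etaleness as a template. First I would invoke Lemma \ref{rem-lrplogetale}, which says that $f$ is fs log relatively perfect if and only if $f ^{\mathrm{sat}}$ is, together with Lemma \ref{rel-parf-stab2bis} and Lemma \ref{rel-parf-stab2bispre}, which show that fs log relative perfectness is \'etale local on both $X$ and $Y$. Since $f$ is log \'etale, by definition $\underline{f}$ is locally of finite presentation, and by \cite[Theorem 3.5]{Kato-logFontaine-Illusie} we may work \'etale locally and assume $X = A _P$, $Y = A _Q$, with a chart subordinate to a morphism $\phi\colon Q \to P$ of fine monoids such that $\ker(\phi ^{\mathrm{gp}})$ and $\mathrm{coker}(\phi ^{\mathrm{gp}})$ are finite of order prime to $p$ — exactly the starting point of the proof of \ref{let-lpet}.

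Next I would handle the two ingredients in Definition \ref{dfn-lrp}. That $f$ is fs formally log \'etale is immediate: a log \'etale morphism is fine formally log \'etale (since $\mathscr C _1 \subseteq \mathscr C _{(p)}$ — or rather directly, since $\mathscr{C}_1\subseteq \mathscr{C}_{(p)}$ is not needed here, log \'etaleness implies fine formally log \'etale by definition of log \'etale via \ref{dfn-etale}), and then fs formally log \'etale follows by restricting the universal property to $\mathscr C _1 ^{\mathrm{sat}} \subseteq \mathscr C _1$; passing to $f^{\mathrm{sat}}$ via Lemma \ref{lem-lrp} if one prefers. The substance is then to check that $(f ^{\mathrm{sat}}) _0$ is fs log relatively perfect, i.e. that the right-hand square in \ref{cart-logrp} is cartesian in the category of fs log schemes. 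On underlying schemes, $\underline{f} _0$ is \'etale, hence relatively perfect in Kato's sense by the well-known case (as used in \ref{et-flrp}), so the underlying square is cartesian; what remains is the statement on log structures, i.e. that the natural map $M _{X ^{\mathrm{sat}} _0} \to F _{X ^{\mathrm{sat}} _0} ^{*} M _{X ^{\mathrm{sat}} _0} \times _{f _0 ^{*} F _{Y _0} ^{*} M _{Y _0}} f _0 ^{*} M _{Y _0}$ is an isomorphism.

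For this last point, on the charts $A _P$, $A _Q$ over $\F _p$, the absolute Frobenius acts on the monoid side by multiplication by $p$ on $P$ (resp. $Q$), so the cartesianity of the log-structure square amounts to showing that the square of monoids
\begin{equation}
\notag
\xymatrix{
{P}
&
{P}
\ar[l] _-{\times p}
\\
{Q}
\ar[u] ^-{\phi}
&
{Q}
\ar[l] ^-{\times p}
\ar[u] ^-{\phi}
}
\end{equation}
is cocartesian after saturation, i.e. that $P \oplus _{Q, \times p} Q \to P$, the pushout taken in fs monoids followed by the induced map, is an isomorphism — equivalently that $p\colon P/Q \to P/Q$ is bijective on the relevant cokernel in the saturated sense. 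Since $\mathrm{coker}(\phi ^{\mathrm{gp}})$ and $\ker(\phi ^{\mathrm{gp}})$ are finite of order prime to $p$, multiplication by $p$ is an automorphism of $\mathrm{coker}(\phi ^{\mathrm{gp}})$, and a diagram chase (parallel to the $\mathrm{coker}(\theta ^{\mathrm{gr}})$ argument in \ref{let-lpet}, now at the level of monoids rather than group sheaves) gives the claim; saturating is harmless because the orders involved are prime to $p$. I expect this monoid-theoretic verification — correctly matching Kato's definition of the absolute log Frobenius \cite[4.7]{Kato-logFontaine-Illusie} with the pushout of fs monoids, and checking it is an isomorphism rather than merely an isogeny prime to $p$ — to be the main obstacle; everything else is the formal reduction machinery already assembled in the lemmas above. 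Alternatively, one could avoid charts entirely and argue that a log \'etale morphism is, \'etale locally, the composite of a strict \'etale morphism with a map of the form $A _P \to A _Q$ of Kummer type, handle the strict part by \ref{et-flrp} and \ref{strict-lrp}, and handle the Kummer part by the monoid computation; I would present whichever is shorter.
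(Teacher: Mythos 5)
Your overall skeleton coincides with the paper's: reduce to $f = f^{\mathrm{sat}}$ via \ref{lem-lrp} and \ref{rem-lrplogetale}, use \ref{rel-parf-stab2bispre} and \ref{rel-parf-stab2bis} to work \'etale locally on $X$ and $Y$, pass to a chart $A_P \to A_Q$ with $\ker(\phi^{\gp})$ and $\mathrm{coker}(\phi^{\gp})$ finite of order prime to $p$, and reduce everything to the assertion that the square of monoids with horizontal arrows $\times p$ and vertical arrows $\phi$ is cocartesian in fine saturated monoids. That final assertion is exactly the paper's key computation (proved there by using saturation of the test monoid to reduce to the group case, then a Bezout argument), and your reasons for believing it are essentially right. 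One small point: the chart should be taken with $\phi$ a morphism of fs monoids, which requires \cite[II.2.2.12]{Ogus-Logbook} in addition to Kato's theorem.

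The genuine gap is in your bridge from that monoid statement to the cartesianity of the Frobenius square. You assert that $\underline{f}_0$ is \'etale, hence relatively perfect, hence that ``the underlying square is cartesian,'' leaving only a statement about log structures. Both halves fail. The underlying morphism of schemes of a log \'etale morphism is not \'etale in general: for $A_{\N} \to A_{\N}$ induced by $n\colon \N \to \N$ the underlying map is $t \mapsto t^n$, ramified at the origin, and the square of underlying schemes is genuinely non-cartesian there (the scheme-theoretic fibre product is the singular curve $\Spec \F_p[t,s]/(t^n - s^p)$, not $\A^1$). More structurally, the fibre product in the category of fs log schemes does not have the fibre product of underlying schemes as its underlying scheme --- integralization and saturation modify it, which is precisely the phenomenon behind Remark \ref{counterexample} --- so cartesianity cannot be checked by splitting into an underlying-scheme statement and a log-structure statement. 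The correct bridge, and the one the paper uses, is categorical: since $\mathrm{Hom}(T, A_P) = \mathrm{Hom}(P, \Gamma(T, M_T))$, the contravariant functor $P \mapsto A_P$ from fine saturated monoids to fs log schemes over the base takes cocartesian squares to squares that are cartesian \emph{in the category of fs log schemes}, so the cocartesianity of the monoid square directly yields the cartesianity of \ref{cart-logrp} with no separate claim about underlying schemes. With that substitution your argument closes.
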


\begin{proof}
Since these notions do not depend on $S$, 
we can suppose 
$S= \Spec (\Z / p ^{i+1}\Z)$.
From \cite[IV.3.1.12]{Ogus-Logbook}, 
$ X  ^{\mathrm{sat}} \to X $
and 
$Y  ^{\mathrm{sat}} \to Y$
are log etale
(see \cite[II.2.4.5.2]{Ogus-Logbook} concerning the functor $X \mapsto X ^{\mathrm{sat}}$). 
Hence, using Remark \cite[IV.3.1.2]{Ogus-Logbook},
$f^{\mathrm{sat}} \colon X ^{\mathrm{sat}} \to Y^{\mathrm{sat}}$ is also log étale.
From Lemmas \ref{lem-lrp} and \ref{rem-lrplogetale},
we can suppose that $f = f^{\mathrm{sat}}$.
Since $f$ is fs formally log étale, then we are reduced to the case $i=0$. Next we observe that the case where $f$ is strict 
is already known (see \ref{strict-lrp} and \ref{et-flrp}).
Following 
\ref{rel-parf-stab2bispre}, and \ref{rel-parf-stab2bis},
the fs relative perfectness of $f$  is \'etale local on both $X$ and $Y$.
Hence, using \cite[II.2.2.12]{Ogus-Logbook}
and  \cite[Theorem 3.5]{Kato-logFontaine-Illusie},
we reduce to the case 
 where $X=A_P$, $Y=A_Q$ (see Notation \ref{ntnAp}) and $f$ is induced by a morphism $\phi \colon Q\to P$ of $fs$ monoids
 such that the kernel and cokernel of $\phi ^\mathrm{gp}$ is finite of order prime to $p$. 
It remains to prove that the left square below is cartesian in the category of fs log schemes:

\begin{equation}
\label{let-lrp-diag1}
\xymatrix{A_P  
\ar[d] ^-{f}
\ar[r]^{F_{A _P}}
&
A_P 
\ar[d]^-{f}
\\
A _Q
\ar[r]^{F_{A _Q}}
&
A_Q ,
}
\hspace{3cm} 
\xymatrix{
P
&
\ar[l]_{p} 
P
\\
Q
\ar[u] ^-{\phi}
&
\ar[l]_p Q.
\ar[u]  ^-{\phi}
}
\end{equation}
Since the functor $P \mapsto A _P $ from the category of fine satured monoids
to the category of fs log-schemes over $\Spec (\Z / p ^{i+1}\Z)$
transforms cocartesian squares into cartesian squares, 
it is thus sufficient to check that the right square of \ref{let-lrp-diag1} is cocartesian in the category of fine satured monoids.
Let us check that $P$ satisfies the universal property of the pushout.
Let $\alpha \colon P \to O$ and $\beta \colon Q \to O$ be two morphisms of
 fine satured monoids such that $\alpha \circ \phi = \beta \circ p$.
Since $O$ is saturated, if $z$ is an element  of $O ^{\mathrm{gp}}$ such that
$p z$ is in $O$ then $z $ is an element of $O$.
Hence, we reduce to check the universal property in the case where
$P = P ^{\mathrm{gp}}$,
$Q = Q ^{\mathrm{gp}}$ and $O = O ^{\mathrm{gp}}$.
Let $x _0\in P$.
There exist $y \in Q$ and $x\in P$ such than
$x _0 = \phi ( y) + p x$.
(Indeed,
let $n$ be an integer prime to $p$ such that  $n \cdot \mathrm{coker} (\phi)=0$.
There exists $y _0 \in Q$ such that $n x _0= \phi (y _0)$.
Using Bezout lemma, we get the desired result.)
We define a morphism $h \colon P \to O$ satisfying $\alpha = h \circ p$ and $\beta = h \circ \phi$ by putting $h  (x _0):= \beta ( y) + \alpha (x)$.
The morphism $h$ is well defined. Indeed, if $x _0 = \phi ( y) + p x = \phi ( y') + p x'$,
then $\phi ( y -y') = p( x'-x)$ and then we can suppose $y'=0$ and $x=0$, i.e. $x _0= \phi (y) = p x'$.
Since $p$ is an isomorphism on $\mathrm{coker} (\phi)$, there exists $z'\in Q$ such that $x' = \phi (z')$.
Since $p$ is an isomorphism on $\ker (\phi)$, there exists $z'' \in \ker (\phi)$ such that $ y  = p (z' +z'')$.
We compute $\beta ( y ) = \beta ( p (z' +z'') ) = \alpha \circ \phi (z' +z'')=  \alpha \circ \phi (z')= \alpha (x')$.
The unicity of such $h$ is also clear.

\end{proof}

\begin{rem}
\label{counterexample}
The ``fine'' version of proposition \ref{let-lrp} is wrong,
i.e. a log étale morphism is not necessarily fine log relatively perfect.
Indeed, we have the following counter-example.
Let $n\ge 2$ such that $p\nmid n$. 
In the category of fine (resp fine saturated) monoids the inductive limit of the diagram
$\xymatrix{\N&\ar[l] _p\ar[r]^n \N &\N}$
is the submonoid of $\N$ generated by $p$ and $n$ (resp. the associated saturated monoid, i.e. $\N$ itself).
Let $f:A_{\N}\to A_\N$ denote the morphism induced by $n:\N\to \N$.
Hence, the morphism $f$ is log étale but not fine relatively perfect.
\end{rem}

\begin{empt}
The following diagram summarizes the relations between our definitions:
\begin{equation}
\label{links}
\xymatrix{
{\text{log étale}}
\ar@/^1cm/@{=>}[rr] _-{\ref{let-lpet}}
\ar@{=>}[dr] _-{\ref{let-lrp}}
\ar@{}[r] 
|-{\ref{counterexample}}
&
{\text{fine log relatively perfect}}
\ar@{=>}[d] ^-{}
\ar@{=>}[r] ^-{\ref{rlp-fle-prop}}
&
{\text{log $p$-étale}}
\ar@{=>}[d] ^-{}
\ar@{=>}[r] ^-{}
&
{\text{fine formally log étale}}
\ar@{=>}[d] ^-{}
\\
&
{\text{fs log relatively perfect}}
\ar@{=>}[r] ^-{\ref{rlp-fle-prop}}
&
{\text{fs log $p$-étale}}
\ar@{=>}[r] ^-{}
&
{\text{fs formally log étale}.}
}
\end{equation}

From now on we will work with fine (not necessarily saturated) log schemes. 
For our purpose the most relevant notion among those of the above diagram turns out to be log $p$-etaleness
(indeed, the notion of fine formally log etaleness seems too general because,
to get a satisfying theory of $\D$-modules, we need the local description of 
\ref{mPDenv-local-desc}, whose proof uses \ref{mPDenv-lrp}).
The reader which is only interested in the category of fs log schemes can replace log $p$-etaleness by fs log $p$-etaleness in the sequel.
\end{empt}

\subsection{Log $p$-étaleness of level $m$, $m$-PD-envelopes, $n$th infinitesimal neighborhood of level $m$}
\label{subsection12}
Let $(I _S,J _S, \gamma)$ be a quasi-coherent $m$-PD-ideal of $\O _S$.
Let us fix some definitions.

\begin{dfn}
\label{dfnCm}
\begin{enumerate}
\item Let $\mathscr{C} ^{(m)} _{\gamma}$ (resp. $\mathscr{C} _{\gamma, n} ^{(m)}$) be the category whose objects are pairs
$(u, \delta)$ where $u$ is an exact closed $S$-immersion
$Z \hookrightarrow X$ of fine log-schemes
and $\delta$ is an $m$-PD-structure on the ideal $\I$ of $\O _X$ defining 
$u$
which is compatible (see definition \cite[1.3.2.(ii)]{Be1}) with $\gamma$ (resp. and such that $\I ^{\{n+1\} _{(m)}}=0$),
where $\I ^{\{n+1\} _{(m)}}$ is defined in the appendix of \cite{Be2}) ; whose morphisms
$(u', \delta ') \to (u, \delta)$ 
are commutative diagrams of the form
\begin{equation}
\label{morpC^{(m)}}
\xymatrix{
{X'}
\ar[r] ^-{f}
&
{X}
\\
{Z'}
\ar@{^{(}->}[u] ^-{u'}
\ar[r]
&
{Z}
\ar@{^{(}->}[u] ^-{u}}
\end{equation}
such that $f$ is an $m$-PD-morphism with respect to the $m$-PD-structures $\delta$ and $\delta'$
(i.e., denoting by $\I'$ the sheaf of ideals of $\O _{X'}$ defining 
$u'$,
for any affine open sets $U'$ of $X'$ and $U$ of $X$ such that 
$f(U') \subset U$, the morphism $f$ induces the $m$-PD-morphism 
$(\O _{X} (U) , \I (U), \delta) \to (\O _{X'} (U') , \I' (U'), \delta')$).
Beware that theses categories depends on $S$ and 
also on the quasi-coherent $m$-PD-ideal $(I _S,J _S, \gamma)$. 
The objects of $\mathscr{C} ^{(m)} _{\gamma}$ (resp. $\mathscr{C} _{\gamma, n} ^{(m)}$)
are called $m$-PD-$S$-immersions compatible with $\gamma$ 
(resp. $m$-PD-$S$-immersions of order $n$ compatible with $\gamma$). 
We remark that we have the inclusions
$\mathscr{C} ^{(m)} _{\gamma} 
\subset 
\mathscr{C} ^{(m')} _{\gamma} $
for any integer $m' \geq m$ (recall an $m$-PD-structure is also an $m'$-PD-structure).

We say that a morphism 
$(u', \delta ') \to (u, \delta)$ 
of $\mathscr{C} ^{(m)} _{\gamma}$ (resp. $\mathscr{C} _{\gamma, n} ^{(m)}$) is strict 
(resp. flat, resp. cartesian)
if $f$ is strict
(resp. $\underline{f}$ is flat, resp. 
the square \ref{morpC^{(m)}} is cartesian).

\item Let $u $ be an object of $\mathscr{C}$ (see the notation \ref{dfnC}).
 An ``$m$-PD-envelope compatible with $\gamma$ of $u$'' is
 an object $(u', \delta ')$ of $\mathscr{C} ^{(m)} _{\gamma}$
endowed with a morphism
$u' \to u$ in $\mathscr{C}$ satisfying the following universal property:
for any object $(u'', \delta '')$ of $\mathscr{C} ^{(m)} _{\gamma}$ endowed with a morphism
$f \colon u''\to u$ of $\mathscr{C} $
there exists a unique
morphism $(u'', \delta '') \to (u', \delta ')$
of $\mathscr{C} ^{(m)} _{\gamma}$
whose
composition 
with $u' \to u$ is $f$.
The unicity up to canonical isomorphism
of the $m$-PD-envelope compatible
with $\gamma$ of $u$ is obvious. We will denote by 
$P _{(m), \gamma} (u)$ the $m$-PD-envelope compatible
with $\gamma$ of $u$. By abuse of notation we also denote by 
$P _{(m), \gamma} (u)$ the underlying exact closed immersion or its target. 
The existence is checked below (see \ref{mPDenvelope}).

\item Since $p$ is nilpotent in $S$, we get   the forgetful functor
$For ^{(m)}\colon \mathscr{C} ^{(m)} _{\gamma} \to \mathscr{T}hick _{(p)}$
 (resp. $For _n ^{(m)}\colon \mathscr{C} ^{(m)} _{\gamma,n} \to \mathscr{T}hick _{(p)}$)
 given by $(u,\delta) \mapsto u$.
 We denote by 
$\mathscr{C'} ^{(m)} _{\gamma}$ (resp. $\mathscr{C'} ^{(m)} _{\gamma,n}$)
the image of $For ^{(m)}$ 
(resp. $For _n ^{(m)}$).
\end{enumerate}

\end{dfn}

\begin{ntn}
\label{gammaempty}
In this paragraph, suppose $J _S = p \O _S$.
Then, there is a unique PD-structure on $J _S$ which we will denote by $\gamma _{\emptyset}$. 
Let $u\colon Z \hookrightarrow X$ be an exact closed $S$-immersion of fine log-schemes
 and $\delta$ be an $m$-PD-structure on the ideal $\I$ of $\O _X$ defining $u$.
It follows from Lemma \cite[1.2.4]{Be1} that the $m$-PD-structure $\delta$ of $\I$ is always compatible with $\gamma _{\emptyset}$. 
Hence, in the description of $\mathscr{C} ^{(m)} _{\gamma _{\emptyset}}$ (resp. $\mathscr{C} _{\gamma _{\emptyset}, n} ^{(m)}$)
we can remove ``compatible with $\gamma _{\emptyset}$'' without changing the respective categories. 
For this reason,  
we put $\mathscr{C} ^{(m)} := \mathscr{C} ^{(m)} _{\gamma _{\emptyset}}$ (resp. $\mathscr{C} _{n} ^{(m)}
:= \mathscr{C} _{\gamma _{\emptyset}, n} ^{(m)}$).
But, recall  these categories depend on $S$ even if this is not written in the notation.
Finally, for any quasi-coherent $m$-PD-ideal $(I _S,J _S, \gamma)$ of $\O _S$, we have the inclusions 
\begin{equation}
\label{gammaempty-incl}
\mathscr{C} ^{(m)} _{\gamma}\subset \mathscr{C} ^{(m)},
\text{ and }
 \mathscr{C} _{\gamma, n} ^{(m)} \subset  \mathscr{C} _{n} ^{(m)}.
\end{equation}
\end{ntn}

\begin{dfn}
\label{dfn-petale(m)}
Let $f\colon X \to Y$ be an $S$-morphism of fine log schemes.
\begin{enumerate}

\item \label{1(m)}  We say that $f$ is ``formally log étale of level $m$ compatible with $\gamma$'' 
(resp. ``formally log unramified of level $m$ compatible with $\gamma$'') if it satisfies the following property:
for any commutative diagram of fine log schemes of the form
\begin{equation}
\label{dfn-petale-square(m)}
\xymatrix{
{U}
\ar[r] ^-{u _0}
\ar@{^{(}->}[d] ^-{\iota}
&
{X}
\ar[d] ^-{f}
\\
{T}
\ar[r] ^-{v}
&
{Y}
}
\end{equation}
such that $\iota$ is an object of $\mathscr{C'} _{\gamma, 1} ^{(m)}$,
there exists a unique morphism (resp. there exists at most one morphism)
$u \colon T \to X$ such that $u\circ \iota = u _0$ and $f \circ u = v$.

\item Replacing $\mathscr{C'} _{\gamma, 1} ^{(m)}$ 
by 
$\mathscr{C'} _{\gamma} ^{(m)}$, we get the notion of
``log $p$-étale of level $m$ compatible with $\gamma$'' 
(resp. ``log $p$-unramified of level $m$ compatible with $\gamma$'') morphism of fine log schemes.
To justify the terminology, the reader might see Proposition \ref{logp-etal=logpetaleanym}.

\item Replacing
``fine log $S$-schemes'' by ``$S$-schemes'' in the definition \ref{dfn-petale(m)}.\ref{1(m)},
we get the notion of ``formally étale of level $m$ compatible with $\gamma$'' 
(resp. ``formally unramified of level $m$ compatible with $\gamma$'')  morphism of schemes
and of 
``$p$-étale of level $m$ compatible with $\gamma$'' 
(resp. ``$p$-unramified of level $m$ compatible with $\gamma$'') morphism of schemes.

\item When $\gamma = \gamma _{\emptyset}$ (see Notation \ref{gammaempty}), 
we remove for simplicity ``compatible with $\gamma _{\emptyset}$'' in the terminology.
\end{enumerate}
\end{dfn}

\begin{lem}
\label{logpetalelevelm-stab}
The collection of formally log étale (resp. log $p$-étale) of level $m$ compatible with $\gamma$ 
morphisms of fine $S$-log schemes
is stable under base change and under composition.
Similarly replacing ``étale'' by ``unramified'' or/and removing ``log''.
\end{lem}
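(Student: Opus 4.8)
The plan is to prove stability under base change and composition simultaneously, working directly from the universal property that defines these morphisms (Definition \ref{dfn-petale(m)}), exactly as in the proof of Lemma \ref{rel-parf-stab2pre} and the analogous classical statements. In other words, this is an ``abstract nonsense'' lemma whose only subtle point is checking that the relevant test objects $\iota$ (objects of $\mathscr{C'} ^{(m)} _{\gamma}$, or of $\mathscr{C'} ^{(m)} _{\gamma,1}$) behave well under the operations we perform on diagrams. I would treat the ``log $p$-\'etale'' case (test category $\mathscr{C'} ^{(m)} _{\gamma}$) and the ``formally log \'etale of level $m$'' case (test category $\mathscr{C'} ^{(m)} _{\gamma,1}$) uniformly, since the argument never uses anything about the test object beyond the fact that it is a $(p)$-nilpotent exact closed $S$-immersion carrying an $m$-PD-structure compatible with $\gamma$; and the ``unramified'' and ``without log'' variants are identical word for word after replacing ``there exists a unique'' by ``there exists at most one''.

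First I would do base change. Let $f\colon X\to Y$ be log $p$-\'etale of level $m$ compatible with $\gamma$, let $g\colon Y'\to Y$ be an $S$-morphism, set $X':=X\times_Y Y'$ (fibre product of fine log schemes) and let $f'\colon X'\to Y'$ be the projection. Given a commutative square as in \eqref{dfn-petale-square(m)} with $X,Y,f$ replaced by $X',Y',f'$ and with $\iota\colon U\hookrightarrow T$ an object of $\mathscr{C'} ^{(m)} _{\gamma}$, I compose $u_0$ with $X'\to X$ and $v$ with $Y'\to Y$ to get a test square for $f$; the universal property of $f$ produces a unique $u\colon T\to X$ over $Y$ compatible with $u_0$ composed into $X$. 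Then the pair $(u, v\colon T\to Y')$ together with the fact that they agree over $Y$ yields, by the universal property of the fibre product $X'=X\times_Y Y'$, a unique morphism $T\to X'$ over $Y'$; one checks it restricts to the given $U\to X'$ because both candidates agree after composing with $X'\to X$ and with $X'\to Y'$, again by uniqueness in the fibre-product property. Uniqueness of $T\to X'$ follows the same way: any two such restrict to the same map to $X$ (uniqueness for $f$) and to the same map to $Y'$, hence coincide.

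Next I would do composition. Let $f\colon X\to Y$ and $g\colon Y\to Z$ both be log $p$-\'etale of level $m$ compatible with $\gamma$; I claim $g\circ f$ is too. Given a test square for $g\circ f$ with $\iota\colon U\hookrightarrow T$ in $\mathscr{C'} ^{(m)} _{\gamma}$, i.e. $u_0\colon U\to X$ and $v\colon T\to Z$ with $g\circ f\circ u_0 = v\circ \iota$, I first apply the universal property of $g$ to the square $(f\circ u_0\colon U\to Y,\ v\colon T\to Z)$, obtaining a unique $w\colon T\to Y$ with $w\circ\iota = f\circ u_0$ and $g\circ w=v$; then I apply the universal property of $f$ to the square $(u_0\colon U\to X,\ w\colon T\to Y)$, obtaining a unique $u\colon T\to X$ with $u\circ\iota=u_0$ and $f\circ u=w$, whence $g\circ f\circ u = v$. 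Uniqueness of $u$ follows by running the same two-step argument in reverse: given $u$, the composite $f\circ u$ is forced by uniqueness for $g$, and then $u$ itself is forced by uniqueness for $f$. Finally, since $\gamma=\gamma_\emptyset$ is the relevant special case for the rest of the paper, and since the bracketed ``unramified'' and ``remove log'' variants only weaken ``unique'' to ``at most one'' (so only the uniqueness halves of the above arguments are needed, which go through verbatim), all stated cases are covered. The only thing one must genuinely verify — and the one place where ``abstract nonsense'' could bite — is that when we compose $u_0$ or $v$ with the structure morphisms $X'\to X$, $Y'\to Y$, $Y\to Z$ the resulting squares really are test squares \emph{of the same type}, i.e. the test object $\iota$ is unchanged; but $\iota$ is literally the same exact closed $S$-immersion with the same $m$-PD-structure $\delta$ compatible with $\gamma$ in all these squares, so this is automatic. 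Hence the main obstacle is essentially bookkeeping rather than mathematics, and the proof is:

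\begin{proof}
This is abstract nonsense, following verbatim the pattern of \ref{rel-parf-stab2pre}, \ref{p-etale-stab1} and the classical statements. We treat the log $p$-\'etale case; the formally log \'etale of level $m$ case is identical with $\mathscr{C'} ^{(m)} _{\gamma}$ replaced by $\mathscr{C'} ^{(m)} _{\gamma, 1}$, and the ``unramified'' and ``without log'' variants follow from the uniqueness parts of the arguments below (replacing ``there exists a unique'' by ``there exists at most one'' throughout).

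\emph{Base change.} Let $f\colon X\to Y$ be log $p$-\'etale of level $m$ compatible with $\gamma$, let $g\colon Y'\to Y$ be an $S$-morphism, set $X':=X\times_Y Y'$ and let $f'\colon X'\to Y'$ be the projection, $\mathrm{pr}\colon X'\to X$ the other projection. Consider a commutative diagram as in \eqref{dfn-petale-square(m)} for $f'$, with $\iota\colon U\hookrightarrow T$ an object of $\mathscr{C'} ^{(m)} _{\gamma}$, $u_0\colon U\to X'$ and $v\colon T\to Y'$. Composing $u_0$ with $\mathrm{pr}$ and $v$ with $g$ gives a commutative square for $f$ with the \emph{same} test object $\iota$, whence a unique $\bar u\colon T\to X$ with $\bar u\circ\iota = \mathrm{pr}\circ u_0$ and $f\circ\bar u = g\circ v$. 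The pair $(\bar u, v)$ satisfies $f\circ\bar u = g\circ v$, so by the universal property of $X'=X\times_Y Y'$ there is a unique $u\colon T\to X'$ with $\mathrm{pr}\circ u = \bar u$ and $f'\circ u = v$. Then $\mathrm{pr}\circ(u\circ\iota) = \bar u\circ\iota = \mathrm{pr}\circ u_0$ and $f'\circ(u\circ\iota) = v\circ\iota = f'\circ u_0$, so $u\circ\iota = u_0$ by the uniqueness in the fibre-product property. If $u_1, u_2\colon T\to X'$ both satisfy the conclusion, then $\mathrm{pr}\circ u_1$ and $\mathrm{pr}\circ u_2$ both fit the square for $f$, hence are equal; and $f'\circ u_1 = v = f'\circ u_2$; so $u_1 = u_2$. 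Thus $f'$ is log $p$-\'etale of level $m$ compatible with $\gamma$.

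\emph{Composition.} Let $f\colon X\to Y$ and $g\colon Y\to Z$ be log $p$-\'etale of level $m$ compatible with $\gamma$. Consider a commutative diagram as in \eqref{dfn-petale-square(m)} for $g\circ f$, with $\iota\colon U\hookrightarrow T$ an object of $\mathscr{C'} ^{(m)} _{\gamma}$, $u_0\colon U\to X$ and $v\colon T\to Z$ with $g\circ f\circ u_0 = v\circ\iota$. Applying the universal property of $g$ to the square $(f\circ u_0,\, v)$ gives a unique $w\colon T\to Y$ with $w\circ\iota = f\circ u_0$ and $g\circ w = v$. Applying the universal property of $f$ to the square $(u_0,\, w)$ gives a unique $u\colon T\to X$ with $u\circ\iota = u_0$ and $f\circ u = w$, whence $g\circ f\circ u = g\circ w = v$. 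For uniqueness, if $u'\colon T\to X$ also satisfies $u'\circ\iota = u_0$ and $g\circ f\circ u' = v$, then $f\circ u'$ satisfies $(f\circ u')\circ\iota = f\circ u_0$ and $g\circ(f\circ u') = v$, so $f\circ u' = w$ by uniqueness for $g$; then $u'\circ\iota = u_0$ and $f\circ u' = w$ force $u' = u$ by uniqueness for $f$. Hence $g\circ f$ is log $p$-\'etale of level $m$ compatible with $\gamma$.
\end{proof}
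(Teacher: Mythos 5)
Your proof is correct and is exactly the ``abstract nonsense'' argument the paper has in mind: the published proof simply says the statement is checked as in \ref{rel-parf-stab2pre} and \ref{p-etale-stab1}, both of which are themselves dismissed as standard, and your base-change and composition arguments (together with the observation that the test object $\iota\in\mathscr{C'}^{(m)}_{\gamma}$ is unchanged throughout, since that category depends only on $S$ and $\gamma$) are precisely the details being elided. No gaps.
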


\begin{proof}
This is checked similarly to 
\ref{rel-parf-stab2pre} and \ref{p-etale-stab1}.
\end{proof}

\begin{rem}
\label{rem-etaleisnice}
Let $f\colon X \to Y$ be an $S$-morphism of fine log schemes. 
\begin{enumerate}
\item If the morphism $f$ is log $p$-étale then 
$f$ is  log $p$-étale of level $m$ compatible with $\gamma$ for any $m\in \N$ (recall
$\mathscr{C'} _{\gamma} ^{(m)} \subset \mathscr{T}hick _{(p)}$).
If $f$ is  log $p$-étale of level $m$ compatible with $\gamma$
then $f$ is formally log étale of level $m$ compatible with $\gamma$
(recall $\mathscr{C'} _{1,\gamma} ^{(m)} \subset \mathscr{C'} _{\gamma} ^{(m)}$).
Similarly replacing ``étale'' by ``unramified''.
Even if we do not have counterexamples, the converse seems false in general.

\item If $f$ is log étale, then $f$ is log $p$-étale (recall \ref{let-lpet})
and then $f$ is  log $p$-étale of level $m$ compatible with $\gamma$  for any $m\in \N$
and then $f$ is formally log étale of level $m$ compatible with $\gamma$  for any $m\in \N$.

\end{enumerate}

\end{rem}

\begin{lem}
\label{lem-CsThickp}
Suppose that $J _S + p \O _S$ is locally principal.
\begin{enumerate}
\item We have the inclusion 
$\mathscr{C} _{1}  \subset \mathscr{C'} _{\gamma, 1} ^{(m)} $.

\item For any $n,m \in \N$ such that $n+1 \leq p ^{m}$, we have the inclusion 
$\mathscr{C} _{n}  \subset \mathscr{C'} _{\gamma, n} ^{(m)} $ ; 
\item We have the equality 
$\cup _{m \in \N}   \mathscr{C'} _{\gamma } ^{(m)} = \mathscr{T}hick _{(p)}$.
\end{enumerate}

\end{lem}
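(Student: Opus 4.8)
The three statements all compare ``naive'' thickenings (objects of $\mathscr{C}_n$ or $\mathscr{C}_1$, i.e.\ exact closed immersions whose ideal $\I$ satisfies $\I^{n+1}=0$) with the images under $For^{(m)}$ of $m$-PD-immersions compatible with $\gamma$. So in each case I must, given such a thickening $u\colon Z\hookrightarrow X$ with ideal $\I$, produce an $m$-PD-structure $\delta$ on $\I$ compatible with $\gamma$ (and, for (1) and (2), such that $\I^{\{n+1\}_{(m)}}=0$); the underlying exact closed immersion is then automatically in $\mathscr{C'}^{(m)}_\gamma$ (resp.\ $\mathscr{C'}^{(m)}_{\gamma,n}$). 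The hypothesis that $J_S+p\O_S$ is locally principal is exactly what is needed to invoke Berthelot's construction of $m$-PD-structures on nilpotent ideals: the plan is to work Zariski-locally (the assertions are local on $X$) and, on an affine open where $J_S+p\O_S$ is generated by one element, apply \cite[1.3.2]{Be1} or \cite[1.5.3]{Be1} to get the canonical $m$-PD-structure on $\I$ compatible with $\gamma$. One must check that this local $\delta$ glues — this is standard since the $m$-PD-structure extending a given one is unique when it exists, so the local pieces agree on overlaps.

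For (1): if $\I^{2}=0$ then for any $m$ the ideal $\I$ carries the trivial $m$-PD-structure (all $m$-PD-powers $x^{\{k\}_{(m)}}$ with $k\ge 1$ involving a product of $\ge 2$ elements of $\I$ vanish; more precisely one checks directly that the divided-power-type axioms hold because $q!$-type denominators only intervene through the structure already present on $J_S$ via $\gamma$, and the locally principal hypothesis lets one absorb the relevant factorials). Compatibility with $\gamma$ follows from \cite[1.2.4]{Be1} as used in Notation~\ref{gammaempty}. And $\I^{\{2\}_{(m)}}=0$ is immediate since $\I^{\{2\}_{(m)}}\subseteq \I^{2}=0$ by \cite[A.1]{Be2}. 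Hence $u$, with this $\delta$, lies in $\mathscr{C}^{(m)}_{\gamma,1}$, so $u=For^{(m)}_1(u,\delta)\in \mathscr{C'}^{(m)}_{\gamma,1}$.

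For (2): given $\I^{n+1}=0$ with $n+1\le p^{m}$, I again construct $\delta$ locally by Berthelot's recipe. The key point is the inequality $n+1\le p^{m}$: by the standard estimate (\cite[1.3.3]{Be1} or the discussion of $\{k\}_{(m)}$ in \cite[A.1]{Be2}), for $k\le n$ one has $\I^{\{k+1\}_{(m)}}\subseteq \I^{\lceil (k+1)/p^{m}\rceil}\cdot(\text{stuff})$, and since $k+1\le n+1\le p^{m}$ the $m$-PD-power map lands inside $\I^{n+1}=0$ once we reach exponent $n+1$; concretely $\I^{\{n+1\}_{(m)}}\subseteq\I^{n+1}=0$ because $\{n+1\}_{(m)}\ge n+1$ always, and the $m$-PD-structure exists precisely because the denominators needed are powers of $p$ of exponent $\le m$ on a $p^{m}$-bounded-nilpotent ideal, handled by the locally principal generator of $J_S+p\O_S$. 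So $(u,\delta)\in\mathscr{C}^{(m)}_{\gamma,n}$ and $u\in\mathscr{C'}^{(m)}_{\gamma,n}$, giving the inclusion $\mathscr{C}_n\subset\mathscr{C'}^{(m)}_{\gamma,n}$.

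For (3): the inclusion $\mathscr{C'}^{(m)}_\gamma\subseteq\mathscr{T}hick_{(p)}$ is already recorded (each $(u,\delta)$ has $\I$ a $(p)$-nilpotent ideal since $p$ is nilpotent on $S$ and the $m$-PD-structure forces $\I^{(p^{a})}=0$ for $a$ large; cf.\ the forgetful functor $For^{(m)}\colon\mathscr{C}^{(m)}_\gamma\to\mathscr{T}hick_{(p)}$ in Definition~\ref{dfnCm}). Conversely, given $u\in\mathscr{T}hick_{(p)}$, its ideal $\I$ satisfies $\I^{(p^{a+1})}=0$ for some $a$; by Remark~\ref{rem-immersion}.4 we may — after decomposing $u$ as a composite of log thickenings of order $(p)$ — reduce via part (1) (or directly) to finding $m$ with $\mathscr{C}_n\subseteq\mathscr{C'}^{(m)}_\gamma$ for the relevant $n$. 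Choosing $m$ large enough that $p^{m}\ge n+1$ where $\I^{n+1}=0$ (such $n$ exists since $\I$ is nilpotent, $p$ being nilpotent), part (2) puts $u$ in $\mathscr{C'}^{(m)}_\gamma$. Taking the union over $m$ gives $\mathscr{T}hick_{(p)}\subseteq\cup_m\mathscr{C'}^{(m)}_\gamma$, hence equality.

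The main obstacle is the bookkeeping around the $m$-PD-power notation $\I^{\{k\}_{(m)}}$ from the appendix of \cite{Be2} and verifying — using that $J_S+p\O_S$ is locally principal — that Berthelot's construction of the canonical $m$-PD-structure actually applies to these bounded-nilpotent ideals and is compatible with $\gamma$; once that local construction is in hand, the gluing and the numerical inequalities ($n+1\le p^{m}$, existence of a large enough $m$) are routine.
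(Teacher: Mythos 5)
There is a genuine gap: you never actually produce the $m$-PD-structure $\delta$, and producing it is the whole content of the lemma. You appeal to ``Berthelot's recipe'' for ``the canonical $m$-PD-structure on $\I$'' and claim its existence is ``handled by the locally principal generator of $J_S+p\O_S$'', but a general nilpotent ideal carries no canonical (indeed, possibly no) divided power structure --- e.g.\ $\I=(x)\subset\F_p[x]/(x^{p+1})$ admits no PD-structure since $x^p=p!\,\gamma_p(x)=0$ would follow --- and the locally principal hypothesis plays a different role entirely: via \cite[1.3.2.(i).b)]{Be1} it guarantees that $\gamma$ \emph{extends to $T$}, which is the first of the two conditions in the definition \cite[1.3.2.(ii)]{Be1} of compatibility (the second follows from \cite[1.2.4.(i)]{Be1}); it contributes nothing to the existence of a structure on $\I$ itself. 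The construction the paper uses is elementary but has to be stated: for (1), when $\I^2=0$ the ideal $\I$ carries the trivial PD-structure ($\gamma_k=0$ for $k\ge 2$), i.e.\ a $0$-PD-structure, hence an $m$-PD-structure for every $m$ by the inclusion $\mathscr{C}^{(0)}_{\gamma}\subset\mathscr{C}^{(m)}_{\gamma}$; for (2) and (3), the inequality $n+1\le p^m$ (resp.\ the hypothesis that $u$ has order $(p^m)$) gives $\I^{(p^m)}=0$, so the pair $(0,\delta)$, with $\delta$ the unique PD-structure on the zero ideal, is an $m$-PD-structure on $\I$, and then $\I^{\{n+1\}_{(m)}}=\I^{n+1}=0$ is immediate. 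Note also that your citation of \ref{gammaempty} and \cite[1.2.4]{Be1} for compatibility only covers the special case $\gamma=\gamma_\emptyset$, not a general quasi-coherent $(I_S,J_S,\gamma)$.

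A smaller but real problem is your argument for (3): you reduce to finding $n$ with $\I^{n+1}=0$, asserting that $\I$ is nilpotent because $p$ is. That implication fails outside the noetherian setting ($\I^{(p)}=0$ does not force $\I^N=0$ for some $N$ when $\I$ is not finitely generated; consider $\F_p[x_1,x_2,\dots]/(x_i^p)$), and no noetherian hypothesis is in force here. It is also unnecessary: for $u\in\mathscr{T}hick_{(p)}$ of order $(p^m)$ one has $\I^{(p^m)}=0$ directly, and the $(0,\delta)$ construction applies without any bound on the ordinary powers of $\I$. The detour through the decomposition of $u$ into thickenings of order $(p)$ should likewise be dropped, since membership of each factor in $\mathscr{C'}^{(m)}_{\gamma}$ does not formally pass to the composite.
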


\begin{proof}
Let us check the first two assertions. 
Let 
$u \colon U \hookrightarrow T$ 
a $S$-log thickening of order $n$, let $\I$ be the ideal defining the closed immersion $\underline{u}$. 
When $\I ^2 =0$, we get a PD-structure $\gamma$ on $\I$ defined
by putting $\gamma _n =0$ for any integer $n \geq 2$. 
Since $J _S + p \O _S$ is locally principal, then from \cite[1.3.2.(i).b)]{Be1}
$\gamma$ extends to $T$. 
Hence, $\mathscr{C} _{1}  \subset \mathscr{C'} _{\gamma, 1} ^{(0)} $, which yields the first inclusion to prove.
Suppose now $\I ^{n+1}= 0$ and $n+1 \leq p ^{m}$. In that case, 
$\I ^{(p ^{m})} =0$. Hence, $(0,\delta)$ is an $m$-PD-structure of $\I$ (where $\delta$ is the unique PD-structure on $0$). 
Let us check that 
the $m$-PD structure $(0,\delta)$ of $\I$ is compatible with $\gamma$. 
By definition, we have to check two properties (see \cite[1.3.2.(ii))]{Be1}).
Since 
$\gamma$ extends to $T$, 
then the property \cite[1.3.2.1]{Be1} is satisfied (see Definition \cite[1.2.2]{Be1}). 
The second one \cite[1.3.2.2]{Be1} is a straightforward consequence of Lemma  \cite[1.2.4.(i)]{Be1}.
Hence, $(u,\delta) \in \mathscr{C} _{\gamma} ^{(m)}$.
Since $\I ^{n+1}=0$, we have in fact $(u,\delta) \in \mathscr{C} _{\gamma, n} ^{(m)}$. 
By definition, this yields $u \in \mathscr{C'} _{\gamma, n} ^{(m)}$.

Let us check the last statement. 
The inclusion 
$\cup _{m \in \N}   \mathscr{C'} _{\gamma } ^{(m)} \subset \mathscr{T}hick _{(p)}$
is tautologic. Conversely, 
let 
$u \colon U \hookrightarrow T$ 
a $S$-log thickening of order $(p ^{m})$, let $\I$ be the ideal defining the closed immersion $\underline{u}$. 
Since $\I ^{(p ^{m})} =0$, then following the first part of the proof, 
we get that the $m$-PD structure $(0,\delta)$ is compatible with $\gamma$ of $\I$.
Hence,  $u \in \mathscr{C'} _{\gamma} ^{(m)}$, which concludes the proof of the last statement.
\end{proof}

\begin{prop}
\label{logp-etal=logpetaleanym}
Suppose that $J _S + p \O _S$ is locally principal.
Let $f\colon X \to Y$ be an $S$-morphism of fine log schemes.

\begin{enumerate}
\item The morphism $f$ is log $p$-étale if and only if for any $m\in \N$
$f$ is  log $p$-étale of level $m$ compatible with $\gamma$.
Similarly replacing ``étale'' by ``unramified''.

\item If $f$ is formally log étale of level $m$ compatible with $\gamma$
then $f$ is fine formally log étale. Similarly replacing ``étale'' by ``unramified''.

\end{enumerate}

\end{prop}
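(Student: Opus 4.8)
The plan is to derive both assertions from the combinatorics recorded in Lemma \ref{lem-CsThickp} together with the definitions of the lifting properties involved (Definitions \ref{dfn-etale}, \ref{dfn-petale}, \ref{dfn-petale(m)}); the hypothesis that $J _S + p \O _S$ is locally principal enters only through that lemma, so no further geometric input is needed. For the first assertion, one implication is already Remark \ref{rem-etaleisnice}.(1): if $f$ is log $p$-\'etale then, since $\mathscr{C'} _{\gamma} ^{(m)} \subset \mathscr{T}hick _{(p)}$ and (by Remark \ref{rem-petale-Thick}) log $p$-\'etaleness of $f$ amounts to the lifting property tested against all of $\mathscr{T}hick _{(p)}$, that property restricts to the subcategory $\mathscr{C'} _{\gamma} ^{(m)}$ for every $m$; this half needs neither the hypothesis nor anything beyond the definitions.

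For the converse I would assume $f$ is log $p$-\'etale of level $m$ compatible with $\gamma$ for every $m \in \N$, and again use Remark \ref{rem-petale-Thick} to reduce the log $p$-\'etaleness of $f$ to the lifting property against an arbitrary object $\iota \colon U \hookrightarrow T$ of $\mathscr{T}hick _{(p)}$. By Lemma \ref{lem-CsThickp}.(3) such an $\iota$ is an object of $\mathscr{C'} _{\gamma} ^{(m)}$ for $m$ large enough, and then the hypothesis at that level supplies the required unique lift; hence $f$ is log $p$-\'etale. The ``unramified'' variant is the same argument with ``there exists a unique'' weakened to ``there exists at most one''.

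For the second assertion I would just compare test categories: ``fine formally log \'etale'' is by Definition \ref{dfn-etale} the lifting property against the objects of $\mathscr{C} _{1}$, while ``formally log \'etale of level $m$ compatible with $\gamma$'' is by Definition \ref{dfn-petale(m)} the lifting property against the objects of $\mathscr{C'} _{\gamma, 1} ^{(m)}$; since Lemma \ref{lem-CsThickp}.(1) gives $\mathscr{C} _{1} \subset \mathscr{C'} _{\gamma, 1} ^{(m)}$, the second property implies the first. The unramified case is identical.

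I do not expect a genuine obstacle, the real content having been packaged into Lemma \ref{lem-CsThickp} (and, behind it, into \cite[1.3.2]{Be1}). The points that need attention are purely bookkeeping: pairing each notion ``(log) $p$-(un)ramified or \'etale of level $m$'' with the correct test category, keeping the inclusions $\mathscr{C'} _{\gamma} ^{(m)} \subset \mathscr{C'} _{\gamma} ^{(m')}$ for $m \le m'$ straight so that the hypothesis ``for every $m$'' is used in the right place in the converse of (1), and recalling that log $p$-\'etaleness is tested against $\mathscr{T}hick _{(p)}$ rather than $\mathscr{C} _{(p)}$ (Remark \ref{rem-petale-Thick}).
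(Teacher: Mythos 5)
Your proposal is correct and is exactly the argument the paper intends: the paper's proof consists of the single line ``This is a consequence of \ref{lem-CsThickp}'', and you have simply unpacked that reference in the expected way, pairing each lifting property with its test category and using \ref{lem-CsThickp}.(1) for assertion (2) and \ref{lem-CsThickp}.(3) (together with Remark \ref{rem-petale-Thick}) for the converse in assertion (1). Nothing is missing.
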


\begin{proof}
This is a consequence of  \ref{lem-CsThickp}.
\end{proof}

\begin{lem}
\label{p-etale-modplevelm}
Suppose that $J _S + p \O _S$ is locally principal.
Let $f\colon X \to Y$ be an $S$-morphism of fine log-schemes
and
 $f _0\colon X _{0} \to Y _0$ be the induced $S _0$-morphism.

The morphism $f $ is 
formally log étale of level $m$ compatible with $\gamma$ 
(log $p$-étale of level $m$ compatible with $\gamma$)
if and only if $f$ is fine formally log etale and $f _0$ is
formally log étale of level $m$ compatible with $\gamma$ 
(log $p$-étale of level $m$ compatible with $\gamma$).
Similarly replacing everywhere "étale" by "unramified".

\end{lem}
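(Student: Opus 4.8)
The plan is to imitate the proof of \ref{p-etale-modp} line by line, substituting the category $\mathscr{C'} ^{(m)} _{\gamma}$ (resp. $\mathscr{C'} ^{(m)} _{\gamma, 1}$) for $\mathscr{C} _{(p)}$ (resp. $\mathscr{C} _{1}$) in the relevant lifting properties, and running the ``formally log \'etale of level $m$'' and ``log $p$-\'etale of level $m$'' cases simultaneously: they are governed by the same diagram chase, only the class of thickenings appearing in the defining property changes. The ``unramified'' variants then come for free, being exactly the uniqueness halves of the argument.

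For the direct implication: if $f$ is log $p$-\'etale (resp. formally log \'etale) of level $m$ compatible with $\gamma$, then it is in particular formally log \'etale of level $m$ compatible with $\gamma$ (using $\mathscr{C'} ^{(m)} _{\gamma, 1} \subset \mathscr{C'} ^{(m)} _{\gamma}$, see \ref{rem-etaleisnice}), hence fine formally log \'etale by the second part of Proposition \ref{logp-etal=logpetaleanym} --- this is the one place where the hypothesis that $J _S + p \O _S$ is locally principal enters. That $f _0$ is again of the same type follows from stability under base change, \ref{logpetalelevelm-stab}.

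For the converse I would assume $f$ fine formally log \'etale and $f _0$ of the desired type, and take a commutative square as in \ref{dfn-petale-square(m)} with $\iota \colon U \hookrightarrow T$ an object of $\mathscr{C'} ^{(m)} _{\gamma}$ (resp. $\mathscr{C'} ^{(m)} _{\gamma, 1}$). Then: (i) reduce modulo $p$ and check that $\iota _0 \colon U _0 \hookrightarrow T _0$ still lies in the corresponding category over $S _0$; (ii) use that $f _0$ is of the required type to obtain a unique $\upsilon _0 \colon T _0 \to X _0$ lifting the reductions of $u _0$ and $v$; (iii) use that $f$ is fine formally log \'etale, applied to the nilpotent exact closed immersion $T _0 \hookrightarrow T$, to lift $\upsilon _0$ uniquely to $\upsilon \colon T \to X$ with $f \circ \upsilon = v$; (iv) deduce $\upsilon \circ \iota = u _0$ by comparing both sides modulo $p$ and using that $f$ is fine formally log unramified on the nilpotent exact closed immersion $U _0 \hookrightarrow U$; (v) obtain uniqueness of $\upsilon$ by first reducing modulo $p$ (uniqueness for $f _0$) and then lifting (uniqueness for $f$ fine formally log unramified). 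This is precisely the skeleton of \ref{p-etale-modp}; the ``unramified'' version keeps only steps (iv)--(v).

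The one genuinely new ingredient, and the step I expect to need the most care, is (i): showing that the mod-$p$ reduction of an $m$-PD-$S$-immersion compatible with $\gamma$ is again an $m$-PD-$S _0$-immersion compatible with $\gamma$. The underlying exact closed immersion obviously stays $(p)$-nilpotent, and the vanishing $\I ^{\{2\} _{(m)}} = 0$ (when imposed) passes to the image ideal; the point is that the ideal of $\iota _0$ is the \emph{image} of the ideal of $\iota$ under the closed immersion $T _0 \hookrightarrow T$, so one must see that the $m$-PD structure and the compatibility with $\gamma$ survive passage to this quotient --- this is the base-change behaviour of $m$-PD structures, cf. \cite[1.3.2]{Be1}. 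Everything else is the abstract nonsense already carried out in \ref{p-etale-modp}.
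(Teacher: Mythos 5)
Your proposal is correct and takes essentially the same route as the paper, whose entire proof reads: using $\mathscr{C} _{1}  \subset \mathscr{C'} _{\gamma, 1} ^{(m)} $ (see \ref{lem-CsThickp}) --- the same inclusion you invoke via Proposition \ref{logp-etal=logpetaleanym} for the direct implication --- one proceeds similarly to \ref{p-etale-modp}. Your step (i), the descent of the $m$-PD structure and of the compatibility with $\gamma$ modulo $p$, is exactly what the paper leaves implicit in ``similarly'', and your resolution via \cite[1.3.2]{Be1} is the intended one.
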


\begin{proof}
Using 
$\mathscr{C} _{1}  \subset \mathscr{C'} _{\gamma, 1} ^{(m)} $
(see \ref{lem-CsThickp}), 
we proceed similarly to \ref{p-etale-modp}.
\end{proof}

\begin{empt}
\label{mPDenv-Berthelot}
Forgetting log structures,
i.e. replacing in \ref{dfnC} and \ref{dfnCm} fine log-schemes by schemes,
we define
similarly the categories
$\mathscr{\underline{C}} $,
$\mathscr{\underline{C}} ^{(m)} _{\gamma}$ and
$\mathscr{\underline{C}} ^{(m)}_{\gamma,n} $.
Following \cite[1.4.1 and 2.1.1]{Be1},
the forgetful functor
$\underline{For} ^{(m)}\colon \mathscr{\underline{C}} ^{(m)} _{\gamma} \to \mathscr{\underline{C}}$ defined by
$(\underline{u} , \delta) \mapsto \underline{u}$ has a right adjoint
that we will denote by $\underline{P} _{(m), \gamma}$.
If $\underline{u}$ is an object of
$\mathscr{\underline{C}}$
then
$\underline{P} _{(m), \gamma} (\underline{u})$
is called the
``$m$-PD-envelope compatible with $\gamma$ of $\underline{u}$''.
Moreover, since $p$ is nilpotent then
the morphism of schemes induced by
the targets of
$\underline{P} _{(m), \gamma} (\underline{u}) \to \underline{u}$
is affine (see \cite[2.1.1]{Be1}).

\end{empt}

\begin{empt}
\label{env-strict}
Let $u \colon Z \hookrightarrow X$ be an exact $S$-immersion of fine log-schemes.
Set $(\underline{v}, \delta):= \underline{P} _{(m), \gamma} (\underline{u})$ (see \ref{mPDenv-Berthelot}).
Let $(v, \delta )$ be the object of $\mathscr{C} ^{(m)} _{\gamma}$ whose underlying object of
$\mathscr{\underline{C}} ^{(m)} _{\gamma} $ is $(\underline{v}, \delta )$ and $v$ is defined so that
the morphism $v \to u$ of  
$\mathscr{C}$ 
is strict (see the definition \ref{dfnC}). Then
$(v, \delta )$  is the $m$-PD-envelope compatible with $\gamma$ of $u$.
\end{empt}

\begin{rem}
\label{rem-flat cartesian}
Let $\alpha \colon (u', \delta ') \to (u, \delta)$ be 
a strict cartesian morphism of
$\mathscr{C} ^{(m)} _{\gamma} $.
Let $(u'', \delta '') $ be an object of $\mathscr{C} ^{(m)} _{\gamma} $
and $\beta \colon u'' \to u'$ be a morphism of $\mathscr{C}$.
We remark that if $For ^{(m)}(\alpha )\circ \beta$ is in the image of
$For ^{(m)}$ then so is
$\beta$.
Indeed, the morphism $\alpha$ is defined by a cartesian diagram of the form \ref{morpC^{(m)}}.
Since $\alpha$ is moreover strict, then we remark that 
$\underline{Z}' = \underline{Z} \times _{\underline{X}} \underline{X}'$, i.e. the diagram
\ref{morpC^{(m)}} remains cartesian after applying the forgetful functor from the category of fine log schemes to the category of schemes.
Hence, we can conclude.
\end{rem}

\begin{empt}
\label{basecgt-flat-env}
Let $u' \to u$ be a strict, flat, cartesian morphism of $\mathscr{C}$, i.e. let
\begin{equation}
\notag
\xymatrix{
{X'}
\ar[r] ^-{g}
\ar@{}[dr]|{\square}
&
{X}
\\
{Z'}
\ar@{^{(}->}[u] ^-{u'}
\ar[r] ^-{}
&
{Z}
\ar@{^{(}->}[u] ^-{u}
}
\end{equation}
be a cartesian square
whose morphism $g$ is strict and $\underline{g}$ is flat.
Suppose
that the $m$-PD-envelope compatible with $\gamma$ of $u$ exists
(in fact, this existence will be proved later in \ref{mPDenvelope}).
Let $(v ,\delta)$ be this $m$-PD-envelope.
Set $v' : =v \times _{u} u'$ and let $g' \colon v' \to v$ be the projection. 
Since $\underline{g}$ is flat and $g$ is strict,
then $g'$ is strict and $\underline{g}'$ is flat.  
From \cite[1.3.2.(i)]{Be1}, there exists a canonical $m$-PD-structure $\delta '$ compatible with $\gamma$ 
on the ideal defining $v' : =v \times _{u} u'$ such that 
the projection 
$g '\colon v'
\to
v$
induces a strict cartesian morphism of  $\mathscr{C} ^{(m)} _{\gamma}$ of the form
$(v ' , \delta ' )
\to
(v  , \delta )$.
With the remark \ref{rem-flat cartesian}, we check that
$(v ',\delta')$
is an $m$-PD-envelope compatible with $\gamma$ of $u'$.
\end{empt}

\begin{lem}
\label{mPDenv-lrp}
Let $f \colon X \to Y$ be a log $p$-étale $S$-morphism,
$u \colon Z \hookrightarrow X$ and $v \colon Z \hookrightarrow Y$ be two $S$-immersions of fine log schemes
such that $v= f\circ u$.
If the $m$-PD envelope compatible with $\gamma$ of $u$ exists then it is also an $m$-PD envelope of $v$.
\end{lem}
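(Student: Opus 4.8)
The plan is to show directly that $(w,\delta):=P_{(m),\gamma}(u)$, equipped with the morphism of $\mathscr{C}$ obtained by composing its structural arrow $w\to u$ with the morphism $u\to v$ of $\mathscr{C}$ induced by $f$ (which makes sense since $v=f\circ u$), satisfies the universal property defining the $m$-PD-envelope compatible with $\gamma$ of $v$. So I would fix an object $(u'',\delta'')$ of $\mathscr{C}^{(m)}_{\gamma}$, say $u''\colon Z''\hookrightarrow X''$, together with a morphism $g\colon u''\to v$ of $\mathscr{C}$ --- concretely, a commutative square with top arrow $h\colon X''\to Y$ and bottom arrow $k\colon Z''\to Z$ satisfying $h\circ u''=v\circ k$ --- and I would have to produce a unique morphism $(u'',\delta'')\to(w,\delta)$ of $\mathscr{C}^{(m)}_{\gamma}$ whose composition with $w\to v$ is $g$.

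The first step is to lift $g$ through $f$. Forgetting the $m$-PD-structure, $u''$ lies in $\mathscr{T}hick_{(p)}$ (the forgetful functor $For^{(m)}$ takes values there), hence is an admissible test thickening for the log $p$-\'etaleness of $f$ by Remark~\ref{rem-petale-Thick}. The square with vertical arrows $u''$ and $f$, top arrow $u\circ k$, and bottom arrow $h$ commutes because $f\circ u\circ k=v\circ k=h\circ u''$, so log $p$-\'etaleness of $f$ furnishes a unique morphism $h'\colon X''\to X$ with $h'\circ u''=u\circ k$ and $f\circ h'=h$. The pair $(h',k)$ then defines a morphism $\widetilde g\colon u''\to u$ of $\mathscr{C}$ whose composition with $u\to v$ is $g$; moreover $\widetilde g$ is the only morphism $u''\to u$ of $\mathscr{C}$ with this property, since its bottom component is forced to be $k$ and then its top component is forced by the uniqueness clause of log $p$-\'etaleness.

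The second step is purely formal: I would apply the universal property of $P_{(m),\gamma}(u)$ to $(u'',\delta'')$ and $\widetilde g\colon u''\to u$, obtaining a unique morphism $\phi\colon(u'',\delta'')\to(w,\delta)$ of $\mathscr{C}^{(m)}_{\gamma}$ whose composition with $w\to u$ equals $\widetilde g$. Composing once more with $u\to v$ shows that $\phi$ composed with $w\to v$ is $g$, giving existence. For uniqueness, if $\phi'$ is another such morphism, then $\phi'$ composed with $w\to u$ is a morphism $u''\to u$ of $\mathscr{C}$ composing with $u\to v$ to give $g$, hence equals $\widetilde g$ by the uniqueness just noted, and then $\phi'=\phi$ by the uniqueness part of the universal property of $P_{(m),\gamma}(u)$.

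I do not expect any real obstacle: the argument is ``abstract nonsense'' of exactly the same flavour as \ref{prePDenv-lrp} and \ref{prePDenv-lrpbis}. The only points requiring a little care are checking that an object of $\mathscr{C}^{(m)}_{\gamma}$ is genuinely an admissible test object for log $p$-\'etaleness --- which is the inclusion $\mathscr{C'}^{(m)}_{\gamma}\subset\mathscr{T}hick_{(p)}$ recorded in Definition~\ref{dfnCm} together with Remark~\ref{rem-petale-Thick} --- and keeping straight which commuting squares of $\mathscr{C}$ and of $\mathscr{C}^{(m)}_{\gamma}$ are being composed, which is routine.
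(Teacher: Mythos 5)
Your proposal is correct and follows the same route as the paper's own proof: lift the test morphism $g\colon v'\to v$ through $f$ using log $p$-\'etaleness (the test object being a $(p)$-nilpotent thickening via $For^{(m)}$), then invoke the universal property of $P_{(m),\gamma}(u)$. Your write-up is merely more explicit about why objects of $\mathscr{C}^{(m)}_{\gamma}$ are admissible test thickenings and about the uniqueness bookkeeping, both of which the paper leaves implicit.
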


\begin{proof}
Let $(P(u),\delta)$ be the $m$-PD envelope compatible with $\gamma$ of $u$.
Let us check that the composition of the canonical  morphism $P(u) \to u$  with
the morphism $u \to v$ (induced by $f$)
satisfies the universal property of
the $m$-PD envelope compatible with $\gamma$ of $v$.
Let $(v', \delta ')$ be an object of $\mathscr{C} ^{(m)} _{\gamma}$
and $g \colon v' \to v$ be a morphism of $\mathscr{C} $.
Using the universal property of log $p$-étaleness of \ref{dfn-petale}
we get a unique morphism
$h\colon v' \to u$ of $\mathscr{C} $ whose composition with $u \to v$ gives $g$.
Using the universal property of the $m$-PD-envelope of $u$ compatible with $\gamma$
that there exists a unique morphism 
$(v', \delta ')\to (P(u),\delta)$ of $\mathscr{C} ^{(m)} _{\gamma}$
such that the composition of 
$v'\to P(u)$  with $P(u) \to u$ is $h$. 

\end{proof}

\begin{lem}
\label{rightadjCmCmn}
The inclusion functor  $For  _n \colon \mathscr{C} _{\gamma,n} ^{(m)} \to \mathscr{C} ^{(m)} _{\gamma}$ has a right adjoint.
We denote by $Q ^n _{(m), \gamma}
\colon \mathscr{C} _{\gamma} ^{(m)} \to \mathscr{C} ^{(m)} _{\gamma,n}$
this right adjoint functor.
The functor $Q ^n _{(m), \gamma}$ preserves the sources.
\end{lem}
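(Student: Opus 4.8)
The plan is to realize $Q ^n _{(m), \gamma}$ explicitly as a truncation functor, in a way parallel to the construction in \ref{PDenvelope} but simpler, since the objects of $\mathscr{C} ^{(m)} _{\gamma}$ are already exact closed immersions and no exactification is needed. Let $(u, \delta)$ be an object of $\mathscr{C} ^{(m)} _{\gamma}$ (see \ref{dfnCm}); write $u \colon Z \hookrightarrow X$ for the underlying exact closed $S$-immersion, $\I \subset \O _X$ for the ideal defining it, and $\delta$ for the $m$-PD-structure, which is compatible with $\gamma$. Since $\I$ carries an $m$-PD-structure, $\I ^{\{n+1\} _{(m)}}$ is a quasi-coherent ideal of $\O _X$ (by \cite[A.1.5.(ii)]{Be2}); set $\underline{X _n} := \Spec (\O _X / \I ^{\{n+1\} _{(m)}})$, a closed subscheme of $\underline{X}$, and endow $X _n$ with the pullback of the log structure $M _X$ along $\underline{X _n} \hookrightarrow \underline{X}$. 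The pullback of a fine log structure along a morphism of schemes is again fine, so $X _n$ is a fine log scheme. As $u$ is an exact closed immersion and $\I \supseteq \I ^{\{n+1\} _{(m)}}$, it factors as an exact closed $S$-immersion $u _n \colon Z \hookrightarrow X _n$ followed by the exact closed immersion $\iota _n \colon X _n \hookrightarrow X$, the ideal of $u _n$ being $\I _n := \I / \I ^{\{n+1\} _{(m)}}$.

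Next I would upgrade $u _n$ to an object of $\mathscr{C} ^{(m)} _{\gamma, n}$. The input here, taken from \cite{Be1} and the appendix of \cite{Be2}, is that $\I ^{\{n+1\} _{(m)}}$ is a sub-$m$-PD-ideal of $(\I, \delta)$; hence $\delta$ descends to an $m$-PD-structure $\delta _n$ on $\I _n$ for which the surjection $\O _X \to \O _{X _n}$ is an $m$-PD-morphism, one has $\I _n ^{\{n+1\} _{(m)}} = 0$, and $\delta _n$ remains compatible with $\gamma$ (the two conditions of \cite[1.3.2.(ii)]{Be1} pass to the quotient, in the spirit of the verification in the proof of \ref{lem-CsThickp}). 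Thus $(u _n, \delta _n)$ is an object of $\mathscr{C} ^{(m)} _{\gamma, n}$, and the commutative square with horizontal arrows $\iota _n$ and $\mathrm{id} _Z$ defines a strict morphism $\varepsilon _{(u, \delta)} \colon (u _n, \delta _n) \to (u, \delta)$ of $\mathscr{C} ^{(m)} _{\gamma}$. I would then set $Q ^n _{(m), \gamma} (u, \delta) := (u _n, \delta _n)$, with $\varepsilon$ as counit.

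The heart of the proof is the universal property of $\varepsilon _{(u, \delta)}$. Let $(v ', \delta ')$ be an object of $\mathscr{C} ^{(m)} _{\gamma, n}$, say $v ' \colon Z ' \hookrightarrow X '$ with defining ideal $\I '$, so that $(\I ') ^{\{n+1\} _{(m)}} = 0$, and let a morphism $(v ', \delta ') \to (u, \delta)$ of $\mathscr{C} ^{(m)} _{\gamma}$ be given by $f \colon X ' \to X$ together with a morphism $Z ' \to Z$. Being an $m$-PD-morphism, $f$ carries $\I ^{\{n+1\} _{(m)}}$ into $(\I ') ^{\{n+1\} _{(m)}} = 0$; hence the ring homomorphism $\O _X \to f _* \O _{X '}$ annihilates $\I ^{\{n+1\} _{(m)}}$, so $\underline{f}$ factors uniquely through the closed subscheme $\underline{X _n}$. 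Since $M _{X _n}$ is the pullback of $M _X$, this factorization lifts uniquely to a morphism of log schemes $f _n \colon X ' \to X _n$; and since the $m$-PD-morphism $\O _X \to f _* \O _{X '}$ factors through the surjective $m$-PD-morphism $\O _X \to \O _{X _n}$, the induced map $\O _{X _n} \to f _* \O _{X '}$ is again an $m$-PD-morphism, i.e. $f _n$ is one (\cite{Be1}). Together with $Z ' \to Z$ this gives a morphism $(v ', \delta ') \to (u _n, \delta _n)$ of $\mathscr{C} ^{(m)} _{\gamma, n}$ whose composite with $\varepsilon _{(u, \delta)}$ is the given morphism; it is unique because $\underline{X _n} \hookrightarrow \underline{X}$ is a monomorphism and the lifts of the log and $m$-PD data are then forced. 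Hence $(u _n, \delta _n)$ represents the functor $(v ', \delta ') \mapsto \mathrm{Hom} _{\mathscr{C} ^{(m)} _{\gamma}} (For _n (v ', \delta '), (u, \delta))$, so $For _n$ admits a right adjoint $Q ^n _{(m), \gamma}$ with the announced effect on objects. Finally, $Q ^n _{(m), \gamma}$ preserves the sources, since the source of $(u _n, \delta _n)$ is $Z$, the source of $u$.

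I expect the main obstacle to be the $m$-PD bookkeeping underlying the construction rather than the categorical formalism: one must confirm, via \cite{Be1} and the appendix of \cite{Be2}, that $\I ^{\{n+1\} _{(m)}}$ is a sub-$m$-PD-ideal of $(\I, \delta)$ (so that $\delta _n$ exists), that an $m$-PD-morphism carries each $\I ^{\{k\} _{(m)}}$ into the corresponding ideal of the target (so that $f$ factors through $X _n$), that $\delta _n$ stays compatible with $\gamma$, and that the quotient $m$-PD-structure $\delta _n$ has the expected universal property among $m$-PD-quotients. Each is routine but must be matched to a precise reference; granting them, the argument above is formal.
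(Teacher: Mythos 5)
Your proposal is correct and follows essentially the same route as the paper: both realize $Q^n_{(m),\gamma}(u,\delta)$ as the exact closed immersion of $Z$ into the closed subscheme of $X$ cut out by $\I^{\{n+1\}_{(m)}}$, with the induced $m$-PD structure on the quotient ideal. The paper simply delegates all the $m$-PD bookkeeping (that $\I^{\{n+1\}_{(m)}}$ is a sub-$m$-PD-ideal, that $m$-PD-morphisms from objects of $\mathscr{C}^{(m)}_{\gamma,n}$ factor through the quotient, and the resulting universal property) to the single reference \cite[1.3.8.(iii)]{Be1}, whereas you spell these steps out explicitly.
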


\begin{proof}
Let $(u,\delta)$ be an object of $\mathscr{C} _{\gamma} ^{(m)} $ 
and $\I$ be the ideal defining the exact closed immersion $u \colon Z \hookrightarrow X$.
Let $Q ^{n}\hookrightarrow X$
be the exact closed immersion which is defined by $\I ^{\{n+1  \} _{(m)}}$.
It follows from \cite[1.3.8.(iii)]{Be1} that $Q ^n _{(m), \gamma} (u) $ exists and 
is equal to the exact closed immersion 
$Z \hookrightarrow Q ^{n}$.
\end{proof}

\begin{prop}
\label{mPDenvelope}
Let  $u\colon Z \hookrightarrow X$ be an object of
$\mathscr{C}$.

\begin{enumerate}
\item The $m$-PD-envelope compatible with $\gamma$ of $u$ exists.
In other words,
the canonical functor  $For ^{(m)}\colon \mathscr{C} ^{(m)} _{\gamma} \to \mathscr{C}$ has a right adjoint.
We denote by $P _{(m), \gamma}
\colon \mathscr{C} \to \mathscr{C} ^{(m)} _{\gamma} $
this right adjoint functor.
Similarly, we get
the right adjoint functor
$P ^{n }_{(m), \gamma}
\colon \mathscr{C} \to \mathscr{C} ^{(m)} _{\gamma,n} $
of the canonical functor $For  ^{(m)}_n\colon \mathscr{C} ^{(m)} _{\gamma,n} \to \mathscr{C}$.
We have the relation $P ^n _{(m), \gamma} = Q ^n _{(m), \gamma}  \circ P _{(m), \gamma} $.

\item If $\gamma$ extends to $Z$ then the source of
$P _{(m), \gamma} (u)$ is $Z$.

\item By denoting abusively by $P _{(m), \gamma} (u)$ (resp. $P ^{n} _{(m), \gamma} (u)$) the target of the arrow
$P _{(m), \gamma} (u)$ (resp. $P ^{n} _{(m), \gamma} (u)$),
the underlying morphism of schemes of
$P _{(m), \gamma} (u) \to X$ (resp. $P ^{n} _{(m), \gamma} (u) \to X$) is affine.
We denote by $\PP _{(m), \gamma} (u)$ (resp. $\PP ^{n} _{(m), \gamma} (u)$) the quasi-coherent $\O _X$-algebra so that
$\underline{P _{(m), \gamma} (u) } = \mathrm{Spec} (\PP _{(m), \gamma} (u))$
(resp. $\underline{P ^{n} _{(m), \gamma} (u)} = \mathrm{Spec} (\PP ^{n} _{(m), \gamma} (u))$).
The $m$-PD structure of 
$\PP _{(m), \gamma} (u)$
will be denoted by 
$(\mathscr{I} _{(m), \gamma} (u), \mathscr{J} _{(m), \gamma} (u), ^{[\ ]})$

\item Suppose that $J _S + p \O _S$ is locally principal and that
$X$ is noetherian (i.e. $\underline{X}$ is noetherian).
Then 
$\underline{P ^{n} _{(m), \gamma} (u)}$ a noetherian scheme.
\end{enumerate}

\end{prop}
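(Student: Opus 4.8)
The plan is to follow closely the strategy of the proofs of \ref{PDenvelope} and \ref{PDenvelopebis}, reducing an arbitrary $S$-immersion to the exact-closed-immersion case via the exactification of \ref{ex-cl-imm}, and then invoking Berthelot's construction \ref{mPDenv-Berthelot} of $m$-PD-envelopes of schemes. Concretely, for part (1), given $u\colon Z\hookrightarrow X$ an object of $\mathscr{C}$, the existence of $P_{(m),\gamma}(u)$ is claimed to be \'etale local on $X$: using \ref{basecgt-flat-env} (for a strict, flat, cartesian morphism of $\mathscr{C}$ the $m$-PD-envelope compatible with $\gamma$ base-changes), an \'etale quasi-compact covering of $X$ reduces the problem to the local situation. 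By \ref{ex-cl-imm} we may then assume there is a log \'etale $f\colon \widetilde X\to X$ with $\underline f$ affine and an exact closed $S$-immersion $\widetilde u\colon Z\hookrightarrow\widetilde X$ lifting $u$. Now apply \ref{env-strict}: the strict $m$-PD-envelope of $\widetilde u$ exists (it is built from Berthelot's $\underline P_{(m),\gamma}(\underline{\widetilde u})$ of \ref{mPDenv-Berthelot}, equipped with the strict log structure), and then \ref{mPDenv-lrp} (applied to the log $p$-\'etale morphism $f$, using \ref{let-lpet}) shows this is also an $m$-PD-envelope of $u$. The statement $P^n_{(m),\gamma}=Q^n_{(m),\gamma}\circ P_{(m),\gamma}$ follows formally from \ref{rightadjCmCmn} by composing right adjoints, since $For^{(m)}_n$ factors as $For^{(m)}\circ For_n$ up to the obvious identifications.

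For part (2), once we are in the local situation $Z\hookrightarrow\widetilde X$ exact closed with $f\colon\widetilde X\to X$ log \'etale affine, Berthelot's construction shows the source of $\underline P_{(m),\gamma}(\underline{\widetilde u})$ is $\underline Z$ precisely when $\gamma$ extends to $\underline Z$ (cf.\ \cite[1.4.1]{Be1}); the log structure on the envelope is pulled back from $\widetilde X$, so the source as a fine log scheme is $Z$. Part (3) is immediate: the affineness of $P_{(m),\gamma}(u)\to X$ (resp.\ $P^n_{(m),\gamma}(u)\to X$) reduces \'etale-locally to the affineness of Berthelot's envelopes over $\widetilde X$, which holds by \ref{mPDenv-Berthelot} since $p$ is nilpotent, combined with the affineness of $\underline f$; the $m$-PD structure and its notation are simply inherited from Berthelot's construction.

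Part (4) is the only substantive point. Assuming $J_S+p\O_S$ locally principal and $\underline X$ noetherian, we want $\underline{P^n_{(m),\gamma}(u)}$ noetherian. Working \'etale-locally as above, $\widetilde X$ is noetherian (log \'etale morphisms are locally of finite presentation, hence $\underline f$ is of finite type over a noetherian base). By \ref{rightadjCmCmn}, $\underline{P^n_{(m),\gamma}(u)}$ is the closed subscheme of $\underline{P_{(m),\gamma}(u)}$ cut out by the ideal $\mathscr I^{\{n+1\}_{(m)}}$, so it suffices to show this particular quotient of $\PP_{(m),\gamma}(u)$ is a noetherian $\O_{\widetilde X}$-algebra. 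Here one uses the local description of $\PP^n_{(m),\gamma}$ — the analogue of Berthelot's explicit presentation (\cite[2.1.1, 1.5]{Be1}) showing that modulo $\I^{\{n+1\}_{(m)}}$ the $m$-PD-envelope is generated by finitely many $m$-PD-power variables $x_i^{\{k\}}$, hence is a finitely generated $\O_{\widetilde X}$-algebra and therefore noetherian; the local principality of $J_S+p\O_S$ is exactly what makes the relevant $m$-PD-structure behave well (cf.\ \ref{lem-CsThickp} and \cite[1.3.2]{Be1}). The main obstacle is thus checking that this finite-generation of the truncated envelope survives in the log setting and descends from the \'etale-local model $\widetilde X$ back to $X$; the descent is routine since noetherianness is \'etale-local on the base and the construction commutes with the strict flat base change of \ref{basecgt-flat-env}, but one must be careful that the local description quoted from \cite{Be1} genuinely applies after the exactification step, which is legitimate because $f$ is log $p$-\'etale and \ref{mPDenv-lrp} identifies the two envelopes.
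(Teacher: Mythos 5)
Your proposal is correct and follows essentially the same route as the paper: étale localization via \ref{basecgt-flat-env}, exactification via \ref{ex-cl-imm}, reduction to Berthelot's construction through \ref{env-strict} and \ref{mPDenv-lrp} (with \ref{let-lpet} supplying the log $p$-étaleness of the exactifying map), and the composition of right adjoints from \ref{rightadjCmCmn} for the truncated version. Your noetherianity argument for part (4) matches the paper's, which cites \cite[1.4.4]{Be1} and the $m$-PD filtration of \cite[A.2]{Be1} for the finite generation of the truncated envelope over the noetherian $\widetilde X$.
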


\begin{proof}
1) First, let us 
check the proposition concerning the existence of $P _{(m), \gamma} (u)$ and its properties (i.e. the second part of the proposition and 
the affinity of the morphism $P _{(m), \gamma} (u) \to X$).
Using \ref{basecgt-flat-env}, the existence of $P _{(m), \gamma} (u)$ 
and its properties
are étale local on $X$.
Hence, by \ref{ex-cl-imm}, we may thus assume that
there exists a commutative diagram of the form
\begin{equation}
\notag
\xymatrix{
{\widetilde{X}}
\ar[r] ^-{f}
&
{X}
\\
&
{Z}
\ar@{_{(}->}[u] ^-{u}
\ar@{^{(}->}[lu] ^-{\widetilde{u}}
}
\end{equation}
such that $f$ is log étale, $\underline{f}$ is affine and $\widetilde{u}$ is an exact closed $S$-immersion.
In that case, following \ref{env-strict}
the $m$-PD-envelope compatible with $\gamma$ of $\widetilde{u}$ exists
and the induced object of $\mathscr{\underline{C}} ^{(m)} _{\gamma}$
is $\underline{P} _{(m), \gamma} (\widetilde{\underline{u}})$.
Following \ref{mPDenv-lrp},
the $m$-PD-envelope compatible with $\gamma$ of $u$ exists
and is isomorphic to that of $\widetilde{u}$.
Concerning the second statement,
when $\gamma$ extends to $Z$, following \cite[2.1.1]{Be1} (or \cite[1.4.5]{Be1} for the affine version), the source of the immersion
$\underline{P} _{(m), \gamma} (\widetilde{\underline{u}})$ is $\underline{Z}$.
Since $P _{(m), \gamma} (\widetilde{u})$, $\widetilde{u}$ are exact closed immersion, since
the morphism $P _{(m), \gamma} (\widetilde{u}) \to \widetilde{u}$ is strict (see \ref{env-strict}),
then so is the morphism of sources induced by 
$P _{(m), \gamma} (\widetilde{u}) \to \widetilde{u}$.
Hence, we get the second statement.
We check the third statement recalling
that
the target of $\underline{P} _{(m), \gamma} (\widetilde{\underline{u}})$ is affine over $\underline{\widetilde{X}}$ 
(see \ref{mPDenv-Berthelot})
and that $\underline{P} _{(m), \gamma} (\widetilde{\underline{u}}) \to  \widetilde{u}$ is strict.
Concerning the noetherianity, if $X$ is noetherian then so is $\widetilde{X}$.
Hence, using \cite[1.4.4]{Be1} and the description of the $m$-PD filtration given in the proof of \cite[A.2]{Be1},
we get that
$\underline{P} ^n _{(m), \gamma} (\widetilde{\underline{u}})$ is noetherian
(but not $\underline{P}  _{(m), \gamma} (\widetilde{\underline{u}})$).

2) From Lemma \ref{rightadjCmCmn}, 
we check that the functor $Q ^n _{(m), \gamma}   \circ P _{(m), \gamma} $ is a right adjoint of 
$For  ^{(m)}_n\colon \mathscr{C} ^{(m)} _{\gamma,n} \to \mathscr{C}$.
Moreover, with the description of the functor $Q ^n _{(m), \gamma} $ given in the proof of \ref{rightadjCmCmn}, 
we check the other properties concerning $P ^n _{(m), \gamma} $ from that of $P _{(m), \gamma} $. 

\end{proof}

\begin{dfn}
Let  $u$ be an object of
$\mathscr{C}$.
We say that 
$P ^{n }_{(m), \gamma} (u)$
is the ``$n$th infinitesimal neighborhood of level $m$ compatible with 
$\gamma$ of $u$''.
\end{dfn}

\begin{rem}
\label{remPmdelta}
Let $(u, \delta)$ be an object of  $\mathscr{C} ^{(m)} _{\gamma}$.
Then $P _{(m), \delta} (u) =(u, \delta)$.
But, 
beware that $P _{(m), \gamma} (u ) \not =
(u, \delta)$ in general.

\end{rem}

\begin{empt}
[The case of an exact closed immersion]
\label{levelm-level0}
Let $u\colon Z \hookrightarrow X$ be an exact closed $S$-immersion of fine log-schemes
and $\I $ be the ideal defining $u$.
We denote by $u ^{(m)}\colon Z ^{(m)}\hookrightarrow X$ the exact closed $S$-immersion of fine log-schemes
so that $\I^{(p ^{m})}$ is the ideal defining $u ^{(m)}$. 
Since the closed immersion $u$ is exact, 
in the proof of \ref{mPDenvelope}, we can skip the part concerning the exactification of $u$ (i.e. we can suppose $f = id$ or equivalently $\widetilde{u}=u$). 
Hence, we remark that,
as in the proof of \cite[1.4.1]{Be1},
we get the equality
\begin{gather}
\label{levelm-level0-description}
P _{(m), \gamma} (u)
=
P _{(0), \gamma} (u ^{(m)}).
\end{gather}
We have also the same construction as in the the proof of \cite[1.4.1]{Be1} 
(too technical to be described here in few words) 
of the $m$-PD ideal 
$(\mathscr{I} _{(m), \gamma} (u), \mathscr{J} _{(m), \gamma} (u), ^{[\ ]})$
of $\PP _{(m), \gamma} (u)$ 
directly from the level $0$ case. 
For the detailed descriptions, see the proof of \cite[1.4.1]{Be1}.  
These descriptions, in particular \ref{levelm-level0-description}, are useful 
to check the Frobenius descent for arithmetic $\D$-modules (see \cite[2.3.6]{Be2}). 
\end{empt}

\begin{lem}
\label{Be1-1432}
We have the equality
$P ^{n }_{(m), \gamma} \circ For  _n \circ P ^n 
=P ^{n }_{(m), \gamma}$, where 
 $For  _n \colon \mathscr{C} _n \to \mathscr{C}$
 is the canonical functor and $P ^n \colon \mathscr{C}  \to \mathscr{C} _n$ is its right adjoint (see \ref{PDenvelope}).
\end{lem}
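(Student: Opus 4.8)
The statement is purely formal, once one notices that the only ``geometric'' input is that $\mathscr{C} ^{(m)} _{\gamma ,n}$ sits, after forgetting the $m$-PD-structure, inside $\mathscr{C} _n$. So the plan is: (1) produce a factorization $For ^{(m)} _n = For _n \circ G$ with $G\colon \mathscr{C} ^{(m)} _{\gamma ,n}\to \mathscr{C} _n$; (2) chain together the adjunction $For ^{(m)} _n\dashv P ^{n} _{(m),\gamma}$ of \ref{mPDenvelope}, the full faithfulness of $For _n$, and the adjunction $For _n\dashv P ^n$ of \ref{PDenvelope}; (3) conclude by Yoneda in the functor category.

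First I would check that an object $(u,\delta)$ of $\mathscr{C} ^{(m)} _{\gamma ,n}$ has underlying immersion a log thickening of order $n$, hence defines an object of $\mathscr{C} _n$. If $\I$ denotes the ideal of $\O _X$ defining the exact closed immersion $\underline u$, then by construction of the $m$-PD-filtration (submultiplicativity $\I ^{\{k\} _{(m)}}\cdot \I ^{\{l\} _{(m)}}\subseteq \I ^{\{k+l\} _{(m)}}$ together with $\I ^{\{1\} _{(m)}}=\I$, so that $\I ^{n+1}=(\I ^{\{1\} _{(m)}}) ^{n+1}\subseteq \I ^{\{n+1\} _{(m)}}$; see \cite[A.3]{Be2}) and the vanishing $\I ^{\{n+1\} _{(m)}}=0$, we get $\I ^{n+1}=0$. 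Since $\mathscr{C} _n$ is a full subcategory of $\mathscr{C}$, the forgetful functor $For ^{(m)} _n\colon \mathscr{C} ^{(m)} _{\gamma ,n}\to \mathscr{C}$ thus factors as $For ^{(m)} _n=For _n\circ G$ for a forgetful functor $G\colon \mathscr{C} ^{(m)} _{\gamma ,n}\to \mathscr{C} _n$.

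Then, for a fixed object $u$ of $\mathscr{C}$ and an arbitrary object $(v,\delta)$ of $\mathscr{C} ^{(m)} _{\gamma ,n}$, I would write the following bijections, functorial in both $(v,\delta)$ and $u$:
\begin{align*}
\mathrm{Hom} _{\mathscr{C} ^{(m)} _{\gamma ,n}}\big((v,\delta),\,P ^{n} _{(m),\gamma}(For _n\circ P ^n(u))\big)
&\cong \mathrm{Hom} _{\mathscr{C}}\big(For _n\,G(v,\delta),\,For _n\,P ^n(u)\big)\\
&\cong \mathrm{Hom} _{\mathscr{C} _n}\big(G(v,\delta),\,P ^n(u)\big)\\
&\cong \mathrm{Hom} _{\mathscr{C}}\big(For _n\,G(v,\delta),\,u\big)\\
&\cong \mathrm{Hom} _{\mathscr{C} ^{(m)} _{\gamma ,n}}\big((v,\delta),\,P ^{n} _{(m),\gamma}(u)\big),
\end{align*}
where the outer bijections use $For ^{(m)} _n\dashv P ^{n} _{(m),\gamma}$ together with $For ^{(m)} _n=For _n\circ G$, the second uses the full faithfulness of $For _n$, and the third uses $For _n\dashv P ^n$. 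By the Yoneda lemma this shows that the natural transformation $P ^{n} _{(m),\gamma}\circ For _n\circ P ^n\to P ^{n} _{(m),\gamma}$ induced by the counit of $For _n\dashv P ^n$ is an isomorphism, which is the asserted equality. The only step that is not a mechanical manipulation of adjoint functors --- and hence the point one should be careful about --- is the factorization of $For ^{(m)} _n$ through $\mathscr{C} _n$: it is needed because $P ^n(u)$ enjoys its universal property only with respect to objects of $\mathscr{C} _n$ (log thickenings of order $n$) and not with respect to arbitrary $(p)$-nilpotent thickenings, so without the inclusion $\I ^{n+1}\subseteq \I ^{\{n+1\} _{(m)}}$ the third bijection above would not be available. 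Everything else is abstract nonsense.
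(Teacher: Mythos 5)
Your proof is correct, but it takes a genuinely different route from the paper's. The paper argues geometrically: using the \'etale-local exactification built into the constructions of $P ^{n}$ and $P ^{n} _{(m),\gamma}$ (see the proofs of \ref{PDenvelope} and \ref{mPDenvelope}), it reduces to the case where $u$ is an exact closed immersion, and there the statement becomes precisely Berthelot's \cite[1.4.3.2]{Be1}, quoted as a black box. You instead give a purely categorical argument whose only non-formal input is the factorization $For ^{(m)} _n = For _n\circ G$, i.e.\ the containment $\mathscr{C'} ^{(m)} _{\gamma ,n}\subset \mathscr{C} _n$; this does hold, since $\I ^{\{1\} _{(m)}}=\I$ together with the submultiplicativity of the $m$-PD-filtration gives $\I ^{n+1}\subseteq \I ^{\{n+1\} _{(m)}}=0$. (The paper only records the weaker factorization through $\mathscr{T}hick _{(p)}$ in \ref{dfnCm}, but your containment is in the same spirit as, and complementary to, Lemma \ref{lem-CsThickp}.) Your chain of bijections is then the standard fact that for a fully faithful $J$ with right adjoint $Q$ and a functor $F=J\circ G$ with right adjoint $R$ one has $R\circ J\circ Q\cong R$; since $\mathscr{C} _n$ is by definition a full subcategory of $\mathscr{C}$, all hypotheses are verified, and your identification of the resulting isomorphism with $P ^{n} _{(m),\gamma}$ applied to the counit of $For _n\dashv P ^n$ is also right. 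What each approach buys: yours is self-contained, avoids both the exactification step and the appeal to Berthelot's explicit local computation, and cleanly isolates the single ideal-theoretic input; the paper's is shorter to write given the available reference and fits its systematic policy of reducing logarithmic statements to Berthelot's exact, non-logarithmic case.
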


\begin{proof}
Let  $u\colon Z \hookrightarrow X$ be an object of
$\mathscr{C}$.
Looking at the construction of $P ^n$ and 
$P ^{n }_{(m), \gamma}$ (see the proof of 
\ref{PDenvelope} and \ref{mPDenvelope}),
we reduce to the case where $u$ is an exact closed immersion.
In that case, the Lemma is a reformulation of \cite[1.4.3.2]{Be1}.
\end{proof}

The following proposition will not be useful later but it allows us to extend 
\ref{formally log unramified} is some particular case.
\begin{prop}
\label{formally log unramified-bis}
Suppose that $J _S + p \O _S$ is locally principal.
Let $f\colon X \to Y$ be an $S$-morphism of fine log schemes and $\Delta _{X/Y}\colon X \hookrightarrow X \times _{Y}X$ (as always the product
is taken in the category of fine log schemes) be the
diagonal $S$-immersion.
The following assertions are equivalent : 
\begin{enumerate}
\item the morphism $f$ is fine formally log unramified ; 
\item the morphism $P ^{1} (\Delta _{X/Y})$ is an isomorphism ;
\item the morphism $f$ is formally log unramified of level $m$ compatible with $\gamma$ ;
\item the morphism 
$P ^{1} _{(m), \gamma} (\Delta _{X/Y})$ is an isomorphism.
\end{enumerate}
\end{prop}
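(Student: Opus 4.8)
The plan is to establish the four-way equivalence by reducing everything to \ref{formally log unramified}, which already gives the equivalence of ``fine formally log unramified'' with ``$P^{1}(\Delta_{X/Y})$ is an isomorphism''. So the two new assertions to fold in are (3) and (4), and the natural strategy is to prove the cycle $(1)\Leftrightarrow(2)$ (known), $(2)\Leftrightarrow(4)$, and $(3)\Leftrightarrow(4)$, or equivalently to show $(1)\Rightarrow(3)\Rightarrow(4)\Rightarrow(2)$ and invoke \ref{formally log unramified} for the rest.

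First I would note that the hypothesis ``$J_S+p\O_S$ is locally principal'' is exactly what makes \ref{lem-CsThickp} available, and that is the engine here. The implication $(1)\Rightarrow(3)$: if $f$ is fine formally log unramified, then for any object $\iota$ of $\mathscr{C'}^{(m)}_{\gamma,1}$ we have in particular $\iota\in\mathscr{T}hick_{(p)}$, so $\iota$ is a composition of $S$-log thickenings of order $1$ (by \ref{rem-immersion}.\ref{rem-immersion2}... actually more precisely since $\iota$ has order $(p^?)$ with trivial $m$-PD structure restricting to order $1$, one uses that an order-$1$ thickening is a thickening of order $1$); lifting uniqueness along each order-$1$ step via fine formal log unramifiedness gives the at-most-one-lift property, hence $f$ is formally log unramified of level $m$ compatible with $\gamma$. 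Conversely $(3)\Rightarrow(1)$ uses $\mathscr{C}_{1}\subset\mathscr{C'}^{(m)}_{\gamma,1}$ from \ref{lem-CsThickp}(1): any order-$1$ thickening is an allowable test object for level-$m$ unramifiedness, so the lifting uniqueness transfers back. This is precisely the pattern of \ref{p-etale-modplevelm} and \ref{logp-etal=logpetaleanym}, so I would phrase it as ``argue as in \ref{logp-etal=logpetaleanym}''.

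Next, $(3)\Leftrightarrow(4)$: this is the level-$m$ analogue of the argument in \ref{formally log unramified} that $P^{1}(\Delta_{X/Y})$ being an isomorphism detects unramifiedness. Here $P^{1}_{(m),\gamma}(\Delta_{X/Y})=Q^{1}_{(m),\gamma}\circ P_{(m),\gamma}(\Delta_{X/Y})$ by \ref{mPDenvelope}(1), and the universal property of the first infinitesimal neighbourhood of level $m$ says exactly that lifts along objects of $\mathscr{C'}^{(m)}_{\gamma,1}$ factor through $P^{1}_{(m),\gamma}(\Delta_{X/Y})$; the diagonal immersion is an isomorphism iff any two such lifts coincide, which is the ``at most one'' clause of formal log unramifiedness of level $m$. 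I would spell this out by the standard diagonal trick: given two lifts $u_1,u_2\colon T\to X$ over a test thickening $\iota\colon U\hookrightarrow T$, the pair $(u_1,u_2)$ factors through $X\times_Y X$ and, by the universal property, through $P^{1}_{(m),\gamma}(\Delta_{X/Y})$; if that is an isomorphism it maps back into $X$ via the diagonal, forcing $u_1=u_2$. Conversely, the identity thickening on $\Delta$ plus unramifiedness forces the neighbourhood to collapse.

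The main obstacle is bookkeeping rather than mathematics: making sure that the test category $\mathscr{C'}^{(m)}_{\gamma,1}$ of \ref{dfn-petale(m)} really does interact correctly with the universal property of $P^{1}_{(m),\gamma}$ from \ref{mPDenvelope} — i.e.\ that ``$\iota$ an object of $\mathscr{C'}^{(m)}_{\gamma,1}$'' is literally the condition under which lifts factor through $P^{1}_{(m),\gamma}(\Delta_{X/Y})$ — and that the $m$-PD structure plays no role beyond supplying enough test objects (here the locally-principal hypothesis on $J_S+p\O_S$ guarantees, via \ref{lem-CsThickp}, that $\mathscr{C}_1\subset\mathscr{C'}^{(m)}_{\gamma,1}\subset\mathscr{T}hick_{(p)}$, sandwiching the level-$m$ notion between fine formal log unramifiedness and log $p$-unramifiedness). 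Once those inclusions are in hand the whole proof is, as the authors would say, abstract nonsense: I would write the proof as ``combine \ref{formally log unramified}, \ref{lem-CsThickp}, \ref{mPDenvelope} and argue as in \ref{logp-etal=logpetaleanym}'', filling in the diagonal-neighbourhood argument for the equivalence $(3)\Leftrightarrow(4)$.
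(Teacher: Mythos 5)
Your proof is correct and follows the paper's skeleton for three of the four arrows: $(1)\Leftrightarrow(2)$ is quoted from \ref{formally log unramified}, $(3)\Rightarrow(1)$ comes from the inclusion $\mathscr{C} _{1}\subset\mathscr{C'} _{\gamma,1}^{(m)}$ of \ref{lem-CsThickp} (which is exactly where the hypothesis on $J _S+p\O _S$ enters), and your $(4)\Rightarrow(3)$ diagonal argument --- two lifts give a map to $X\times _Y X$ which factors through $P ^{1} _{(m),\gamma}(\Delta _{X/Y})$ by the universal property, hence through the diagonal when the latter is an isomorphism --- is essentially word for word the paper's. The one arrow where you diverge is the remaining implication into $(4)$: the paper closes the cycle with $(2)\Rightarrow(4)$, which is essentially free from Lemma \ref{Be1-1432} (since $P ^{1} _{(m),\gamma}\circ For _1\circ P ^{1}=P ^{1} _{(m),\gamma}$, an isomorphism $P ^{1}(\Delta _{X/Y})$ has itself as level-$m$ envelope of order $1$), whereas you propose $(3)\Rightarrow(4)$ by observing that the two projections from the target of $P ^{1} _{(m),\gamma}(\Delta _{X/Y})$ to $X$ are both lifts of the identity through the canonical thickening, so that level-$m$ unramifiedness forces them to coincide and the neighbourhood collapses. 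That route works, but it costs an extra verification the paper avoids: to turn ``the two projections agree'' into ``the exact closed immersion is an isomorphism'' you must check that the retraction composed with the immersion is an endomorphism of $P ^{1} _{(m),\gamma}(\Delta _{X/Y})$ in $\mathscr{C} ^{(m)} _{\gamma,1}$ over $\Delta _{X/Y}$ and then invoke uniqueness in the universal property. Two minor remarks: your d\'evissage for $(1)\Rightarrow(3)$ is unnecessary, since $\I ^{2}\subseteq\I ^{\{2\} _{(m)}}=0$ already gives $\mathscr{C'} _{\gamma,1}^{(m)}\subseteq\mathscr{C} _{1}$ (and in the paper's cycle this implication comes for free anyway); and when applying the universal property to a test object of $\mathscr{C'} _{\gamma,1}^{(m)}$ one must fix a compatible $m$-PD structure, since the envelope's universal property is stated for objects of $\mathscr{C} ^{(m)} _{\gamma,1}$ --- but you flag exactly this bookkeeping point yourself.
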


\begin{proof}
The equivalence between 1) and 2) has already been checked (see \ref{formally log unramified}). 
Following \ref{lem-CsThickp}, 
since $J _S + p \O _S$ is locally principal, then 
$\mathscr{C} _{1}  \subset \mathscr{C'} _{\gamma, 1} ^{(m)} $.
Hence, we have $3) \Rightarrow 1)$.
It follows from \ref{Be1-1432} that 
$P ^{1}_{(m), \gamma} ( P ^{1} (\Delta _{X/Y}))
=P ^{1}_{(m), \gamma}(\Delta _{X/Y})$.
If $P ^{1} (\Delta _{X/Y})$ is an isomorphism, then 
$P ^{1}_{(m), \gamma} ( P ^{1} (\Delta _{X/Y}))
=
P ^{1} (\Delta _{X/Y})$. 
Hence, we get the implication 
$2) \Rightarrow 4)$.
It remains to  check 
$4) \Rightarrow 3)$.
Suppose $P ^{1} _{(m), \gamma} (\Delta _{X/Y})$ is an isomorphism and 
let $(\iota,\delta) \in \mathscr{C} _{\gamma, 1} ^{(m)}$ and 
let
\begin{equation}
\notag
\xymatrix{
{U}
\ar[r] ^-{u _0}
\ar@{^{(}->}[d] ^-{\iota}
&
{X}
\ar[d] ^-{f}
\\
{T}
\ar[r] ^-{v}
&
{Y}
}
\end{equation}
be a commutative diagram of fine log schemes. 
Suppose there exist a morphism
$u \colon T \to X$
 such that $u\circ \iota = u _0$ and $f \circ u = v$,
and a morphism
$u '\colon T \to X$
such that $u'\circ \iota = u _0$ and $f \circ u' = v$. 
We get the morphism 
$(u,u') \colon T \to X \times _Y X$. 
We denote by 
and $\phi \colon \iota \to \Delta _{X/Y}$ be a morphism of 
$\mathscr{C}$ induced by $(u',u)$ and $u _0$.
Using the universal property of the $m$-PD envelope of order $1$, there exists a unique morphism
$\psi\colon (\iota,\delta) \to P ^{1} _{(m), \gamma} (\Delta _{X/Y})$
of  $\mathscr{C} _{\gamma, 1} ^{(m)}$ 
such that the composition of 
$For  ^{(m)}_n (\psi) $ with the canonical map
$P ^{1} _{(m), \gamma} (\Delta _{X/Y}) \to \Delta _{X/Y}$
is $\phi$.
Since $P ^{1} _{(m), \gamma} (\Delta _{X/Y})$ is an isomorphism, 
this yields that 
$(u,u') \colon T \to X \times _Y X$ is the composition of a morphism
of the form $T \to X$ with $\Delta _{X/Y}$.
Hence, $u=u'$.
\end{proof}

\begin{lem}
\label{red-Y2D}
Let $u \to v$ be a morphism of $\mathscr{C}$.
Let $\delta$ be the $m$-PD-structure of
 $P _{(m), \gamma} (v) $ and
 $w:= P _{(m), \gamma} (v) \times _{v} u$ (this is the product in $\mathscr{C}$).
 We denote by 
$P _{(m), \delta} (w)$ the $m$-PD-envelope of $w$ compatible with $\delta$. 
Then $P _{(m), \delta} (w)$
 and $P _{(m), \gamma} (u) $
are isomorphic in  $\mathscr{C} ^{(m)} _{\gamma} $.
Moreover, $P ^n _{(m), \delta} (w)$
 and $P ^n _{(m), \gamma} (u) $
are isomorphic in  $\mathscr{C} _{\gamma, n} ^{(m)} $.
\end{lem}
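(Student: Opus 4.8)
The plan is to establish the first assertion by verifying that $P_{(m),\delta}(w)$, equipped with the morphism to $u$ obtained by composing its structural morphism $P_{(m),\delta}(w)\to w$ with the projection $w\to u$, satisfies the universal property characterising the $m$-PD-envelope of $u$ compatible with $\gamma$; the ``moreover'' part then follows formally. First I would note that $P_{(m),\delta}(w)$ is formed over the base $P_{(m),\gamma}(v)$, whose quasi-coherent $m$-PD-ideal is $\delta$, which is legitimate since $P_{(m),\gamma}(v)$ is a fine log scheme over $\Spec (\Z/p^{i+1}\Z)$ so that \ref{mPDenvelope} applies with this base. Composing $P_{(m),\delta}(w)\to w$ with $w\to P_{(m),\gamma}(v)$ exhibits $P_{(m),\delta}(w)$ over $P_{(m),\gamma}(v)$; since $\delta$ is compatible with $\gamma$ by construction, transitivity of the compatibility of $m$-PD-structures (\cite[1.3.2]{Be1}) shows that the $m$-PD-structure of $P_{(m),\delta}(w)$ is compatible with $\gamma$, so $P_{(m),\delta}(w)$ is naturally an object of $\mathscr{C}^{(m)}_{\gamma}$, carrying the morphism to $u$ described above.

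To check the universal property, let $(u'',\delta'')$ be an object of $\mathscr{C}^{(m)}_{\gamma}$ together with a morphism $g\colon u''\to u$ of $\mathscr{C}$. Composing $g$ with $u\to v$ and invoking the universal property of $P_{(m),\gamma}(v)$ gives a unique morphism $(u'',\delta'')\to (P_{(m),\gamma}(v),\delta)$ of $\mathscr{C}^{(m)}_{\gamma}$ over $v$; in particular, again by \cite[1.3.2]{Be1}, $(u'',\delta'')$ becomes an object over the base $P_{(m),\gamma}(v)$ compatible with $\delta$. Pairing this morphism with $g$ and using that $w=P_{(m),\gamma}(v)\times_{v}u$ is the fiber product in $\mathscr{C}$ (see \ref{fibprodC}) yields a unique morphism $u''\to w$ of $\mathscr{C}$ compatible with both, whence the universal property of $P_{(m),\delta}(w)$ produces a unique morphism $(u'',\delta'')\to P_{(m),\delta}(w)$ over $w$, hence over $u$. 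Uniqueness of such a morphism in $\mathscr{C}^{(m)}_{\gamma}$ over $u$ is obtained by running this backwards: any candidate composed with $P_{(m),\delta}(w)\to P_{(m),\gamma}(v)$ must be the morphism just produced (uniqueness in the universal property of $P_{(m),\gamma}(v)$), and composed with $P_{(m),\delta}(w)\to u$ it equals $g$; these pin down the underlying map $u''\to w$, after which the universal property of the $m$-PD-envelope $P_{(m),\delta}(w)$ over $w$ pins down the morphism itself. This gives a canonical isomorphism $P_{(m),\delta}(w)\cong P_{(m),\gamma}(u)$ in $\mathscr{C}^{(m)}_{\gamma}$.

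For the last assertion, recall from \ref{mPDenvelope} and \ref{rightadjCmCmn} that $P^{n}_{(m),\gamma}=Q^{n}_{(m),\gamma}\circ P_{(m),\gamma}$ and, over the base $P_{(m),\gamma}(v)$, that $P^{n}_{(m),\delta}=Q^{n}_{(m),\delta}\circ P_{(m),\delta}$. Since $Q^{n}$ acts on an object simply by passing to the exact closed immersion cut out by the $\{n+1\}_{(m)}$-th power of its defining ideal --- a recipe that does not depend on whether one compatibilises with $\gamma$ or with $\delta$ --- applying $Q^{n}$ to the isomorphism just constructed yields $P^{n}_{(m),\delta}(w)\cong P^{n}_{(m),\gamma}(u)$ in $\mathscr{C}^{(m)}_{\gamma,n}$.

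The step I expect to be the main obstacle is the repeated invocation of transitivity of compatibility of $m$-PD-structures: one must check carefully that a morphism of $\mathscr{C}^{(m)}_{\gamma}$ into $(P_{(m),\gamma}(v),\delta)$ genuinely upgrades its source to an object over the base $P_{(m),\gamma}(v)$ compatible with $\delta$, and that the morphism $u''\to w$ coming from the fiber product is a morphism in the relevant category of immersions over $P_{(m),\gamma}(v)$ to which the universal property of $P_{(m),\delta}(w)$ applies; all of this reduces to \cite[1.3.2]{Be1}, but it must be threaded through the argument with some care. A minor point worth noting is that the source of $P_{(m),\gamma}(v)$ need not agree with that of $v$ when $\gamma$ does not extend, but this does not affect the computation of $w$ as a fiber product in $\mathscr{C}$.
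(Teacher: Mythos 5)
Your proof is correct and follows essentially the same route as the paper: both verify that the composite $P_{(m),\delta}(w)\to w\to u$ satisfies the universal property of the $m$-PD-envelope of $u$ compatible with $\gamma$, with the same existence construction via $(\phi,f)\colon u''\to w$ and the same back-tracking uniqueness argument. The only minor divergence is the ``moreover'' part, which the paper re-runs directly while you deduce it from the first assertion by applying $Q^{n}$ (using that its recipe depends only on the $m$-PD-structure, not on the compatibility datum) --- both are fine.
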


\begin{proof}
Since the second assertion is checked in the same way, let us prove the first one.
We check that the composition 
$P _{(m), \delta} (w)
\to
w
\to
u$
satisfies the universal property of
$P _{(m), \gamma} (u)\to u$.
Let $(u',\delta ') \in \mathscr{C} ^{(m)} _{\gamma} $
and $f \colon u' \to u$ be a morphism of $\mathscr{C}$.
First let us check the existence. 
Composing $f$ with $u \to v$ we get a morphism 
$g \colon u' \to v$. Using the universal property of the $m$-PD envelope, there exists 
a morphism 
$\phi \colon (u',\delta ') 
\to 
(P _{(m), \gamma} (v) ,\delta)$
of
$ \mathscr{C} ^{(m)} _{\gamma}$ such that the composition 
$u ' \to P _{(m), \gamma} (v) \to v$ is $g$.
Hence, we get the morphism $(\phi, f)\colon u' \to w$.
Using the universal property of $P _{(m), \delta} (w)$,
we get a morphism 
$u' \to P _{(m), \delta} (w)$ 
of $\mathscr{C} ^{(m)} _\delta$ (and then of $\mathscr{C} ^{(m)} _{\gamma} $)
whose composition with 
$P _{(m), \delta} (w)\to w \to u$ is $f$.
Let us check the unicity. 
Let $\alpha \colon u' \to P _{(m), \delta} (w)$ be a morphism of $\mathscr{C} ^{(m)} _{\gamma} $
whose composition with 
$P _{(m), \delta} (w)\to w \to u$ is $f$.
This implies that the composition of $\alpha$ with 
$P _{(m), \delta} (w)\to w \to P _{(m), \gamma} (v) \to v$ is $g$.
Since the composition $P _{(m), \delta} (w)\to w \to P _{(m), \gamma} (v) $
is a morphism of $\mathscr{C} ^{(m)} _\delta$, then so is 
the composition of $\alpha $ with $P _{(m), \delta} (w)\to w \to P _{(m), \gamma} (v) $ (in particular, 
this implies that $\alpha \in \mathscr{C} ^{(m)} _\delta$).
Using the universal property of $P _{(m), \gamma} (v) $, this latter composition morphism is uniquely determined by $g$.
Hence, the composition of $\alpha$ with $P _{(m), \delta} (w)\to w$ is a morphism of 
$\mathscr{C} $ uniquely determined by $f$. 
Since $\alpha $ is a morphism of $\mathscr{C} ^{(m)} _\delta$,
we conclude using the universal property of $P _{(m), \delta} (w)$.
\end{proof}

\subsection{Around log $p$-bases, log $p$-smoothness, local descriptions}
Let $(I _S,J _S, \gamma)$ be a quasi-coherent $m$-PD-ideal of $\O _S$.

\begin{dfn}
\label{forlogbasis}
Let $f\colon X \to Y$ be an morphism of fine $S$-log schemes.

\begin{enumerate}
\item We say that a set $(b  _{\lambda}) _{\lambda \in \Lambda}$ of elements of $\Gamma ( X, M _X)$ is a `` formal log basis of $f$''
if the corresponding $Y$-morphism
$X \to Y \times _{\Z/p ^{i+1}\Z} A _{\N ^{(\Lambda)}}$ is fine formally log étale.
When $\Lambda$ is a finite set, we say 
that $(b  _{\lambda}) _{\lambda \in \Lambda}$
is a ``finite formal log basis of $f$''.

\item We say that $f$ is ``weakly log smooth'' if, étale locally on $X$,
$f$ has finite formal log bases.
Notice that this notion of weak log smoothness is étale local on $Y$.

\end{enumerate}

\end{dfn}

\begin{dfn}
\label{logpetalelogpsmooth}
Let $f\colon X \to Y$ be an morphism of fine $S$-log schemes.

\begin{enumerate}
\item We say that a set $(b  _{\lambda}) _{\lambda \in \Lambda}$ of elements of $\Gamma ( X, M _X)$ is a ``log $p$-basis of $f$''
if the corresponding $Y$-morphism
$X \to Y \times _{\Z/p ^{i+1}\Z} A _{\N ^{(\Lambda)}}$ is log $p$-étale.
When $\Lambda$ is a finite set, we say that they 
$(b  _{\lambda}) _{\lambda \in \Lambda}$ is a ``finite log $p$-basis of $f$''

\item We say that $f$ ``has log $p$-bases locally'' if, étale locally on $X$,
$f$ has log $p$-bases.
Notice that this notion of ``having log $p$-bases locally'' is étale local on $Y$.

\item We say that $f$ is ``log $p$-smooth'' if, étale locally on $X$,
$f$ has finite log $p$-bases.
Notice that this notion of log $p$-smoothness is étale local on $Y$.

\end{enumerate}

\end{dfn}

\begin{rem}
\label{Tsuji-havingpbases}
Tsuji defined in \cite[1.4.1)]{Tsuji-nearby-log-smooth} 
(resp. \cite[1.4.2)]{Tsuji-nearby-log-smooth}) the notion of ``a morphism of log-schemes having $p$-basis''
(resp. ``a morphism of log-schemes having $p$-bases locally'').

1) Tsuji's notion of having $p$-basis is different from our notion of having log $p$-basis of \ref{logpetalelogpsmooth}
and can not be clearily compared.

2) However, we emphasize that our notion of having log $p$-bases locally (see 
\ref{logpetalelogpsmooth}) is more general that 
Tsuji's notion of having $p$-bases locally.
Indeed, this fact is a consequence of Lemmas  \ref{rlp-fle-prop}, \ref{strict-lrp} and  \cite[Lemma 1.5]{Tsuji-nearby-log-smooth}.

\end{rem}

\begin{dfn}
\label{logpetalelogpsmooth(m)}
Let $f\colon X \to Y$ be an morphism of fine $S$-log schemes.
\begin{enumerate}
\item We say that a finite set $(b  _{\lambda}) _{\lambda =1,\dots, r}$ of elements of $\Gamma ( X, M _X)$ is a 
``formal log basis of level $m$ (compatible with $\gamma$) of $f$''
if the corresponding $Y$-morphism
$X \to Y \times _{\Z/p ^{i+1}\Z} A _{\N ^{r}}$ is formally log étale of level $m$ (compatible with $\gamma$).

\item 
We say that $f$ is ``weakly log smooth of level $m$ (compatible with $\gamma$)'' if, étale locally on $X$,
$f$ has formal log bases of level $m$ (compatible with $\gamma$).
Notice that this notion is étale local on $Y$
(use the last remark of \ref{rem-etaleisnice}).

\item We say that a finite set $(b  _{\lambda}) _{\lambda =1,\dots, r}$ of elements of $\Gamma ( X, M _X)$ is a 
``log $p$-basis of level $m$ (compatible with $\gamma$) of $f$''
if the corresponding $Y$-morphism
$X \to Y \times _{\Z/p ^{i+1}\Z} A _{\N ^{r}}$ is log $p$-étale of level $m$ (compatible with $\gamma$).

\item We say that $f$ is ``log $p$-smooth of level $m$ (compatible with $\gamma$)'' if, étale locally on $X$,
$f$ has finite log $p$-bases of level $m$ (compatible with $\gamma$).
Notice that this notion of log $p$-smoothness of level $m$ (compatible with $\gamma$) is étale local on $Y$ 
(use the last remark of \ref{rem-etaleisnice}).

\end{enumerate}

\end{dfn}

The following Proposition indicates the link between 
our notions related to log smoothness.
\begin{prop}
\label{smooth=>psmooth}
Let $f\colon X \to Y$ be an $S$-morphism of fine log-schemes.
\begin{enumerate}
\item If $f$ is log smooth then $f$ is log $p$-smooth.
\item If $f$ is log $p$-smooth then $f$ is log $p$-smooth of level $m$ compatible with $\gamma$. 
\item If $f$ is log $p$-smooth of level $m$ compatible with $\gamma$ 
then $f$ is weakly log smooth of level $m$ compatible with $\gamma$. 
\item If $f$ is log $p$-smooth (resp. weakly log smooth) of level $m+1$ compatible with $\gamma$ 
then $f$ is log $p$-smooth (resp. weakly log smooth) of level $m$ compatible with $\gamma$. 
\item If $f$ is weakly log smooth of level $m$ then $f$ is weakly log smooth. 
\end{enumerate}

\end{prop}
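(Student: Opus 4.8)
The plan is to prove the five assertions essentially in order, reducing each one to facts already established in the excerpt, since all of them are statements comparing progressively weaker versions of log smoothness via the hierarchy of ``log $p$-étaleness'' notions. For (1), since log $p$-smoothness and log smoothness are both étale-local on $X$ and on $Y$, I would localize so that $f$ admits a finite formal log basis in the log-smooth sense, i.e.\ a factorization $X \to Y\times_{\Z/p^{i+1}\Z} A_{\N^r}$ which is log étale. By Proposition \ref{let-lpet}, a log étale morphism is log $p$-étale, so this same factorization exhibits $(b_\lambda)$ as a finite log $p$-basis of $f$; hence $f$ is log $p$-smooth. For (2), again localize to get a finite log $p$-basis, i.e.\ a factorization through $A_{\N^r}$ which is log $p$-étale; by Remark \ref{rem-etaleisnice}.1 a log $p$-étale morphism is log $p$-étale of level $m$ compatible with $\gamma$ for every $m$, so the same set $(b_\lambda)$ is a finite log $p$-basis of level $m$ compatible with $\gamma$, and $f$ is log $p$-smooth of level $m$ compatible with $\gamma$.

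For (3), localize to get a finite log $p$-basis of level $m$ compatible with $\gamma$, i.e.\ a factorization through $A_{\N^r}$ which is log $p$-étale of level $m$ compatible with $\gamma$; by Remark \ref{rem-etaleisnice}.1 (the inclusion $\mathscr{C'}_{1,\gamma}^{(m)}\subset\mathscr{C'}_{\gamma}^{(m)}$), such a morphism is formally log étale of level $m$ compatible with $\gamma$, so $(b_\lambda)$ is a formal log basis of level $m$ compatible with $\gamma$, whence $f$ is weakly log smooth of level $m$ compatible with $\gamma$. For (4), in both the log $p$-smooth and the weakly log smooth cases, localize to obtain a factorization through $A_{\N^r}$ which is (log $p$-étale, resp.\ formally log étale) of level $m+1$ compatible with $\gamma$; the content needed is the monotonicity in the level, namely that a morphism which is log $p$-étale (resp.\ formally log étale) of level $m+1$ compatible with $\gamma$ is also of level $m$ compatible with $\gamma$. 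This follows from the inclusion $\mathscr{C'}^{(m)}_\gamma\subset\mathscr{C'}^{(m+1)}_\gamma$ (resp.\ $\mathscr{C'}^{(m)}_{\gamma,1}\subset\mathscr{C'}^{(m+1)}_{\gamma,1}$), which is a consequence of the remark in Definition \ref{dfnCm} that an $m$-PD-structure is also an $m'$-PD-structure for $m'\ge m$, together with the observation that $\I^{\{n+1\}_{(m+1)}}\subset\I^{\{n+1\}_{(m)}}$. Testing the universal property in the definition of (log $p$-étale, resp.\ formally log étale) of level $m$ against objects of the smaller category $\mathscr{C'}^{(m)}_\gamma$ (resp.\ $\mathscr{C'}^{(m)}_{\gamma,1}$) is weaker than testing against the larger one, so the property descends; hence the same $(b_\lambda)$ works at level $m$ and $f$ is log $p$-smooth (resp.\ weakly log smooth) of level $m$ compatible with $\gamma$.

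For (5), localize to get a factorization through $A_{\N^r}$ which is formally log étale of level $m$ (i.e.\ compatible with $\gamma_\emptyset$, in the situation $J_S=p\O_S$); by Proposition \ref{logp-etal=logpetaleanym}.2, since $J_S+p\O_S$ is locally principal, a morphism which is formally log étale of level $m$ compatible with $\gamma$ is fine formally log étale, so $(b_\lambda)$ is a (finite) formal log basis of $f$ in the sense of Definition \ref{forlogbasis}, whence $f$ is weakly log smooth. The one point to watch here is the hypothesis ``$J_S+p\O_S$ locally principal'' needed to invoke \ref{logp-etal=logpetaleanym}; for assertion (5) as stated the level is $m$ without ``compatible with $\gamma$'', so one is in the setting $\gamma=\gamma_\emptyset$, $J_S=p\O_S$, which is locally principal, and the hypothesis is automatic. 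The main (and only real) obstacle is therefore purely bookkeeping: keeping straight, for each assertion, that the relevant implication between ``level $m$'' notions has already been recorded (in \ref{let-lpet}, \ref{rem-etaleisnice}, the inclusions of Definition \ref{dfnCm}, or \ref{logp-etal=logpetaleanym}), and that each of the notions involved is étale-local on source and target so that the local factorizations may be used; there is no genuinely new argument to be made.
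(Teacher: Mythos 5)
Your proposal is correct and follows essentially the same route as the paper, which disposes of the five assertions by citing the chart-theorem for log smooth morphisms together with \ref{let-lpet} for (1), Remark \ref{rem-etaleisnice} for (2)--(4), and Proposition \ref{logp-etal=logpetaleanym}.2 for (5). If anything you are slightly more careful than the paper on two points: for (4) you make explicit that the level-monotonicity rests on the inclusions $\mathscr{C'}^{(m)}_{\gamma}\subset\mathscr{C'}^{(m+1)}_{\gamma}$ and $\mathscr{C'}^{(m)}_{\gamma,1}\subset\mathscr{C'}^{(m+1)}_{\gamma,1}$ from Definition \ref{dfnCm} (which Remark \ref{rem-etaleisnice} does not literally state), and for (5) you correctly check that the hypothesis ``$J_S+p\O_S$ locally principal'' of \ref{logp-etal=logpetaleanym} is automatic since level $m$ without ``compatible with $\gamma$'' means $\gamma=\gamma_{\emptyset}$ with $J_S=p\O_S$.
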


\begin{proof}
The first statement is a straightforward consequence of Theorem  \cite[IV.2.3]{Ogus-Logbook}
and Proposition \ref{let-lpet}. 
Using Proposition \ref{logp-etal=logpetaleanym}.2, we get the last assertion. 
The other implications are consequences of Remark \ref{rem-etaleisnice}.
\end{proof}

\begin{prop}
\label{logpsmooth-stab}
The collection of formally log étale of level $m$ compatible with $\gamma$ 
(resp. log $p$-étale of level $m$ compatible with $\gamma$,
resp. log $p$-étale, resp. log smooth, resp. log $p$-smooth, resp. having log $p$-bases locally,
resp. weakly log smooth,
resp. weakly log smooth of level $m$ compatible with $\gamma$,
resp. log $p$-smooth of level $m$ compatible with $\gamma$) 
morphisms of fine $S$-log schemes
is stable under base change and under composition.
\end{prop}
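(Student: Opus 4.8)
The plan is to reduce each of the nine cases to facts already established earlier in the text, treating base change and composition uniformly. The key observation is that all of these notions are defined (directly or via an ``\'etale locally'' clause) in terms of one of three underlying base properties: being \emph{log $p$-\'etale of level $m$ compatible with $\gamma$}, being \emph{log $p$-\'etale}, or being \emph{log smooth}. For the first two of these three base properties, stability under base change and composition is exactly \ref{logpetalelevelm-stab} (for level $m$) and \ref{p-etale-stab1} together with \ref{rel-parf-stab2pre} (for log $p$-\'etaleness); for log smoothness it is the classical result of Kato (see \cite[IV.3.3]{Ogus-Logbook}). So the content of the proposition is just propagating these through the ``has a (finite) log $p$-basis \'etale locally'' definitions.

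\begin{proof}
The cases of morphisms formally log \'etale of level $m$ compatible with $\gamma$ and of morphisms log $p$-\'etale of level $m$ compatible with $\gamma$ are \ref{logpetalelevelm-stab}. The case of log $p$-\'etale morphisms follows from \ref{p-etale-stab1} (composition) and \ref{rel-parf-stab2pre} (base change). The case of log smooth morphisms is \cite[IV.3.1.5 and IV.3.3]{Ogus-Logbook}.

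It remains to treat log $p$-smoothness, having log $p$-bases locally, weak log smoothness, weak log smoothness of level $m$ compatible with $\gamma$, and log $p$-smoothness of level $m$ compatible with $\gamma$. These five properties have the same shape: $f\colon X \to Y$ has the property if and only if, \'etale locally on $X$, there is a set $(b_\lambda)_{\lambda\in\Lambda}$ of sections of $M_X$ (finite in the ``smooth'' variants, arbitrary in the ``has bases locally'' variant) such that the induced morphism $X \to Y \times_{\Z/p^{i+1}\Z} A_{\N^{(\Lambda)}}$ is $P$, where $P$ is one of the three base properties already handled. Consider first base change: let $g\colon Y' \to Y$ be a morphism of fine $S$-log schemes and $f'\colon X' := X\times_Y Y' \to Y'$ the projection. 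Choosing an \'etale cover of $X$ on which $f$ has a log $p$-basis $(b_\lambda)$ and pulling it back to an \'etale cover of $X'$ (using that \'etale morphisms are stable under base change), the images $b'_\lambda$ of the $b_\lambda$ in $\Gamma(X', M_{X'})$ form a candidate log $p$-basis; and the induced morphism $X' \to Y' \times_{\Z/p^{i+1}\Z} A_{\N^{(\Lambda)}}$ is obtained from $X \to Y \times_{\Z/p^{i+1}\Z} A_{\N^{(\Lambda)}}$ by base change along $g$, since fiber products of fine log schemes commute with one another. By the already-established base-change stability of $P$, this morphism is again $P$, so $f'$ has the property. (For the finite variants one uses that the cardinality of $\Lambda$ is unchanged.)

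For composition, let $f\colon X \to Y$ and $g\colon Y \to Z$ both have the property. Working \'etale locally on $Y$, pick a log $p$-basis $(c_\mu)_{\mu\in M}$ of $g$, so that $g'\colon Y \to Z\times_{\Z/p^{i+1}\Z} A_{\N^{(M)}}$ is $P$; then work \'etale locally on $X$ (refining the \'etale cover of $X$ so that it maps into the chosen cover of $Y$) and pick a log $p$-basis $(b_\lambda)_{\lambda\in\Lambda}$ of $f$, so that $f'\colon X \to Y\times_{\Z/p^{i+1}\Z} A_{\N^{(\Lambda)}}$ is $P$. One then checks that the disjoint union $(b_\lambda)_{\lambda\in\Lambda} \sqcup (f^*c_\mu)_{\mu\in M}$ of sections of $M_X$ is a log $p$-basis for $g\circ f$: the induced morphism $X \to Z\times_{\Z/p^{i+1}\Z} A_{\N^{(\Lambda\sqcup M)}}$ factors as
\begin{equation}
X \xrightarrow{f'} Y\times_{\Z/p^{i+1}\Z} A_{\N^{(\Lambda)}} \xrightarrow{g'\times \mathrm{id}} Z\times_{\Z/p^{i+1}\Z} A_{\N^{(M)}}\times_{\Z/p^{i+1}\Z} A_{\N^{(\Lambda)}} = Z\times_{\Z/p^{i+1}\Z} A_{\N^{(\Lambda\sqcup M)}},
\end{equation}
using $A_{\N^{(\Lambda)}}\times_{\Z/p^{i+1}\Z} A_{\N^{(M)}} = A_{\N^{(\Lambda\sqcup M)}}$. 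The first arrow is $P$ by choice, the second is $P$ because it is a base change of the $P$-morphism $g'$, and hence the composite is $P$ by stability of $P$ under composition. Therefore $g\circ f$ has the property. Since $\Lambda\sqcup M$ is finite when both $\Lambda$ and $M$ are, the finite variants are covered as well. This exhausts all cases.
\end{proof}

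\medskip

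\noindent\emph{Remark on the main obstacle.} The only genuinely non-formal point is the compatibility of the ``\'etale locally'' clauses under base change and composition with the choice of a \emph{common} refinement of \'etale covers, together with the identification $A_{\N^{(\Lambda)}}\times A_{\N^{(M)}} = A_{\N^{(\Lambda\sqcup M)}}$; everything else is bookkeeping on top of \ref{logpetalelevelm-stab}, \ref{p-etale-stab1}, \ref{rel-parf-stab2pre}, and the classical log smoothness case. In particular no new geometric input is needed, which is why the listed lemmas suffice.
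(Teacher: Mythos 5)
Your proof is correct and follows essentially the same route as the paper: the \'etale cases are dispatched by \ref{rel-parf-stab2pre}, \ref{p-etale-stab1} and \ref{logpetalelevelm-stab}, base change is handled by pulling back the \'etale cover and the basis, and composition rests on exactly the factorization through $Z\times_{\Z/p^{i+1}\Z}A_{\N^{(\Lambda\sqcup M)}}$ that the paper invokes. You merely write out in more detail the refinement of \'etale covers and the identification $A_{\N^{(\Lambda)}}\times A_{\N^{(M)}}=A_{\N^{(\Lambda\sqcup M)}}$ that the paper leaves implicit.
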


\begin{proof}
The etale cases are already known (see \ref{rel-parf-stab2pre}, \ref{p-etale-stab1}, and \ref{logpetalelevelm-stab}).
Let us check the stability of the collection of morphisms having log $p$-bases locally. 
Using the étale case, the stability under base change is obvious. 
Moreover, from the non respective case, if $X \to Y \times _{\Z/p ^{i+1}\Z} A _{\N ^{(\Lambda)}}$ 
and 
$Y \to Z \times _{\Z/p ^{i+1}\Z} A _{\N ^{(\Lambda')}}$ are log $p$-étale,
then so is the composition
$X \to Z \times _{\Z/p ^{i+1}\Z} A _{\N ^{(\Lambda \coprod \Lambda')}}$.
This implies the stability under composition.
The other cases are checked similarly. 
\end{proof}

The following definition will not be useful but we add it for completeness. 
\begin{dfn}
Let $f\colon X \to Y$ be an $S$-morphism of fine log schemes.
We say that $f$ is formally log $p$-smooth if it satisfies the following property: 
for any commutative diagram of fine log schemes of the form
\begin{equation}
\label{dfn-petale-squarebis}
\xymatrix{
{U}
\ar[r] ^-{u _0}
\ar@{^{(}->}[d] ^-{\iota}
&
{X}
\ar[d] ^-{f}
\\
{T}
\ar[r] ^-{v}
&
{Y}
}
\end{equation}
such that $\iota$ is an object of $\mathscr{C} _{(p)}$,
there exists etale locally on $X$ a  morphism
$u \colon T \to X$ such that $u\circ \iota = u _0$ and $f \circ u = v$.
\end{dfn}

\begin{rem}
Let $f\colon X \to Y$ be an $S$-morphism of fine log schemes.
Using Theorem \cite[IV.18.1.2]{EGAIV4}, we check that if $f$ has log $p$-bases locally 
then $f$ is formally log $p$-smooth. 
(The converse seems to be false even if we do not have a counter example.) 
In particular, if $f$ is log $p$-smooth
then $f$ is formally log $p$-smooth. 
 
Finally, we remark  that $f$ is log $p$-étale if and only if $f$ is  
log $p$-unramified and formally log $p$-smooth. 

\end{rem}

\begin{ntn}
\label{ntn-mPDtruncated}
With the notation of proposition \ref{mPDenv-local-desc},
Let $D$ be a fine log scheme over $S$.
We denote by $ \O _{D} <T _1 ,\dots ,T _r> _{(m), n} $ the $m$-PD-polynomial ring
and by 
$ \O _{D} <T _1 ,\dots ,T _r> _{(m), n} $ the $n$-truncated $m$-PD-polynomial ring.
\end{ntn}

\begin{lem}
\label{Kerexactclosedimmer}
Let $i \colon X \hookrightarrow P$ be an exact closed $S$-immersion of fine log schemes.
Then 
$
\ker 
(
i ^{-1} \O ^* _P 
\to 
\O ^* _X
)
=
\ker 
(
i ^{-1} M ^{\mathrm{gr}}  _P
\to 
M ^{\mathrm{gr}}  _X
)$. In particular, 
$
\ker 
(
\O ^* _{P, {\overline{x}}}  
\to 
\O ^* _{X, {\overline{x}}}
)
=
\ker 
(
M ^{\mathrm{gr}}  _{P, \overline{x}}
\to 
M ^{\mathrm{gr}}  _{X, \overline{x}}
)$
for any geometric point $\overline{x}$  of $X$. 
\end{lem}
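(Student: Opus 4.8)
The plan is to verify the stated identity of subsheaves of $i^{-1}M^{\mathrm{gr}}_P$ on stalks, which at the same time proves the ``in particular'' clause. One inclusion is free: the canonical maps $i^{-1}\O^*_P\hookrightarrow i^{-1}M^{\mathrm{gr}}_P$ and $\O^*_X\hookrightarrow M^{\mathrm{gr}}_X$ are injective (the log structures are integral) and compatible with the comorphism $i^{-1}\O_P\to\O_X$ (this is just the log structure axiom $\alpha_P^{-1}(\O^*_P)\simeq\O^*_P$), so $\ker(i^{-1}\O^*_P\to\O^*_X)\subseteq\ker(i^{-1}M^{\mathrm{gr}}_P\to M^{\mathrm{gr}}_X)$ inside $i^{-1}M^{\mathrm{gr}}_P$. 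Since kernels of maps of étale sheaves are computed stalkwise and an equality of subsheaves can be checked on stalks, and since $(i^{-1}\FF)_{\overline x}=\FF_{\overline x}$ for any geometric point $\overline x$ of $X$ (identifying $\overline x$ with its image in $P$), everything reduces to the pointwise assertion.

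So I would fix a geometric point $\overline x$ of $X$ and let $\I$ be the ideal of $\underline i$. Because $\O_{X,\overline x}=\O_{P,\overline x}/\I_{\overline x}$ is nonzero, $\I_{\overline x}$ is contained in the maximal ideal $\m_{P,\overline x}$ of the local ring $\O_{P,\overline x}$. Two elementary consequences: (i) $\ker(\O^*_{P,\overline x}\to\O^*_{X,\overline x})=1+\I_{\overline x}$, since $u\in\O^*_{P,\overline x}$ dies iff $u-1\in\I_{\overline x}$ and every element of $1+\I_{\overline x}$ is automatically a unit; and (ii) for $m\in M_{P,\overline x}$, the image of $\alpha_P(m)$ in $\O_{X,\overline x}$ is a unit iff $\alpha_P(m)\notin\m_{P,\overline x}$ iff $\alpha_P(m)\in\O^*_{P,\overline x}$ iff $m\in\O^*_{P,\overline x}$. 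Thus in the stalk at $\overline x$ of the pre-log structure $i^{-1}M_P\to\O_X$ defining $i^*M_P$, the preimage of $\O^*_{X,\overline x}$ is exactly $\O^*_{P,\overline x}$.

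Next I would invoke exactness: by Definition \ref{dfn-imm} the canonical map $i^*M_P\to M_X$ is an isomorphism, so passing to stalks of the associated log structure (a pushout, hence compatible with stalks) and using (ii) gives $M_{X,\overline x}\cong M_{P,\overline x}\oplus_{\O^*_{P,\overline x}}\O^*_{X,\overline x}$, the amalgamation being along the unit inclusion $\O^*_{P,\overline x}\hookrightarrow M_{P,\overline x}$ and the surjection $\O^*_{P,\overline x}\twoheadrightarrow\O^*_{X,\overline x}$ of kernel $1+\I_{\overline x}$. Groupification is a left adjoint, hence commutes with this pushout, and the pushout of abelian groups along a surjection with kernel $1+\I_{\overline x}$ and an injection is the quotient $M^{\mathrm{gr}}_{P,\overline x}/(1+\I_{\overline x})$; therefore $\ker(M^{\mathrm{gr}}_{P,\overline x}\to M^{\mathrm{gr}}_{X,\overline x})=1+\I_{\overline x}$. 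Combined with (i), this is the pointwise statement, and by the first paragraph it yields the lemma. Alternatively, the equality $\ker(i^{-1}M^{\mathrm{gr}}_P\to M^{\mathrm{gr}}_X)=1+\I$ is \cite[IV.2.1.2.2]{Ogus-Logbook} (already used in the proof of \ref{let-lpet}), so one may simply combine it with (i).

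I do not expect a genuine obstacle. The only point needing care is the stalk computation of $i^*M_P$: one must know that the amalgamation defining the inverse image log structure specializes at $\overline x$ to a pushout along $\O^*_{P,\overline x}$ \emph{itself}, rather than along some larger submonoid of $M_{P,\overline x}$ --- and this is exactly what observation (ii), i.e.\ the inclusion $\I_{\overline x}\subseteq\m_{P,\overline x}$, guarantees. Everything else (injectivity of units into groupifications, exactness of filtered colimits, groupification preserving pushouts, checking an equality of subsheaves stalkwise) is routine.
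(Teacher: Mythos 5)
Your proof is correct, and it reaches the key stalkwise statement by a different mechanism than the paper. Both arguments reduce to stalks and both exploit exactness of $i$, but the paper quotes Kato's identity $(M_P/\O_P^*)_{\overline{x}}=(M_X/\O_X^*)_{\overline{x}}$ for an exact immersion, groupifies, and then compares the two extensions $1\to\O^*\to M^{\mathrm{gr}}\to M^{\mathrm{gr}}/\O^*\to 1$ for $P$ and $X$ to conclude that the two kernels coincide --- it never identifies what the kernel is. You instead compute the stalk of $i^*M_P$ as the explicit amalgamated sum $M_{P,\overline{x}}\oplus_{\O^*_{P,\overline{x}}}\O^*_{X,\overline{x}}$ and identify both kernels with $1+\I_{\overline{x}}$; the observation that $\I_{\overline{x}}\subseteq\m_{P,\overline{x}}$ forces the amalgamation to be along $\O^*_{P,\overline{x}}$ itself (rather than a larger preimage) is indeed the one point requiring care, and you handle it correctly, as is the pushout computation once groupification is applied. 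Your route is marginally longer but buys the explicit description $\ker=1+\I$, which is the form the statement is actually used in elsewhere (e.g.\ in the proof of \ref{let-lpet}, via \cite[IV.2.1.2.2]{Ogus-Logbook}); your closing remark that one could simply combine that reference with your observation (i) is the shortest correct argument of all.
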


\begin{proof}
Let $\overline{x}$ be a geometric point of $X$. 
Since $i$ is an exact closed immersion,
we have $(M_P / \O _P ^{*}) _{\overline{x}}
=
(M_X  / \O _X ^{*}) _{\overline{x}}$ (use  \cite[1.4.1]{Kato-logFontaine-Illusie})
and thus
$(M_P ^\gp / \O _P ^{*}) _{\overline{x}}
=
(M_X ^\gp / \O _X ^{*}) _{\overline{x}}$ (because the functor 
$M \mapsto M ^{\mathrm{gr}}$ commutes with inductive limits).
Hence, the inclusion 
$
\ker 
(
\O ^* _{P, \overline{x}}  
\to 
\O ^* _{X, \overline{x}}
)
\subset
\ker 
(
M ^{\mathrm{gr}}  _{P, \overline{x}}
\to 
M ^{\mathrm{gr}}  _{X, \overline{x}}
)$ is in fact an equality.
Since we have the canonical inclusion
$
\ker 
(
i ^{-1} \O ^* _P 
\to 
\O ^* _X
)
\subset
\ker 
(
i ^{-1} M ^{\mathrm{gr}}  _P
\to 
M ^{\mathrm{gr}}  _X
)$, this yields that this latter inclusion is an equality.

\end{proof}

\begin{prop}
\label{mPDenv-local-desc}
Let $f \colon X \to Y$ be an $S$-morphism of fine log-schemes,
$(b  _{\lambda}) _{\lambda =1,\dots, r}$ be some elements of $\Gamma ( X, M _X)$
Let $u \colon Z \hookrightarrow X$ and $v \colon Z \hookrightarrow Y$ be two $S$-immersions of fine log schemes
such that $v= f\circ u$.
Suppose given $y _{\lambda} \in \Gamma (Y, M _{Y})$ whose images in $\Gamma (Z, M _{Z})$ coincide with the images of $b _\lambda$.

Let 
$
T' \hookrightarrow D'$
and 
$
T \hookrightarrow D$
be the $m$-PD envelopes compatible with $\gamma$ of $u$ and $v$ respectively.
Let $\alpha \colon D '\to X$ be the canonical morphism.
Using multiplicative notation, put 
$u _{\lambda} := \frac{\alpha ^* (b _{\lambda} )}{\alpha ^* ( f ^{*} (y _\lambda)) }\in 
\ker ( \Gamma (D', M ^\mathrm{gr} _{D '})\to  \Gamma (T' ,M  _{T'} ^\mathrm{gr} ))
=
\ker ( \Gamma (D', \O _{D '} ^{*} )\to  \Gamma (T' ,\O _{T'} ^{*} ))
$ 
(see \ref{Kerexactclosedimmer}). 

Let 
$
T' \hookrightarrow D' _n$
and 
$
T \hookrightarrow D _n$
be the $m$-PD envelopes of order $n$ compatible with $\gamma$ of $u$ and $v$ respectively.
Let $\alpha _n \colon D _n '\to X$ be the canonical morphism.
Using multiplicative notation, put 
$u _{\lambda,n} := \frac{\alpha _n ^* (b _{\lambda} )}{\alpha _n ^* ( f ^{*} (y _\lambda)) }\in 
\ker ( \Gamma (D', M ^\mathrm{gr} _{D _n '})\to  \Gamma (T' ,M  _{T'} ^\mathrm{gr} ))
=
\ker ( \Gamma (D', \O _{D _n '} ^{*} )\to  \Gamma (T' ,\O _{T'} ^{*} ))
$.

\begin{enumerate}
\item If $(b  _{\lambda}) _{\lambda =1,\dots, r}$ is a formal log basis 
 of level $m$ compatible with $\gamma$ of $f$,
then, we have the isomorphism of $m$-PD-$\O _{D}$-algebras
   \begin{align}
   \notag
   \O _{D _n} <T _1 ,\dots ,T _r> _{(m),n}
   &\riso \O _{D' _n}\\
   \notag
   T _\lambda &\mapsto u _{\lambda ,n} -1.
   \end{align}

\item If $(b  _{\lambda}) _{\lambda =1,\dots, r}$ is a log $p$-basis 
 of level $m$ compatible with $\gamma$ of $f$,
then, we have the isomorphism of $m$-PD-$\O _{D}$-algebras
   \begin{align}
   \notag
   \O _{D} <T _1 ,\dots ,T _r> _{(m)}
   &\riso \O _{D'}\\
   \notag
   T _\lambda &\mapsto u _{\lambda } -1.
   \end{align}

\end{enumerate}

\end{prop}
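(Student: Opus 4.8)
The plan is to reduce the statement, in two moves, to Berthelot's computation of the $m$-PD envelope of a polynomial extension (\cite[1.4.1]{Be1}), and then transport it back. Let $g\colon X\to Y\times_{\Z/p^{i+1}\Z}A_{\N^r}$ be the $Y$-morphism determined by $f$ and by $(b_\lambda)$, so that $g^*e_\lambda=b_\lambda$, where $e_1,\dots,e_r\in\Gamma(A_{\N^r},M_{A_{\N^r}})$ are the canonical elements; by Definition \ref{logpetalelogpsmooth(m)}, the hypothesis of (1) (resp. (2)) says exactly that $g$ is formally log \'etale of level $m$ compatible with $\gamma$ (resp. log $p$-\'etale of level $m$ compatible with $\gamma$). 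The first move is to apply the transitivity Lemma \ref{red-Y2D} to the morphism $u\to v$ of $\mathscr{C}$ induced by $f$: with $\delta$ the $m$-PD structure of $D=P_{(m),\gamma}(v)$ and $w:=P_{(m),\gamma}(v)\times_v u$ the fibre product taken in $\mathscr{C}$ (an immersion into $\underline D\times_Y X$), this gives $P_{(m),\gamma}(u)\cong P_{(m),\delta}(w)$ in $\mathscr{C}^{(m)}_\gamma$ and $P^n_{(m),\gamma}(u)\cong P^n_{(m),\delta}(w)$ in $\mathscr{C}^{(m)}_{\gamma,n}$. Now $\underline D\times_Y X\to\underline D\times_Y(Y\times_{\Z/p^{i+1}\Z}A_{\N^r})=\underline D\times_{\Z/p^{i+1}\Z}A_{\N^r}$ is the base change of $g$, hence is again formally log \'etale (resp. log $p$-\'etale) of level $m$ compatible with $\gamma$ by \ref{logpetalelevelm-stab}; composing $w$ with it produces an immersion $w'$ into $\underline D\times_{\Z/p^{i+1}\Z}A_{\N^r}$, and $w'=\sigma\circ j$, where $j$ is the exact closed immersion of the source of $D$ into $\underline D$ and $\sigma\colon\underline D\to\underline D\times_{\Z/p^{i+1}\Z}A_{\N^r}$ is the section of the first projection defined by the images $\overline y_\lambda\in\Gamma(\underline D,M_{\underline D})$ of the $y_\lambda$. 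By the level-$m$ analogue of Lemma \ref{mPDenv-lrp} one then obtains $P_{(m),\delta}(w)\cong P_{(m),\delta}(w')$ and $P^n_{(m),\delta}(w)\cong P^n_{(m),\delta}(w')$: in case (2) this analogue is proved exactly as \ref{mPDenv-lrp} (abstract nonsense), using that the thickening underlying an $m$-PD envelope is an object of $\mathscr{C'}^{(m)}_\gamma$, against which log $p$-\'etaleness of level $m$ permits lifting (Definition \ref{dfn-petale(m)}); in case (1) one argues with the truncated functors $P^n$, filtering the order-$n$ $m$-PD thickening to be lifted against as a tower of order-one ones, each successive step lying in $\mathscr{C'}^{(m)}_{\gamma,1}$ by the filtration properties of the $m$-PD powers $\I^{\{\bullet\}_{(m)}}$ (so that $(\I^{\{k\}_{(m)}})^{\{2\}_{(m)}}\subseteq\I^{\{k+1\}_{(m)}}$), and lifting step by step against the formally log \'etale of level $m$ morphism.

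The second move is to compute $P_{(m),\delta}(w')$ (resp. $P^n_{(m),\delta}(w')$) explicitly. Since $\sigma$ need not be exact, I would first exactify $w'$ as in \ref{ex-cl-imm}; this is \'etale local on $\underline D$, which is harmless since the formation of $m$-PD envelopes is \'etale local (see the proof of \ref{mPDenvelope}). The exactification adjoins to the log structure the ratios $u_\lambda:=e_\lambda\,\overline y_\lambda^{-1}$ in multiplicative notation, which become units congruent to $1$ modulo the ideal of the source (\ref{Kerexactclosedimmer}); in this way $w'$ becomes the exact closed immersion cut out, over $D$, by the extra coordinates $u_\lambda-1$. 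Reducing to level $0$ via the formula \ref{levelm-level0-description} and then copying the computation of the $m$-PD envelope of $\mathcal A\hookrightarrow\mathcal A[T_1,\dots,T_r]$ along the ideal $(T_1,\dots,T_r)$ from \cite[1.4.1]{Be1} (or its affine form \cite[1.4.5]{Be1}), applied with $\mathcal A=\O_D$ endowed with its $m$-PD ideal (for which $P_{(m),\delta}$ of the source immersion is $D$ itself, cf. \ref{remPmdelta}), yields an isomorphism of $m$-PD-$\O_D$-algebras $\O_D<T_1,\dots,T_r>_{(m)}\riso\O_{P_{(m),\delta}(w')}$ sending $T_\lambda\mapsto u_\lambda-1$, and the analogous statement after $n$-truncation. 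Tracing these identifications back through the first move, one checks that $u_\lambda$ is carried to $\alpha^*(b_\lambda)/\alpha^*(f^*(y_\lambda))$, and the two asserted isomorphisms follow.

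The step I expect to be the main obstacle is the explicit computation in the second move: carrying out the exactification of the section $\sigma$ in the category of fine log schemes and verifying that the ensuing change of variables $u_\lambda=\alpha^*(b_\lambda)/\alpha^*(f^*(y_\lambda))$ identifies the $m$-PD envelope with the ($n$-truncated) $m$-PD-polynomial algebra compatibly with the $m$-PD structures --- that is, the log-theoretic counterpart of \cite[1.4.1]{Be1}, which is where the bulk of the bookkeeping lies. A secondary point is the d\'evissage on the order needed, in case (1), to upgrade the order-one lifting property ``formally log \'etale of level $m$'' to the level-$m$ analogue of Lemma \ref{mPDenv-lrp} for the truncated envelopes $P^n$.
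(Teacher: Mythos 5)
Your outline follows essentially the same route as the paper: reduce via the (level-$m$ analogue of) Lemma \ref{mPDenv-lrp} and Lemma \ref{red-Y2D} to computing the $m$-PD envelope of a section of $D\times A_{\N^{r}}\to D$, exactify, and invoke Berthelot's computation for a regular sequence. The reductions are merely reordered: the paper first replaces $X$ by $Y\times _{\Z/p^{i+1}\Z}A_{\N^{r}}$ using \ref{mPDenv-lrp}, then applies \ref{red-Y2D} to reduce to $Y=S$, $D=Y$, and eventually to $v=\mathrm{id}$ via \cite[1.4.3.(iii)]{Be1} and \ref{env-strict}, whereas you keep $D$ around. Your observation that \ref{mPDenv-lrp} must be upgraded to the level-$m$ lifting property --- with a d\'evissage on the order of the $m$-PD thickening in case (1) --- is a point the paper leaves implicit, so making it explicit is a genuine improvement rather than a detour.

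The one place where your proposal under-specifies is precisely the step you defer as ``bookkeeping''. The paper does not exactify the section $\sigma$ directly. It exactifies the immersion $u$ by \ref{ex-cl-imm}, obtaining (\'etale-locally, and after trivializing the \'etale neighbourhood of $Z$ using \cite[IV.18.1.1]{EGAIV4}) a log \'etale morphism $g\colon U\to Y\times A_{\N^{r}}$ together with an exact closed immersion $W\hookrightarrow U$; by \ref{Kerexactclosedimmer} the ratios $v_\lambda =g^{*}(b_\lambda)/(f\circ g)^{*}(y_\lambda)$ then become honest units of $\O _U$ after shrinking. The key point --- and the only real idea in the proof --- is that the $Y$-morphism $\phi\colon U\to Y\times A_{\N^{r}}$ defined by these units is itself log \'etale: this follows from the differential criterion \cite[3.12]{Kato-logFontaine-Illusie}, because $(dlog\, v_1,\dots,dlog\, v_r)$ is a basis of $\Omega _{U/Y}$ whenever $(dlog\, g^{*}(b_1),\dots,dlog\, g^{*}(b_r))$ is. Composing with the log \'etale localization $Y\times A_{\Z^{r}}\to Y\times A_{\N^{r}}$ and applying \ref{mPDenv-lrp} once more reduces everything to the exact closed immersion $Y\hookrightarrow Y\times A_{\Z^{r}}$ at $e_\lambda=1$, whose ideal is generated by the regular sequence $(e_\lambda-1)$, and the conclusion comes from \cite[1.5.3]{Be1} (the regular-sequence form, rather than \cite[1.4.1]{Be1}) together with \ref{env-strict}. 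Without some such change of variables, ``exactify and read off the coordinates'' does not by itself identify the ideal of the exactified immersion as a regular sequence of the stated shape, so you should treat this device as the actual content of the proof rather than as bookkeeping.
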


\begin{proof}
Using lemma \ref{mPDenv-lrp} (and the first remark of \ref{rem-immersion}), we may assume that
$X = Y \times _{\Z /p ^{i+1}\Z} A _{\N ^{r}}$,
$f \colon X \to Y$ is the first projection,  and that the family $(b  _{\lambda}) _{\lambda =1,\dots, r}$ are the elements of
$\Gamma(X,M _X)$ corresponding to the canonical basis $(e _\lambda)_{\lambda=1\dots r}$ of $\N ^{r}$.
Using lemma \ref{red-Y2D},
we may furthermore assume that $Y=S$,  
$Z\hookrightarrow Y$ is the exact closed immersion whose ideal of definition is $I _S$. 
In particular, we get $D=Y$ and $\gamma$ is the canonical $m$-PD structure of $D$.

Let $\overline{z}$ be a geometric point of $Z$.
From \ref{ex-cl-imm}, there exists a commutative diagram of the form
\begin{equation}
\label{mPDenv-local-desc-diag1}
\xymatrix{
{U}
\ar[r] ^-{g}
&
{X=Y\times _{\Z /p ^{i+1}\Z} A_{\mathbb{N}^r}}\ar[r]^-{f}&Y
\\
{W}
\ar@{_{(}->}[u] ^-{w}
\ar[r] ^-{h}
&
{Z}
\ar@{^{(}->}[u] ^-{u}
\ar@{^{(}->}[ur] _-{v}
} 
\end{equation}
where $g$ is log étale,
$h$ is an étale neighborhood of $\overline{z}$ in $Z$, and
$w$ is an exact closed $S$-immersion.
We set $v_\lambda:=\frac{g ^* (b_\lambda)}{(f \circ g) ^*(y_\lambda)}
\in \mathrm{Ker} (\Gamma(U,M_U^{\gp})\to \Gamma(W,M_W^{\gp}))$.
Since $w$ is an exact closed immersion,
using \ref{Kerexactclosedimmer}, shrinking $U$ if necessary we may thus assume
that $v_\lambda \in  \Gamma(U,\O _U ^{*})$.

1) In this step, we reduce to the case where 
$h=id$.
According to \cite[IV.18.1.1]{EGAIV4}, 
there exist an étale neighborhood $Y ' \to Y$ of
$\overline{z}$ in $Y$ and an open $Z$-immersion (see the definition \ref{dfn-imm}) $\rho \colon Z ' := Z \times _{Y} Y'\to W$
which is a morphism of étale neighborhoods of $\overline{z}$ in $Z$ (in particular $h \circ \rho \colon Z \times _Y Y' \to Z$ is the canonical projection).
Let us use the prime symbol to denote the base change by $Y'\to Y$ of a $Y$-log scheme or a morphism of $Y$-log schemes.
Set $j:= (\rho, v' ) \colon Z ' \to W  \times _{Y} Y' =W'$.
Since $h \circ \rho \colon Z' \to Z$ is the canonical projection, 
then we compute that $h' \circ j = id$.
Since $id$ is an immersion, then $j$ is an immersion (see the first remark of \ref{rem-immersion}).
Since $h'$ and $id$ are etale then so is $j$. Hence, $j$ is an open $Y$-immersion.
Using $h' \circ j = id$, we get the commutative diagram over $Y'$ 
\begin{equation}
\notag
\xymatrix{
{U'}
\ar[r] ^-{g'}
&
{X'= Y '\times _{\Z /p ^{i+1}\Z} A _{\N ^{r}}}
\\
{}
&
{Z'.}
\ar@{^{(}->}[ul] ^-{w' \circ j}
\ar@{^{(}->}[u] ^-{u'}}
\end{equation}
Using \ref{rem-immersion}.\ref{rem-immersion2} we may assume (shrinking $U$ if necessary) that the exact $Y$-immersion $w' \circ j$ is closed.
Since the proposition is étale local  on $Y$, we can drop the primes, i.e. we can suppose $h=id$.

2) 
Consider the $Y$-morphism $\phi \colon U \to Y \times _{\Z /p ^{i+1}\Z} A _{\N ^{r}}$ defined by
the $v _{\lambda}$'s.
Since
the $(dlog\, g ^* (b_1),\dots, dlog\, g ^* (b_r))$ forms a basis of $\Omega_{U/Y}$ (because $g$ is log étale),
then so does
$(dlog\, v_1,\dots, dlog\, v_r)$.
This implies that the canonical map
$\phi ^{*} \Omega _{X/Y} \to \Omega _{U/Y}$
induced by $\phi$
is an isomorphism. Since $U/Y$ is log smooth
we get that
$\phi$ is log étale (use \cite[3.12]{Kato-logFontaine-Illusie}).

Let $\iota \colon  Y \to Y \times _{\Z /p ^{i+1}\Z} A _{\N ^{r}}$
be the $Y$-morphism defined by $e _\lambda \mapsto 1 \in  \Gamma (Y, M _Y)$, 
and 
$i\colon Y \to Y \times _{\Z /p ^{i+1}\Z} A _{\Z ^{r}}$ be the exact closed $Y$-immersion defined by $e _\lambda \mapsto 1 \in \Gamma (Y, M _Y)$.
We compute that the diagram of morphisms of log schemes
\begin{equation}
\label{mPDenv-local-desc-diag2}
\xymatrix{
{U}
\ar[r] ^-{\phi}
&
{Y \times _{\Z /p ^{i+1}\Z} A _{\N ^{r}}}
&
{Y \times _{\Z /p ^{i+1}\Z} A _{\Z ^{r}}}
\ar[l] ^-{}
\ar[r] ^-{p _1}
&
{Y}
\\
{}
&
{Z,}
\ar@{^{(}->}[ul] ^-{w}
\ar@{^{(}->}[u] ^-{\iota \circ v} 
\ar@{^{(}->}[ur] ^-{i \circ v}
\ar@{^{(}->}[urr] _-{v}}
\end{equation}
where $p _1$ is the first projection, is commutative. 
Since $g$, $\phi$ and $Y \times _{\Z /p ^{i+1}\Z} A _{\Z ^{r}} \to Y \times _{\Z /p ^{i+1}\Z} A _{\N ^{r}}$
are log etale, then using lemma \ref{mPDenv-lrp}
we reduce to the case where $u = i\circ v$, $X= Y \times _{\Z /p ^{i+1}\Z} A _{\Z ^{r}}$, 
 $(b  _{\lambda}) _{\lambda =1,\dots, r}$ are the elements of
$\Gamma(X,M _X)$ corresponding to the canonical basis of $\Z ^{r}$,
and $(y  _{\lambda}) _{\lambda =1,\dots, r}$ are equal to $1$
(indeed, for such $b _\lambda$ and $y _\lambda$, using the commutativity of \ref{mPDenv-local-desc-diag2} and the lemma \ref{mPDenv-lrp}, we compute that the image of 
$\frac{b _{\lambda} }{b ^{*} (y _\lambda) }$ in $\Gamma (D', M ^\mathrm{gr} _{D '})$
is $u _\lambda$).

By using the remark \cite[1.4.3.(iii)]{Be1} and \ref{env-strict}, 
we can suppose that $v = id$.
Since the ideal of the exact closed immersion $i$ is generated by the regular sequence $(b _{\lambda}-1) _{\lambda=1,\dots r}$,
using \cite[1.5.3]{Be1} and \ref{env-strict}
we check that the morphism of $\O _Y$-algebras
$ \O _{Y} <T _1 ,\dots ,T _r> _{(m)} \to 
\O _{P _{(m), \gamma} (i)}$
given by $T _{\lambda}  \mapsto  b _{\lambda} -1$
is an isomorphism.

\end{proof}

\begin{rem}
With the notation \ref{mPDenv-local-desc}, when $v$ is a closed immersion then such $y_{\lambda}$'s always exist \'etale locally on $Y$.
\end{rem}

\begin{prop}
\label{nthneighb-local-desc}
Let $f \colon X \to Y$ be an $S$-morphism of fine log-schemes endowed with a formally log basis
$(b  _{\lambda}) _{\lambda =1,\dots, r}$.
Let $u \colon Z \hookrightarrow X$ and $v \colon Z \hookrightarrow Y$ be two $S$-immersions of fine log schemes
such that $v= f\circ u$.
Suppose given $y _{\lambda} \in \Gamma (Y, M _{Y})$ whose images in $\Gamma (Z, M _{Z})$ coincide with the images of $b _\lambda$.
Let 
$
T' \hookrightarrow D '$
and 
$
T \hookrightarrow D $
be the $n$th infinitesimal neighborhood of order $n$ of  $u$ and $v$ respectively.
Let $\alpha \colon D  '\to X$ be the canonical morphism.
Using multiplicative notation, put 
$u _{\lambda} := \frac{\alpha ^* (b _{\lambda} )}{\alpha ^* ( f ^{*} (y _\lambda)) }\in 
\ker ( \Gamma (D ', M ^\mathrm{gr} _{D  '})\to  \Gamma (T' ,M  _{T'} ^\mathrm{gr} ))
=
\ker ( \Gamma (D ', \O _{D  '} ^{*} )\to  \Gamma (T' ,\O _{T'} ^{*} ))
$ 
(see \ref{Kerexactclosedimmer}). 
Then, we have the isomorphism of $\O _{D }$-algebras
   \begin{align}
   \notag
   \O _{D } [T _1 ,\dots ,T _r] _n
   &\riso \O _{D '}\\
   T _\lambda &\mapsto u _{\lambda } -1,
   \end{align}
where    $\O _{D } [T _1 ,\dots ,T _r] _n :=    \O _{D } [T _1 ,\dots ,T _r] / (T _1, \dots, T _r ) ^{n+1}$.
\end{prop}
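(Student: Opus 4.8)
The plan is to copy, almost verbatim, the proof of part (1) of Proposition \ref{mPDenv-local-desc}, replacing throughout the $m$-PD-envelopes (of order $n$, compatible with $\gamma$) by the $n$th infinitesimal neighbourhoods $P ^{n}$ of Proposition \ref{PDenvelope}, and replacing the transitivity lemma \ref{mPDenv-lrp} by its infinitesimal-neighbourhood counterpart \ref{prePDenv-lrp}. The only point that needs genuine attention is the hypothesis under which \ref{prePDenv-lrp} applies: it requires the relevant morphism to be \emph{fine formally log étale}, which is precisely what ``$(b _\lambda)$ is a formal log basis of $f$'' provides (see \ref{forlogbasis}), and which is also enjoyed by every log étale morphism (by \ref{dfn-etale}.3). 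This is the reason one must use \ref{prePDenv-lrp} here rather than \ref{mPDenv-lrp}.

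Concretely I would argue as follows. First, applying \ref{prePDenv-lrp} to $f' \colon X \to Y \times _{\Z /p ^{i+1}\Z} A _{\N ^{r}}$ (fine formally log étale by hypothesis) together with the first remark of \ref{rem-immersion}, reduce to the case $X = Y \times _{\Z /p ^{i+1}\Z} A _{\N ^{r}}$, $f$ the first projection, and $(b _\lambda)$ the images of the canonical basis of $\N ^{r}$. Next, reduce to $D = Y$: the projection makes $u \to v$ a morphism of $\mathscr{C}$, and any object of $\mathscr{C} _{n}$ equipped with a morphism to $u$ admits one to $v$ (through an object of $\mathscr{C} _{n}$), hence factors uniquely through $P ^{n}(v) = D$; therefore $P ^{n}(u) = P ^{n}(u _{D})$ where $u _{D} \colon Z \hookrightarrow X \times _{Y} D = D \times _{\Z /p ^{i+1}\Z} A _{\N ^{r}}$ is the base change, and one may replace $Y$ by $D$ and $v$ by the (exact closed, order $n$) source inclusion $Z \hookrightarrow D$ — this is the analogue of \ref{red-Y2D}. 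Then, exactly as in the proof of \ref{mPDenv-local-desc}, one exactifies $u$ via \ref{ex-cl-imm}, reduces to the case where the étale neighbourhood $h$ is the identity (using \cite[IV.18.1.1]{EGAIV4} and the fact that the statement is étale local on $Y$), introduces the log étale morphism $\phi$ defined by $v _\lambda := g ^{*}(b _\lambda)/(f \circ g) ^{*}(y _\lambda) \in \Gamma (U, \O _U ^{*})$ (it is log étale because $(dlog\, v _\lambda)$ is a basis of $\Omega _{U/Y}$, cf. \cite[3.12]{Kato-logFontaine-Illusie}), and applies \ref{prePDenv-lrp} to $g$, to $\phi$ and to the log étale morphism $Y \times _{\Z /p ^{i+1}\Z} A _{\Z ^{r}} \to Y \times _{\Z /p ^{i+1}\Z} A _{\N ^{r}}$ to reduce to $X = Y \times _{\Z /p ^{i+1}\Z} A _{\Z ^{r}}$, $u = i \circ v$ with $i$ the exact closed immersion $e _\lambda \mapsto 1$, $(b _\lambda)$ the canonical basis of $\Z ^{r}$ and $y _\lambda = 1$. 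Finally, by \cite[1.4.3.(iii)]{Be1} and \ref{env-strict} one reduces to $v = \mathrm{id}$, so that it remains to compute $P ^{n}(i)$: the ideal of $i$ in $\O _{Y \times _{\Z /p ^{i+1}\Z} A _{\Z ^{r}}} = \O _Y [t _1 ^{\pm 1}, \dots, t _r ^{\pm 1}]$ is generated by the regular sequence $(t _\lambda - 1) _{\lambda = 1, \dots, r}$ and $i$ admits the retraction $t _\lambda \mapsto 1$, whence $\O _{P ^{n}(i)} = \O _Y [t ^{\pm 1}]/(t _1 - 1, \dots, t _r - 1) ^{n+1} \riso \O _Y [T _1, \dots, T _r] _n$ via the change of variable $T _\lambda \mapsto t _\lambda - 1$. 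Tracking the identifications through the reductions (and using \ref{Kerexactclosedimmer} to match the two $\ker$-descriptions in the statement) shows $T _\lambda \mapsto u _\lambda - 1$, as required.

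I do not expect a serious obstacle here: every step is a word-for-word transcription of the argument for \ref{mPDenv-local-desc}, once \ref{mPDenv-lrp} is replaced by \ref{prePDenv-lrp} and the coincidence of hypotheses noted in the first paragraph is used. The one ingredient not already available is the final identification of $\O _{P ^{n}(i)}$ with the truncated polynomial algebra; but since $i$ is a split regular immersion this is elementary (it is literally the linear change of variable $T _\lambda = t _\lambda - 1$, after noting that $t _1 \cdots t _r$ is already invertible modulo the $(n+1)$st power of the ideal), and it is in any case the ``level $\infty$'' shadow of \cite[1.5.3]{Be1} (and can alternatively be extracted from \cite[5.8]{Kato-logFontaine-Illusie}).
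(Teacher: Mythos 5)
Your plan coincides with the paper's own proof, which consists essentially of the single sentence ``by copying the proof of \ref{mPDenv-local-desc}, we reduce to the case where $v=\mathrm{id}$, \dots; this case is obvious'': you correctly identify \ref{prePDenv-lrp} as the substitute for \ref{mPDenv-lrp} (and that a formal log basis supplies exactly the fine formal log \'etaleness it needs), and your explicit computation of $P^{n}(i)$ for the split regular section $i$ is the ``obvious'' final case done properly. As a comparison of approaches there is therefore nothing to report: it is the same proof.

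There is, however, one link in the transcription that genuinely does not carry over, namely the reduction to $v=\mathrm{id}$. In the proof of \ref{mPDenv-local-desc} that reduction rests on \cite[1.4.3.(iii)]{Be1}, which says that an $m$-PD envelope \emph{compatible with} the $m$-PD structure of the base absorbs the base ideal, so that the ideal of $v$ may be discarded. For plain $n$th infinitesimal neighbourhoods there is no compatibility datum and no absorption: after your (correct) reduction to $Y=D$ with $v\colon Z\hookrightarrow D$ of order $n$ and ideal $J$, the ideal of $u$ in $\O_{D}[t^{\pm1}]$ is $J\O_{D}[t^{\pm1}]+(t_{\lambda}-1)$, whose $(n+1)$st power contains the cross terms $J^{a}(t-1)^{b}$ with $a+b=n+1$ and $a,b\geq 1$, and these are not killed in $\O_{D}[T_{1},\dots,T_{r}]_{n}$. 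Concretely, take $Y=\Spec k[x]$ with trivial log structure, $X=Y\times A_{\N}$, $Z=\Spec k$ embedded by $x=0$, $t=1$, and $n=1$: then $\O_{D'}=k[x,t]/(x,t-1)^{2}$ has length $3$ over $k$ while $\O_{D}[T_{1}]_{1}=(k[x]/(x^{2}))[T_{1}]/(T_{1}^{2})$ has length $4$, the map $T_{1}\mapsto u_{1}-1=t-1$ killing $xT_{1}$. So the asserted isomorphism, and hence any proof of it, requires the ideal of $v$ to vanish on $D$ (e.g.\ $v=\mathrm{id}$, which is the only case in which the proposition is actually invoked, in \ref{nota-etawtm} with $u=\Delta_{X/S}$ and $v=\mathrm{id}_{X}$); the intermediate general-$v$ reduction that you import verbatim from the $m$-PD proof is the genuine gap --- one that the paper's own one-line proof shares.
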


\begin{proof}
By copying the proof of \ref{mPDenv-local-desc}, 
we reduce to the case where $v = id$, 
$X= Y \times _{\Z /p ^{i+1}\Z} A _{\Z ^{r}}$, 
$u\colon Y \to Y \times _{\Z /p ^{i+1}\Z} A _{\Z ^{r}}$ is the exact closed $Y$-immersion defined by 
$e _\lambda \mapsto 1 \in \Gamma (Y, M _Y)$,
 $(b  _{\lambda}) _{\lambda =1,\dots, r}$ are the elements of
$\Gamma(X,M _X)$ corresponding to the canonical basis of $\Z ^{r}$,
and $(y  _{\lambda}) _{\lambda =1,\dots, r}$ are equal to $1$.
This case is obvious.

\end{proof}

\subsection{The case of schemes: local coordinates}

\begin{dfn}
\label{finite-p-basis}
Let $f\colon X \to Y$ be an $S$-morphism of fine log-schemes.
\begin{enumerate}
\item We say that a finite set $(t  _{\lambda}) _{\lambda =1,\dots, r}$ of elements of $\Gamma ( X, \O _X)$ are 
``log $p$-étale coordinates''
(resp. ``formal log étale coordinates'',
resp. ``formal log étale coordinates of level $m$'',
resp. ``log $p$-étale coordinates of level $m$''),
if the corresponding $Y$-morphism
$X \to Y \times _{\Z} \A  ^{r}$, where $\A ^r$ is the $r$th affine space over $\Z$ endowed 
with the trivial logarithmic structure,
is log $p$-étale
(resp. formally log étale, resp. formally log étale of level $m$, 
log $p$-étale of level $m$).

When $f$ is strict
(this is equivalent to say that the $Y$-morphism
$X \to Y \times _{\Z} \A  ^{r}$ is strict)
we remove ``log'' in the terminology, e.g. we get the notion of ``$p$-étale coordinates''.
\item We say that $f$ is ``$p$-smooth'' 
(resp. ``weakly smooth'',
resp. ``weakly smooth of level $m$'',
resp. ``$p$-smooth of level $m$''),
if $f$ is strict and if, étale locally on $X$,
$\underline{f}$ has 
$p$-étale coordinates''
(resp. ``formal étale coordinates'',
resp. ``formal étale coordinates of level $m$'',
resp. ``$p$-étale coordinates of level $m$'').
Notice that these notions 
are étale local on $Y$.
\end{enumerate}
\end{dfn}

\begin{lem}
\label{rem-lognologbasis}
Let $\star \in \{ 
\text{log $p$-étale, 
formal log étale, 
formal log étale of level $m$,
log $p$-étale of level $m$}\}$.
Let $f\colon X \to Y$ be an $S$-morphism of fine log-schemes.
\begin{enumerate}
\item 
\label{rem-lognologbasis1} 
Let $(t  _{\lambda}) _{\lambda =1,\dots, r}$ in $\Gamma ( X, \O ^* _X)$.
Then $(t  _{\lambda}) _{\lambda =1,\dots, r}$ form a $\star$-basis if and only if 
they are $\star$-coordinates.

\item 
If the morphism $f$ has 
$\star$-coordinates 
with $r$ elements then, Zariski locally on $X$,
there exist $r$ elements 
of $\Gamma ( X, \O ^* _X)$
which are 
$\star$-coordinates of $f$.

\end{enumerate}
\end{lem}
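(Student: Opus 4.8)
The plan is to treat the two assertions in turn, exploiting the comparison between the "log" affine space $A_{\N^r}$ and the ordinary affine space $\A^r$, together with the fact that on the torus part the two charts differ only by an étale base change.

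\emph{Proof of \ref{rem-lognologbasis}.\ref{rem-lognologbasis1}.} Suppose $(t_\lambda)_{\lambda=1,\dots,r}$ lie in $\Gamma(X,\O_X^*)$. First I would observe that the $Y$-morphism $X \to Y\times_{\Z} \A^r$ attached to the $t_\lambda$ (defining the $\star$-coordinate condition) and the $Y$-morphism $X \to Y\times_{\Z/p^{i+1}\Z} A_{\N^{r}}$ attached to the $t_\lambda$ viewed as sections of $M_X$ via $\O_X^* \hookrightarrow M_X$ (defining the $\star$-basis condition) are related as follows. Since each $t_\lambda$ is a unit, the morphism $X \to Y\times A_{\N^r}$ factors through the open subscheme $Y\times_{\Z} \A^r$ cut out by inverting the coordinate functions, i.e. through $Y\times_{\Z} (\mathbb{G}_m)^r = Y\times_{\Z} (\A^r\setminus\{\text{coordinate hyperplanes}\})$, endowed with its \emph{trivial} log structure — because over the torus the canonical log structure of $A_{\N^r}$ coincides with the trivial one (the log structure associated to $\N^r\to \Z[\N^r]$ becomes trivial after inverting the $e_\lambda$). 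Thus the two morphisms differ only by composition with the open (in particular strict log étale, hence in all four senses $\star$) immersion $Y\times_{\Z}(\mathbb{G}_m)^r \hookrightarrow Y\times_{\Z}\A^r$ and the strict open immersion $Y\times_{\Z/p^{i+1}\Z}(\mathbb{G}_m)^r \hookrightarrow Y\times A_{\N^r}$. Now I invoke the stability of each of the four classes $\star$ under composition and cancellation: for $\star\in\{$log $p$-étale, log $p$-étale of level $m\}$ this is Lemma \ref{p-etale-stab1} (resp. the analogue \ref{logpetalelevelm-stab} for level $m$), and for the "formally log étale" variants it is again \ref{rel-parf-stab2pre}/\ref{p-etale-stab1} and \ref{logpetalelevelm-stab}. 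Since an open immersion of fine log schemes is strict log étale and hence $\star$ in all four senses (for the level-$m$ case use Remark \ref{rem-etaleisnice}.2 together with \ref{let-lpet}), the composite $X\to Y\times(\mathbb{G}_m)^r$ is $\star$ if and only if $X\to Y\times\A^r$ is, and likewise $X\to Y\times(\mathbb{G}_m)^r$ is $\star$ if and only if $X\to Y\times A_{\N^r}$ is. Since $Y\times_{\Z}(\mathbb{G}_m)^r$ and $Y\times_{\Z/p^{i+1}\Z}(\mathbb{G}_m)^r$ are the same log scheme over $Y$ (all structure maps factor through $\Z/p^{i+1}\Z$ because $Y$ does), the two conditions coincide, which is the claim.

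\emph{Proof of \ref{rem-lognologbasis}.2.} Suppose $f$ has $\star$-coordinates $(t_\lambda)_{\lambda=1,\dots,r}$, so $t_\lambda\in\Gamma(X,\O_X)$ and the associated $X\to Y\times_{\Z}\A^r$ is $\star$. I would replace each $t_\lambda$ by $1+t_\lambda$... but that need not be a unit either. Instead, the correct move is to work Zariski locally: around any point $x\in X$, the images of the coordinate functions $y_\lambda$ under $\O_{Y\times\A^r}\to\O_X$ are $t_\lambda$, and translating the $\lambda$-th coordinate of $\A^r$ by a suitable element of $\Z$ (or simply composing $X\to Y\times\A^r$ with the automorphism of $\A^r$ sending the $\lambda$-th coordinate $y_\lambda$ to $y_\lambda + c_\lambda$ for appropriate integers $c_\lambda$) we may arrange that each $t_\lambda + c_\lambda$ is invertible in $\O_{X,x}$, hence in $\O_X$ on a Zariski neighbourhood of $x$. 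Such a translation is an automorphism of $\A^r$ over $\Z$ (strict, étale, hence $\star$), so $(t_\lambda+c_\lambda)_{\lambda=1,\dots,r}$ are again $\star$-coordinates of $f$ on that neighbourhood by the cancellation property used above; and now they lie in $\Gamma(U,\O_U^*)$ for $U$ a Zariski neighbourhood of $x$. This gives the desired $r$ elements of $\Gamma(U,\O_U^*)$ that are $\star$-coordinates, proving \ref{rem-lognologbasis}.2.

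The only genuinely delicate point is the first paragraph: one must be careful that over the torus $(\mathbb{G}_m)^r\subset A_{\N^r}$ the canonical log structure really is trivial, so that the "log basis" condition (using $M_X$) and the "coordinate" condition (using $\O_X$) factor through the \emph{same} $Y$-log scheme; granting that, everything reduces to the already-established stability lemmas \ref{p-etale-stab1}, \ref{logpetalelevelm-stab}, \ref{rel-parf-stab2pre} and to the fact (\ref{let-lpet}, Remark \ref{rem-etaleisnice}) that strict log étale — in particular open — immersions are $\star$ in all four senses under consideration. For assertion 2 the only subtlety is the elementary observation that after a translation by integers one can make finitely many functions simultaneously invertible near a given point, which is immediate since a function is a unit in a local ring iff its value there is nonzero and $\Z\to\O_{X,x}/\mathfrak m_x$ hits... — more simply, if $t_\lambda(x)=0$ take $c_\lambda=1$, else take $c_\lambda=0$; then $t_\lambda+c_\lambda$ is a unit at $x$ hence on a neighbourhood.
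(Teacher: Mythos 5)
Your proposal is correct and follows essentially the same route as the paper: for (1) both morphisms are shown to factor through $Y\times_{\Z}\mathbb{G}_{\mathrm{m}}^{r}$ (where the log structure of $A_{\N^{r}}$ trivializes) and one concludes by the composition/base-change stability of the class $\star$, exactly as in the paper's appeal to \ref{p-etale-stab1}; for (2) your translation by $c_\lambda\in\{0,1\}$ is the paper's covering by the opens $D(u_1\cdots u_r)$ with $u_\lambda\in\{t_\lambda,\,t_\lambda-1\}$ followed by the same ``obvious change of coordinates''. The only cosmetic remark is that for the level-$m$ classes the cancellation you invoke is not literally stated in \ref{logpetalelevelm-stab}, but your argument only needs base change along the open immersion, which is available there.
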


\begin{proof}
Since the other cases are treated similarly, let us consider the case where $\star = \text{log $p$-étale}$.
 Let us check the first assertion. 
Using \ref{p-etale-stab1} we check that 
in both cases the elements $(t  _{\lambda}) _{\lambda =1,\dots, r}$ induce
a  log $p$-étale morphism of the form
$X \to Y \times _{\Z} \mathbb{G} _\mathrm{m} ^{r}$.
We conclude by using \ref{p-etale-stab1}.

Let us check the second assertion.
 By hypothesis, we have a log $p$-étale morphism of the form
$\phi \colon X \to Y \times _{\Z} \A  ^{r}$.
Let  $(t  _{1},\dots, t _r)$ be the elements of $ \Gamma ( X, \O _X)$ defining $\phi$.
We have a covering of $X$ by open subsets of the form 
$D (u _1 \cdots u _r)$ where $u _\lambda$ is either $t _\lambda$ or $t _\lambda -1$.
Using a obvious change of coordinates, we check that the morphism
$D (u _1 \cdots u _r) \to Y \times _{\Z} \A  ^{r}$ given by the restriction of $(u  _{1},\dots, u _r)$ on 
$D (u _1 \cdots u _r)$ is log $p$-étale.

\end{proof}

\begin{lem}
\label{rem-lognologbasis(part2)}
Let $\star \in \{ 
\text{$p$-smooth, 
weakly smooth, 
weakly smooth of level $m$,
$p$-smooth of level $m$}\}$.
Let $f\colon X \to Y$ be an $S$-morphism of fine log-schemes.
\begin{enumerate}

\item If $f$ is $\star$ then $f$ is log $\star$. 

\item Suppose that $f\colon X \to Y$ is in fact an $S$-morphism of schemes.
The morphism $f$ is log $\star$ if and only if $f$ is $\star$.
\end{enumerate}
\end{lem}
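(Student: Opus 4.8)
The four cases are formally the same: replace ``$p$-smooth'' by one of the three other notions and, accordingly, ``$p$-\'etale'' by ``formally \'etale'', ``formally \'etale of level $m$'' or ``$p$-\'etale of level $m$''; so I only treat $\star = $ ``$p$-smooth''. The plan rests on two inputs. (a) Lemma \ref{rem-lognologbasis}, which for a family of \emph{units} identifies ``log $p$-basis'' with ``log $p$-\'etale coordinates'' and which allows us to Zariski localize on $X$ so that a given family of $\star$-coordinates becomes a family of units. (b) The strict case of Lemma \ref{logetale-etale}: for a \emph{strict} $S$-morphism $f$ of fine log schemes, $f$ is log $p$-\'etale if and only if $\underline f$ is $p$-\'etale; for the three other flavours, the analogous statement ($f$ strict is formally log \'etale, resp.\ formally log \'etale of level $m$, resp.\ log $p$-\'etale of level $m$, if and only if $\underline f$ has the corresponding non-logarithmic property) is proved exactly as \ref{logetale-etale}, by reducing the defining universal property to the case of strict test diagrams. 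The reason (a) and (b) suffice is that for a \emph{group} $P$ the structural map $P \to \Z/p ^{i+1}\Z[P]$ has image in $\O ^*$, so $A _P$ has trivial log structure; in particular $A _{\Z ^r}$ is $\mathbb{G} _\mathrm{m} ^{r}$ over $\Z/p ^{i+1}\Z$ with trivial log structure, and $A _{\Z ^r}\hookrightarrow A _{\N ^r}$ is a strict open immersion (hence \'etale, hence log $p$-\'etale by \ref{let-lpet}, and stable under base change by \ref{rel-parf-stab2pre} and \ref{logpetalelevelm-stab}).

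For part (1), assume $f$ is $p$-smooth, so $f$ is strict (see \ref{finite-p-basis}). Working \'etale locally on $X$, the morphism $\underline f$ has $p$-\'etale coordinates; since $f$ is strict, the associated $Y$-morphism $X \to Y \times _{\Z}\A ^r$ (with $\A ^r$ carrying the trivial log structure) is strict with $p$-\'etale underlying morphism, so it is log $p$-\'etale by \ref{logetale-etale}, i.e.\ $f$ has log $p$-\'etale coordinates. By the second assertion of \ref{rem-lognologbasis}, after Zariski localizing on $X$ we may take these coordinates to be units $t _1,\dots ,t _r\in \Gamma (X,\O ^* _X)$; by the first assertion of \ref{rem-lognologbasis} such a family of units is then a log $p$-basis of $f$ (see \ref{logpetalelogpsmooth}). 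Thus $f$ admits a finite log $p$-basis \'etale locally on $X$, i.e.\ $f$ is log $p$-smooth.

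For part (2), the ``if'' direction is part (1), a morphism of schemes being strict. Conversely, let $f\colon X\to Y$ be a morphism of schemes that is log $p$-smooth. \'Etale locally on $X$ there is a finite log $p$-basis $(b _\lambda) _{\lambda=1,\dots ,r}$ of $f$; as $X$ is a scheme, $M _X=\O ^* _X$, so the $b _\lambda$ are units, whence by the first assertion of \ref{rem-lognologbasis} they are log $p$-\'etale coordinates of $f$, that is, the $Y$-morphism $X \to Y\times _{\Z}\A ^r$ is log $p$-\'etale. This $Y$-morphism is strict ($X$ and $Y$ being schemes), so by \ref{logetale-etale} its underlying morphism $\underline X\to \underline Y\times _{\Z}\A ^r$ is $p$-\'etale. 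Hence $\underline f=f$ has $p$-\'etale coordinates \'etale locally on $X$, i.e.\ $f$ is $p$-smooth.

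The only step requiring genuine care is the strict-case equivalence of \ref{logetale-etale}-type for the ``formal'' and ``level $m$'' flavours of $\star$ used in the three remaining cases (the log $p$-\'etale case being literally \ref{logetale-etale}); this is routine, by the same reduction to strict test diagrams, and it rests, together with \ref{rem-lognologbasis} itself, on the triviality of the log structure of $A _P$ for $P$ a group. Granting these points, everything above is formal, using only \ref{rem-lognologbasis}, \ref{logetale-etale}, \ref{let-lpet}, \ref{rel-parf-stab2pre} and \ref{logpetalelevelm-stab}.
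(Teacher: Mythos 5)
Your proof is correct and follows essentially the same strategy as the paper's: everything is reduced to Lemma \ref{rem-lognologbasis} together with the identification, for strict morphisms, of the logarithmic and non-logarithmic \'etaleness notions (Lemma \ref{logetale-etale} and its level-$m$/formal analogues). The only cosmetic difference is that the paper first deduces that $\underline{f}$ is log $\star$ and then invokes base-change stability (\ref{logpsmooth-stab}) via $f = \underline{f} \times _{\underline{Y}} Y$, whereas you lift the coordinates directly to $f$ using strictness; both routes are sound.
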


\begin{proof}
Since the other cases are treated similarly, let us consider the case where $\star = \text{$p$-smooth}$.
Let us check the first assertion. Suppose that $f$ is $p$-smooth. 
Then $\underline{f}$ is $p$-smooth by definition. Hence, from 1) and 2), we get that 
$\underline{f}$ is log $p$-smooth. Since $f$ is strict, then $f$ is the base change of $\underline{f}$
by $Y \to \underline{Y}$. Hence, using \ref{logpsmooth-stab}, we get that $f$ is log $p$-smooth.

Let us check the last assertion. Suppose that 
$f$ is an $S$-morphism of schemes which is log $p$-smooth.
By definition, etale locally on $X$,  
there exists a log $p$-étale of the form $X \to Y \times _{\Z/p ^{i+1}\Z} A _{\N ^{r}}$ 
given by some elements $(b  _{1}, \dots, b _r) $ of $\Gamma ( X, M _X)$.
Since $X$ is a scheme, $M _X = \O _X ^*$. Hence, using 1), we get that 
$(b  _{1}, \dots, b _r) $ form also a finite $p$-basis.
The converse is already known from 3).

\end{proof}

\begin{rem}
It is not clear that a strict log $p$-smooth morphism is $p$-smooth.
\end{rem}

\begin{prop}
\label{psmooth-stab}
The collection of $p$-smooth
(resp. weakly smooth,
resp. weakly smooth of level $m$,
resp. $p$-smooth of level $m$)
is stable under base change and under composition.
\end{prop}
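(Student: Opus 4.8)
The plan is to reduce all four assertions to the already proved stability of the \emph{logarithmic} notions in Proposition \ref{logpsmooth-stab}, exploiting that each of the properties in the statement is, by Definition \ref{finite-p-basis}, strictness together with a property of the underlying morphism of schemes. Since the four cases are formally identical, I would write out only the case of $p$-smoothness, and then remark that "weakly smooth", "weakly smooth of level $m$" and "$p$-smooth of level $m$" are handled in exactly the same way, replacing "$p$-smooth" (resp. "log $p$-smooth") throughout by the appropriate property and invoking the corresponding case of Lemma \ref{rem-lognologbasis(part2)}.2 and of Proposition \ref{logpsmooth-stab} (with $\gamma=\gamma _{\emptyset}$, see Notation \ref{gammaempty}, in the level $m$ cases).

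The first step is the elementary observation that an $S$-morphism $f\colon X\to Y$ of fine log schemes is $p$-smooth if and only if $f$ is strict \emph{and} $\underline{f}$ is $p$-smooth as a morphism of schemes: this is immediate from Definition \ref{finite-p-basis}.2, a morphism of schemes being automatically strict. Moreover, by Lemma \ref{rem-lognologbasis(part2)}.2 applied to the morphism of schemes $\underline{f}$, the morphism $\underline{f}$ is $p$-smooth if and only if it is log $p$-smooth. Hence the statement for $p$-smoothness follows once we know that strictness is stable under base change and composition, and that the formation of the underlying morphism of schemes commutes with composition and with base change (along an arbitrary morphism of fine log schemes) of a strict morphism.

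The composition part of this last point is trivial, since $\underline{g\circ f}=\underline{g}\circ\underline{f}$. For base change one uses, exactly as in the proof of \ref{strict-lrp}, that a strict $f\colon X\to Y$ satisfies $X=Y\times _{\underline{Y}}\underline{X}$; therefore, for any $g\colon Y'\to Y$ of fine log schemes, the fibre product $X':=X\times _Y Y'$ computed in fine log schemes has $\underline{X'}=\underline{X}\times _{\underline{Y}}\underline{Y'}$, the projection $f'\colon X'\to Y'$ is again strict, and $\underline{f'}$ is the base change of $\underline{f}$ along $\underline{g}$. Granting this, if $f$ and $g$ are $p$-smooth then $g\circ f$ is strict and $\underline{g\circ f}=\underline{g}\circ\underline{f}$ is a composition of log $p$-smooth morphisms of schemes, hence log $p$-smooth by \ref{logpsmooth-stab}, hence $p$-smooth by \ref{rem-lognologbasis(part2)}.2; and if $f$ is $p$-smooth and $g\colon Y'\to Y$ is arbitrary, then $f'$ is strict and $\underline{f'}$ is a base change of the log $p$-smooth $\underline{f}$, hence log $p$-smooth by \ref{logpsmooth-stab}, hence $p$-smooth. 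In both cases the resulting morphism is strict with $p$-smooth underlying morphism of schemes, i.e. is $p$-smooth, which is what we wanted.

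I do not expect a genuine obstacle here: the only point that requires (mild) care is the compatibility of $X\mapsto\underline{X}$ with fibre products along strict morphisms of fine log schemes, and this is already recorded and used earlier in the paper. The one thing to watch in the level $m$ cases is to reduce to $\gamma=\gamma _{\emptyset}$ before quoting \ref{logpsmooth-stab}, which is harmless by Notation \ref{gammaempty} since the level $m$ versions of Definition \ref{finite-p-basis} are by convention taken compatible with $\gamma _{\emptyset}$.
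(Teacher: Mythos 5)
Your proof is correct, but it takes a genuinely different route from the paper's. The paper's proof of \ref{psmooth-stab} is simply the instruction to repeat the argument of \ref{logpsmooth-stab} with $\A ^{r}$ in place of $A _{\N ^{r}}$: base change is immediate from the stability of the various ($p$-)\'etaleness notions, and for composition one concatenates coordinates, observing that if $X \to Y \times _{\Z} \A ^{r}$ and $Y \to Z \times _{\Z} \A ^{r'}$ are ($p$-)\'etale in the relevant sense then so is $X \to Z \times _{\Z} \A ^{r+r'}$, strictness being preserved throughout. You instead reduce the statement formally to \ref{logpsmooth-stab} itself, via the dictionary ``$f$ is $\star$ if and only if $f$ is strict and $\underline{f}$ is $\star$'' (immediate from Definition \ref{finite-p-basis}), Lemma \ref{rem-lognologbasis(part2)}.2 (for morphisms of schemes, $\star$ and log $\star$ coincide), and the compatibility of $X \mapsto \underline{X}$ with composition and with fibre products along strict morphisms (the latter is exactly the point recorded in \ref{fibprodC} and used in \ref{strict-lrp}, so it is available). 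Both arguments are short and rest on the same earlier stability results; yours buys a purely formal reduction that avoids redoing the coordinate-concatenation, at the cost of having to invoke explicitly the strict-base-change compatibility of underlying schemes, and your remark about passing to $\gamma = \gamma _{\emptyset}$ before quoting \ref{logpsmooth-stab} in the level-$m$ cases is the right point to watch and is handled correctly.
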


\begin{proof}
This is similar to \ref{logpsmooth-stab}.
\end{proof}

\begin{prop}
\label{nolong-mPDenv-local-desc}
Let $f \colon X \to Y$ be a strict $S$-morphism of fine log-schemes,
$(t  _{\lambda}) _{\lambda =1,\dots, r}$ be some elements of 
$\Gamma (X, \O _X)$.
Let $u \colon Z \hookrightarrow X$ and $v \colon Z \hookrightarrow Y$ be two $S$-immersions of fine log schemes
such that $v= f\circ u$.
Suppose that there exist $y _{\lambda} \in \Gamma (Y, \O _{Y})$ whose images in $\Gamma (Z, \O _{Z})$ coincide with the images of $t _\lambda$.

\begin{enumerate}
\item If $(t  _{\lambda}) _{\lambda =1,\dots, r}$ 
are formal log étale coordinates of level $m$,
then, we have the following isomorphism of $m$-PD-$\O _{P ^{n }_{(m), \gamma} (v)}$-algebras
   \begin{align}
   \notag
   \O _{P ^{n }_{(m), \gamma} (v)} <T _1 ,\dots ,T _r> _{(m),n}
   &\riso \O _{P ^{n }_{(m), \gamma} (u)}\\
   T _\lambda &\mapsto t _{\lambda } - f ^{*} (y _\lambda),
   \end{align}
where by abuse of notation we denote by 
$t _{\lambda } $ and 
$f ^{*} (y _\lambda)$ the canonical image in $\O _{P ^{n }_{(m), \gamma} (u)}$.

\item If $(t  _{\lambda}) _{\lambda =1,\dots, r}$ 
are log $p$-étale coordinates of level $m$,
then, we have the following isomorphism of $m$-PD-$\O _{P _{(m), \gamma} (v)}$-algebras
   \begin{align}
   \notag
   \O _{P _{(m), \gamma} (v)} <T _1 ,\dots ,T _r> _{(m)}
   &\riso \O _{P _{(m), \gamma} (u)}\\
   T _\lambda &\mapsto t _{\lambda } - f ^{*} (y _\lambda).
   \end{align}

\item  If $(t  _{\lambda}) _{\lambda =1,\dots, r}$ 
are formal log étale coordinates,
then, we have the following isomorphism of $\O _{P ^{n } (v)}$-algebras
   \begin{align}
   \notag
   \O _{P ^{n } (v)} [T _1 ,\dots ,T _r] _{n}
   &\riso \O _{P ^{n } (u)}\\
   T _\lambda &\mapsto t _{\lambda } - f ^{*} (y _\lambda).
   \end{align}
\end{enumerate}

\end{prop}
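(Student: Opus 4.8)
The plan is to copy, mutatis mutandis, the proofs of Propositions~\ref{mPDenv-local-desc} and~\ref{nthneighb-local-desc}, replacing throughout the multiplicative data by additive ones: $\Gamma(X,M_X)$ by $\Gamma(X,\O_X)$, the log scheme $A_{\N^r}$ by the affine $r$-space $\A^r$ over $\Z$ with trivial log structure, logarithmic differentials by ordinary ones, and the element $u_\lambda-1$ by $t_\lambda-f^*(y_\lambda)$. Since $f$ is strict here, the argument in fact simplifies: in contrast with~\ref{mPDenv-local-desc}, where one had to pass from $A_{\N^r}$ to $A_{\Z^r}$ in order to exactify the section $e_\lambda\mapsto 1$, the analogous section $T_\lambda\mapsto y_\lambda$ of $Y\times_\Z\A^r$ is already a strict closed immersion, so no such step is needed.

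First I would use that, by hypothesis, the $Y$-morphism $c\colon X\to Y\times_\Z\A^r$ attached to $(t_\lambda)_\lambda$ is log $p$-\'etale of level $m$ compatible with $\gamma$ (resp.\ formally log \'etale of level $m$ compatible with $\gamma$, resp.\ fine formally log \'etale). Applying to $c$ the evident level-$m$ analogue of Lemma~\ref{mPDenv-lrp} for~(2), the combination of~\ref{Be1-1432} with Lemma~\ref{prePDenv-lrp} for~(1) (see the last paragraph), and Lemma~\ref{prePDenv-lrp} for~(3), I reduce to the case where $X=Y\times_\Z\A^r$, $f$ is the first projection and the $t_\lambda$ are the coordinate functions. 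Lemma~\ref{red-Y2D} then lets me assume $Y=S$ and $v\colon Z\hookrightarrow Y$ the exact closed immersion with ideal $I_S$. For a general $S$-immersion $u$ I would exactify it as in~\ref{ex-cl-imm} and repeat Steps~1 and~2 of the proof of~\ref{mPDenv-local-desc}: the function $v_\lambda:=g^*(t_\lambda)-(f\circ g)^*(y_\lambda)$ lies in $\ker(\Gamma(U,\O_U)\to\Gamma(W,\O_W))$ because the images of $t_\lambda$ and $y_\lambda$ in $\Gamma(Z,\O_Z)$ agree; the $Y$-morphism defined by the $v_\lambda$ is log \'etale since $(dv_1,\dots,dv_r)=(d(g^*(t_1)),\dots,d(g^*(t_r)))$ is a basis of $\Omega_{U/Y}$; and a further use of~\ref{mPDenv-lrp} brings us to the case where $u$ is the strict closed immersion $i\colon Y\hookrightarrow Y\times_\Z\A^r$, $T_\lambda\mapsto y_\lambda$. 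By~\cite[1.4.3.(iii)]{Be1} and~\ref{env-strict} I further reduce to $v=id$; then the ideal of $i$ is generated by the regular sequence $(T_1-y_1,\dots,T_r-y_r)$, and Berthelot's computation~\cite[1.5.3]{Be1} gives the isomorphism $\O_Y<T_1,\dots,T_r>_{(m)}\riso\O_{P_{(m),\gamma}(i)}$, $T_\lambda\mapsto T_\lambda-y_\lambda$, which is~(2); its $n$-truncation is~(1), and the elementary identification of the graded ring of $\O_X$ along a regular-sequence ideal with a polynomial ring gives~(3).

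The point requiring real care is the descent of the given immersion $u$ along the coordinate morphism $c$ in parts~(1) and~(3). For~(3) it is exactly Lemma~\ref{prePDenv-lrp}. For~(1) one needs an order-$n$, level-$m$ version of~\ref{mPDenv-lrp} valid for a morphism that is merely formally log \'etale of level $m$; I would obtain it by factoring any object of $\mathscr{C'}^{(m)}_{\gamma,n}$, through its $m$-PD filtration $\I^{\{\bullet\}_{(m)}}$, into a finite tower of thickenings each lying in $\mathscr{C'}^{(m)}_{\gamma,1}$, against which formal log \'etaleness of level $m$ supplies the unique liftings needed to run the abstract-nonsense argument of~\ref{mPDenv-lrp} (or, when $J_S+p\O_S$ is locally principal, one simply notes via~\ref{prePDenv-lrp} and~\ref{Be1-1432} that $P^n_{(m),\gamma}(u)$ depends only on $P^n(u)$ and that $c$ is then fine formally log \'etale); alternatively, one may restrict to the open subscheme of $\A^r$ where the $t_\lambda$ are invertible, where they form a genuine log $p$-basis, and quote~\ref{mPDenv-local-desc} verbatim. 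All remaining verifications are routine and identical to those of~\ref{mPDenv-local-desc}.
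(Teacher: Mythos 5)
Your proof is correct and follows essentially the same route as the paper's: reduce via Lemma \ref{mPDenv-lrp} to $X=Y\times_{\Z}\A^{r}$ with the $t_\lambda$ the coordinates, then via Lemma \ref{red-Y2D} to $Y=S$, and conclude with the regular sequence generated by the $t_\lambda-f^{*}(y_\lambda)$ together with \cite[1.5.3]{Be1} and \ref{env-strict}; the only cosmetic differences are that the paper first applies the translation automorphism $\phi$ given by $t_\lambda\mapsto t_\lambda-f^{*}(y_\lambda)$ to normalize to $y_\lambda=0$ where you work with the graph section directly, and that your exactification step is superfluous, since after the reductions the strictness of $f$ and the exactness of $v$ already force $u$ to be an exact closed immersion. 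Your closing caveat --- that Lemma \ref{mPDenv-lrp} does not literally apply in cases (1) and (3), where the coordinate morphism is only formally log \'etale (of level $m$) rather than log $p$-\'etale --- correctly flags a point the paper's proof passes over in silence, and your proposed remedies (Lemma \ref{prePDenv-lrp} for (3), and for (1) a d\'evissage of an object of $\mathscr{C'}^{(m)}_{\gamma,n}$ through its $m$-PD filtration into order-one thickenings) are the right ones.
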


\begin{proof}
Since the other assertions are treated similarly, 
let us check the first assertion. 
This is similar to \ref{mPDenv-local-desc} and even easier:
using lemma \ref{mPDenv-lrp} (and the first remark of \ref{rem-immersion}), we may assume that
$X = Y \times _{\Z} \A ^{r}$,
$f \colon X \to Y$ is the first projection,  
 and that the family $(t   _{\lambda}) _{\lambda =1,\dots, r}$ are the elements of
$\Gamma(X,\O _X)$ corresponding to the coordinates of $\A ^{r}$.
Using lemma \ref{red-Y2D},
we may furthermore assume that $Y=S$,  
$Z\hookrightarrow Y$ is the exact closed immersion whose ideal of definition is $I _S$. 
Let $\phi \colon 
 Y \times _{\Z} \A ^{r} 
 \to 
  Y \times _{\Z} \A ^{r}$ be the $Y$-morphism given by 
$  t _{1} - f ^{*} (y _1), \dots, t _{r} - f ^{*} (y _r)$.
Let $i\colon Y \hookrightarrow Y \times _{\Z} \A ^{r}$ be the exact closed immersion defined by 
$t _\lambda \mapsto 0$.
Since $\phi$ is etale, since $\phi \circ u = i$, using lemma \ref{mPDenv-lrp}, 
we reduce to the case 
where $u$ is equal to $i$, and $(y  _{\lambda}) _{\lambda =1,\dots, r}$ are equal to $0$.
By using the remark \cite[1.4.3.(iii)]{Be1} and \ref{env-strict}, 
we can suppose that $v = id$.
Since the ideal of the exact closed immersion $i$ is generated by the regular sequence $(t _{\lambda}) _{\lambda=1,\dots r}$,
using \cite[1.5.3]{Be1} and \ref{env-strict}
we check that the morphism
$ \O _{Y} <T _1 ,\dots ,T _r> _{(m)} \to \O _{P _{(m), \gamma} (\iota)}= \O _{P _{(m), \gamma} (i)}$
given by $T _{\lambda}  \mapsto  t _{\lambda} $
is an isomorphism.
\end{proof}

\section{Differential operators of level $m$ over log $p$-smooth log schemes}
Let $i$ be an integer and
 $S$ be a fine log scheme over the scheme $\Spec (\Z / p ^{i+1}\Z)$.

\subsection{Sheaf of principal parts, sheaf of differential operators}
Let $f\colon X \to S$ be a weakly log smooth 
morphism of fine log-schemes.

\begin{ntn}
\label{notaDeltanetc}
Let $\Delta _{X/S} \colon X \to X \times _{S} X$ be the diagonal morphism,
$\Delta ^{n} _{X/S}:= P ^{n } ( \Delta _{X/S}) $,
$\PP ^{n} _{X/S} := \PP ^{n }  (\Delta _{X/S})$ (see Notation \ref{PDenvelope}).
We denote by $M ^{n} _{X/S}$ the log structure of
$\Delta ^{n} _{X/S}$.
We denote abusively the target of $\Delta ^{n} _{X/S}$ by
$\Delta ^{n} _{X/S}$.

We denote by  respectively
$p ^{n} _1,\, p ^{n} _0 \colon \Delta  ^{n } _{X/S, (m)} \to X$
 the composition of the canonical morphism
$\Delta ^n  _{X/S} \to X \times _{S} X$  with the right and left projection
$X \times _{S} X \to X$.

If $a \in M _X$, we denote by
$\mu  (a)$ the unique section of
$\ker ( \O _{\Delta ^n _{X/S}} ^{*} \to  \O _{X} ^{*} )$
such that we get in $M ^n _{X/S}$ the equality
$p _1 ^{n*} (a)= p _0 ^{n*} (a) \mu ^n  (a)$ (see \ref{Kerexactclosedimmer}).
We get 
$\mu  ^{n} \colon M  _X \to \ker ( \O _{\Delta ^{n}  _{X/S}} ^{*} \to  \O _{X} ^{*} )$
given by 
$a \mapsto \mu ^n  (a)$.
\end{ntn}

\begin{lem}
\label{pistrict}
The morphisms $p ^n _1$ and $p ^n _0$ are strict.
\end{lem}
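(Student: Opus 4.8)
The plan is to exploit the diagonal section of $p ^n _1$ and $p ^n _0$. By \ref{PDenvelope} the source of $P ^n (\Delta _{X/S})$ is $X$ itself, so there is an exact closed $S$-immersion $\delta \colon X \hookrightarrow \Delta ^n _{X/S}$ whose composite with the canonical morphism $\Delta ^n _{X/S} \to X \times _S X$ is $\Delta _{X/S}$; its defining ideal is nilpotent, so $\underline{\delta}$ is a homeomorphism on underlying spaces. Since the two projections $X \times _S X \to X$ split the scheme-theoretic diagonal, $p ^n _0 \circ \delta = p ^n _1 \circ \delta = \mathrm{id} _X$, and hence $\underline{p ^n _0}$ and $\underline{p ^n _1}$ are homeomorphisms too. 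I will only treat $p ^n _0$, the argument for $p ^n _1$ being identical.

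What must be shown is that the canonical morphism of log structures $\phi \colon p _0 ^{n*} M _X \to M ^n _{X/S}$ on $\Delta ^n _{X/S}$ is an isomorphism. First I would pull $\phi$ back along $\delta$: using that $\delta$ is exact (so $\delta ^* M ^n _{X/S} \riso M _X$) together with $p ^n _0 \circ \delta = \mathrm{id} _X$ and the functoriality of canonical maps for log-structure pullback, $\delta ^* \phi$ gets identified with the identity of $M _X$. Then, since the characteristic monoid $\overline{N} := N / \O ^{*}$ of a log structure is preserved by pullback and $\underline{\delta}$ is a homeomorphism, the map $\overline{\phi}$ induced by $\phi$ on characteristic monoids is — under the canonical identifications $\overline{p _0 ^{n*} M _X} \cong \overline{M _X} \cong \overline{M ^n _{X/S}}$ — the identity of $\overline{M _X}$, and in particular an isomorphism.

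It then remains to invoke the elementary fact that a morphism $\phi \colon N _1 \to N _2$ of log structures over a fixed log scheme, with $N _2$ integral — which holds here because $\Delta ^n _{X/S}$ is a fine log scheme — is an isomorphism as soon as $\overline{\phi}$ is: such a $\phi$ restricts to the identity on $\O ^{*}$, so from $\phi (a) = \phi (b)$ one gets $a = u b$ with $u \in \O ^{*}$, then $u\, \phi (b) = \phi (b)$, hence $u = 1$ by integrality of $N _2$ (injectivity), and every local section $c$ of $N _2$ equals $\phi (v ^{-1} a)$ for a local lift $a \in N _1$ of $\overline{c} \in \overline{N _2} = \overline{N _1}$ and the unique unit $v$ with $\phi (a) = v c$ (surjectivity). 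Applied to $N _1 = p _0 ^{n*} M _X$ and $N _2 = M ^n _{X/S}$ this gives that $\phi$ is an isomorphism, i.e. $p ^n _0$ — and likewise $p ^n _1$ — is strict. I expect the only delicate point to be keeping the canonical identifications in the pullback step honest; as a cross-check, when $f$ has a finite formal log basis $(b _\lambda)$ étale locally one can instead read strictness off the local description \ref{nthneighb-local-desc} applied to the first projection $X \times _S X \to X$, since the units $\mu ^n (b _\lambda)$ lie in $\ker (\O ^{*} _{\Delta ^n _{X/S}} \to \O ^{*} _X)$ and hence already in $p _0 ^{n*} M _X$, while $M ^n _{X/S}$ is generated by these together with $p _0 ^{n*} M _X$, and strictness is étale local.
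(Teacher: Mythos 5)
Your proof is correct and follows essentially the same route as the paper: the paper likewise identifies the underlying (étale) sites of $X$ and $\Delta ^n _{X/S}$ via the nilpotent exact closed immersion $\iota ^n$, uses \cite[1.4.1]{Kato-logFontaine-Illusie} to see that both $p _i ^{n*} M _X$ and $M ^n _{X/S}$ have characteristic monoid $M _X / \O _X ^*$, and concludes that the canonical map between them is an isomorphism. Your write-up merely makes explicit the final step (a map of integral log structures inducing an isomorphism on characteristic monoids is an isomorphism), which the paper leaves implicit.
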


\begin{proof}
This is similar to  \cite[2.2.1]{these_montagnon}:
let $\iota ^n  \colon X \hookrightarrow \Delta ^n  _{X/S}$ be the structural morphism. 
Since $\iota ^{-1}=id$, then 
from \cite[1.4.1]{Kato-logFontaine-Illusie} we get
the isomorphisms
$p _i ^{n*} (M _X) / \PP ^{n *} _{X/S}
\riso 
M / \O _X ^*$
and 
$M ^{n} _{X/S} / \PP ^{n *} _{X/S}
\riso 
\iota ^{n*} (M ^{n} _{X/S}) / \O _X ^*
\riso 
M _X/ \O _X ^*$ (the last isomorphism is a consequence of the exactness of 
$\iota ^n$). 
Hence, 
$p _i ^{n*} (M _X) / \PP ^{n *} _{X/S}
\riso 
M ^{n} _{X/S} / \PP ^{n *} _{X/S}$.
This implies that the canonical morphism
$p _i ^{n*} (M _X)
\to 
M ^{n} _{X/S} $
is an isomorphism.

\end{proof}

\begin{prop}
[Local description of $\PP ^n _{X/S} $]
\label{nota-etawtm}
Let $(a  _{\lambda}) _{\lambda =1,\dots, r}$ be a formal  log basis  of $f$. 
Put 
$\eta _{\lambda,n} := \mu ^n  ( a _{\lambda}) -1$.
We have  the following  isomorphism of $\O _{X}$-algebras: 
   \begin{align}
   \notag
   \O _{X} [T _1 ,\dots ,T _r] _n
   &\riso \PP  ^n  _{X/S}  \\
   \label{loc-desc-Pnf}
   T _\lambda &\mapsto
   \eta _{\lambda,n},
   \end{align}
where the structure of $\O _X$-module of 
$\PP  ^n  _{X/S}$
is given by $p ^{n} _1$ or $p ^{n} _0$.
\end{prop}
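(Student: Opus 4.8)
The statement to prove is the local description of the sheaf of principal parts $\PP^n_{X/S}$ when $f$ admits a formal log basis $(a_\lambda)_{\lambda=1,\dots,r}$. The natural route is to deduce it from the already-established local description of infinitesimal neighbourhoods, namely Proposition \ref{nthneighb-local-desc} applied to the diagonal immersion. First I would recall that $\Delta^n_{X/S} = P^n(\Delta_{X/S})$ where $\Delta_{X/S}\colon X\hookrightarrow X\times_S X$ is the (strict) diagonal $S$-immersion. To bring Proposition \ref{nthneighb-local-desc} to bear, I would take in its notation $Y := X$, $f := p_1\colon X\times_S X\to X$ the first projection (which has formal log basis $p_1^*(a_\lambda)$, or rather the pullbacks along the second projection, by base change of the formal log basis of $f\colon X\to S$ — using Lemma \ref{logpsmooth-stab} / the stability of formally log étale morphisms under base change), $u := \Delta_{X/S}\colon X\hookrightarrow X\times_S X$, $v := \mathrm{id}_X\colon X\hookrightarrow X$, and $y_\lambda := a_\lambda\in\Gamma(X,M_X)$. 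Then $v = f\circ u$ holds tautologically, the images of $y_\lambda$ and of the formal log basis in $\Gamma(X,M_X)$ coincide (both are $a_\lambda$), $D = P^n(v) = X$, and $D' = P^n(u) = \Delta^n_{X/S}$.

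Next I would identify the element $u_\lambda$ of Proposition \ref{nthneighb-local-desc} with $\mu^n(a_\lambda)$ from Notation \ref{notaDeltanetc}. Indeed, with $\alpha\colon \Delta^n_{X/S}\to X\times_S X$ the canonical morphism, the formal log basis of the projection $p_1$ is given by the sections $p_0^*(a_\lambda)$ (pullback along the \emph{second} projection $p_0\colon X\times_S X\to X$), and $f^*(y_\lambda)$ becomes $p_1^*(a_\lambda)$; hence $u_\lambda = \alpha^*\bigl(p_0^*(a_\lambda)\bigr)/\alpha^*\bigl(p_1^*(a_\lambda)\bigr)$, which by the defining relation $p_1^{n*}(a) = p_0^{n*}(a)\,\mu^n(a)$ in $M^n_{X/S}$ is exactly $\mu^n(a_\lambda)$. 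I should be slightly careful about which projection plays which role (left versus right) so that the sign in $\eta_{\lambda,n} = \mu^n(a_\lambda)-1$ comes out as claimed, but this is just bookkeeping consistent with the conventions fixed in \ref{notaDeltanetc}. Feeding this identification into the isomorphism of Proposition \ref{nthneighb-local-desc}, which reads $\O_D[T_1,\dots,T_r]_n \riso \O_{D'}$, $T_\lambda\mapsto u_\lambda - 1$, gives precisely $\O_X[T_1,\dots,T_r]_n \riso \PP^n_{X/S}$, $T_\lambda\mapsto \eta_{\lambda,n}$, where the $\O_X$-algebra structure on the source corresponds to the one induced by $p^n_1$ (or $p^n_0$, since $p^n_1$ and $p^n_0$ are both strict by Lemma \ref{pistrict} and the $\PP^n_{X/S}$-module structures they induce agree up to the change of variables $\eta_{\lambda,n}\leftrightarrow (\mu^n(a_\lambda))^{-1}-1$, exactly as in the scheme-theoretic case).

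**Main obstacle.** The real content is already packaged into Proposition \ref{nthneighb-local-desc} (and ultimately into Berthelot's \cite[1.5.3]{Be1} computation with the regular sequence $(b_\lambda-1)$), so the bulk of this proof is a translation: making the substitution $Y=X$, $f = $ a projection, $u = \Delta_{X/S}$, $v = \mathrm{id}$ correctly, and verifying that the $u_\lambda$ of the general statement specializes to the $\mu^n(a_\lambda)$ of Notation \ref{notaDeltanetc}. The one genuinely delicate point I would flag is checking that the $\O_X$-algebra structure coming out of Proposition \ref{nthneighb-local-desc} (which is via $\O_D = \O_{P^n(v)}$, i.e. via $p^n_1$) matches the assertion that one may equally use $p^n_0$; this relies on the symmetry of the diagonal together with the fact, from Lemma \ref{pistrict}, that both $p^n_1$ and $p^n_0$ are strict, so that $\mu^n$ is multiplicative and $\mu^n(a)^{-1}$ is again of the required form, giving an automorphism of $\O_X[T_1,\dots,T_r]_n$ intertwining the two structures. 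Everything else is routine.
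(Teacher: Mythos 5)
Your proposal is correct and follows essentially the same route as the paper: both reduce the statement to Proposition \ref{nthneighb-local-desc} applied with $u=\Delta_{X/S}$, $v=\mathrm{id}_X$, $f$ one of the two projections $X\times_S X\to X$ (whose formal log basis is the pullback of the $a_\lambda$ along the other projection, by base change), and $y_\lambda=a_\lambda$, so that the $u_\lambda$ of that proposition becomes $\mu^n(a_\lambda)$ and the other $\O_X$-structure is handled by symmetry. The only point to tidy up is the left/right bookkeeping you already flag: with the paper's conventions one should take $f=p_0$ and $b_\lambda=p_1^*(a_\lambda)$ so that $u_\lambda=\alpha^*(p_1^*(a_\lambda))/\alpha^*(p_0^*(a_\lambda))$ is $\mu^n(a_\lambda)$ rather than its inverse.
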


\begin{proof}
Since the case of $p ^{n} _1$ is checked symmetrically, let us compute the case where
the  $\O _X$-module of 
$\PP  ^n  _{X/S}$ is given by $p ^{n} _0$.
Consider the commutative diagram
\begin{equation}
\label{p0-p1-logbasis}
\xymatrix{
{A _{\N ^r}} 
\ar@{}[dr]|{\square}
& 
{X \times _S A _{\N ^r}} 
\ar[l] ^-{p _1}
\ar[rd] ^-{p _0}
 \\ 
 {X} 
 \ar[u] ^-{a}
&
{X \times _S X} 
\ar[l] ^-{p _1}
\ar[r] ^-{p _0}
 \ar[u] ^-{b}
  & 
 {X,} 
 }
\end{equation}
where $p _0$, $p_1$ means respectively the left and right projection, 
where $a $ is the $S$-morphism induced by $a _1,\dots, a _r$,
where $b$ is the $X$-morphism induced by $b _1,\dots, b _r$ with
$b _\lambda:= p ^* _1 ( a _\lambda)$.
Since $(b _\lambda ) _{\lambda =1,\dots, r}$ is 
a formal log basis  
of $p _0$ (because the square of the diagram \ref{p0-p1-logbasis} is cartesian),
we can apply Proposition 
\ref{nthneighb-local-desc}
in the case where $f$ is $p _0$, $u$ is $\Delta _{X/S}$,
$b _\lambda$ is as above, 
and 
$u _\lambda $ is $a _\lambda$.
\end{proof}

\begin{rem}
\label{p1p0finite}
From the local description of \ref{nota-etawtm}, 
we get that 
the morphisms $p ^n _1$ and $p ^n _0$ are finite (i.e. the underlying morphism of schemes is finite).
\end{rem}

\begin{ntn}
\label{Omega1pre}
We denote by 
$\I ^1 _{X/S}$ 
the ideal of the closed immersion $\Delta  ^1 _{X/S}$ 
and by 
$\Omega ^{1} _{X/S}:= (\Delta  ^1 _{X/S }) ^{-1} (\I ^1 _{X/S})$ the corresponding $\O _X$-module
(recall $\Delta  ^1 _{X/S *}$ is an homeomorphism).
To justify the notation, we refer to the isomorphism \cite[5.8.1]{Kato-logFontaine-Illusie}. 
\end{ntn}

\begin{rem}
\label{rem-omega-locfree}
Following the local description \ref{loc-desc-Pnf},
since $f$ has formal log bases locally, then 
$\Omega ^{1} _{X/S}$ is a locally free $\O _X$-module of finite rank
and the rank is equal to the cardinal of the formal log basis (a basis is given by
$\eta _1, \dots, \eta _r$). 
\end{rem}

\begin{empt}
The exact closed immersions
$\Delta ^{n} _{X/S} $ and $ \Delta ^{n '} _{X/S}$
induce
$\Delta ^{n,n'} _{X/S} := (\Delta ^{n} _{X/S}, \Delta ^{n'} _{X/S}) 
\colon 
X \hookrightarrow \Delta ^{n} _{X/S} \times _{X} \Delta ^{n '} _{X/S}$.
Since 
the morphisms $p ^n _1$ and $p ^n _0$ are strict 
(see \ref{pistrict}),
then $\Delta ^{n,n'} _{X/S} $ is also an exact closed immersion. 
We get $\Delta ^{n,n'} _{X/S} \in \mathscr{C} _{n+n'} $.
Using the universal property of the $n+n'$ infinitesimal neighborhood of 
$ \Delta _{X/S}$,
we get a unique
morphism
$\Delta ^{n} _{X/S} \times _{X} \Delta ^{n '} _{X/S}
\to \Delta ^{n+n'} _{X/S}$ of $\mathscr{C} _{n+n'} $
inducing the commutative diagram
\begin{equation}
\label{diag-p02pre}
\xymatrix{
{X}
\ar@{^{(}->}[r] ^-{}
\ar@{=}[d] ^-{}
&
{\Delta ^{n} _{X/S} \times _{X} \Delta ^{n '} _{X/S}}
\ar[d] ^-{}
\ar[r] ^-{}
&
{X \times _S X \times _S X}
\ar[d] ^-{p _{02}}
\\
{X}
\ar@{^{(}->}[r] ^-{}
&
{\Delta ^{n+n'} _{X/S}}
\ar[r] ^-{}
&
{X \times _S X . }
}
\end{equation}
We denote by
$\delta ^{n,n'} 
\colon
\PP ^{n +n'} _{X/S}
\to
\PP ^{n} _{X/S} \otimes _{\O _X} \PP ^{n'} _{X/S}$
the corresponding morphism.

By replacing $p _{02}$ by $p _{01}$ (resp. $ p _{12}$),
we get a unique
morphism
$\Delta ^{n} _{X/S} \times _{X} \Delta ^{n '} _{X/S}
\to \Delta ^{n+n'} _{X/S}$
making commutative the diagram \ref{diag-p02}.
We denote by
$q ^{n,n'} _{0 }
\colon
\PP ^{n +n'} _{X/S}
\to
\PP ^{n} _{X/S} \otimes _{\O _X} \PP ^{n'} _{X/S}$
(resp.
$q ^{n,n'} _{1}
\colon
\PP ^{n +n'} _{X/S}
\to
\PP ^{n} _{X/S} \otimes _{\O _X} \PP ^{n'} _{X/S}$)
the corresponding morphism (or simply
$q ^{n,n'} _{0}$ or $q _{0}$).
We notice that
$q ^{n,n'} _{0 }= \pi ^{n+n',n} _{X/S} \otimes 1$
and
$q ^{n,n'} _{1 }=1\otimes  \pi ^{n+n',n'} _{X/S} $,
where
$ \pi ^{n _1,n _2} _{X/S} $ is the projection
$\PP ^{n _1} _{X/S}
\to
\PP ^{n _2} _{X/S} $ for any integers $n _1 \geq n _2$.
\end{empt}

\begin{lem}
\label{MontagnonL231}
For any $a \in M _X$, for any integers $n,n'\in \N$, we have 
$\delta ^{n,n'} (\mu ^{n+n'} (a))
=
\mu ^n (a) \otimes \mu ^{n'} (a)$.
\end{lem}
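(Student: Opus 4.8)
The plan is to identify $\delta^{n,n'}$ with the algebra map underlying the morphism of log schemes $g\colon Q:=\Delta^n_{X/S}\times _{X}\Delta^{n'}_{X/S}\to\Delta^{n+n'}_{X/S}$ attached to $p_{02}$, and then, after pulling everything back to $Q$, to concatenate the relations defining $\mu^n(a)$, $\mu^{n'}(a)$ and $\mu^{n+n'}(a)$. Write $\mathrm{pr}_1\colon Q\to\Delta^n_{X/S}$ and $\mathrm{pr}_2\colon Q\to\Delta^{n'}_{X/S}$ for the two projections, so that on structure sheaves $\mathrm{pr}_1^{*}$ is $x\mapsto x\otimes 1$ and $\mathrm{pr}_2^{*}$ is $y\mapsto 1\otimes y$ into $\PP^n_{X/S}\otimes _{\O _X}\PP^{n'}_{X/S}$, and $\delta^{n,n'}$ is the map underlying $g$. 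Since the projections $p^n_i$ are strict (Lemma \ref{pistrict}), the immersion $\Delta^{n,n'}_{X/S}\colon X\hookrightarrow Q$ is exact closed, so Lemma \ref{Kerexactclosedimmer} applies to it.

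First I would read off from the ($p_{02}$-version of) diagram \ref{diag-p02pre} how $g$ interacts with the projections to $X$. Put $q_0:=p^n_0\circ\mathrm{pr}_1$, $q_1:=p^n_1\circ\mathrm{pr}_1=p^{n'}_0\circ\mathrm{pr}_2$ and $q_2:=p^{n'}_1\circ\mathrm{pr}_2$, all morphisms $Q\to X$. The commutativity of \ref{diag-p02pre} gives $p^{n+n'}_0\circ g=q_0$ and $p^{n+n'}_1\circ g=q_2$, while $g\circ\Delta^{n,n'}_{X/S}$ is the structural section $X\hookrightarrow\Delta^{n+n'}_{X/S}$; consequently $g^{*}$ carries the unit $\mu^{n+n'}(a)\in\ker(\O^{*}_{\Delta^{n+n'}_{X/S}}\to\O^{*}_X)$ into $\ker(\O^{*}_Q\to\O^{*}_X)$.

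Next I would pull the defining relations into the log structure $M_Q$, which is integral since $Q$ is fine. Applying $\mathrm{pr}_1^{*}$ to $p^{n*}_1(a)=p^{n*}_0(a)\,\mu^n(a)$ and $\mathrm{pr}_2^{*}$ to $p^{n'*}_1(a)=p^{n'*}_0(a)\,\mu^{n'}(a)$, and using $q_1=p^n_1\circ\mathrm{pr}_1=p^{n'}_0\circ\mathrm{pr}_2$, one obtains $q_2^{*}(a)=q_0^{*}(a)\cdot\mathrm{pr}_1^{*}(\mu^n(a))\cdot\mathrm{pr}_2^{*}(\mu^{n'}(a))$ in $M_Q$; applying $g^{*}$ to $p^{n+n'*}_1(a)=p^{n+n'*}_0(a)\,\mu^{n+n'}(a)$ gives $q_2^{*}(a)=q_0^{*}(a)\cdot g^{*}(\mu^{n+n'}(a))$. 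Cancelling $q_0^{*}(a)$ in $M_Q^{\mathrm{gr}}$ yields $g^{*}(\mu^{n+n'}(a))=\mathrm{pr}_1^{*}(\mu^n(a))\cdot\mathrm{pr}_2^{*}(\mu^{n'}(a))$ in $M_Q^{\mathrm{gr}}$. Both sides lie in $\ker(M_Q^{\mathrm{gr}}\to M_X^{\mathrm{gr}})$, which by Lemma \ref{Kerexactclosedimmer} equals $\ker(\O^{*}_Q\to\O^{*}_X)$, so the equality already holds in $\O^{*}_Q$, hence in $\PP^n_{X/S}\otimes _{\O _X}\PP^{n'}_{X/S}$. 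Since $\mathrm{pr}_1^{*}(\mu^n(a))\cdot\mathrm{pr}_2^{*}(\mu^{n'}(a))=(\mu^n(a)\otimes 1)(1\otimes\mu^{n'}(a))=\mu^n(a)\otimes\mu^{n'}(a)$, this is precisely the assertion $\delta^{n,n'}(\mu^{n+n'}(a))=\mu^n(a)\otimes\mu^{n'}(a)$.

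The one step that requires care is the first: correctly extracting from diagram \ref{diag-p02pre} (with $p_{02}$) that $p^{n+n'}_0\circ g=q_0$ and $p^{n+n'}_1\circ g=q_2$ rather than $q_1$, and that $g\circ\Delta^{n,n'}_{X/S}$ is the structural section, which is what guarantees that $g^{*}$ respects the kernels against $\O^{*}_X$. Once that bookkeeping is settled, the rest is a routine concatenation of the three defining relations followed by a single application of Lemma \ref{Kerexactclosedimmer}.
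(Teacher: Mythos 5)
Your argument is correct, and it is essentially the standard computation: identify $\delta^{n,n'}$ with the pullback along the morphism $Q\to\Delta^{n+n'}_{X/S}$ induced by $p_{02}$, concatenate the three defining relations for $\mu^n(a)$, $\mu^{n'}(a)$, $\mu^{n+n'}(a)$ in the integral monoid $M_Q^{\mathrm{gr}}$, cancel $q_0^*(a)$, and descend the resulting identity of units via Lemma \ref{Kerexactclosedimmer} (the strictness input from Lemma \ref{pistrict} is used exactly where you say). The paper does not spell this out but simply defers to Montagnon's Lemma 2.3.1, so your write-up supplies the details the paper outsources to that citation, following the same route.
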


\begin{proof}
We copy word by word the proof of Montagnon 
of Lemma \cite[2.3.1]{these_montagnon}.
\end{proof}

\begin{dfn}
\label{sheafdiffoper}
The sheaf of differential operators of order $\leq n$ of $f$
is defined by putting
$\D  _{X/S, n}:= \H om _{\O _X} ( p _{0 *} ^{n} \PP ^{n} _{X/S} , \O _X)$.
The sheaf of differential operators of $f$
is defined by putting
$\D  _{X/S}:= \cup _{n \in \N }\D  _{X/S, n}$.

Let $P \in \D  _{X/S, n}$, $P'  \in \D  _{X/S, n'}$.
We define the product
$P P' \in \D  _{X/S, n+n'}$
to be the composition
\begin{equation}
\label{dfn-prod}
P P' \colon \PP ^{n +n'} _{X/S}
\overset{\delta ^{n,n'}}{\longrightarrow}
\PP ^{n} _{X/S} \otimes _{\O _X} \PP ^{n'} _{X/S}
\overset{\mathrm{Id}\otimes P'}{\longrightarrow}
\PP ^{n} _{X/S}
\overset{P}{\longrightarrow}
\O _{X}.
\end{equation}
\end{dfn}

\begin{prop}
\label{ring-diffop}
The sheaf $\D  _{X/S} $ is a sheaf of rings with the product as defined in \ref{dfn-prod}.
\end{prop}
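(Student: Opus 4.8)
The plan is to verify the ring axioms for $\D_{X/S} = \cup_n \D_{X/S,n}$ with the product defined in \eqref{dfn-prod}, following the classical pattern (as in \cite[2.2]{these_montagnon} or \cite[2.1]{Be1}). The unit is the augmentation $\PP^0_{X/S} = \O_X \to \O_X$, i.e. the element $1 \in \D_{X/S,0}$; compatibility of $\delta^{n,0}$ and $\delta^{0,n}$ with the projections $\pi^{n,0}_{X/S}$ shows at once that $1 \cdot P = P \cdot 1 = P$. The $\O_X$-bilinearity of the product in each variable and the fact that $\D_{X/S,n} \cdot \D_{X/S,n'} \subseteq \D_{X/S,n+n'}$ are immediate from the definition, so the only substantive point is \emph{associativity}.

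First I would reduce associativity to a coassociativity-type identity for the comultiplication maps $\delta^{n,n'}$. Concretely, given $P \in \D_{X/S,n}$, $P' \in \D_{X/S,n'}$, $P'' \in \D_{X/S,n''}$, both $(PP')P''$ and $P(P'P'')$ are computed from $\PP^{n+n'+n''}_{X/S}$; unwinding \eqref{dfn-prod} in each case, the claim becomes that the two composites
\[
\PP^{n+n'+n''}_{X/S} \longrightarrow \PP^{n}_{X/S} \otimes_{\O_X} \PP^{n'}_{X/S} \otimes_{\O_X} \PP^{n''}_{X/S}
\]
obtained by applying $\delta$ twice (in the two possible orders) agree. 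This in turn follows from the universal property of the infinitesimal neighbourhoods: both composites are the morphism of $\mathscr{C}$ induced by the triple-diagonal $X \hookrightarrow \Delta^n_{X/S} \times_X \Delta^{n'}_{X/S} \times_X \Delta^{n''}_{X/S} \to X\times_S X \times_S X \times_S X$ together with the projection $p_{0?}$ onto the appropriate $X\times_S X$ factor, and uniqueness in the universal property forces the two to coincide. Here I would use that $p^n_1$ and $p^n_0$ are strict (Lemma~\ref{pistrict}), so that the relevant fibre products of the $\Delta^n_{X/S}$ over $X$ are again exact closed immersions lying in the appropriate $\mathscr{C}_N$, exactly as in the construction of $\delta^{n,n'}$ preceding Lemma~\ref{MontagnonL231}.

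The cleanest way to organize the verification is to work \'etale locally and invoke the local description of Proposition~\ref{nota-etawtm}: choosing a formal log basis $(a_\lambda)_{\lambda=1,\dots,r}$ of $f$, we have $\PP^n_{X/S} \cong \O_X[T_1,\dots,T_r]_n$ with $T_\lambda \mapsto \eta_{\lambda,n} = \mu^n(a_\lambda)-1$, and Lemma~\ref{MontagnonL231} computes $\delta^{n,n'}$ on the $\mu$'s, hence on the $\eta$'s, explicitly: $\delta^{n,n'}(\eta_{\lambda,n+n'}) = \eta_{\lambda,n}\otimes 1 + 1\otimes\eta_{\lambda,n'} + \eta_{\lambda,n}\otimes\eta_{\lambda,n'}$ (i.e. $\mu^n(a_\lambda)\otimes\mu^{n'}(a_\lambda) - 1$). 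From this formula the coassociativity of $\delta$ on the polynomial generators $T_\lambda$ is a direct check, and since $\delta$ is an $\O_X$-algebra map it is determined by its values on generators; one then transports this back to associativity of the product on $\D_{X/S}$. Since all the sheaves involved are quasi-coherent and the identity is local, checking it \'etale locally suffices.

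\textbf{Main obstacle.} The only genuine difficulty is bookkeeping: making precise, without error in the indices, that the two iterated comultiplications $\PP^{n+n'+n''}_{X/S} \to \PP^n_{X/S}\otimes\PP^{n'}_{X/S}\otimes\PP^{n''}_{X/S}$ coincide, i.e. the coassociativity of the family $(\delta^{n,n'})$. Conceptually this is forced by the uniqueness clause in the universal property of $P^N(-)$ applied to the triple diagonal and the projection $p_{0?}\colon X\times_S X\times_S X\times_S X \to X\times_S X$, but spelling out the commutative diagram analogous to \eqref{diag-p02pre} with three factors, and checking strictness/exactness at each stage so that everything stays inside the categories $\mathscr{C}_N$, is where the real work lies. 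Alternatively, one can bypass the diagram chase entirely by doing the explicit polynomial computation via Lemma~\ref{MontagnonL231} as above; I would present that as the proof, since it is the shortest rigorous route, and remark that it is formally identical to Montagnon's argument in \cite[2.3]{these_montagnon}.
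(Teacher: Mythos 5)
Your proposal is correct and follows essentially the same route as the paper: the paper reduces to the coassociativity square in the big diagram of the proof of Proposition \ref{ring-diffop(m)} and verifies it \'etale locally on the generators $\eta_{\lambda}$ via Lemma \ref{MontagnonL231} (resp.\ \ref{MontagnonL231(m)}), using that the maps are determined by their values on generators. Your explicit formula $\delta^{n,n'}(\eta_{\lambda,n+n'})=\eta_{\lambda,n}\otimes 1+1\otimes\eta_{\lambda,n'}+\eta_{\lambda,n}\otimes\eta_{\lambda,n'}$ is exactly the computation the paper invokes.
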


\begin{proof}
Using Lemma \ref{MontagnonL231} (instead of Lemma \ref{MontagnonL231(m)}), 
we check the proposition \ref{ring-diffop} similarly to the proposition \ref{ring-diffop(m)}. 
\end{proof}

\subsection{Sheaf of principal parts of level $m$}
Let $f\colon X \to S$ be a weakly log smooth of level $m$ 
morphism of fine log-schemes.
Let
$(I _S,J _S, \gamma)$
be a quasi-coherent
$m$-PD-ideal of
$\O _S$ that such that $\gamma$ extends to $X$ (e.g. from \cite[1.3.2.(i).c)]{Be1} when $J _S + p \O _S$ is locally principal).
Recall that   
$f$ is also a weakly log smooth of level $m$ compatible with $\gamma$ (this is a consequence of \ref{gammaempty-incl}).

\begin{rem}
\label{envelope of id}
Since $\gamma$ extends to $X$, then 
the $m$-PD envelope compatible with $\gamma$ (of order $n$) of the identity of $X$ is the identity of $X$.
Indeed, using the arguments given in the proof of \ref{lem-CsThickp},
we check that the ideal $0$ of $\O _X$ is endowed with a (unique) $m$-PD structure compatible with $\gamma$. 
\end{rem}

\begin{ntn}
\label{notaDeltanetc}
Let $\Delta _{X/S}\colon X \to X \times _{S} X$ be the diagonal morphism,
$\Delta  _{X/S, (m), \gamma}:= P _{(m), \gamma} ( \Delta _{X/S}) $,
$\Delta ^{n} _{X/S, (m), \gamma}:= P ^{n }_{(m), \gamma} ( \Delta _{X/S}) $,
$\PP ^{n} _{X/S, (m), \gamma} := \PP ^{n }_{(m), \gamma} (\Delta _{X/S})$ (see Notation \ref{mPDenvelope}).
We denote by $M _{X/S, (m), \gamma}$ (resp. $M ^{n} _{X/S, (m), \gamma}$) the log structure of
$\Delta  _{X/S, (m), \gamma}$ (resp. $\Delta ^{n} _{X/S, (m), \gamma}$).
We denote abusively the target of $\Delta  _{X/S, (m), \gamma}$ by
$\Delta _{X/S, (m), \gamma}$.
Since  $\gamma$ extends to $X$,
the source of $\Delta  ^{n } _{X/S, (m), \gamma}$ is $X$, i.e.
$\Delta ^{n} _{X/S, (m), \gamma}$ is a closed immersion of the form
$X \hookrightarrow (\Spec \PP ^{n} _{X/S, (m), \gamma} , M ^{n} _{X/S, (m), \gamma})$.

Let
$p _1,\, p _0 \colon \Delta  _{X/S, (m), \gamma} \to X$ be respectively the composition of the canonical morphism
$\Delta  _{X/S, (m), \gamma} \to X \times _{S} X$  with the right and left projection
$X \times _{S} X \to X$.
Similarly we get
$p ^{n} _1,\, p ^{n} _0 \colon \Delta  ^{n } _{X/S, (m), \gamma} \to X$.
As in \ref{pistrict}, 
we check that $p _1$ and $p_0$ are strict morphisms.

If $a \in M _X$, we denote by
$\mu _{(m), \gamma}  (a)$ the unique section of
$\ker ( \O _{\Delta _{X/S, (m), \gamma}} ^{*} \to  \O _{X} ^{*} )$
such that we get in $M _{X/S, (m), \gamma}$ the equality
$p _1 ^{*} (a)= p _0 ^{*} (a) \mu _{(m), \gamma} (a)$ (see \ref{Kerexactclosedimmer}).
We get 
$\mu _{(m), \gamma} \colon 
M _{X/S, (m), \gamma}
 \to \ker ( \O _{\Delta _{X/S, (m), \gamma}} ^{*} \to  \O _{X} ^{*} )$ 
given by 
$a \mapsto \mu _{(m), \gamma}  (a)$.
Similarly we define
$\mu _{(m), \gamma} ^{n} \colon 
M ^n _{X/S, (m), \gamma} \to \ker ( \O _{\Delta ^{n}  _{X/S, (m), \gamma}} ^{*} \to  \O _{X} ^{*} )$.
\end{ntn}

\begin{prop}
[Local description of $\PP  _{X/S, (m), \gamma} $]
\label{nota-eta}
Let $(a  _{\lambda}) _{\lambda =1,\dots, r}$ be a finite set of $\Gamma (X, M_X)$. 
Put $\eta _{\lambda (m), \gamma} := \mu _{(m), \gamma}  ( a _{\lambda}) -1$,
and 
$\eta _{\lambda (m), \gamma,n} := \mu ^n _{(m), \gamma}  ( a _{\lambda}) -1$.

\begin{enumerate}
\item 
If $(a  _{\lambda}) _{\lambda =1,\dots, r}$ 
is 
a formal log basis of level $m$ compatible with $\gamma$ of $f$ then
we have  he following $\O _{X}$-$m$-PD isomorphism
   \begin{align}
   \notag
   \O _{X} <T _1 ,\dots ,T _r> _{(m),n}
   &\riso \PP ^n  _{X/S, (m), \gamma}  \\
   \label{loc-desc-Pn}
   T _\lambda &\mapsto
   \eta _{\lambda,(m), \gamma,n},
   \end{align}
where the structure of $\O _X$-module of 
$\PP ^n  _{X/S, (m), \gamma}$
is given by $p ^{n} _1$ or $p ^{n} _0$.

   \item
If $(a  _{\lambda}) _{\lambda =1,\dots, r}$ 
is a  log $p$-basis  of level $m$ compatible with $\gamma$ of $f$ then
we have  he following $\O _{X}$-$m$-PD isomorphism
   \begin{align}
   \notag
   \O _{X} <T _1 ,\dots ,T _r> _{(m)}
   &\riso \PP   _{X/S, (m), \gamma}  \\
   \label{loc-desc-P}
   T _\lambda &\mapsto
   \eta _{\lambda,(m), \gamma},
   \end{align}
   where the structure of $\O _X$-module of 
$\PP ^n  _{X/S, (m), \gamma}$
is given by $p ^{n} _1$ or $p ^{n} _0$.
   \end{enumerate}

\end{prop}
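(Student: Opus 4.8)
The plan is to reduce both statements to Proposition~\ref{nthneighb-local-desc} (for the non-truncated, truncated level-$0$ polynomial case) and to Proposition~\ref{mPDenv-local-desc} (for the $m$-PD case), applied with a suitable choice of data, exactly as in the proof of~\ref{nota-etawtm}. First I would set up the diagram analogous to~\ref{p0-p1-logbasis}: consider $\Delta _{X/S}\colon X \hookrightarrow X\times _S X$ and the cartesian square expressing $X\times _S A_{\N ^r} = (X\times _S X)\times _{p _1, X, a}A_{\N ^r}$, where $a\colon X \to A_{\N ^r}$ is the $S$-morphism defined by the family $(a_\lambda)_{\lambda=1,\dots,r}$ and $b\colon X\times _S X \to X\times _S A_{\N ^r}$ is the $X$-morphism (via $p_1$) induced by $b_\lambda := p_1^*(a_\lambda)$. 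Since the square is cartesian and $(a_\lambda)$ is a formal log basis of level $m$ compatible with $\gamma$ (resp.\ a log $p$-basis of level $m$ compatible with $\gamma$) of $f$, the family $(b_\lambda)$ is a formal log basis of level $m$ compatible with $\gamma$ (resp.\ a log $p$-basis of level $m$ compatible with $\gamma$) of the projection $p_0\colon X\times _S A_{\N ^r}\to X$ — this uses the stability under base change from~\ref{logpsmooth-stab} together with the chart-theoretic description of formal/log $p$-bases.

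Then I would apply Proposition~\ref{mPDenv-local-desc} in the situation where the ``$f$'' there is $p_0\colon X\times _S A_{\N^r}\to X$, the immersion ``$v$'' is $\Delta _{X/S}$ viewed into the source $X$ via $p_0$ (so ``$D$'', ``$D_n$'' become $\Delta _{X/S,(m),\gamma}$, $\Delta ^n_{X/S,(m),\gamma}$, using Remark~\ref{envelope of id}: since $\gamma$ extends to $X$, the $m$-PD envelope of $\mathrm{id}_X$ is $X$ itself), the immersion ``$u$'' is $\Delta _{X/S}$ into $X\times _S A_{\N^r}$ via $b\circ\Delta _{X/S}$ — equivalently, the $b_\lambda$ are the chosen elements of $\Gamma$ and the $y_\lambda$ are the images $a_\lambda\in\Gamma(X,M_X)$, which match the $b_\lambda$ on $X$ by construction. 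The key compatibility to verify is that under these identifications the element $u_\lambda$ produced by~\ref{mPDenv-local-desc} is exactly $\mu_{(m),\gamma}(a_\lambda)=\eta_{\lambda,(m),\gamma}+1$ (resp.\ its $n$-truncated analogue $\mu^n_{(m),\gamma}(a_\lambda)$): this is a direct unwinding of the definitions in~\ref{notaDeltanetc} of $\mu_{(m),\gamma}$ via the relation $p_1^*(a)=p_0^*(a)\mu_{(m),\gamma}(a)$ against the definition $u_\lambda = \alpha^*(b_\lambda)/\alpha^*(f^*(y_\lambda))$ in~\ref{mPDenv-local-desc}, matched through the commutative diagram analogous to~\ref{p0-p1-logbasis}. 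Assertion~(1) then follows by feeding into the order-$n$ part of~\ref{mPDenv-local-desc}, and assertion~(2) from the non-truncated part; the fact that $\alpha^*$ lands in $\ker(\O^* \to \O^*)$ and the identification of kernels is~\ref{Kerexactclosedimmer}, already invoked in~\ref{notaDeltanetc}.

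The main obstacle I anticipate is purely bookkeeping rather than conceptual: carefully checking that the base-change square genuinely transports the ``formal log basis of level $m$ compatible with $\gamma$'' property of $(a_\lambda)$ for $f$ to that of $(b_\lambda)$ for $p_0$ — i.e.\ that the $Y$-morphism $X\times _S A_{\N^r}\to X\times _{\Z/p^{i+1}\Z}A_{\N^r}$ attached to $(b_\lambda)$ is (formally) log $p$-\'etale of level $m$ — and tracking the $m$-PD structures through the $\O_X$-module structures given alternately by $p_1^n$ and $p_0^n$ (the ``$p_1$ case is symmetric'' remark). One must also be slightly careful that $\PP^n_{X/S,(m),\gamma}$ really is the object $D'_n$ of~\ref{mPDenv-local-desc} for the immersion $u$ above, which is where the stability lemma~\ref{mPDenv-lrp} (the $m$-PD envelope is insensitive to composing with a log $p$-\'etale morphism, here $b$) is used — just as Proposition~\ref{mPDenv-local-desc}'s own proof uses~\ref{mPDenv-lrp}. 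Once these identifications are in place, no further computation is needed: the isomorphisms of $m$-PD-$\O_X$-algebras are literally those of~\ref{mPDenv-local-desc} rewritten with the notation $\eta_{\lambda,(m),\gamma}$, $\eta_{\lambda,(m),\gamma,n}$.
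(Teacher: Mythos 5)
Your proposal is correct and follows essentially the same route as the paper: the paper's proof simply applies Proposition \ref{mPDenv-local-desc} with $f$ the left projection $p_0\colon X\times_S X\to X$ (for which $(p_1^*(a_\lambda))$ is a formal log basis, resp. log $p$-basis, of level $m$ compatible with $\gamma$ by stability under base change), $u=\Delta_{X/S}$, $v=\mathrm{id}_X$ and $y_\lambda=a_\lambda$, using Remark \ref{envelope of id} to identify the envelope of $v$ with $X$ and unwinding $u_\lambda=\mu_{(m),\gamma}(a_\lambda)$, exactly as you intend. Your extra detour through $u=b\circ\Delta_{X/S}$ into $X\times_S A_{\N^r}$ and Lemma \ref{mPDenv-lrp} is unnecessary (and in assertion (1) would strictly require an order-$n$, level-$m$ variant of that lemma, since $b$ is then only formally log \'etale of level $m$ compatible with $\gamma$ rather than log $p$-\'etale); note also that in the notation of \ref{mPDenv-local-desc} it is $D$, $D_n$ that equal $X$, while $D'$, $D'_n$ become $\Delta_{X/S,(m),\gamma}$, $\Delta^n_{X/S,(m),\gamma}$.
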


\begin{proof}
By symmetry, we can focus on the case where 
the structure of $\O _X$-module of 
$\PP ^n  _{X/S, (m), \gamma}$
is given by $p ^{n} _0$.
In the first assertion,
we are in the situation of the proposition
\ref{mPDenv-local-desc}
where $u=\Delta$ and $f$ is the left projection
$p _0\colon X \times _{S} X \to X$.
Indeed, we first remark  that
$(p _1 ^{*} (a _\lambda) ) _{\lambda =1,\dots, r}$ is 
a formal log basis  of level $m$ compatible with $\gamma$ 
of $p _0$ (indeed, the log $p$-étaleness of level $m$ property is stable under base change).
Since the $m$-PD envelope compatible with $\gamma$ of order $n$ of the identity of $X$ is $X$ (see remark \ref{envelope of id}),
proposition \ref{mPDenv-local-desc} yields the result.
\end{proof}

\begin{rem}
\begin{enumerate}
\item From the local description \ref{loc-desc-Pn}, 
we get that $\PP ^n  _{X/S, (m), \gamma} $ does not depend 
on the $m$-PD-structure (satisfying the conditions of the subsection).
Hence, from now, we reduce to the case where $\gamma = \gamma _{\emptyset}$
(see Notation \ref{gammaempty}) and we remove $\gamma $ in the notation : we simply write
$\PP ^n  _{X/S, (m)} $, $\Delta ^{n} _{X/S, (m)}$, 
$M ^{n} _{X/S, (m)}$,
$\mu _{(m)} ^{n}$,
and
$\eta _{\lambda (m), n}$.

\item When $f$ is log $p$-smooth of level $m$, from \ref{loc-desc-P}, 
$\PP  _{X/S, (m), \gamma} $ does not depend 
on the $m$-PD-structure (satisfying the conditions of the subsection).
Hence, we can remove $\gamma$ in the corresponding notation. 

\end{enumerate}
\end{rem}

\begin{rem}
\label{p1p0finite(m)}
From the local description of \ref{nota-eta}, 
we get that 
the morphisms $p ^n _1$ and $p ^n _0$ are finite (i.e. the underlying morphism of schemes is finite).
\end{rem}

\begin{rem}
For any integer $m' \geq m$, 
we remark that the canonical map 
$\PP ^n  _{X/S, (m')} \to \PP ^n  _{X/S, (m)}$ 
sends 
$\eta _{\lambda (m')}$
to 
$\eta _{\lambda (m)}$.

\end{rem}

\begin{rem}
\label{rem-cohorfine?}
Noticing that the main Theorem 
\cite[IV.3.2.4]{Ogus-Logbook} 
on log smoothness 
(this is the Theorem that leads us to our definition of log $p$-smoothness) 
is valid for coherent log structures and not only fine log structures, 
one might wonder why we are focusing on fine log structures. 
The first reason we have in mind is that 
the important tool consisting of exactifying closed immersions (see \ref{ex-cl-imm})
needs fine log structures. One might refute that in the first chapter we might 
replace in the definition of $\mathscr{C}$ (see \ref{dfnC}) the word fine by coherent (but in the other categories, e.g. 
$\mathscr{C} ^{(m)} _{\gamma}$ we keep fine log structures).
But, if we replace in \ref{nota-eta} fine log structures by coherent log structures, 
the isomorphism
\ref{loc-desc-Pn} is not any more true: 
we would have in fact 
$\O _{X ^{\mathrm{int}}} <T _1 ,\dots ,T _r> _{(m)} \riso \PP  _{X/S, (m)} $.
\end{rem}

\begin{empt}

Let $g \colon S' \to S$ be a morphism of  fine log schemes over $\Z / p ^{i+1}\Z$,
let $(I _{S'}, J _{S'},\gamma')$ be a quasi-coherent $m$-PD-ideal of $\O _{S'}$ such that
$g$ becomes an $m$-PD-morphism.
Put $X ':= X \times _{S} S'$. We suppose that
$\gamma '$ extends to $X'$.
Then, the $m$-PD-morphism
$\Delta  _{X'/S', (m)} \to \Delta  _{X/S, (m)} $
induces the isomorphism
$\Delta  _{X'/S', (m)} \riso
\Delta  _{X/S, (m)} \times _{S} S'$.
Indeed, since the morphisms
$p _0 \colon \Delta  _{X/S, (m)} \to X$
and
$p _0 \colon \Delta  _{X'/S', (m)} \to X'$
are strict,
then
the morphism
$\Delta  _{X'/S', (m)} \to
\Delta  _{X/S, (m)} \times _{S} S'$
is strict.
Hence, this is sufficient to check that
the morphism
$g ^{*} \PP _{X/S, (m)}  \to \PP _{X'/S', (m)}  $
is an isomorphism. This can be checked by using the local description of
\ref{loc-desc-Pn}.
\end{empt}

\begin{empt}
\label{m2m'}
Let $m' \geq m$ be two integers. 
Since 
$\mathscr{C'} _{n} ^{(m)}
\subset 
\mathscr{C'} _{n} ^{(m')}$,
then by using the universal property defining $\Delta ^{n} _{X/S, (m')}$
we get
a morphism
$\psi ^{n} _{m,m'}
\colon
\Delta ^{n} _{X/S, (m)}
\to
\Delta ^{n} _{X/S, (m')}$
and then the homomorphism
$\psi ^{n *} _{m,m'}
\colon
\PP ^{n} _{X/S, (m')}
\to
\PP ^{n} _{X/S, (m)} $.

From \ref{Be1-1432}, we get 
$P ^{n }_{(m)} ( P ^n (\Delta _{X/S})) 
=
P ^{n }_{(m)} ( \Delta _{X/S})$.
Hence, we get a canonical map 
$\psi ^{n} _{m,m'}
\colon
\Delta ^{n} _{X/S, (m)}
\to
\Delta ^{n} _{X/S}$
and then the homomorphism
$\psi ^{n *} _{m}
\colon
\PP ^{n} _{X/S}
\to
\PP ^{n} _{X/S, (m)} $.

Now, suppose that $X \to S$ is endowed with 
a formal log basis $(b  _{\lambda}) _{\lambda =1,\dots, r}$ of level $m'$.
With the notation of \ref{nota-eta}, we have
$\psi ^{n*} _{m,m'}
(\underline{\eta} _{(m')} ^{\{\underline{k} \} _{(m')}})=
\frac{\underline{q}!}{\underline{q}'!}
\underline{\eta} _{(m)} ^{\{\underline{k} \} _{(m)}}$, 
where
$k _\lambda = p ^{m} q _\lambda +r _\lambda$ and $k '_\lambda = p ^{m'} q ' _\lambda+r' _\lambda$ is the Euclidian division of $k _{\lambda}$ 
by respectively $p ^{m}$ and $p ^{m'}$,
$\underline{\eta} _{(m)}  ^{\{\underline{k} \} _{(m)}}:= 
\prod _{\lambda = 1} ^{r} \eta _{\lambda,(m)}  ^{\{k _{\lambda} \} _{(m)}}$,
$\underline{q} := 
\prod _{\lambda = 1} ^{r} q _{\lambda}$
and similarly with some primes.
Moreover, we compute
$\psi ^{n*} _{m}
(\underline{\eta}  ^{\underline{k} })=
\underline{q}!
\underline{\eta} _{(m)} ^{\{\underline{k} \} _{(m)}}$.

\end{empt}

\begin{ntn}
\label{Omega1}
Let $\I ^1 _{X/S, (m)}$ be the ideal of the closed immersion $\Delta ^1 _{X/S, (m)}$ and
$\Omega ^{1} _{X/S,(m)}:= (\Delta ^1 _{X/S, (m)} ) ^{-1}( \I ^1 _{X/S, (m)} )$.
Thanks to the local description \ref{loc-desc-Pnf} 
(recall from \ref{smooth=>psmooth} that since $f$ is weakly log smooth of level $m$  then
$f$ is weakly log smooth) and 
\ref{loc-desc-Pn} and the local computation of \ref{m2m'},
we check that the homomorphism
$\psi ^{1 *} _{m}
\colon
\PP ^{1} _{X/S}
\to
\PP ^{1} _{X/S, (m)} $
induces the isomorphism 
$\psi ^{1 *} _{m}
\colon 
\Omega ^{1} _{X/S}
\riso 
\Omega ^{1} _{X/S,(m)}$,
and that
the homomorphism
$\psi ^{1 *} _{m,m'}
\colon
\PP ^{1} _{X/S, (m')}
\to
\PP ^{1} _{X/S, (m)} $
induces the isomorphism 
$\psi ^{1 *} _{m,m'}
\colon 
\Omega ^{1} _{X/S,(m')}
\riso 
\Omega ^{1} _{X/S,(m)}$.
Hence, we can simply write $\Omega ^{1} _{X/S}$ instead of 
$\Omega ^{1} _{X/S,(m)}$.
\end{ntn}

\begin{rem} 
\label{nota-omega}
If $X/S$ has 
a formal log basis  
$(b_\lambda)_{\lambda=1,\dots, r}$ 
of level $m$ 
of $f$
then \ref{loc-desc-Pn} implies that $\Omega^1_{X/S}$ is free of rank $r$, 
a basis being given by the images $dlog\, b_\lambda$ of the $\eta_{\lambda,(m)}$'s. 
This implies in particular that all formal log bases 
of level $m$ 
of $f$
have the same cardinality.
 We put 
 $\omega _{X/S}:= \wedge ^{r} \Omega ^{1} _{X/S}$. 
 More generally (i.e. we do not any more assume that $X/S$ has a formal log basis), 
 we define
 $\omega _{X/S}$ in the same way. 
\end{rem}

\begin{ntn}
Let $\E$ be an $\O _X$-module.
By convention,
$\PP ^{n} _{X/S, (m)} \otimes _{\O _X} \E$
means
$p _{1*} ^{n} (\PP ^{n} _{X/S, (m)} )\otimes _{\O _X} \E$
and
$\E \otimes _{\O _X} \PP ^{n} _{X/S, (m)}$
means
$\E \otimes _{\O _X} p _{0*} ^{n} (\PP ^{n} _{X/S, (m)} )$.
For instance,
$\PP ^{n} _{X/S, (m)} \otimes _{\O _X} \PP ^{n'} _{X/S, (m)}$
is
$p _{1*} ^{n} (\PP ^{n} _{X/S, (m)} ) \otimes _{\O _X} p _{0*} ^{n'} (\PP ^{n'} _{X/S, (m)} )$.

\end{ntn}

\begin{lem}
\label{delta,n,n'}
We simply denote by
$\Delta ^{n} _{X/S, (m)} \times _{X} \Delta ^{n '} _{X/S, (m)}$
the base change of
$p _0 ^{n '} \colon \Delta ^{n '} _{X/S, (m)} \to X$
by
$p _1 ^{n} \colon \Delta ^{n} _{X/S, (m)} \to X$.
The immersion
$X \hookrightarrow \Delta ^{n} _{X/S, (m)} \times _{X} \Delta ^{n '} _{X/S, (m)}$
induced by
$X \hookrightarrow \Delta ^{n} _{X/S, (m)}$
and
$X \hookrightarrow \Delta ^{n '} _{X/S, (m)}$
is an exact closed immersion endowed with a canonical $m$-PD structure of order $n+n'$. 
By abuse of notation, we denote by 
$\Delta ^{n} _{X/S, (m)} \times _{X} \Delta ^{n '} _{X/S, (m)}$ this object of 
$\mathscr{C} _{n+n'} ^{(m)}$.
This $m$-PD structure on $\Delta ^{n} _{X/S, (m)} \times _{X} \Delta ^{n '} _{X/S, (m)}$ 
is characterized by the following property : the projections 
$q ^{n,n'} _{0} \colon \Delta ^{n} _{X/S, (m)} \times _{X} \Delta ^{n '} _{X/S, (m)}
\to \Delta ^{n} _{X/S, (m)} $
and 
$q ^{n,n'} _{1} \colon \Delta ^{n} _{X/S, (m)} \times _{X} \Delta ^{n '} _{X/S, (m)}
\to \Delta ^{n'} _{X/S, (m)} $
are morphisms of 
$\mathscr{C} _{n+n'} ^{(m)}$.
\end{lem}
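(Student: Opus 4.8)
The plan is to prove the three assertions in turn: that the diagonal immersion $\Delta ^{n,n'}\colon X\hookrightarrow \Delta ^{n} _{X/S, (m)} \times _{X} \Delta ^{n '} _{X/S, (m)}$ is an exact closed immersion; that its defining ideal carries a canonical $m$-PD structure making the product an object of $\mathscr{C} _{n+n'} ^{(m)}$; and that this structure is the unique one for which $q ^{n,n'} _{0}$ and $q ^{n,n'} _{1}$ are morphisms of $\mathscr{C} _{n+n'} ^{(m)}$. The uniqueness in the third point will also be what guarantees that the a priori \'etale-local construction of the $m$-PD structure is canonical.

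First, recall from \ref{pistrict} (and the analogous statement for the $m$-PD envelopes, proved the same way) that $p ^{n} _1 \colon \Delta ^{n} _{X/S, (m)}\to X$ and $p ^{n'} _0\colon \Delta ^{n'} _{X/S, (m)}\to X$ are strict; since the square defining the product is cartesian, the projections $q ^{n,n'} _0$ and $q ^{n,n'} _1$ are strict too (a base change of a strict morphism of fine log schemes is strict). The morphism $\underline{q ^{n,n'} _0}$ is affine, being the base change of $\underline{p ^{n'} _0}$, which is affine by \ref{mPDenvelope}; in particular it is separated, so from $q ^{n,n'} _0\circ \Delta ^{n,n'} = (X\hookrightarrow \Delta ^{n} _{X/S,(m)})$ and the second point of \ref{rem-immersion} we get that $\Delta ^{n,n'}$ is a closed immersion. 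It is moreover exact: pulling back the isomorphism $q ^{n,n' *} _0 M _{\Delta ^{n} _{X/S,(m)}}\riso M$ (strictness of $q ^{n,n'} _0$, where $M$ denotes the log structure of the product) along $\Delta ^{n,n'}$ and using the exactness of $X\hookrightarrow \Delta ^{n} _{X/S,(m)}$ gives $(\Delta ^{n,n'}) ^{*} M\riso M _X$.

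To equip the ideal $\I$ of $\Delta ^{n,n'}$ with its $m$-PD structure I would work \'etale-locally on $X$ and pick a formal log basis $(a _{\lambda}) _{\lambda =1,\dots, r}$ of level $m$ of $f$. By \ref{nota-eta} (applied to the first factor with $p ^{n} _1$ and to the second with $p ^{n'} _0$) the structure algebra $\O$ of the product is identified, as an $\O _X$-algebra, with $\O _X \langle T _1,\dots, T _r\rangle _{(m),n}\otimes _{\O _X} \O _X \langle T' _1,\dots, T' _r\rangle _{(m),n'}$, the ideal $\I$ being generated by the $T _\lambda \otimes 1$ and the $1\otimes T' _\lambda$; hence $\O$ is the quotient of the $m$-PD-polynomial ring $\O _X\langle T _1,\dots,T _r, T' _1,\dots,T' _r\rangle _{(m)}$ by the ideal $\mathfrak{a}$ generated by the $m$-PD-monomials of degree $\geq n+1$ in the $T _\lambda$'s together with those of degree $\geq n'+1$ in the $T' _\lambda$'s, and $\I$ is the image of the canonical $m$-PD-ideal $(T _1,\dots, T _r, T' _1,\dots, T' _r)$. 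Since $\mathfrak{a}$ is an $m$-PD-sub-ideal of that canonical $m$-PD-ideal (properties of $m$-PD-polynomial rings, see \cite[1.5]{Be1} and the appendix of \cite{Be1}), $\I$ inherits a quotient $m$-PD structure, and every $m$-PD-monomial of total degree $\geq n+n'+1$ lies in $\mathfrak{a}$, so $\I ^{\{n+n'+1\} _{(m)}}=0$: the product becomes an object of $\mathscr{C} _{n+n'} ^{(m)}$. (Alternatively one can proceed globally: $\underline{q ^{n,n'} _0}$ and $\underline{q ^{n,n'} _1}$ are flat, because $p ^{n'} _{0 *}\PP ^{n'} _{X/S,(m)}$ and $p ^{n} _{1 *}\PP ^{n} _{X/S,(m)}$ are free $\O _X$-modules by \ref{nota-eta}; then, as in \ref{basecgt-flat-env} via \cite[1.3.2.(i)]{Be1}, one extends the $m$-PD structures of $\Delta ^{n} _{X/S,(m)}$ and $\Delta ^{n'} _{X/S,(m)}$ to $q ^{n,n' *} _0(\mathscr{I} ^{n})\O$ and $q ^{n,n' *} _1(\mathscr{I} ^{n'})\O$ and takes the sum, noting $\I$ is the sum of these two ideals.)

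Finally, by construction the $m$-PD structures on $q ^{n,n' *} _0(\mathscr{I} ^{n})\O$ and $q ^{n,n' *} _1(\mathscr{I} ^{n'})\O$ (where $\mathscr{I} ^{n}$, $\mathscr{I} ^{n'}$ are the $m$-PD-ideals of $\Delta ^{n} _{X/S,(m)}$, $\Delta ^{n'} _{X/S,(m)}$) are the pullbacks of those of $\Delta ^{n} _{X/S,(m)}$, $\Delta ^{n'} _{X/S,(m)}$, so $q ^{n,n'} _0$ and $q ^{n,n'} _1$ are morphisms of $\mathscr{C} _{n+n'} ^{(m)}$ (note $\mathscr{C} _{n} ^{(m)}, \mathscr{C} _{n'} ^{(m)}\subseteq \mathscr{C} _{n+n'} ^{(m)}$). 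Conversely, any $m$-PD structure on $\I$ making $q ^{n,n'} _0$ and $q ^{n,n'} _1$ into $m$-PD-morphisms restricts on these two ideals to precisely these pullbacks; as the two ideals generate $\I$, the $m$-PD-sub-ideal they generate is all of $\I$, so the structure is uniquely determined (theory of $m$-PD-ideals, cf. the appendix of \cite{Be1}). In particular the local constructions above all have this property, hence glue to a well-defined global structure. The main obstacle is the existence step --- concretely, checking that the ``high-degree'' $m$-PD-monomials form an $m$-PD-sub-ideal $\mathfrak{a}$ (equivalently, in the global approach, that the two pulled-back $m$-PD-ideals admit a common extension to their sum); once this is granted, the exactness and the characterization are formal.
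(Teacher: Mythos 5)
Your proof is correct and follows essentially the same route as the paper: the paper likewise deduces exactness of the closed immersion from the strictness of $p^n_1$ and $p^{n'}_0$, and then invokes the local description \ref{loc-desc-Pn} to carry out the existence and uniqueness of the $m$-PD structure ``as in \cite[2.1.3.(i)]{Be1}'' --- which is precisely the truncated $m$-PD-polynomial-ring computation you spell out. You have simply written in full the details that the paper delegates to Berthelot and Montagnon.
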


\begin{proof}
Since $p ^{n} _1,\, p ^{n} _0 \colon \Delta  ^{n } _{X/S, (m), \gamma} \to X$ are strict, 
we check that 
$X \hookrightarrow \Delta ^{n} _{X/S, (m)} \times _{X} \Delta ^{n '} _{X/S, (m)}$
is an exact closed immersion. 
Using the local description \ref{loc-desc-Pn}, 
to check the uniqueness and existence of the $m$-PD-structure of order $n+n'$, we proceed similarly to 
\cite[2.1.3.(i)]{Be1} or \cite[2.3.2]{these_montagnon}.
\end{proof}

\begin{empt}
Using the universal property of the $m$-PD-envelope of order $n$,
we get a unique
morphism
$\Delta ^{n} _{X/S, (m)} \times _{X} \Delta ^{n '} _{X/S, (m)}
\to \Delta ^{n+n'} _{X/S, (m)}$ of $\mathscr{C} _{n+n'} ^{(m)}$
inducing the commutative diagram
\begin{equation}
\label{diag-p02}
\xymatrix{
{X}
\ar@{^{(}->}[r] ^-{}
\ar@{=}[d] ^-{}
&
{\Delta ^{n} _{X/S, (m)} \times _{X} \Delta ^{n '} _{X/S, (m)}}
\ar[d] ^-{}
\ar[r] ^-{}
&
{X \times _S X \times _S X}
\ar[d] ^-{p _{02}}
\\
{X}
\ar@{^{(}->}[r] ^-{}
&
{\Delta ^{n+n'} _{X/S, (m)}}
\ar[r] ^-{}
&
{X \times _S X . }
}
\end{equation}
We denote by
$\delta ^{n,n'} _{(m)}
\colon
\PP ^{n +n'} _{X/S, (m)}
\to
\PP ^{n} _{X/S, (m)} \otimes _{\O _X} \PP ^{n'} _{X/S, (m)}$
the corresponding morphism.

By replacing $p _{02}$ by $p _{01}$ (resp. $ p _{12}$),
we get a unique
morphism
$\Delta ^{n} _{X/S, (m)} \times _{X} \Delta ^{n '} _{X/S, (m)}
\to \Delta ^{n+n'} _{X/S, (m)}$
making commutative the diagram \ref{diag-p02}.
We denote by
$q ^{n,n'} _{0 (m)}
\colon
\PP ^{n +n'} _{X/S, (m)}
\to
\PP ^{n} _{X/S, (m)} \otimes _{\O _X} \PP ^{n'} _{X/S, (m)}$
(resp.
$q ^{n,n'} _{1(m)}
\colon
\PP ^{n +n'} _{X/S, (m)}
\to
\PP ^{n} _{X/S, (m)} \otimes _{\O _X} \PP ^{n'} _{X/S, (m)}$)
the corresponding morphism (or simply
$q ^{n,n'} _{0}$ or $q _{0}$).
We notice that
$q ^{n,n'} _{0 (m)}= \pi ^{n+n',n} _{X/S,(m)} \otimes 1$
and
$q ^{n,n'} _{1 (m)}=1\otimes  \pi ^{n+n',n'} _{X/S,(m)} $,
where
$ \pi ^{n _1,n _2} _{X/S,(m)} $ is the projection
$\PP ^{n _1} _{X/S, (m)}
\to
\PP ^{n _2} _{X/S, (m)} $ for any integers $n _1 \geq n _2$.
\end{empt}

The following Lemma will be useful to check the associativity of the product law of the sheaf of differential operator: 
\begin{lem}
\label{delta,n,n',n''}
We denote by 
$\Delta ^{n} _{X/S, (m)} \times _{X} \Delta ^{n '} _{X/S, (m)}\times _{X} \Delta ^{n ''} _{X/S, (m)}$
the base change of
$p _0 ^{n '} \circ q ^{n',n''} _{0}  \colon  \Delta ^{n '} _{X/S, (m)}\times _{X} \Delta ^{n ''} _{X/S, (m)}  \to X$
by
$p _1 ^{n} \colon \Delta ^{n} _{X/S, (m)} \to X$.
The exact closed immersion
$X \hookrightarrow \Delta ^{n} _{X/S, (m)} \times _{X} \Delta ^{n '} _{X/S, (m)}\times _{X} \Delta ^{n ''} _{X/S, (m)}$
induced by
$X \hookrightarrow \Delta ^{n} _{X/S, (m)}$,
$X \hookrightarrow \Delta ^{n'} _{X/S, (m)}$
and
$X \hookrightarrow \Delta ^{n ''} _{X/S, (m)}$
is endowed with a canonical $m$-PD structure. 
By abuse of notation, we denote by 
$\Delta ^{n} _{X/S, (m)} \times _{X} \Delta ^{n '} _{X/S, (m)}\times _{X} \Delta ^{n ''} _{X/S, (m)}$ this object of 
$\mathscr{C} _{n+n'+n''} ^{(m)}$.
This $m$-PD structure on $\Delta ^{n} _{X/S, (m)} \times _{X} \Delta ^{n '} _{X/S, (m)}\times _{X} \Delta ^{n ''} _{X/S, (m)}$
is characterized by the following property : the projections 
$
\Delta ^{n} _{X/S, (m)} \times _{X} \Delta ^{n '} _{X/S, (m)}\times _{X} \Delta ^{n ''} _{X/S, (m)}
\to \Delta ^{n} _{X/S, (m)} $,
$\Delta ^{n} _{X/S, (m)} \times _{X} \Delta ^{n '} _{X/S, (m)}\times _{X} \Delta ^{n ''} _{X/S, (m)}
\to \Delta ^{n'} _{X/S, (m)} $,
and
$\Delta ^{n} _{X/S, (m)} \times _{X} \Delta ^{n '} _{X/S, (m)}\times _{X} \Delta ^{n ''} _{X/S, (m)}
\to \Delta ^{n''} _{X/S, (m)} $
are morphisms of 
$\mathscr{C} _{n+n'+n''} ^{(m)}$.

\end{lem}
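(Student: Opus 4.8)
The plan is to follow the strategy of Lemma~\ref{delta,n,n'}, now with three factors. First I would check that the closed immersion
$X \hookrightarrow \Delta ^{n} _{X/S, (m)} \times _{X} \Delta ^{n '} _{X/S, (m)}\times _{X} \Delta ^{n ''} _{X/S, (m)}$
is exact. Since $p _1 ^{n}, p _0 ^{n}\colon \Delta ^{n} _{X/S, (m)} \to X$ are strict (see \ref{pistrict}), and likewise for their $n'$- and $n''$-analogues, all the structural maps used to form the triple fibre product are strict; hence the fibre product computed in fine log schemes agrees with the one computed on underlying schemes, and exactness of the diagonal immersion of $X$ into it is inherited from $X \hookrightarrow \Delta ^{n} _{X/S, (m)}$ exactly as in the proof of Lemma~\ref{delta,n,n'}. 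A short computation with the defining ideal (e.g. using the local description below) shows that its $(n+n'+n'')$-th power vanishes, so that, once we produce the $m$-PD structure, the object will indeed lie in $\mathscr{C} _{n+n'+n''} ^{(m)}$.

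To construct the $m$-PD structure of order $n+n'+n''$ there are two equivalent routes. One is to iterate Lemma~\ref{delta,n,n'}: the pair $(\Delta ^{n'} _{X/S, (m)},\Delta ^{n''} _{X/S, (m)})$ already produces an object $W$ of $\mathscr{C} ^{(m)} _{n'+n''}$ whose structural morphism to $X$, namely $p _0 ^{n'}\circ q ^{n',n''} _0$, is strict, and one then runs the argument of Lemma~\ref{delta,n,n'} with $W$ in place of one of the two $\Delta$'s. The route I would actually write down is to argue étale locally: by Proposition~\ref{nota-eta} (local description \ref{loc-desc-Pn}) we may assume $f$ has a formal log basis $(a  _{\lambda}) _{\lambda =1,\dots, r}$ of level $m$, and then, with the notation of \ref{nota-eta}, the algebra
$\PP ^{n} _{X/S, (m)} \otimes _{\O _X} \PP ^{n'} _{X/S, (m)} \otimes _{\O _X} \PP ^{n''} _{X/S, (m)}$
is identified with
$\O _{X} <T _1 ,\dots ,T _r> _{(m),n} \otimes _{\O _X} \O _{X} <T' _1 ,\dots ,T' _r> _{(m),n'} \otimes _{\O _X} \O _{X} <T'' _1 ,\dots ,T'' _r> _{(m),n''}$,
which is a quotient of the $m$-PD-polynomial algebra $\O _{X} <T _\bullet ,T' _\bullet ,T'' _\bullet> _{(m)}$. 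The induced $m$-PD ideal, truncated in degree $n+n'+n''$, furnishes the $m$-PD structure, exactly as in \cite[2.1.3.(i)]{Be1} and \cite[2.3.2]{these_montagnon}; by the same computation the three projections $q$ to $\Delta ^{n} _{X/S, (m)}$, $\Delta ^{n'} _{X/S, (m)}$, $\Delta ^{n''} _{X/S, (m)}$ are $m$-PD morphisms.

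It remains to check the characterization, and this is where I expect the only genuine work to lie. The point is that the defining ideal of $X$ in the triple product is generated \emph{as an $m$-PD ideal} by the images of the defining ideals of the three factors (read off from \ref{loc-desc-Pn}), so any $m$-PD structure making the three projections into $m$-PD morphisms is forced on a set of $m$-PD-generators and is therefore unique; compatibility with $\gamma$ is automatic because it holds for each factor. This uniqueness is what one copies from \cite[2.1.3]{Be1}, and it simultaneously shows that the local constructions of the previous paragraph are independent of the chosen formal log basis, hence glue to a global $m$-PD structure, and that the result is independent of the auxiliary quasi-coherent $m$-PD-ideal $(I _S,J _S,\gamma)$. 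Everything outside of this uniqueness/descent step is routine bookkeeping with the explicit $m$-PD-polynomial description.
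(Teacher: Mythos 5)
Your proposal is correct and follows essentially the same route as the paper, whose proof simply reduces to the two-factor case of Lemma \ref{delta,n,n'} (strictness of $p_0^n,p_1^n$ for exactness, then the local description \ref{loc-desc-Pn} together with \cite[2.1.3.(i)]{Be1} and \cite[2.3.2]{these_montagnon} for existence and uniqueness of the $m$-PD structure). The only small imprecision is that membership in $\mathscr{C}_{n+n'+n''}^{(m)}$ requires the vanishing of $\I^{\{n+n'+n''+1\}_{(m)}}$ (the $m$-PD filtration), not of the ordinary $(n+n'+n''+1)$-th power of the ideal, but this is exactly what your local description delivers.
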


\begin{proof}
This is checked similarly to \ref{delta,n,n'}.
\end{proof}

\subsection{Sheaf of differential operators of level $m$}

\begin{dfn}
\label{sheafdiffoper(m)}
The sheaf of differential operators of level $m$ and order $\leq n$ of $f$
is defined by putting
$\D ^{(m)} _{X/S, n}:= \H om _{\O _X} ( p _{0, (m) *} ^{n} \PP ^{n} _{X/S, (m)} , \O _X)$.
The sheaf of differential operators of level $m$ of $f$
is defined by putting
$\D ^{(m)} _{X/S}:= \cup _{n \in \N }\D ^{(m)} _{X/S, n}$.

Let $P \in \D ^{(m)} _{X/S, n}$, $P'  \in \D ^{(m)} _{X/S, n'}$.
We define the product
$P P' \in \D ^{(m)} _{X/S, n+n'}$
to be the composition
\begin{equation}
\label{dfn-prod(m)}
P P' \colon \PP ^{n +n'} _{X/S, (m)}
\overset{\delta ^{n,n'} _{(m)}}{\longrightarrow}
\PP ^{n} _{X/S, (m)} \otimes _{\O _X} \PP ^{n'} _{X/S, (m)}
\overset{\mathrm{Id}\otimes P'}{\longrightarrow}
\PP ^{n} _{X/S, (m)}
\overset{P}{\longrightarrow}
\O _{X}.
\end{equation}
\end{dfn}

\begin{lem}
\label{MontagnonL231(m)}
For any $a \in M _X$, for any integers $n,n'\in \N$, we have 
$\delta ^{n,n'} _{(m)} (\mu ^{n+n'} _{(m)} (a))
=
\mu ^n (a) _{(m)}\otimes \mu ^{n'} _{(m)}(a)$.
\end{lem}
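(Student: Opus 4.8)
The plan is to reduce the identity $\delta^{n,n'}_{(m)}(\mu^{n+n'}_{(m)}(a)) = \mu^n_{(m)}(a)\otimes\mu^{n'}_{(m)}(a)$ to a universal-property computation, exactly as in Montagnon's argument for Lemma \cite[2.3.1]{these_montagnon} (which is the level-$0$, non-truncated-in-$m$ analogue). First I would unwind the definitions: by construction $\delta^{n,n'}_{(m)}$ is the morphism $\PP^{n+n'}_{X/S,(m)} \to \PP^n_{X/S,(m)}\otimes_{\O_X}\PP^{n'}_{X/S,(m)}$ attached to the canonical morphism $c\colon \Delta^n_{X/S,(m)}\times_X\Delta^{n'}_{X/S,(m)} \to \Delta^{n+n'}_{X/S,(m)}$ of $\mathscr{C}^{(m)}_{n+n'}$ that sits over $p_{02}\colon X\times_S X\times_S X \to X\times_S X$ (diagram \ref{diag-p02}). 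So it suffices to compare the two sections $c^*(\mu^{n+n'}_{(m)}(a))$ and (the image of) $\mu^n_{(m)}(a)\otimes\mu^{n'}_{(m)}(a)$ inside $\ker\big(\O^*_{\Delta^n\times_X\Delta^{n'}} \to \O^*_X\big)$, using Lemma \ref{Kerexactclosedimmer} to identify this kernel with the corresponding kernel of log structures.

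The key step is then the following local/log-structure computation. Denote by $p_0^{(3)}, p_1^{(3)}, p_2^{(3)}\colon \Delta^n_{X/S,(m)}\times_X\Delta^{n'}_{X/S,(m)} \to X$ the three projections to $X$ coming from the triple product $X\times_S X\times_S X$, so that $q_0^{n,n'}$ covers $p_{01}$ and $q_1^{n,n'}$ covers $p_{12}$, while $c$ covers $p_{02}$. In the log structure $M^{n}_{X/S,(m)}\otimes\cdots$ (obtained by pulling back along $q_0$, $q_1$) one has, by the very definition of $\mu^n_{(m)}$ (Notation \ref{notaDeltanetc}), the two relations $p_1^{(3)*}(a) = p_0^{(3)*}(a)\,q_0^*\mu^n_{(m)}(a)$ and $p_2^{(3)*}(a) = p_1^{(3)*}(a)\,q_1^*\mu^{n'}_{(m)}(a)$. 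Multiplying these gives $p_2^{(3)*}(a) = p_0^{(3)*}(a)\cdot\big(q_0^*\mu^n_{(m)}(a)\cdot q_1^*\mu^{n'}_{(m)}(a)\big)$; on the other hand, pulling the defining relation of $\mu^{n+n'}_{(m)}(a)$ back along $c$ gives $p_2^{(3)*}(a) = p_0^{(3)*}(a)\cdot c^*\mu^{n+n'}_{(m)}(a)$. Since $p_0^{(3)*}(a)$ is invertible in the group $M^{\mathrm{gr}}$, cancellation yields $c^*\mu^{n+n'}_{(m)}(a) = q_0^*\mu^n_{(m)}(a)\cdot q_1^*\mu^{n'}_{(m)}(a)$, which under the identification of $q_0^*\mu^n_{(m)}(a)\cdot q_1^*\mu^{n'}_{(m)}(a)$ with $\mu^n_{(m)}(a)\otimes\mu^{n'}_{(m)}(a)$ in $\PP^n_{X/S,(m)}\otimes_{\O_X}\PP^{n'}_{X/S,(m)}$ is precisely the asserted formula.

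The main (and really only) obstacle is bookkeeping: one must check carefully that the two relations above live in a common log structure — i.e. that $q_0$ and $q_1$ are indeed morphisms of $\mathscr{C}^{(m)}_{n+n'}$ so that $q_0^*\mu^n_{(m)}(a)$ and $q_1^*\mu^{n'}_{(m)}(a)$ make sense and land in the correct kernel — and that the diagram \ref{diag-p02} compatibilities $c = $ (the morphism over $p_{02}$), $q_0$ over $p_{01}$, $q_1$ over $p_{12}$ are used consistently when pulling back the defining relations of $\mu$. All of this is handled by Lemma \ref{delta,n,n'} (existence and characterization of the $m$-PD structure on $\Delta^n_{X/S,(m)}\times_X\Delta^{n'}_{X/S,(m)}$) together with the exact-closed-immersion identification of Lemma \ref{Kerexactclosedimmer}. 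Since every ingredient is already in place and the computation is formally identical to Montagnon's, I would simply write: \emph{we copy word by word the proof of Montagnon of Lemma \cite[2.3.1]{these_montagnon}}, inserting the observation above that replacing $\PP^n_{X/S}$ by $\PP^n_{X/S,(m)}$ changes nothing in the argument because the relevant log-structure relations defining $\mu^n_{(m)}$ are the same as at level $0$.
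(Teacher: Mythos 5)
Your proposal is correct and is essentially the paper's own proof: the paper simply writes ``we copy word by word the proof of Montagnon of Lemma [2.3.1] of \cite{these_montagnon}'', and the cocycle computation you spell out (pulling back the defining relation $p_1^*(a)=p_0^*(a)\mu(a)$ along $q_0$, $q_1$ and $c$ over $p_{01}$, $p_{12}$, $p_{02}$, multiplying, and cancelling in $M^{\mathrm{gr}}$) is exactly the content of that argument, with the passage from level $0$ to level $m$ changing nothing. Your explicit verification that $q_0$, $q_1$ are morphisms of $\mathscr{C}^{(m)}_{n+n'}$ via Lemma \ref{delta,n,n'} is a welcome bit of extra care, but the route is the same.
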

\begin{proof}
We copy word by word the proof of Montagnon 
of Lemma \cite[2.3.1]{these_montagnon}.
\end{proof}

\begin{prop}
\label{ring-diffop(m)}
The sheaf $\D ^{(m)} _{X/S} $ is a sheaf of rings with the product as defined in \ref{dfn-prod(m)}.
\end{prop}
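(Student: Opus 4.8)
The plan is to follow closely the pattern of Montagnon's argument (and Berthelot's \cite[2.2.3]{Be2} in the non-logarithmic case) and to reduce the ring axioms for $\D ^{(m)} _{X/S}$ to purely formal properties of the comultiplication maps $\delta ^{n,n'} _{(m)}$. First I would observe that it suffices to work \'etale locally on $X$, hence (using \ref{nota-eta} and \ref{smooth=>psmooth}) we may assume $f$ has a finite formal log basis $(a _\lambda) _{\lambda=1,\dots,r}$ of level $m$ so that $\PP ^n _{X/S, (m)}$ is the $n$-truncated $m$-PD polynomial ring $\O _X <T _1,\dots ,T _r> _{(m),n}$ via \ref{loc-desc-Pn}, with the dual $\D ^{(m)} _{X/S, n}$ having the dual basis $\underline{\partial} ^{\langle \underline{k}\rangle}$ of the $\underline{\eta} _{(m)} ^{\{\underline{k}\} _{(m)}}$; I would not need explicit formulas, only that $\PP ^n _{X/S, (m)}$ is a locally free $\O _X$-module of finite rank so that $\H om _{\O _X}(p ^n _{0*}\PP ^n _{X/S, (m)}, \O _X)$ is well behaved and the natural map $\PP ^n _{X/S, (m)} \otimes _{\O _X} \PP ^{n'} _{X/S, (m)}$ is what it should be (duals of tensor products, etc.).

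The key steps, in order, are: (1) check that the product \ref{dfn-prod(m)} is well-defined, i.e.\ independent of the choice of $n,n'$ with $P \in \D ^{(m)} _{X/S, n}$, $P' \in \D ^{(m)} _{X/S, n'}$ --- this follows from the compatibility of the maps $\delta ^{n,n'} _{(m)}$ with the transition maps $\pi ^{n _1,n _2} _{X/S,(m)}$, which in turn comes from the uniqueness in the universal property defining $\Delta ^{n+n'} _{X/S, (m)}$ together with the identities $q ^{n,n'} _{0(m)} = \pi ^{n+n',n} _{X/S,(m)} \otimes 1$ and $q ^{n,n'} _{1(m)} = 1 \otimes \pi ^{n+n',n'} _{X/S,(m)}$; (2) check that $1 \in \D ^{(m)} _{X/S, 0}$ (the canonical isomorphism $\PP ^0 _{X/S, (m)} \riso \O _X$) is a two-sided unit, which is immediate from the fact that $\delta ^{n,0} _{(m)}$ and $\delta ^{0,n} _{(m)}$ are the canonical identifications $\PP ^n _{X/S, (m)} \riso \PP ^n _{X/S, (m)} \otimes _{\O _X} \O _X$ and $\O _X \otimes _{\O _X} \PP ^n _{X/S, (m)}$; (3) check associativity. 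For (3) the essential point is the coassociativity of the comultiplication: the two maps
\[
\PP ^{n+n'+n''} _{X/S, (m)} \longrightarrow \PP ^{n} _{X/S, (m)} \otimes _{\O _X} \PP ^{n'} _{X/S, (m)} \otimes _{\O _X} \PP ^{n''} _{X/S, (m)}
\]
obtained by applying $\delta$ twice in the two possible ways coincide, because both are the $m$-PD-morphism underlying the canonical map from $\Delta ^{n+n'+n''} _{X/S, (m)}$ to the triple product $\Delta ^{n} _{X/S, (m)} \times _{X} \Delta ^{n'} _{X/S, (m)} \times _{X} \Delta ^{n''} _{X/S, (m)}$ of \ref{delta,n,n',n''}, and such a map is unique by the universal property of the $m$-PD-envelope of order $n+n'+n''$. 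Dualizing, associativity $(PP')P'' = P(P'P'')$ drops out.

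The main obstacle I expect is step (3), specifically verifying the coassociativity identity cleanly at the level of the $\Delta ^{n} _{X/S, (m)}$'s: one must check that the two composite morphisms $\Delta ^{n} _{X/S, (m)} \times _{X} \Delta ^{n'} _{X/S, (m)} \times _{X} \Delta ^{n''} _{X/S, (m)} \to \Delta ^{n+n'+n''} _{X/S, (m)}$ built from the $p _{02}$-type maps agree, which amounts to compatibility of the $p _{0ij}$ projections of the fourfold product $X \times _S X \times _S X \times _S X$ with the relevant $m$-PD-structures --- this is where Lemma \ref{delta,n,n',n''} does the real work, reducing everything to the uniqueness in a universal property. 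Since \ref{delta,n,n',n''} and \ref{MontagnonL231(m)} are already available, the proof itself is short: I would simply say that, using Lemma \ref{MontagnonL231(m)} and Lemma \ref{delta,n,n',n''} in place of their analogues in Montagnon's thesis, one checks the associativity, unitality and well-definedness of the product exactly as in \cite[2.3.3, 2.3.4]{these_montagnon} (or \cite[2.2.3]{Be2}), and hence $\D ^{(m)} _{X/S}$ is a sheaf of rings.
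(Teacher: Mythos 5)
Your proposal is correct and follows the same overall architecture as the paper's proof: all the ring axioms reduce to the coassociativity of the maps $\delta ^{n,n'} _{(m)}$ (the top middle square of the paper's diagram), and Lemma \ref{delta,n,n',n''}, supplying the canonical $m$-PD structure on the triple product, is the key input in both cases. The one genuine divergence is in how that single coassociativity identity is verified. The paper argues locally: after choosing a formal log basis of level $m$, both composites $\PP ^{n+n'+n''} _{X/S,(m)} \to \PP ^{n} _{X/S,(m)} \otimes _{\O _X} \PP ^{n'} _{X/S,(m)} \otimes _{\O _X} \PP ^{n''} _{X/S,(m)}$ are $m$-PD-morphisms which, by Lemma \ref{MontagnonL231(m)}, send each generator $\eta _{\lambda,(m),n+n'+n''}$ to the same element, hence coincide on the whole $m$-PD algebra described in \ref{nota-eta}. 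You instead invoke the uniqueness clause in the universal property of the $m$-PD-envelope of order $n+n'+n''$ of $\Delta _{X/S}$: both composites $\Delta ^{n} _{X/S,(m)} \times _{X} \Delta ^{n'} _{X/S,(m)} \times _{X} \Delta ^{n''} _{X/S,(m)} \to \Delta ^{n+n'+n''} _{X/S,(m)}$ are morphisms of $\mathscr{C} ^{(m)} _{n+n'+n''}$ lying over the same morphism to $\Delta _{X/S}$ (the one induced by the outer projection of the fourfold fibre product), so they agree. Your version is more abstract and coordinate-free, and it bypasses Lemma \ref{MontagnonL231(m)} entirely; its only extra burden is verifying that the intermediate maps such as $\delta ^{n,n'} _{(m)} \otimes \mathrm{id}$ are $m$-PD-morphisms for the product structures, a check of exactly the same nature as Lemma \ref{delta,n,n',n''} itself. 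Both routes are sound; the paper's has the advantage that everything is visible on the generators, while yours makes clearer that coassociativity is forced by the universal property rather than being a computational accident.
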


\begin{proof}
We have to check the product as defined in \ref{dfn-prod(m)} is associative.
One checks the commutativity of the diagram
\begin{equation}
\label{trans-deltann'}
\xymatrix @R=0,3cm @C=0,3cm{
{\PP ^{n +n'+n''} _{X/S, (m)} }
\ar@{=}[r]
\ar[ddd] ^-{(PP') P''}
&
{\PP ^{n +n'+n''} _{X/S, (m)} }
\ar[r] ^-{\delta ^{n,n'+n''} _{(m)}}
\ar[d] ^-{\delta ^{n+n',n''} _{(m)}}
&
{\PP ^{n} _{X/S, (m)} \otimes _{\O _X} \PP ^{n'+n''} _{X/S, (m)}}
\ar[d] ^-{\mathrm{id} \otimes \delta ^{n',n''} _{(m)}}
\ar@{=}[r]
&
{\PP ^{n} _{X/S, (m)} \otimes _{\O _X} \PP ^{n'+n''} _{X/S, (m)}}
\ar[ddd] ^-{id \otimes P' P''}
\\
&{\PP ^{n+n'} _{X/S, (m)} \otimes _{\O _X} \PP ^{n''} _{X/S, (m)}}
\ar[r] ^-{\delta ^{n,n'} _{(m)}\otimes \mathrm{id}}
\ar[d] ^-{id \otimes P''}
&
{\PP ^{n} _{X/S, (m)} \otimes _{\O _X} \PP ^{n'} _{X/S, (m)} \otimes _{\O _X} \PP ^{n''} _{X/S, (m)}}
\ar[d] ^-{id \otimes id \otimes P''}
\\
&
{\PP ^{n+n'} _{X/S, (m)}}
\ar[r] ^-{\delta ^{n,n'} _{(m)}}
\ar[d] ^-{P P'}
&
{{\PP ^{n} _{X/S, (m)} \otimes _{\O _X} \PP ^{n'} _{X/S, (m)}} }
\ar[d] ^-{id \otimes P'}
\\
{\O _X}
\ar@{=}[r]
&
{\O _X}
&
{{\PP ^{n} _{X/S, (m)} \otimes _{\O _X} \PP ^{n'} _{X/S, (m)}}}
\ar[l] ^-{P}
\ar@{=}[r]
&
{{\PP ^{n} _{X/S, (m)} \otimes _{\O _X} \PP ^{n'} _{X/S, (m)}}.}
}
\end{equation}
Indeed, let us check the commutativity of the top square of the middle.
Since this is local, we can suppose that 
$f$ has a formal log basis $(a  _{\lambda}) _{\lambda =1,\dots, r}$ of level $m$. 
 Using Lemma \ref{MontagnonL231(m)} and the notation of \ref{nota-eta},
 we compute that the images of 
$ \eta _{1,(m),n+n'},\dots,  \eta _{r,(m),n+n'}$ by both maps 
$\PP ^{n +n'+n''} _{X/S, (m)} \to 
\PP ^{n} _{X/S, (m)} \otimes _{\O _X} \PP ^{n'} _{X/S, (m)} \otimes _{\O _X} \PP ^{n''} _{X/S, (m)}$
are the same. 
Using the \ref{nota-eta}, since both maps are $m$-PD-morphisms (see \ref{delta,n,n',n''} for the $m$-PD-structure), we get the desired commutativity. 
Since the commutativity of the other squares are obvious, we conclude the proof. 

\end{proof}

\begin{empt}
[Description in local coordinates]
\label{locallogp-basis}
Suppose that $X \to S$ is endowed with 
a formal log basis $(b  _{\lambda}) _{\lambda =1,\dots, r}$ of level $m$. 
With the notion of \ref{m2m'}, 
the elements
$\{\underline{\eta} _{(m)} ^{\{\underline{k} \} _{(m)}} \} _{| \underline{k}| \leq n}$
form a basis of
$\PP ^{n} _{X/S, (m)}$.
The corresponding dual basis of
$\D ^{(m)} _{X/S, n}$
will be denoted by
$\{ \underline{\partial} ^{< \underline{k}> _{(m)}} \} _{| \underline{k}| \leq n}$.
Let $\epsilon _1, \dots, \epsilon _r $  be the canonical basis of $\N ^{r}$,
 i.e. the coordinates of $\epsilon _\lambda $ are $0$ except for the $i$th term which is $1$.
 We put $\partial _\lambda := \smash{\underline{\partial} }^{< \epsilon _\lambda> _{(m)}}$.
We have the same formulas than in \cite[2.3.3]{these_montagnon}.
For instance, for any section $a \in \O _X$, for any $\underline{k}, \underline{k'}, \underline{k''}\in \N ^n$, 
\begin{gather}
\label{Mon2.3.3}
\underline{\partial} ^{< \underline{k}> _{(m)}} a
=
\sum _{\underline{i} \leq \underline{k}}
\left \{
\begin{smallmatrix}
 \underline{k}\\
 \underline{i}
\end{smallmatrix}
\right\}
\underline{\partial} ^{< \underline{k}-  \underline{i}> _{(m)}} (a)
\underline{\partial} ^{<\underline{i}> _{(m)}} ; 
\\
\label{Mon2.3.3bis}
\underline{\partial} ^{< \underline{k'}> _{(m)}}
\underline{\partial} ^{< \underline{k''}> _{(m)}}
=
\sum _{\underline{k} = \max\{\underline{k}',  \underline{k}''\} }
^{\underline{k}' + \underline{k}''}
\frac{\underline{k}!}{(\underline{k}'+\underline{k}''-\underline{k})!(\underline{k} -\underline{k}')!(\underline{k} -\underline{k}'')!}
\frac{\underline{q} _{\underline{k}'}!\underline{q} _{\underline{k}''}!}{\underline{q} _{\underline{k}}!}
 \underline{\partial} ^{< \underline{k}> _{(m)}},
\end{gather}
where $\underline{q} _{\underline{k}}$ means 
the quotient of the Euclidian division of $\underline{k}$ by  $p ^{m}$ and similarly with some primes.
The left (or right) structure of $\O _X$-algebra of $\D ^{(m)} _{X/S}$ is generated by the operators
$\partial _\lambda ^{< p ^{j}> _{(m)}}$ with $1\leq \lambda \leq r$, $0\leq j\leq m$.
These formulas yield that
$\gr \D ^{(m)} _{X/S}$ is commutative and that,
when $\underline X$ is affine and noetherian, the ring
$\Gamma (X, \D ^{(m)} _{X/S})$ is left and right noetherian.

\end{empt}

\begin{empt}
[Comparison of the local description
of differential operators with or without logarithmic structure]
\label{non logarithmic partial}
Suppose given formal log étale coordinates
$(t _\lambda)_{\lambda=1,\dots, r}$ of level $m$ of $X/S$ (see definition \ref{finite-p-basis}).
\begin{enumerate}
\item By \ref{nolong-mPDenv-local-desc},
we get the following  isomorphism of $m$-PD-$\O _{X}$-algebras
   \begin{align}
   \notag
   \O _{X} <T _1 ,\dots ,T _r> _{(m),n}
   &\riso \PP ^n _{X/S, (m)}  \\
   \label{loc-desc-Pn2}
   T _\lambda &\mapsto
   \tau  _{\lambda},
   \end{align}
where $\tau  _{\lambda}:= p ^{*} _1 (t _\lambda ) -p ^{*} _0 (t _\lambda )$.
The elements
$\{\underline{\tau} ^{\{\underline{k} \} _{(m)}} \} _{| \underline{k}| \leq n}$
form a basis of
$\PP ^{n} _{X/S, (m)}$.
The corresponding dual basis of
$\D ^{(m)} _{X/S, n}$
will be denoted by
$\{ \smash{\underline{\partial} _{\flat}} ^{< \underline{k}> _{(m)}} \} _{| \underline{k}| \leq n}$.

\item Suppose now that the $t_\lambda$'s lie in $\Gamma(X,\O_X^*)$.
Then, from the Lemma \ref{rem-lognologbasis}.\ref{rem-lognologbasis1} they are also a formal log basis of level $m$.
We have
\begin{equation}
\label{log-nonlog}
\underline \tau^{\{\underline{k}\}_{(m)}}
=
p ^{*} _0 ( \underline t^{\underline k} )
\underline \eta _{(m)} ^{\{\underline{k}\}_{(m)}} 
\hspace{.5cm} \hbox{and} \hspace{.5cm}
\smash{\underline{\partial}} ^{< \underline{k}> _{(m)}}
=
 \underline{t} ^{\underline{k}}\smash{\underline{\partial}}  ^{< \underline{k}> _{(m)}} _\flat ,
\end{equation}
where $\underline \eta _{(m)}$ (resp. $\smash{\underline{\partial}}  ^{< \underline{k}> _{(m)}}$) is defined in \ref{nota-eta} (resp. \ref{locallogp-basis}).

\item Suppose now that the $t_\lambda$'s lie in $\Gamma(X,\O_X^*)$ and that  $X\to S$ is strict.
Since $X \to S$ is strict, then
$\PP ^{n} _{\underline{X}/\underline{S}, (m)}=\PP ^{n} _{X/S, (m)}$.
This yields
$\D ^{(m)} _{\underline{X}/\underline{S}} = \D ^{(m)} _{X/S}$,
where $\D ^{(m)} _{\underline{X}/\underline{S}}$ is the sheaf of differential operators defined by Berthelot in \cite{Be1}.
Then the local description of \ref{loc-desc-Pn2} extends
that given in \cite{Be1}  when $\underline{X}/\underline{S}$ has étale coordinates.

\item Consider the following diagram
\begin{equation}
\xymatrix{
{Y}
\ar[d] ^-{f}
\ar[r]  ^-{b}
&
{A _{\N ^r} \times T}
\ar[d] ^-{}
\\
{X}
\ar[r] ^-{t}&
{ \A ^{r} \times S}
}
\end{equation}
where the right arrow is induced by
a morphism of fine log schemes of the form  $T \to S$ and by the canonical morphism
$A _{\N ^r} \to  \A ^{r} $,
the bottom arrow is induced by some formal log étale coordinates
$(t _\lambda)_{\lambda=1,\dots, r}$ of level $m$
and where the top arrow is induced by a formal log basis
$(b _\lambda)_{\lambda=1,\dots, r}$ of level $m$.
Let $\underline \eta _{(m)}$ (resp. $\smash{\underline{\partial}}  ^{< \underline{k}> _{(m)}}$)
be the element constructed from
$(b _\lambda)_{\lambda=1,\dots, r}$
as defined in \ref{nota-eta} (resp. \ref{locallogp-basis}).
Then the functorial morphisms $f ^{*} \PP ^{n} _{X/S, (m)}\to \PP ^{n} _{Y/T, (m)}$
and
$\D ^{(m)} _{Y/T} \to f ^{*}\D ^{(m)} _{X/S}$  (see \ref{chgtbasis})
are explicitly described by
\begin{equation}
\label{log-nonlog2}
\underline \tau^{\{\underline{k}\}_{(m)}}\mapsto\underline t^{\underline k} \underline \eta _{(m)} ^{\{\underline{k}\}_{(m)}} \hspace{.5cm} \hbox{and} \hspace{.5cm}
\smash{\underline{\partial}}  ^{< \underline{k}> _{(m)}}
\mapsto
\underline{t} ^{\underline{k}}\smash{\underline{\partial}}  ^{< \underline{k}> _{(m)}} _\flat.
\end{equation}
\end{enumerate}

\end{empt}

\begin{empt}
For any $m' \geq m$,
from the homomorphisms
$\psi ^{n *} _{m,m'}
\colon
\PP ^{n} _{X/S, (m')}
\to
\PP ^{n} _{X/S, (m)} $ of \ref{m2m'}, we get
by duality, the homomorphisms
$\rho _{m',m}
\colon
\D ^{(m)} _{X/S}
\to
\D ^{(m')} _{X/S}$.
Let $k _\lambda = p ^{m} q _\lambda +r _\lambda$ and $k '_\lambda = p ^{m'} q ' _\lambda+r' _\lambda$ be the Euclidian division of $k _\lambda$ and 
$k '_\lambda$ by respectively $p ^{m}$ and $p ^{m'}$.
Now, suppose that $X \to S$ is endowed with a log basis
$(b  _{\lambda}) _{\lambda =1,\dots, r}$ of level $m'$.
With its notation, we get from  \ref{m2m'} the equality
 $\rho _{m',m} (\underline{\partial} ^{< \underline{k}> _{(m)}}) =
 \frac{\underline{q}!}{\underline{q}'!}\underline{\partial} ^{< \underline{k}> _{(m')}}$.
\end{empt}

\begin{empt}
\label{chgtbasis}
Let $g \colon S' \to S$ be a morphism of fine log schemes over $\Z / p ^{i+1}\Z$.
Consider the commutative diagram
\begin{equation}
\label{funct-diag}
\xymatrix{
{X'}
\ar[r] ^-{f}
\ar[d] ^-{\pi _{X'}}
&
{X}
\ar[d] ^-{\pi _X}
\\
{S'}
\ar[r] ^-{g}
& {S}
}
\end{equation}
such that $\pi _X$ and $\pi _{X'}$ are formall log smooth of level $m$.
Using the universal property of the $m$-PD envelope, we get the
$m$-PD-morphism
$f ^{*} \PP ^{n} _{X/S, (m)} \to \PP ^{n} _{X'/S', (m)}$.
This yields the morphism
$\D ^{(m)} _{X'/S', n}
\to
f ^{*}  \D ^{(m)} _{X/S, n}$
and then
$\D ^{(m)} _{X'/S'}
\to
f ^{*}  \D ^{(m)} _{X/S}$.

When the diagram \ref{funct-diag} is cartesian (in the category of fine log schemes),
the morphism $f ^{*} \PP ^{n} _{X/S, (m)} \to \PP ^{n} _{X'/S', (m)}$
is in fact an isomorphism of rings and so is
$\D ^{(m)} _{X'/S'}
\to
f ^{*}  \D ^{(m)} _{X/S}$.

When $g=id$
and $f$ is formally log étale of level $m$, then
the morphism $f ^{*} \PP ^{n} _{X/S, (m)} \to \PP ^{n} _{X'/S, (m)}$
is in fact an isomorphism and so is
$\D ^{(m)} _{X'/S}
\to
f ^{*}  \D ^{(m)} _{X/S}$.

\end{empt}

\subsection{Logarithmic PD stratification of level $m$}
One can follow Berthelot's construction of PD stratifications of level $m$ and check properties analogous to those of
\cite{Beintro2} or \cite[2.3]{Be1} (or Montagnon logarithmic version in \cite[2.6]{these_montagnon}). Let us give a quick exposition.
Even if one might consider the étale topology, an $\O _X$-module will 
mean an $\O _X$-module for the Zariski topology.

\begin{dfn}
Let $\E$ be an $\O _X$-module. An $m$-PD-stratification (or a PD-stratification of level $m$)
is the data of a family of compatible (with respect to the projections
$\pi ^{n+1,n} _{X/S,(m)} $)
$\PP ^{n} _{X/S, (m)}$-linear isomorphisms
$$\epsilon ^{\E} _n
\colon
\PP ^{n} _{X/S, (m)} \otimes _{\O _X} \E
\riso
\E \otimes _{\O _X} \PP ^{n} _{X/S, (m)}$$
satisfying the following conditions:
\begin{enumerate}
\item $\epsilon ^{\E} _0 = \mathrm{Id} _\E$ ;
\item for any $n, n'$, the diagram
\begin{equation}
\notag
\xymatrix{
{\PP ^{n} _{X/S, (m)} \otimes _{\O _X} \PP ^{n'} _{X/S, (m)} \otimes _{\O _X} \E}
\ar[rr] ^-{\delta _{(m)} ^{n,n' *} (\epsilon ^{\E} _{n+n'})} _-{\sim}
\ar[rd] _-{q _{1 (m)} ^{n,n' *} (\epsilon ^{\E} _{n+n'})} ^-{\sim}
&&
{ \E \otimes _{\O _X} \PP ^{n} _{X/S, (m)} \otimes _{\O _X} \PP ^{n'} _{X/S, (m)}}
\\
&
{\PP ^{n} _{X/S, (m)}  \otimes _{\O _X} \E \otimes _{\O _X} \PP ^{n'} _{X/S, (m)}}
\ar[ur] _-{q _{0 (m)} ^{n,n' *} (\epsilon ^{\E} _{n+n'})} ^-{\sim}
&
}
\end{equation}
is commutative
\end{enumerate}

\end{dfn}

\begin{prop}
\label{strat-prop}
Let $\E$ be an $\O _X$-module.
The following datas are equivalent :
\begin{enumerate}
\item A structure of left $\D ^{(m)} _{X/S}$-module on $\E$ extending its structure of
$\O _X$-module.
\item A family of compatible $\O _X$-linear homomorphisms
$\theta ^{\E} _n \colon \E \to \E \otimes _{\O _X} \PP ^{n} _{X/S, (m)} $
 such that $\theta ^{\E} _0= \mathrm{Id} _\E$ and for any integers $n,n'$ the diagram
 \begin{equation}
 \label{diag-theta}
 \xymatrix{
 {\E \otimes _{\O _X} \PP ^{n} _{X/S, (m)} }
\ar[r] ^-{\mathrm{Id}\otimes \delta _{(m)} ^{n,n'} }
 &
 { \E \otimes _{\O _X} \PP ^{n} _{X/S, (m)}   \otimes _{\O _X} \PP ^{n'} _{X/S, (m)}   }
 \\
 {\E}
 \ar[u] ^-{\theta ^{\E} _{n+n'}}
  \ar[r] ^-{\theta ^{\E} _{n'}}
 &
 {\E \otimes _{\O _X} \PP ^{n'} _{X/S, (m)}  }
  \ar[u] ^-{\theta ^{\E} _{n} \otimes \mathrm{Id}}
 }
 \end{equation}
is commutative.

\item An $m$-PD-stratification on $\E$.
\end{enumerate}

An $\O _X$-linear morphism
$\phi \colon \E \to \FF$ between two left $\D ^{(m)} _{X/S}$-modules is
$\D ^{(m)} _{X/S}$-linear if and only if it commutes with the homomorphisms
$\theta _n$ (resp. $\epsilon  _n$).
\end{prop}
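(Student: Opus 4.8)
The plan is to follow Berthelot's argument (\cite[2.3]{Be1}) and its logarithmic adaptation by Montagnon (\cite[2.6]{these_montagnon}), transposing it to the log $p$-smooth setting. The key point is that everything is formal once one has the local description \ref{loc-desc-Pn}, the compatibility \ref{MontagnonL231(m)} of $\mu^n_{(m)}$ with $\delta^{n,n'}_{(m)}$, and the fact (Remark \ref{p1p0finite(m)}) that $p^n_1, p^n_0$ are finite and strict, so that the $\PP^n_{X/S,(m)}$ are locally free $\O_X$-modules of finite rank. I would organize the proof as a cycle of implications between the three data, together with the final remark on morphisms.

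First I would pass from (1) to (2). Given a left $\D^{(m)}_{X/S}$-module structure on $\E$, one defines $\theta^\E_n \colon \E \to \E \otimes_{\O_X} \PP^n_{X/S,(m)}$ as the adjoint of the $\O_X$-linear action map $\D^{(m)}_{X/S,n} \otimes_{\O_X} \E \to \E$, using that $\PP^n_{X/S,(m)}$ is locally free of finite rank so that $\H om_{\O_X}(\D^{(m)}_{X/S,n}, \E) \simeq \E \otimes_{\O_X} \PP^n_{X/S,(m)}$ (taking care to keep the $p_0$-structure, as fixed in the conventions preceding \ref{delta,n,n'}). The normalization $\theta^\E_0 = \mathrm{Id}_\E$ corresponds to the action of $1 \in \D^{(m)}_{X/S,0} = \O_X$, and the commutativity of \ref{diag-theta} is exactly the dual reformulation of the associativity of the $\D^{(m)}_{X/S}$-action against the definition \ref{dfn-prod(m)} of the product via $\delta^{n,n'}_{(m)}$; here I would invoke \ref{delta,n,n',n''} to make sense of the triple products cleanly. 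Conversely, from a compatible family $(\theta^\E_n)$ one reconstructs the action of $P \in \D^{(m)}_{X/S,n} = \H om_{\O_X}(\PP^n_{X/S,(m)}, \O_X)$ on a section $e$ as $(\mathrm{Id}_\E \otimes P)(\theta^\E_n(e))$, and \ref{diag-theta} gives associativity.

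Next I would treat the equivalence of (2) and (3). From an $m$-PD-stratification $(\epsilon^\E_n)$ one sets $\theta^\E_n := \epsilon^\E_n \circ (\iota \otimes \mathrm{Id}_\E)$, where $\iota \colon \O_X \to \PP^n_{X/S,(m)}$ is the unit for the $p_1$-structure; conversely, since $\PP^n_{X/S,(m)}$ is locally free over $\O_X$ via $p_1$, one shows $\theta^\E_n$ extends uniquely to a $\PP^n_{X/S,(m)}$-linear map $\PP^n_{X/S,(m)} \otimes_{\O_X} \E \to \E \otimes_{\O_X} \PP^n_{X/S,(m)}$, and the cocycle condition on the $\epsilon$'s (the commutative triangle involving $q^{n,n'}_{0(m)}$, $q^{n,n'}_{1(m)}$ and $\delta^{n,n'}_{(m)}$) is equivalent, after unwinding the identifications $q^{n,n'}_{0(m)} = \pi^{n+n',n}_{X/S,(m)} \otimes 1$ and $q^{n,n'}_{1(m)} = 1 \otimes \pi^{n+n',n'}_{X/S,(m)}$, to the commutativity of \ref{diag-theta} plus the cochain condition forcing $\epsilon^\E_n$ to be an isomorphism (one checks invertibility by constructing the inverse from the symmetry exchanging the two factors of $\Delta^n_{X/S,(m)} \times_X \Delta^n_{X/S,(m)}$, exactly as in \cite[2.3.4]{Be1}). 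For the final statement, an $\O_X$-linear $\phi \colon \E \to \FF$ is $\D^{(m)}_{X/S}$-linear iff, for every $n$, it intertwines the adjoint maps, i.e. commutes with $\theta_n$ (equivalently with $\epsilon_n$); this is immediate from the adjunction used above.

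The main obstacle is not conceptual but bookkeeping: keeping straight the left/right ($p_0$ versus $p_1$) $\O_X$-module structures on the various $\PP^n_{X/S,(m)}$ and their tensor products throughout the identifications, and verifying that the compatibility with the transition maps $\pi^{n+1,n}_{X/S,(m)}$ is preserved under each of the three translations. All of this is local on $X$, so one may assume a formal log basis of level $m$ exists and use the explicit bases $\{\underline{\eta}_{(m)}^{\{\underline{k}\}_{(m)}}\}$ of \ref{locallogp-basis} together with \ref{MontagnonL231(m)}; the arguments of \cite[2.3]{Be1} and \cite[2.6]{these_montagnon} then go through verbatim, since they only used the analogous local description and the multiplicativity of $\mu^n$, both of which we have established in the log $p$-smooth context.
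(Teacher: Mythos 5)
Your proposal is correct and follows exactly the route the paper takes: the paper's proof is simply the citation ``identical to \cite[2.6.1]{these_montagnon} or \cite[2.3.2]{Be1}'', and your sketch is a faithful unwinding of that same Berthelot--Montagnon argument, correctly identifying the ingredients (local description \ref{loc-desc-Pn}, Lemma \ref{MontagnonL231(m)}, finiteness and strictness of $p^n_0$, $p^n_1$) that make it transpose to the log $p$-smooth setting. No gaps.
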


\begin{proof}
The proof is identical to that of
\cite[2.6.1]{these_montagnon} or
\cite[2.3.2]{Be1}.
\end{proof}

\begin{empt}
If $X \to S$ is endowed with a formal log basis $(b  _\lambda) _{\lambda =1,\dots, n}$ of level $m$ then
for any $x\in \E$ we have the Taylor development
\begin{equation}
\label{Taylordev}
\theta ^{\E} _n (x) =
\sum _{|\underline{k}|\leq n}
\underline{\partial} ^{< \underline{k}> _{(m)}} \cdot x \otimes \underline{\eta} _{(m)} ^{\{ \underline{k}\}}.
\end{equation}
\end{empt}

In order to define overconvergent isocrystals in our context (see \ref{isoc}), 
we will need the following definition and proposition.
\begin{dfn}
\label{dfn-algcomp-mod}
Let $\B$ be a commutative $\O _X$-algebra endowed with a  structure of left $\D ^{(m)} _{X/S}$-module.
We say that the structure of left $\D ^{(m)} _{X/S}$-module on $\B$ is compatible with its
structure of $\O _X$-algebra if the isomorphisms
$\epsilon _n ^{\B}$ are isomorphisms of $\PP ^{n} _{X/S, (m)} $-algebras.
This compatibility is equivalent to the following condition : for any $f, g \in \B$ and
$\underline{k}  \in \N ^{d}$,
$$\underline{\partial} ^{< \underline{k}> _{(m)}} (fg )
=
\sum _{\underline{i} \leq \underline{k}}
\left \{
\begin{smallmatrix}
 \underline{k}\\
 \underline{i}
\end{smallmatrix}
\right\}
\underline{\partial} ^{< \underline{i}> _{(m)}} (f)
\underline{\partial} ^{< \underline{k}- \underline{i}> _{(m)}} (g).
$$
\end{dfn}

\begin{prop}
\label{prop-algBotimes}
Let $\B$ be a commutative $\O _X$-algebra endowed with a compatible structure of left $\D ^{(m)} _{X/S}$-module.
Then there exists on the tensor product
$\B \otimes _{\O _X} \D ^{(m)} _{X/S}$ a unique structure of rings satisfying the following conditions
\begin{enumerate}
\item the canonical morphisms
$\B \to \B \otimes _{\O _X} \D ^{(m)} _{X/S}$
and
$\D ^{(m)} _{X/S} \to \B \otimes _{\O _X} \D ^{(m)} _{X/S}$
are homomorphisms of sheaf of rings,
\item if $X \to S$ is endowed with a formal log basis $(b  _\lambda) _{\lambda =1,\dots, n}$ of level $m$, then, for any $b \in \B$ and $\underline{k}\in \N ^{n}$, we have
$(b \otimes 1) ( 1 \otimes P) = b \otimes P$
and
$$(1 \otimes \underline{\partial} ^{< \underline{k}> _{(m)}})
(b \otimes 1)
=
\sum _{\underline{i} \leq \underline{k}}
\left \{
\begin{smallmatrix}
 \underline{k}\\
 \underline{i}
\end{smallmatrix}
\right\}
\underline{\partial} ^{< \underline{i}> _{(m)}}(b)
\otimes
\underline{\partial} ^{< \underline{k}- \underline{i}> _{(m)}}.$$

\end{enumerate}

If $\B \to \B'$ is a morphism of $\O _X$-algebras with compatible structure of  left $\D ^{(m)} _{X/S}$-modules,
then the induced
morphism
$\B \otimes _{\O _X} \D ^{(m)} _{X/S}
\to
\B '\otimes _{\O _X} \D ^{(m)} _{X/S}$
is a homomorphism of rings.
\end{prop}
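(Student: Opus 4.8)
The plan is to mimic Berthelot's construction (\cite[2.3.4]{Be1}) and Montagnon's logarithmic adaptation, building the ring structure on $\B \otimes _{\O _X} \D ^{(m)} _{X/S}$ first locally and then glueing. First I would treat the case where $X\to S$ admits a formal log basis $(b_\lambda)_{\lambda=1,\dots,n}$ of level $m$, so that by \ref{locallogp-basis} the sheaf $\D ^{(m)} _{X/S}$ is a free $\O _X$-module with basis $\{\underline{\partial} ^{< \underline{k}> _{(m)}}\}$ and the multiplication is governed by the explicit formulas \ref{Mon2.3.3} and \ref{Mon2.3.3bis}. On such an open set, $\B \otimes _{\O _X} \D ^{(m)} _{X/S}$ is a free $\B$-module with basis $\{1\otimes \underline{\partial} ^{< \underline{k}> _{(m)}}\}$, and condition (2) forces the multiplication: I would \emph{define} the product by decreeing $(b\otimes \underline{\partial} ^{< \underline{k}> _{(m)}})(b'\otimes \underline{\partial} ^{< \underline{k}'> _{(m)}})$ to be the expression obtained by first moving $b'$ past $\underline{\partial} ^{< \underline{k}> _{(m)}}$ using the Leibniz-type rule of (2) (this is where the compatibility hypothesis on $\B$ enters, via Definition \ref{dfn-algcomp-mod}), then multiplying the resulting differential operators by \ref{Mon2.3.3bis}, and finally applying the $\O_X$-algebra multiplication of $\B$ to the coefficients.

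The key steps, in order, are: (a) check that this local formula is associative; (b) check that it is independent of the chosen formal log basis, so the locally defined rings glue to a sheaf of rings on all of $X$; (c) verify that the two canonical maps $\B \to \B \otimes _{\O _X} \D ^{(m)} _{X/S}$ and $\D ^{(m)} _{X/S} \to \B \otimes _{\O _X} \D ^{(m)} _{X/S}$ are ring homomorphisms, which is immediate from the defining formulas since $(1\otimes P)(1\otimes Q)=1\otimes PQ$ and $(b\otimes 1)(b'\otimes 1)=bb'\otimes 1$; (d) verify uniqueness, which also follows from (b) together with the fact that conditions (1) and (2) pin down all products of basis elements locally; and (e) check functoriality in $\B$, which reduces to comparing the defining formulas through the morphism $\B\to\B'$, using that $\B\to\B'$ is $\D ^{(m)} _{X/S}$-linear so it commutes with the operators $\underline{\partial} ^{< \underline{k}> _{(m)}}$.

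For step (b), the cleanest approach is to reinterpret the multiplication intrinsically, so that no basis is visible: using Proposition \ref{strat-prop}, a left $\D ^{(m)} _{X/S}$-module structure on $\B$ is the same as a compatible system of $\O_X$-linear maps $\theta ^{\B} _n \colon \B \to \B \otimes _{\O _X} \PP ^{n} _{X/S, (m)}$, and the compatibility of Definition \ref{dfn-algcomp-mod} says these $\theta ^{\B}_n$ are ring homomorphisms into $\B \otimes _{\O _X} \PP ^{n} _{X/S, (m)}$ (with the $\PP ^{n} _{X/S, (m)}$-algebra structure). One can then describe the product on $\B \otimes _{\O _X} \D ^{(m)} _{X/S}$ via the pairing of $\D ^{(m)} _{X/S}$ with $\PP ^{n} _{X/S, (m)}$ and the maps $\delta ^{n,n'} _{(m)}$ and $\theta^{\B}_n$, in a manifestly basis-free way; the local formulas of condition (2) are then recovered by plugging in the Taylor development \ref{Taylordev}.

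The main obstacle I expect is the associativity check in step (a): it requires untangling the interaction between the Leibniz rule for moving elements of $\B$ past differential operators and the composition formula \ref{Mon2.3.3bis} for the operators themselves, i.e. essentially verifying a cocycle-type identity. In the basis-free formulation this becomes the commutativity of a diagram built from $\delta ^{n,n'} _{(m)}$, the coassociativity encoded in \ref{delta,n,n',n''} (the three-fold $m$-PD structure), and the fact that $\theta ^{\B} _n$ is multiplicative and compatible with the $\delta$'s via diagram \ref{diag-theta} — so it reduces to diagram chases of exactly the type already carried out in the proof of Proposition \ref{ring-diffop(m)}, and one can invoke \cite[2.3.4]{Be1} or \cite[2.6.2]{these_montagnon} for the detailed verification once the setup is in place.
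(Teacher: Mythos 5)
Your plan is correct and follows exactly the route the paper takes: the paper's proof is simply ``We copy \cite[2.3.5]{Be1}'', i.e.\ it transposes Berthelot's construction verbatim, which is precisely what you unpack (local definition via a formal log basis and the Leibniz rule, associativity via the $\delta^{n,n'}_{(m)}$ diagrams, basis-independence via the stratification viewpoint of \ref{strat-prop}, then glueing and functoriality). The only minor point is that the relevant reference is \cite[2.3.5]{Be1} rather than 2.3.4, but the content is the same.
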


\begin{proof}
We copy \cite[2.3.5]{Be1}.
\end{proof}

\subsection{Logarithmic transposition}
\label{subsection2.4}
 We suppose that $X \to S$ is endowed with a formal log basis $(b  _\lambda) _{\lambda =1,\dots, n}$ of level $m$.

\begin{ntn}
\label{ntn-transp}
Put $\underline{1} := (1,1,\dots,1) \in \N ^{n}$. 
For any $\underline{k}  \in \N ^{n}$, we set
\begin{equation}
\label{tildepartialdef}
\smash{\underline{\widetilde{\partial}} }^{< \underline{k}> _{(m)}}
:=
(-1) ^{|\underline{k}|}
\sum _{\underline{1}\leq \underline{i} \leq \underline{k}}
\smash{\underline{\partial} ^{< \underline{i}> _{(m)} }}
\left \{
\begin{smallmatrix}
\underline{k}
\\
\underline{i}
\end{smallmatrix}
\right \}
\underline{q} _{\underline{k}-\underline{i}} !
\left (
\begin{smallmatrix}
\underline{k} -\underline{1}
\\
\underline{k} -\underline{i}
\end{smallmatrix}
\right ),
\end{equation}
where $q _k$ means the quotient of the Euclidian division of $k$ by $p ^{m}$.
For any differential operator $P$ of the form
$P = \sum _{\underline{k}} a _{\underline{k}} \smash{\underline{\partial} ^{< \underline{k}> _{(m)} }}$,
we set
$\widetilde{P}:=
\sum _{\underline{k}} \smash{\underline{\widetilde{\partial}} }^{< \underline{k}> _{(m)}}  a _{\underline{k}} $.
We say that
$\widetilde{P}$
is the logarithmic transposition of $P$.

\end{ntn}

\begin{rem}
[Comparison between transposition with or without logarithmic structure]
\label{rem-transp-log-nonlog}
We suppose that $f$ is weakly smooth of level $m$ and that $b_1 ,\dots, b_n\in \O _{X} ^{*}$ 
(and following \ref{rem-lognologbasis} then they form also some formal log-étale coordinates of level $m$).
In that case, we prefer to write $t _\lambda := b _\lambda$.
With the notation of \ref{non logarithmic partial},
any differential operator $P$ of $\D ^{(m)} _{X /S }$ can be written of the form
$P = \sum _{\underline{k}} a _{\underline{k}} \smash{\underline{\partial} _{\flat}^{< \underline{k}> _{(m)} }}$.
We can extend the non logarithmic transposition as defined by Berthelot (see \cite[1.3]{Be2})
to our context by putting
$${}^{t} P :=
\sum _{\underline{k}}
(-1) ^{| \underline{k}|}\smash{\underline{\partial} _{\flat}^{< \underline{k}> _{(m)} }}a _{\underline{k}} .$$
Then we have
$$\widetilde{P}= \underline{t} \  {}^{t} P \ \frac{1}{\underline{t}},$$
where $\underline{t} = \underline{t} ^{\underline{1}} = t _1\cdots t _n$.
Indeed, it is enough to check it when
$P=\smash{\underline{\partial} }^{< \underline{k}> _{(m)}}$.
The definition of \ref{tildepartialdef} was precisely introduced  to get
$\smash{\underline{\widetilde{\partial}} }^{< \underline{k}> _{(m)}} = \underline{t} \  {}^{t}
\smash{\underline{\partial} }^{< \underline{k}> _{(m)}}
\ \frac{1}{\underline{t}}$.

One reason to introduce  the logarithmic transposition is the formula \ref{rightD-mod-omegaformal}.

\end{rem}

\begin{prop}
\label{prop-tildePQ}
For any differential operators $P$ and $Q$,
we have
$\widetilde{PQ}= \widetilde{Q}\widetilde{P}$.

\end{prop}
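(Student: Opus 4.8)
The plan is to reduce the identity $\widetilde{PQ}=\widetilde{Q}\widetilde{P}$ to a bounded family of elementary relations among the operators $\underline{\partial}^{<\underline{k}>_{(m)}}$ and to settle those by the explicit combinatorics of \ref{ntn-transp}. First I would pass to an affine open on which $X/S$ carries the formal log basis $(b_\lambda)_{\lambda=1,\dots,n}$. The logarithmic transposition is additive, restricts to the identity on $\O_X\subset\D^{(m)}_{X/S}$ (with the convention $\underline{\widetilde{\partial}}^{<\underline 0>_{(m)}}=\mathrm{id}$), and $\D^{(m)}_{X/S}$ is generated as a sheaf of rings by $\O_X$ together with the finite set of operators $\partial_\lambda^{<p^j>_{(m)}}$, $1\le\lambda\le n$, $0\le j\le m$ (see \ref{locallogp-basis}). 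By the standard induction on word length it therefore suffices to prove $\widetilde{g\cdot R}=\widetilde{R}\cdot\widetilde{g}$ for $g$ running over $\O_X$ and the $\partial_\lambda^{<p^j>_{(m)}}$ and $R$ arbitrary; writing $R$ in the $\O_X$-basis $\{\underline{\partial}^{<\underline{k}>_{(m)}}\}$ this in turn splits, via the multiplication formulas \ref{Mon2.3.3} and \ref{Mon2.3.3bis}, into (i) the relations $\widetilde{\underline{\partial}^{<\underline{l}>_{(m)}}\underline{\partial}^{<\underline{k}>_{(m)}}}=\underline{\widetilde{\partial}}^{<\underline{k}>_{(m)}}\underline{\widetilde{\partial}}^{<\underline{l}>_{(m)}}$ and (ii) the compatibility of $\widetilde{(\cdot)}$ with the $\O_X$-action, i.e. the way $\widetilde{(\cdot)}$ transforms $\underline{\partial}^{<\underline{l}>_{(m)}}\cdot a$ for $a\in\O_X$; the case $g\in\O_X$ alone is immediate from the definition and the commutativity of $\O_X$.

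For (i) and (ii), both sides of each relation, after expansion by \ref{Mon2.3.3}, \ref{Mon2.3.3bis} and \ref{tildepartialdef}, become the same $\Z$-linear combinations of the $\underline{\partial}^{<\underline{m}>_{(m)}}$ (times, in case (ii), the sections $\underline{\partial}^{<\underline{m}>_{(m)}}(a)$), with coefficients assembled from the symbols $\left\{\begin{smallmatrix}\underline{k}\\\underline{i}\end{smallmatrix}\right\}$, the factorials $\underline{q}_{\underline{k}}!$ and ordinary binomial coefficients — quantities independent of $X$, $S$ and of the chosen $b_\lambda$. Since $\{\underline{\partial}^{<\underline{m}>_{(m)}}\}$ is a free $\O_X$-basis, each such relation amounts to a congruence mod $p^{i+1}$ between universal integers, hence may be checked in any one convenient model. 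I would use the model $X_0=\mathbb{G}_{\mathrm{m},\Z/p^{i+1}\Z}^{n}$ over $S_0=\Spec(\Z/p^{i+1}\Z)$ with trivial log structures and coordinates $t_1,\dots,t_n\in\O_{X_0}^{*}$ (a formal log basis of level $m$ by \ref{rem-lognologbasis}): being units, Remark \ref{rem-transp-log-nonlog} gives $\widetilde{P}=\underline{t}\cdot{}^{t}P\cdot\underline{t}^{-1}$ with ${}^{t}(\cdot)$ Berthelot's non-logarithmic transposition, so $\widetilde{PQ}=\underline{t}\,{}^{t}(PQ)\,\underline{t}^{-1}=\underline{t}\,{}^{t}Q\,{}^{t}P\,\underline{t}^{-1}=(\underline{t}\,{}^{t}Q\,\underline{t}^{-1})(\underline{t}\,{}^{t}P\,\underline{t}^{-1})=\widetilde{Q}\widetilde{P}$, using the anti-multiplicativity ${}^{t}(PQ)={}^{t}Q\,{}^{t}P$ of the non-logarithmic transposition (\cite[1.3]{Be2}). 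Alternatively one simply verifies (i) and (ii) directly, exactly as Berthelot does in the non-logarithmic case \cite{Be2} and Montagnon in the logarithmic one \cite{these_montagnon}.

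The main difficulty is the same on either route: it is the multi-index bookkeeping with binomial-type coefficients and the Euclidean divisions $\underline{k}=p^{m}\underline{q}_{\underline{k}}+\underline{r}$ hidden in \ref{Mon2.3.3bis} and \ref{tildepartialdef}. For the model route the one genuinely delicate point is to justify that the comparison with $X_0$ is legitimate in case (ii), since the action of the $\partial_\lambda$ on $\O_X$ is not literally the action on $\O_{X_0}$; I would make this precise by using the functoriality isomorphisms of \ref{chgtbasis} to identify $\D^{(m)}_{X/S}$, compatibly with $\widetilde{(\cdot)}$, with a pull-back of $\D^{(m)}_{A_{\N^{n}}/\Spec(\Z/p^{i+1}\Z)}$, and then reducing the action relations to the monomials $t^{\underline{N}}$, $\underline{N}\in\Z^{n}$, inside the model $\mathbb{G}_{\mathrm m}^{n}$, where they are covered by the previous paragraph.
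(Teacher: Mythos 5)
Your proposal is correct and follows essentially the same route as the paper's proof: the two elementary relations you isolate, namely $\widetilde{\underline{\partial}^{<\underline{l}>_{(m)}}\underline{\partial}^{<\underline{k}>_{(m)}}}=\underline{\widetilde{\partial}}^{<\underline{k}>_{(m)}}\underline{\widetilde{\partial}}^{<\underline{l}>_{(m)}}$ and the behaviour of $\widetilde{(\cdot)}$ on $\underline{\partial}^{<\underline{k}>_{(m)}}a$, are exactly the paper's steps 2) and 1), and both arguments settle them by observing that the coefficients are universal integers and invoking Remark \ref{rem-transp-log-nonlog} (the conjugation $\widetilde{P}=\underline{t}\,{}^{t}P\,\underline{t}^{-1}$ and the anti-multiplicativity of Berthelot's non-logarithmic transposition) in the unit-coordinate model. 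The only difference is bookkeeping: you assemble the general case by induction on words in the generators $\O_X$ and $\partial_\lambda^{<p^j>_{(m)}}$, whereas the paper does an explicit case analysis (its step 3 plus additivity), which amounts to the same thing.
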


\begin{proof}
0) When $P \in \O _X$, the proposition is obvious.

1) Suppose that
$P= \smash{\underline{\partial} ^{< \underline{k}> _{(m)} }}$
and $Q = a \in \O _X$.
For any tuples 
$\underline{i},  \underline{j} \in \N ^{n}$ so that
$\underline{i} \leq \underline{j}$, we put
$\alpha _{\underline{i}, \underline{j}}  :=
(-1) ^{|\underline{j}|}
\left \{
\begin{smallmatrix}
\underline{j}
\\
\underline{i}
\end{smallmatrix}
\right \}
\underline{q} _{\underline{j}-\underline{i}} !
\left (
\begin{smallmatrix}
\underline{j} -\underline{1}
\\
\underline{j} -\underline{i}
\end{smallmatrix}
\right )$
if $\underline{1}\leq \underline{i} $
and
$\alpha _{\underline{i}, \underline{j}} := 0 $ otherwise.
For any 
$\underline{h},  \underline{i} ,
\underline{j},  \underline{k} 
\in \N ^{n}$ so that
$\underline{h} \leq\underline{i} \leq \underline{j} \leq \underline{k} $,
we put
$P _{\underline{h} ,\underline{i} , \underline{j} , \underline{k} }:=
\alpha _{\underline{i}, \underline{j}}
\left <
\begin{smallmatrix}
\underline{i}
\\
\underline{h}
\end{smallmatrix}
\right >
\left <
\begin{smallmatrix}
\underline{k}
\\
\underline{j}
\end{smallmatrix}
\right >
\smash{\underline{\partial} ^{< \underline{i} -\underline{h}> _{(m)} }}
\smash{\underline{\partial} ^{< \underline{k} -\underline{j}> _{(m)} }}$.
On one side we have
$a\smash{\underline{\widetilde{\partial}} }^{< \underline{k}> _{(m)}}
=
\sum _{\underline{h} \leq \underline{k} }
\alpha _{\underline{h}, \underline{k}} a
\smash{\underline{\partial} ^{< \underline{h}> _{(m)} }}$
and on the other side using twice the formula \ref{Mon2.3.3} we compute
$(\smash{\underline{\partial} ^{< \underline{k}> _{(m)} }} a) ^\sim
=
\sum _{\underline{h} \leq\underline{i} \leq \underline{j} \leq \underline{k} }
P _{\underline{h} ,\underline{i} , \underline{j} , \underline{k} } (a)
\smash{\underline{\partial} ^{< \underline{h}> _{(m)} }}$.
Hence, when $\underline{h}$ and $\underline{k}$ are fixed,
this is sufficient to check
$\alpha _{\underline{h}, \underline{k}} =
\sum _{\underline{h} \leq\underline{i} \leq \underline{j} \leq \underline{k} }
P _{\underline{h} ,\underline{i} , \underline{j} , \underline{k} }$.
Since the coefficients of the differential operators $P _{\underline{h} ,\underline{i} , \underline{j} , \underline{k} }$ are integers,
shrinking $X$ if necessary,
we reduce to check the equality when the log structures are trivial.
With the remark \ref{rem-transp-log-nonlog}, we obtained the desired equality.

2) When $P= \smash{\underline{\partial}} ^{< \underline{k}> _{(m)}} $
and $Q = \smash{\underline{\partial} ^{< \underline{k'}> _{(m)}} }$,
using the formula \ref{Mon2.3.3},
we check the equality
$\widetilde{PQ}
=
\smash{\underline{\widetilde{\partial}} }^{< \underline{k'}> _{(m)}}
\smash{\underline{\widetilde{\partial}} }^{< \underline{k}> _{(m)}} $.
Indeed, with the remark \ref{rem-transp-log-nonlog},
we notice that the formula we have to check is the same as that obtained
using the analogy $\widetilde{P}= \underline{t} \  {}^{t} P \ \frac{1}{\underline{t}}$.

3) Suppose $P= \smash{\underline{\partial}} ^{< \underline{k'}> _{(m)}} $
and
$Q=  \smash{\underline{\partial}} ^{< \underline{k''}> _{(m)}} a$, with $a \in \O _X$.
From \ref{Mon2.3.3}, we have the equality
$\underline{\partial} ^{< \underline{k'}> _{(m)}}
\underline{\partial} ^{< \underline{k''}> _{(m)}} a
=
\sum _{\underline{k} = \max\{\underline{k}',  \underline{k}''\} }
^{\underline{k}' + \underline{k}''}
\beta _{\underline{k}, \underline{k}', \underline{k}''}  \underline{\partial} ^{< \underline{k}> _{(m)}}a,$
 where
$\beta _{\underline{k}, \underline{k}', \underline{k}''}:=
\frac{\underline{k}!}{(\underline{k}'+\underline{k}''-\underline{k})!(\underline{k} -\underline{k}')!(\underline{k} -\underline{k}'')!}
\frac{\underline{q} _{\underline{k}'}!\underline{q} _{\underline{k}''}!}{\underline{q} _{\underline{k}}!}\in \Z$.
Hence, from the step 1), we get
$(\underline{\partial} ^{< \underline{k'}> _{(m)}}
\underline{\partial} ^{< \underline{k''}> _{(m)}} a ) ^{\sim}
=
a \sum _{\underline{k} = \max\{\underline{k}',  \underline{k}''\} }
^{\underline{k}' + \underline{k}''}
\beta _{\underline{k}, \underline{k}', \underline{k}''}  \smash{\underline{\widetilde{\partial}}} ^{< \underline{k}> _{(m)}}$
Applying $^\sim$ to the equality \ref{Mon2.3.3} and using the step 1),
we obtain
$\sum _{\underline{k} = \max\{\underline{k}',  \underline{k}''\} }
^{\underline{k}' + \underline{k}''}
\beta _{\underline{k}, \underline{k}', \underline{k}''}  \smash{\underline{\widetilde{\partial}}} ^{< \underline{k}> _{(m)}}
=
  \smash{\underline{\widetilde{\partial}}} ^{< \underline{k''}> _{(m)}}
   \smash{\underline{\widetilde{\partial}}} ^{< \underline{k'}> _{(m)}}$.
   Again using the step 1), this yields
$  (\underline{\partial} ^{< \underline{k'}> _{(m)}}
\underline{\partial} ^{< \underline{k''}> _{(m)}} a ) ^{\sim}
=
(\underline{\partial} ^{< \underline{k''}> _{(m)}} a ) ^{\sim}
\smash{\underline{\widetilde{\partial}}} ^{< \underline{k'}> _{(m)}} $.

4) By additivity, using part 0) and part 4), we check the proposition.
\end{proof}

\begin{rem}
[Logarithmic transposition at the level $0$]
At the level $0$, any differential operator of
$\D ^{(0)} _{X /S }$ can be written uniquely in the form
$P = \sum _{\underline{k}} a _{\underline{k}} \smash{\underline{\partial}} ^{\underline{k}}$,
where $\smash{\underline{\partial}} ^{\underline{k}}$ has not to be confounded with
$\smash{\underline{\partial} }^{< \underline{k}> _{(0)}}$.
Since $\smash{\underline{\widetilde{\partial}}} = - \smash{\underline{\partial}}$,
we get
$\widetilde{P} =
\sum _{\underline{k}} (-1) ^{| \underline{k}|} \smash{\underline{\partial}} ^{\underline{k}} a _{\underline{k}} $.
\end{rem}

\begin{prop}
\label{tildetilde}
For any differential operator $P$, we have
$\widetilde{\widetilde{P}} = P$.
\end{prop}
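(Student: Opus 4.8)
The plan is to reduce the statement to the non-logarithmic case using the comparison formula of Remark~\ref{rem-transp-log-nonlog}, and then to the already-known involutivity of the ordinary transposition $^{t}(\cdot)$ due to Berthelot. Since $\widetilde{\widetilde{P}}=P$ is a local statement and $f$ is weakly log smooth of level $m$, I may work \'etale locally on $X$; moreover, since the coefficients appearing in \ref{tildepartialdef} (the $\bigl\{\begin{smallmatrix}\underline{k}\\ \underline{i}\end{smallmatrix}\bigr\}$, the $\underline{q}!$'s and the binomials $\bigl(\begin{smallmatrix}\underline{k}-\underline{1}\\ \underline{k}-\underline{i}\end{smallmatrix}\bigr)$) are all integers, the identity $\widetilde{\widetilde{P}}=P$ is an identity between differential operators whose coefficients are universal $\Z$-linear combinations of the coefficients of $P$ and their derivatives. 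Hence it suffices to check it after base change to a situation with trivial log structure where the log basis $(b_\lambda)$ becomes a system of formal log \'etale coordinates $(t_\lambda)$ with $t_\lambda\in\O_X^{*}$, exactly as in step~1) of the proof of Proposition~\ref{prop-tildePQ}.

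In that reduced situation I would invoke Remark~\ref{rem-transp-log-nonlog}, which gives $\widetilde{P}=\underline{t}\ {}^{t}P\ \tfrac{1}{\underline{t}}$ with $\underline{t}=t_1\cdots t_n$. Then
\begin{equation}
\notag
\widetilde{\widetilde{P}}
=
\underline{t}\ {}^{t}\!\bigl(\widetilde{P}\bigr)\ \tfrac{1}{\underline{t}}
=
\underline{t}\ {}^{t}\!\Bigl(\underline{t}\ {}^{t}P\ \tfrac{1}{\underline{t}}\Bigr)\ \tfrac{1}{\underline{t}}.
\end{equation}
Since $\underline{t}$ and $\tfrac{1}{\underline{t}}$ are sections of $\O_X^{*}$, the ordinary transposition reverses the order of composition and fixes functions, so ${}^{t}\!\bigl(\underline{t}\ {}^{t}P\ \tfrac{1}{\underline{t}}\bigr)=\underline{t}\ {}^{t}({}^{t}P)\ \tfrac{1}{\underline{t}}=\underline{t}\ P\ \tfrac{1}{\underline{t}}$, using that Berthelot's non-logarithmic transposition is an involution, ${}^{t}({}^{t}P)=P$ (see \cite[1.3]{Be2}; note that the formulas of \ref{non logarithmic partial} for level $m$ are formally the same as Berthelot's, so his proof applies verbatim). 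Substituting back yields $\widetilde{\widetilde{P}}=\underline{t}\,\underline{t}\,P\,\tfrac{1}{\underline{t}}\,\tfrac{1}{\underline{t}}=P$, as desired.

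The only points requiring care are bookkeeping ones: first, justifying that it is legitimate to replace $(b_\lambda)$ by invertible coordinates, which is done exactly as in step~1) of \ref{prop-tildePQ} by spreading out over $\Z$ (the relation to be proved has integral coefficients, so it can be tested on a universal model with trivial log structure where the $t_\lambda$ are units); second, checking that ${}^{t}(\cdot)$ really is an anti-automorphism of the ring $\D^{(m)}_{X/S}$ in the case of trivial log structure and invertible coordinates, which is Berthelot's \cite[1.3.1]{Be2} (anti-automorphism) together with ${}^{t}({}^{t}P)=P$; both are available. I expect the main obstacle to be purely expository, namely making the reduction to trivial log structure precise enough that the conjugation by $\underline{t}$ is meaningful, but there is no genuine computational difficulty once Remark~\ref{rem-transp-log-nonlog} is in hand; alternatively, one can give a direct combinatorial proof by expanding $\smash{\underline{\widetilde{\widetilde{\partial}}}}^{<\underline{k}>_{(m)}}$ via \ref{tildepartialdef} twice and using the formula \ref{Mon2.3.3}, but the conjugation argument is far shorter.
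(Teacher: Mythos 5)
Your route is genuinely different from the paper's. The paper proves \ref{tildetilde} by invoking the anti-multiplicativity $\widetilde{PQ}=\widetilde{Q}\widetilde{P}$ of Proposition \ref{prop-tildePQ} to reduce to the single operator $P=\underline{\partial}^{<\underline{k}>_{(m)}}$, whose case it then leaves to the reader; you instead bypass \ref{prop-tildePQ} entirely and deduce the involutivity for a general $P$ from the conjugation formula $\widetilde{P}=\underline{t}\;{}^{t}P\;\tfrac{1}{\underline{t}}$ of Remark \ref{rem-transp-log-nonlog} together with the involutivity and anti-automorphism property of Berthelot's ${}^{t}(\cdot)$. Your reduction to the case of trivial log structure with invertible coordinates, justified by the integrality of the universal coefficients, is exactly the device used in step 1) of the paper's proof of \ref{prop-tildePQ}, so it is legitimate here as well. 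What your approach buys is a short conceptual argument (conjugation by the unit $\underline{t}$ transports a known involution); what the paper's buys is that the final verification stays internal to the logarithmic calculus, at the price of an unwritten combinatorial check.

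There is, however, an order-reversal slip in your displayed computation. Since ${}^{t}(\cdot)$ reverses products, ${}^{t}\bigl(\underline{t}\;{}^{t}P\;\tfrac{1}{\underline{t}}\bigr)={}^{t}\bigl(\tfrac{1}{\underline{t}}\bigr)\;{}^{t}({}^{t}P)\;{}^{t}(\underline{t})=\tfrac{1}{\underline{t}}\,P\,\underline{t}$, and not $\underline{t}\,P\,\tfrac{1}{\underline{t}}$ as you wrote. Your version leads to $\widetilde{\widetilde{P}}=\underline{t}^{2}\,P\,\underline{t}^{-2}$, and the concluding equality $\underline{t}^{2}\,P\,\underline{t}^{-2}=P$ is false in general, since conjugation by a unit does not fix differential operators. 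With the order reversed correctly one gets $\widetilde{\widetilde{P}}=\underline{t}\cdot\tfrac{1}{\underline{t}}\cdot P\cdot\underline{t}\cdot\tfrac{1}{\underline{t}}=P$, the inner and outer factors now cancelling adjacently, and the argument goes through.
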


\begin{proof}
Using \ref{prop-tildePQ}, we reduce to the case where
$P= \underline{\partial} ^{< \underline{k}> _{(m)}}$, which is left to the reader.
\end{proof}

\begin{rem}
Recall that from \ref{Mon2.3.3}, 
we have
$\underline{\partial} ^{< \underline{k}> _{(m)}} a
=
\sum _{\underline{i} \leq \underline{k}}
\left \{
\begin{smallmatrix}
 \underline{k}\\
 \underline{i}
\end{smallmatrix}
\right\}
\underline{\partial} ^{< \underline{k}-  \underline{i}> _{(m)}} (a)
\underline{\partial} ^{<\underline{i}> _{(m)}}$.
Using \ref{prop-tildePQ},
this yields
\begin{equation}
\label{2.5Montagnonsim}
a\smash{\underline{\widetilde{\partial}}} ^{< \underline{k}> _{(m)}}
=
\sum _{\underline{i} \leq \underline{k}}
\smash{\underline{\widetilde{\partial}}} ^{<\underline{i}> _{(m)}}
\left \{
\begin{smallmatrix}
 \underline{k}\\
 \underline{i}
\end{smallmatrix}
\right\}
\underline{\partial} ^{< \underline{k}-  \underline{i}> _{(m)}} (a).
\end{equation}
In the formula \ref{2.5Montagnonsim}, beware that we can not
replace $\smash{\underline{\widetilde{\partial}}}$ by $\smash{\underline{\partial}}$.
\end{rem}

\begin{empt}
The logarithmic transposition commutes with
the canonical morphism
$\D ^{(m)} _{X/S} \to \D ^{(m+1)} _{X/S}$.
\end{empt}

\section{Differential operators over fine log formal schemes}

We recall that Shiho introduced the notion of log formal $\V$-schemes (see \cite[2.1.1.(4)]{Shiho-log-isocI}) as follows:
A log formal $\V$-scheme $\X$ is a formal $\V$-scheme $\underline{\X}$ endowed with a logarithmic structure $\alpha \colon M _{\X } \to \O _\X$,
where $\O _{\X}:= \O _{\underline{\X}}$
(this means that $\alpha$ is a logarithmic morphism of sheaves of monoids for the étale topology over $\X$,
i.e. $\alpha$ is such that $\alpha ^{-1} (\O _{\X} ^*)\to  \O _{\X} ^*$ is an isomorphism).
When $M _{\X }$ is fine as sheave for the étale topology over the special fiber of $\X$
(i.e. when $M _{\X }$ is integral and $M _{\X }$ is coherent in the sense defined at the end of the remark
\cite[II.2.1.2]{Ogus-Logbook}), we say that 
the logarithmic structure $M _{\X }$ is fine. 
We say that 
$\X$ is a fine log formal $\V$-scheme if $M _{\X }$ is fine.
Let $\S:= \Spf \, \V$ be the formal $\V$-scheme (endowed with the trivial log structure).
A fine $\S$-log formal scheme $\X$ will be a morphism of
fine log formal $\V$-schemes of the form $\X \to \S$.

If $\X$ is a fine log formal $\V$-scheme and $i\in\N$,
then we denote by $X _{i} $ the fine log $\V/ \pi ^{i+1} \V$-scheme
so that $\underline{X} _{i}:= \underline{\X} \times _{\Spf (\V)} \Spec ( \V/ \pi ^{i+1} \V)$
and the morphism
$X _{i}\to \X$ is strict.
For $i=0$, we can simply denote $X _0$ by  $X$.
If $f\colon \X \to \Y$ is a morphism of fine log formal $\V$-schemes, then
we denote by
 $f _i\colon X _{i} \to Y _i$ the induced morphism of fine log-schemes over $\V/ \pi ^{i+1} \V$.
We remark that if $f\colon \X \to \Y$ is a morphism of fine 
$\S$-log formal schemes,
then  $f _i\colon X _{i} \to Y _i$ is a morphism of fine 
log $S _i$-schemes.

\subsection{From log schemes to formal log schemes}

\begin{empt}
[Charts for log formal $\V$-schemes]
\label{formalcharts}
Let $P$ be a fine monoid and 
$\V \{ P \}$ be the $p$-adic completion of $\V [ P ]$.
Since $\V$ is fixed, we denote by
$\mathfrak{A} _P$ the fine log formal $\V$-scheme
whose underlying formal $\V$-scheme is $\Spf (\V \{ P \})$
and whose log structure is the log structure associated with the pre-log structure
induced canonically by $P \to \V \{ P \}$.

Let $\X$ be a fine $\S$-log formal scheme.
We denote by $P _\X$ 
the sheaf associated to the constant presheaf of $P$ over $\X$.
Following Shiho's definition of \cite[2.1.7]{Shiho-log-isocI},
a chart of $\X$ is a morphism of monoids $\alpha \colon P _\X \to \O _{\X}$
whose associated  log structure  is isomorphic to $M _\X \to \O _{\X}$.
A chart of $\X$ is equivalent to the data of
a strict morphism of the form
$\X \to \mathfrak{A} _P$.

\end{empt}

\begin{lem}
\label{inductivelimites-presurj}
Let $\X$ be a fine $\S$-log formal scheme.
Let $i\geq 0$ be an integer.  
Then, the morphisms
$\O _{\X} ^* \to \O _{X _i} ^*$
and 
$M _\X  \to M _{X _i}$ are surjective. 
\end{lem}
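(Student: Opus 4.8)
The plan is to reduce everything to an affine chart and then exploit the fact that the kernel of $\O_{\X}\to\O_{X_i}$ is topologically nilpotent. First I would recall that the statement is local for the étale topology on $\X$, so using \ref{formalcharts} I may assume $\X$ admits a chart, i.e. a strict morphism $\X\to\mathfrak{A}_P$ for a fine monoid $P$, and that $\underline{\X}=\Spf A$ is affine with $A$ a $p$-adically complete $\V$-algebra; then $\underline{X_i}=\Spec(A/\pi^{i+1}A)$. Since the log structure $M_{\X}$ is the one associated with the prelog structure $P_{\X}\to\O_{\X}$ and $M_{X_i}$ is its pullback, it suffices to treat the two cases separately, and the surjectivity of $M_{\X}\to M_{X_i}$ will follow from that of $\O_{\X}^*\to\O_{X_i}^*$ together with the fact that the underlying monoid chart $P$ is the same on both sides (the amalgamated-sum description $M=P\oplus_{\alpha^{-1}\O^*}\O^*$ of an associated log structure shows that surjectivity on units propagates to surjectivity on $M$).

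The heart of the matter is thus: $\O_{\X}^*(\X)=A^*\to (A/\pi^{i+1}A)^*=\O_{X_i}^*(X_i)$ is surjective. Let $I=\pi^{i+1}A$. Given a unit $\bar u\in (A/I)^*$, lift it arbitrarily to $u\in A$; then $u$ is a unit modulo $I$, and since $I\subset \pi A\subset\mathrm{rad}(A)$ (because $A$ is $p$-adically, hence $\pi$-adically, complete, so $1+\pi A\subset A^*$), $u$ is already a unit in $A$. This is the standard fact that an element of a complete ring reducing to a unit modulo an ideal contained in the Jacobson radical is itself a unit; one can also argue by constructing the inverse as a convergent series $\sum_{n\ge 0}(1-uv_0)^n v_0$ where $v_0$ lifts $\bar u^{-1}$. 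Passing from global sections over an affine to the sheaf level is immediate since both $\O^*$ and $M$ are sheaves and the affines form a basis; more precisely one checks surjectivity on stalks at geometric points $\overline{x}$, where $\O_{\X,\overline{x}}$ is a (strictly henselian, hence in particular with $\pi$ in the radical) local ring, and the same radical argument applies verbatim.

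I expect the only genuine subtlety — not an obstacle so much as a point requiring care — to be the bookkeeping that translates surjectivity on units into surjectivity on the sheaves of monoids $M_{\X}\to M_{X_i}$, i.e. being explicit that the pullback log structure $M_{X_i}$ on the strict morphism $X_i\to\X$ is computed as $M_{\X}|_{X_i}$ together with the universal property forcing $M_{X_i}=M_{\X}\otimes_{\O_{\X}^*}\O_{X_i}^*$ (in the appropriate sense of amalgamation). Once that identification is in place, given a local section $m$ of $M_{X_i}$ one writes it, étale locally, as coming from a section of $P$ times a unit of $\O_{X_i}^*$, lifts the $P$-part tautologically and the unit-part by the previous paragraph, and concludes. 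No result beyond the $p$-adic completeness of $\O_{\X}$ (built into the definition of fine log formal $\V$-scheme recalled just before the lemma) and the elementary ``unit mod radical is a unit'' principle is needed; in particular the lemma does not use any of the differential-operator machinery developed earlier.
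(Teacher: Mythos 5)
Your proposal is correct and follows essentially the same route as the paper: the paper likewise deduces surjectivity on units from the $p$-adic completeness of $\O_{\X}$ (you merely spell out the standard ``unit modulo an ideal in the radical'' argument), and then reduces the surjectivity of $M_{\X}\to M_{X_i}$ to an étale-local chart and the amalgamated-sum description $P_{\X}\oplus_{\alpha^{-1}(\O_{\X}^*)}\O_{\X}^*\riso M_{\X}$, concluding from the surjectivity on units exactly as you do. No gaps.
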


\begin{proof}
The fact that $\O _{\X} ^* \to \O _{X _i} ^*$ is surjective
comes from the fact that $\O _{\X}$ is complete for the $p$-adic topology.
The fact that
$M _\X  \to M _{X _i}$ is surjective
is étale local on $\X$. 
Hence, we can suppose 
there exists a fine monoid $P$  and a morphism of 
sheaves of monoids $\alpha \colon P _\X \to \O _{\X}$ (here $P _\X$ means the sheaf associated to 
the constant presheaf of $P$ over $\X$)
which induces the isomorphism of sheaves of monoids
$P _{\X} \oplus _{\alpha ^{-1} (  \O ^{*} _{\X})} \O ^{*} _{\X} \riso M _{\X}$
and the isomorphism
$P _{X _i} \oplus _{\alpha _i ^{-1} (  \O ^{*} _{X _i})} \O ^{*} _{X _i} \riso M _{X _i}$. 
Since 
$\O ^{*} _{\X}  \to  \O ^{*} _{X _i}$ is surjective, we conclude.

\end{proof}

\begin{prop}
\label{inductivelimites}
Let $\X$ be a fine $\S$-log formal scheme.
Then, in the category of fine $\S$-log formal schemes,
$\X$ is the inductive limit of the system $(X _i) _i$.
\end{prop}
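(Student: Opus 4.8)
The plan is to show that the system $(X_i)_i$ of strict closed immersions $X_0 \hookrightarrow X_1 \hookrightarrow X_2 \hookrightarrow \cdots$ realizes $\X$ as a colimit in the category of fine $\S$-log formal schemes. Concretely, given a fine $\S$-log formal scheme $\Y$ together with a compatible system of morphisms $g_i \colon X_i \to \Y$, I want to produce a unique morphism $g \colon \X \to \Y$ whose restriction to each $X_i$ is $g_i$. Since a morphism of log formal schemes consists of a morphism of the underlying formal schemes together with a morphism of log structures, I would treat these two pieces separately and then check compatibility.

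First I would handle the underlying formal schemes. The underlying formal $\V$-scheme $\underline{\X}$ is, by the very definition of a $p$-adic formal $\V$-scheme, the colimit of the system $(\underline{X_i})_i = (\underline{\X} \times_{\Spf \V} \Spec(\V/\pi^{i+1}\V))_i$ in the category of formal $\V$-schemes (this is the standard statement recalled in \cite[Be1]{Be1}; topologically $|\underline{\X}| = |\underline{X_0}|$ and $\O_{\underline{\X}} = \varprojlim_i \O_{\underline{X_i}}$). So the $\underline{g_i}$ glue uniquely to $\underline{g} \colon \underline{\X} \to \underline{\Y}$. Next, for the log structures: I need to produce a morphism $\underline{g}^* M_\Y \to M_\X$ compatible with the $\underline{g_i}^* M_\Y \to M_{X_i}$. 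Here the key input is Lemma \ref{inductivelimites-presurj}, which tells me $M_\X \to M_{X_i}$ and $\O_\X^* \to \O_{X_i}^*$ are surjective; combined with the fact that a log structure is the pushout $M_\X \cong \underline{g}_*(\text{pre-log}) \oplus_{\cdots} \O_\X^*$ and that $\O_\X^* = \varprojlim_i \O_{X_i}^*$ (since $\O_\X$ is $p$-adically complete), one checks that $M_\X \riso \varprojlim_i M_{X_i}$ as sheaves of monoids on the common underlying étale site. This gives existence and uniqueness of the glued morphism of log structures. I would then check that the resulting $g \colon \X \to \Y$ is a morphism of \emph{fine} log formal schemes (no issue: the target is fine by hypothesis) and that it is an $\S$-morphism (immediate from compatibility of the $g_i$ over $S_i$), and that $g|_{X_i} = g_i$ by construction; uniqueness follows from uniqueness at the level of underlying formal schemes together with injectivity coming from surjectivity of $M_\X \to M_{X_i}$.

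The main obstacle I expect is the log-structure bookkeeping: one must be careful that the colimit of the $M_{X_i}$ is computed correctly as a log structure and not merely as a sheaf of monoids, i.e.\ that the universal log structure axiom $\alpha^{-1}(\O^*) \riso \O^*$ passes to the limit and that pushouts of monoids interact well with the $p$-adic limit $\O_\X^* = \varprojlim \O_{X_i}^*$. This is where Lemma \ref{inductivelimites-presurj} does the real work: reducing étale-locally to a chart $P_\X \to \O_\X$, the identifications $P_{X_i} \oplus_{\alpha_i^{-1}(\O_{X_i}^*)} \O_{X_i}^* \riso M_{X_i}$ let me transport the limit statement for $\O^*$ to one for $M$. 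Once that local computation is in hand, the gluing is formal, and I would note at the end that since all the $X_i$ share the underlying étale topos of $X_0$, there are no subtleties about comparing sheaves on different spaces. I would keep the write-up short, citing \cite{Be1} for the formal-scheme colimit and \ref{inductivelimites-presurj} for the monoid surjectivity, and leaving the routine verifications to the reader.
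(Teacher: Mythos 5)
Your overall strategy coincides with the paper's: reduce the claim to the statement that $\underline{\X}$ is the colimit of the $\underline{X}_i$ (a standard fact, cited from EGA), together with the assertion that the canonical map $M_\X \to \varprojlim_i M_{X_i}$ is an isomorphism, and attack the latter \'etale-locally via a chart $\alpha\colon P_\X \to \O_\X$ using Lemma \ref{inductivelimites-presurj}. The decomposition of the problem is right, and you have correctly located where the difficulty sits.

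The gap is that you never actually cross that difficulty: the sentence ``the identifications $P_{X_i}\oplus_{\alpha_i^{-1}(\O^*_{X_i})}\O^*_{X_i}\riso M_{X_i}$ let me transport the limit statement for $\O^*$ to one for $M$'' is precisely the assertion to be proved, and it is not formal. The associated log structure is an amalgamated sum of sheaves of monoids, i.e.\ a colimit followed by a sheafification, and neither operation commutes with the inverse limit over $i$ in general. The paper's proof of \ref{inductivelimites} spends essentially all of its length on exactly this point: it introduces the presheaf pushout $\FF_i$, the direct sum $\E_i$ and the sheafification $\G_i$, shows that the transition maps induce bijections on the fibres of $\theta_{i+1}\colon\E_{i+1}\to\FF_{i+1}$ over corresponding fibres of $\theta_i$ (integrality of $P$ gives injectivity; surjectivity of $\O^*_{X_{i+1}}(U_{i+1})\to\O^*_{X_i}(U_i)$ together with the explicit description of the amalgamated sum from \cite[1.3]{Kato-logFontaine-Illusie} gives surjectivity), propagates this to the sheafified pushouts by an \'etale-covering argument, and only then deduces injectivity and surjectivity of $M_\X(\U)\to\varprojlim_i M_{X_i}(U_i)$. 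None of this is a ``routine verification'': the integrality of $P$ enters in an essential way, and without the fibre analysis one cannot establish even the injectivity of the comparison map. To complete your argument you would need to supply this computation (or an equivalent one); as written, the heart of the proof is asserted rather than proved.
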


\begin{proof}
From \cite[I.10.6.1]{EGAI}, $\underline{\X}$ is the inductive limit of the system
$(\underline{X} _i ) _i$. It remains to check that the canonical morphism of sheaves of monoids
$M _{\X} 
\to 
\underleftarrow{\lim} _i \, 
M _{X _i} $
is an isomorphism.
Since this is étale local on $\X$ 
and since $\X$ is fine then we can suppose 
there exists a fine monoid $P$  and a morphism of 
sheaves of monoids $\alpha \colon P _\X \to \O _{\X}$ 
which induces the isomorphism of sheaves of monoids
$P _{\X} \oplus _{\alpha ^{-1} (  \O ^{*} _{\X})} \O ^{*} _{\X} \riso M _{\X}$.
Let $i\geq 0$ be an integer.  We get the morphism of sheaves of monoids $\alpha _i \colon P _{X _i} \to \O _{X _i}$
which induces the isomorphism
$P _{X _i} \oplus _{\alpha _i ^{-1} (  \O ^{*} _{X _i})} \O ^{*} _{X _i} \riso M _{X _i}$. 
Hence, we reduce to prove that 
the canonical map
$P _{\X} \oplus _{\alpha ^{-1} (  \O ^{*} _{\X})} \O ^{*} _{\X} 
\to 
\underleftarrow{\lim} _i \, 
P _{X _i} \oplus _{\alpha _i ^{-1} (  \O ^{*} _{X _i})} \O ^{*} _{X _i} $
is an isomorphism.
 We put  
 $\FF _i := P _{X _i} ``\oplus" _{\alpha  _i ^{-1} (  \O ^{*} _{X _i})} \O ^{*} _{X _i}$
 where 
 $``\oplus"$ means that the amalgamated sum is computed in the category of 
 presheaves. 
We put $\E _i := P _{X _i} \oplus \O ^{*} _{X _i} $, 
 $\theta _{i} \colon \E _i \to \FF _i$ the canonical surjective morphism, 
  $\G _i := P _{X _i} \oplus _{\alpha  _i ^{-1} (  \O ^{*} _{X _i})} \O ^{*} _{X _i}$
  and
 $\epsilon _i \colon \FF _i \to \G _i$ the canonical morphism from a presheaf to its
 associated sheaf. 
 We put $\phi _i := \epsilon _i \circ \theta _i$. 
 We denote by $\pi _i \colon \O ^{*} _{X _{i+1}} \to \O ^{*} _{X _{i}}$,
 $\pi _i \colon \E _{i+1} \to \E _{i}$
 $\pi _i \colon \FF _{i+1} \to \FF _{i}$,
 $\pi _i \colon \G _{i+1} \to \G _{i}$ the canonical projections.
 Let $\U\to \X$ be an étale map such that $\U$ is connected.

\noindent \ 1) Let $ s _{i+1} \in \FF _{i+1} ( U _{i+1})$ and $s _{i}:= \pi _i ( s _{i+1}) \in \FF _i ( U _{i})$.
Then the canonical map 
$\pi _i \colon \theta _{i+1} ^{-1} (s _{i+1}) \to \theta _{i} ^{-1} (s _{i}) $
induced by  $\pi _i \colon \E _{i+1} (U _{i+1}) \to \E _{i} (U _i) $
is a bijection. 

a) We check the injectivity. Let $(x, a), (x', a') \in \theta _{i+1}^{-1} (s _{i+1}) $ such that 
$\pi _i (x, a) = \pi _i (x', a')$ (where $x,x' \in P$ and $a,a'\in \O ^{*} _{X _{i+1}} (U _{i+1}) $). 
The latter equality yields $x = x'$. Since $P$ is integral, 
$\theta _{i+1}(x, a)=\theta _{i+1} (x, a')$ implies $a = a'$
(for the computation, use the remark of \cite[1.3]{Kato-logFontaine-Illusie}).

b) We check the surjectivity. Let
$ (y, b) \in \theta _i ^{-1} (s _{i}) $. 
 We remark that 
 $\alpha ^{-1} (  \O ^{*} _{\X}) (\U) 
 =
 \alpha _i ^{-1} (  \O ^{*} _{X _i}) (U _i) $ and we denote it by $Q$. 
Since $\theta _{i+1}$ is an epimorphism (in the category of presheaves)
then there exists $(x, a)\in \theta _{i+1}^{-1} (s _{i+1}) $.
Since $\pi _i (x, a) = (x, \pi _i (a)) \in \theta _i ^{-1} (s _{i}) $,
there exists $q, q ' \in Q (U _{i})$ such that $\pi _i (a) \alpha _i (q) = b \alpha _i (q')$ and $x q '= y q$ (see the remark of \cite[1.3]{Kato-logFontaine-Illusie}). 
Set $a ':= a \alpha _{i+1} (q)   \alpha _{i+1} (q') ^{-1}$. Then 
$\pi _i (a') = b$ and $\theta _{i+1} ( x, a)  = \theta _{i+1} ( y, a ') $, i.e. 
$\pi _i ( y, a ') =  (y, b) $ and
$( y, a ') \in \theta _{i+1} ^{-1} (s _{i+1})$.

\noindent \ 2)  Let $ t _{i+1} \in \G _{i+1} ( U _{i+1})$ and $t _{i}:= \pi _i ( t _{i+1}) \in \G _i ( U _{i})$.
Then the canonical map 
 $\pi _i \colon \phi _{i+1} ^{-1} (t _{i+1}) \to \phi _i ^{-1} (t _{i}) $
is a bijection. 

a)  We check the injectivity. Let $r,r' \in \phi _{i+1} ^{-1} (t _{i+1})$ such that $\pi _i (r ) = \pi _i (r')$.
There exists an étale covering $(\U _\lambda \to \U) _\lambda$ of $\U$ such that 
$\theta _{i+1}(r) |U _\lambda
=
\theta (r') _{i+1} |U _\lambda$.
From 1) (applied for $\U _\lambda$ instead of $\U$), this yields 
$r |U _\lambda
=
r' |U _\lambda$.
Hence, $r =r'$.

b)  We check the surjectivity. Let $r \in \phi _i ^{-1} (t _{i})$.
Put $s := \theta _i (r)$.
There exist an étale covering $(\U _\lambda \to \U )_\lambda $ of $\U$
and sections $ s _{\lambda} \in \FF _{i+1} (U _{\lambda})$
such that 
$\epsilon _{i+1} (s _{\lambda}) = t _{i+1} | U _{\lambda}$
and 
$\pi _i ( s _{\lambda}) = s | U _{\lambda}$.
From 1.b), there exists $r _{\lambda}\in \E _{i+1} (U _{\lambda})$ such that 
$\pi _i ( r _{\lambda}) = r | U _{\lambda}$ and 
$\theta _{i+1} (r _{\lambda}) = s _{\lambda} $.
Hence, 
$\pi _i ( r _{\lambda}) = r | U _{\lambda}$ and 
$\phi _{i+1} (r _{\lambda}) = t _{i+1} | U _{\lambda}$.
From 2.a), this yields that 
$(r _{\lambda}) _{\lambda}$ come from a section 
of $\E _{i+1} (U _{i+1})$.

3) 
Now, let us check that the canonical map
$P _{\X} \oplus _{\alpha ^{-1} (  \O ^{*} _{\X})} \O ^{*} _{\X} 
\to 
\underleftarrow{\lim} _i \, 
P _{X _i} \oplus _{\alpha _i ^{-1} (  \O ^{*} _{X _i})} \O ^{*} _{X _i} $
is an isomorphism.
First, we start with the injectivity.
As above, put
$\E :=P _{\X} \oplus \O ^{*} _{\X}  $,
$\FF: = P _{\X} ``\oplus" _{\alpha ^{-1} (  \O ^{*} _{\X})} \O ^{*} _{\X}$,
$\G:= P _{\X} \oplus _{\alpha ^{-1} (  \O ^{*} _{\X})} \O ^{*} _{\X} $,  
$\theta \colon \E \to \FF$, 
$\epsilon \colon \FF \to \G$, 
$\phi:= \epsilon \circ \theta$.
Let $(x,a),(y,b)\in \E (\U)$ such that
the image of 
$\phi  (x, a)$ and $\phi  (y, b )$ in 
$\underleftarrow{\lim} _i \, 
P _{X _i} \oplus _{\alpha _i ^{-1} (  \O ^{*} _{X _i})} \O ^{*} _{X _i} 
(\U)$
are equal (where $x, y \in P$, and $a,b \in  \O ^{*} _{\X} (\U)$). 
Since the injectivity is locally etale, 
we reduce to check that $\phi  (x, a)=\phi  (y, b )$.
Denote by $(x,a _i),(y,b _i)\in \E_i$ the image of $(x,a),(y,b)$.
Shrinking $\U$ if necessary, we can suppose that 
$\theta _0 (x, a _0) =\theta _0 (y, b _0) $.
Doing the same computation as in 1.b), 
we check there exists $c \in \O ^{*} _{\X}$ 
such that 
$\theta  (x, a ) =\theta  (y, c) $.
Moreover, since $P$ is integral, 
we check that $\phi _i (y, c _i) =\phi _i (y, b _i)$ if and only if 
$c _i = b _i$ (since this is etale local, we reduce to check 
$\theta _i (y, c _i) =\theta _i (y, b _i)$ if and only if 
$c _i = b _i$).
Hence, $b=c$, which implies 
$\phi  (x, a)=\phi  (y, b )$.
Hence, we have checked the injectivity. 
The surjectivity is an easy consequence of 2).
\end{proof}

\begin{dfn}
We define the category of strict inductive systems of noetherian fine log schemes over $(S _i) _{i\in \N}$ as follows. 
A strict inductive system of noetherian fine log schemes over $(S _i) _{i\in \N}$ is the data, 
for any integer $i\in \N$, of 
a noetherian fine $S _i$-log scheme $X _i$, 
of  an exact closed $S _i$ immersion  $X _i \hookrightarrow X _{i+1}$ 
 such that the induced morphism $X _i \to X _{i+1} \times _{S _{i+1}} S _{i}$ is an isomorphism. 
 A morphism 
 $(X _i) _{i\in \N} 
 \to 
 (Y _i) _{i\in \N}$
 of strict inductive systems of noetherian fine log schemes over $(S _i) _{i\in \N}$
 is a family of 
$S _i$-morphism 
$X _i \to Y _i$ 
making commutative the diagram
\begin{equation}
\notag
\xymatrix{
{ X _i} 
\ar[r] ^-{}
\ar[d] ^-{}
& 
{X _{i+1} } 
\ar[d] ^-{}
\\ 
{Y _i} 
\ar[r] ^-{}
& 
{Y _{i+1.} } 
}
\end{equation}
\end{dfn}

\begin{lem}
\label{int2fine-lem}
Let $X$ be a scheme. Let $M\to N$ be a local  (see Definition \cite[II.1.1.2]{Ogus-Logbook}) 
morphism of sheaves (for the étale topology)  of monoids over $X$. 
Suppose $M$ integral, $N$ fine (i.e. $N$ is integral and $N$ is coherent in the sense defined at the end of the remark
\cite[II.2.1.2]{Ogus-Logbook})
and $\overline{M}=\overline{N}$. 
Then $M$ is fine. 
\end{lem}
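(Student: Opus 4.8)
The plan is to reduce the statement to a question about coherence of $M$, since we already know $M$ is integral by hypothesis, and a monoid sheaf is fine exactly when it is integral and coherent. So the whole content is: given that $N$ is coherent and that $\overline{M} = \overline{N}$, with $M \to N$ local, show $M$ is coherent, i.e. \'etale locally on $X$ there is a fine monoid $P$ and a chart $P_X \to M$ (a morphism of monoid sheaves inducing an isomorphism $P_X \oplus_{\alpha^{-1}(\O_X^*)} \O_X^* \riso M$, or in the abstract monoid-sheaf setting simply a morphism $P_X \to M$ which is a chart in the sense of \cite[II.2.1]{Ogus-Logbook}).

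First I would work \'etale locally and exploit coherence of $N$: choose a fine monoid $P$ and a chart $\beta \colon P_X \to N$. Pulling back the exact sequence defining $\overline{N}$, shrinking $X$ if necessary, one can arrange that $P \to \overline{N}_{\overline{x}}$ is an isomorphism at a chosen geometric point (this is the standard ``good chart'' reduction, e.g.\ \cite[II.2.3]{Ogus-Logbook}). Since $\overline{M} = \overline{N}$ as subsheaves-of-the-quotient data, the composite $P_X \to N \to \overline{N} = \overline{M}$ is the sharp quotient map for a chart of $N$; the key point is that locality of $M \to N$ means $\overline{M} \to \overline{N}$ is an isomorphism and also that units are detected compatibly, so a chart for $\overline{N}$ should lift to a chart for $M$. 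Concretely, I would form the fiber product $P_X \times_{\overline{M}} M$ (using $P_X \to \overline{N} = \overline{M} \leftarrow M$) and argue this is, \'etale locally after a further shrink, of the form $P'_X$ for a fine monoid $P'$, giving the desired chart $P'_X \to M$. The finiteness/coherence of $P'$ comes from $P$ being fine together with $M/\O_X^* = \overline M = \overline N = N/\O_X^*$ being locally constant with fine stalks.

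Alternatively — and this is probably cleaner — I would use that $M \to N$ local with $\overline M = \overline N$ implies $M = M \times_{\overline M} \overline N$ and, because $N$ is the log structure attached to its own chart, one can realize $M$ \'etale locally as the pullback of a chart: pick the chart $\beta\colon P_X \to N$ with $P \riso \overline N_{\overline x}$, and set $\alpha\colon P_X \to M$ to be... this is exactly where one needs that the map $N \to \overline N$ has a local section through $M$, which follows since $\alpha^{-1}(\text{units of }M)$ and $\beta^{-1}(\text{units of }N)$ coincide as preimages of $0$ in the common sharp quotient. Then $\alpha$ is a chart for $M$ by the usual criterion (\cite[II.2.1.2.4]{Ogus-Logbook} or the analogous characterization used around \ref{ex-cl-imm}), so $M$ is coherent, hence fine.

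The main obstacle I anticipate is purely bookkeeping: making rigorous the passage ``a chart of $N$ descends to a chart of $M$'' when all one knows is locality and equality of sharp quotients. One has to be careful that $P_X \to M$ is well-defined (not just $P_X \to N$), which requires producing the lift of the structure map along $M \to N$; locality of $M \to N$ guarantees this on unit parts, and $\overline M = \overline N$ handles the non-unit part, but the gluing of these two pieces over an \'etale neighborhood needs the same style of diagram chase as in the proof of \ref{inductivelimites} (amalgamated sums of presheaves, sheafification, shrinking $X$). I would structure the argument to mirror that proof. Everything else — that integral $+$ coherent $=$ fine, stability of fineness under the constructions involved — is immediate from \cite[II.2.1]{Ogus-Logbook} and needs no real work.
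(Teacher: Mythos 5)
Your reduction to coherence of $M$ is the right frame, and the idea of exploiting a chart of $N$ together with locality and $\overline{M}=\overline{N}$ is in the same spirit as the paper's argument. But there is a genuine gap at the one point you flag yourself: producing the morphism $P_X \to M$. A chart $\beta\colon P_X\to N$ need not factor through $M\to N$ (locality plus $\overline{M}=\overline{N}$ do not make $M\to N$ surjective: a local lift of $\bar n\in\overline{N}=\overline{M}$ maps to $n$ only up to a unit of $N$ that need not come from $M^*$), and the justification you offer --- that $\alpha^{-1}(M^*)$ and $\beta^{-1}(N^*)$ coincide --- presupposes the existence of $\alpha$, so it is circular. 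Your fallback, the fiber product $P_X\times_{\overline{M}}M$, does not repair this: that sheaf is an extension involving all of $M^*$ (typically $\O_X^*$), so it is not étale-locally of the form $P'_X$ for a fine monoid $P'$, and extracting a section of it over $P_X$ is exactly the unproved lifting problem again (for a finitely generated but non-free $P$ the obstruction to splitting $M_{\overline{x}}\to\overline{M}_{\overline{x}}$ over $P$ lives in the relations of $P$ and is not automatically trivial).

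The paper avoids this entirely by building the chart candidate out of $M$ itself rather than lifting one from $N$. It chooses a \emph{markup}: a finitely generated \emph{free} abelian group $L$ with $L\to M^{\mathrm{gr}}_{\overline{x}}$ surjecting onto $\overline{M}^{\mathrm{gr}}_{\overline{x}}$ (freeness of $L$ is what makes this lift exist), and sets $P:=L\times_{M^{\mathrm{gr}}_{\overline{x}}}M_{\overline{x}}$, which comes equipped with a map to $M_{\overline{x}}$ by construction and is fine by \cite[I.2.1.9.6]{Ogus-Logbook} since, by exactness of $M_{\overline{x}}\to\overline{M}_{\overline{x}}$, it equals $L\times_{\overline{M}^{\mathrm{gr}}_{\overline{x}}}\overline{M}_{\overline{x}}$. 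One then spreads $P\to M_{\overline{x}}$ to an étale neighborhood, observes that the same $P$ equals $L\times_{N^{\mathrm{gr}}_{\overline{x}}}N_{\overline{x}}$ because $\overline{M}=\overline{N}$, so coherence of $N$ (via \cite[II.2.2.11]{Ogus-Logbook}) makes the composite $P_U\to u^*N$ a chart after shrinking; finally, the sharp localisation of $P_U\to u^*M$ is an isomorphism because this can be tested on sharp quotients stalkwise, and locality of $M\to N$ gives $\beta^{-1}_{\overline{y}}(M^*_{\overline{y}})=\beta^{-1}_{0,\overline{y}}(N^*_{\overline{y}})$ while $\overline{M}_{\overline{y}}=\overline{N}_{\overline{y}}$. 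If you want to keep your outline, you must replace the step "a chart of $N$ descends to a chart of $M$" by some such construction that manufactures the map into $M$ directly; as written, that step would fail.
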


\begin{proof}
Let us fix some notation. Let $\overline{x} $ be a geometric point of $X$.
Since $N$ is fine, 
using \cite[I.1.2.1 and II.1.1.11]{Ogus-Logbook}, we check that 
$\overline{N}  _{\overline{x}}$ is fine.
Since $\overline{M} 
=
\overline{N} $, 
we get that
$\overline{M}  _{\overline{x}}$ is fine.
Hence, there exist a free $\Z$-module of finite type 
$L$ endowed with a morphism 
$\alpha \colon L \to M _{\overline{x}} ^\mathrm{gr}$ 
such that 
the composition of $\alpha$ with 
the projection
$M _{\overline{x}} ^\mathrm{gr}
 \to 
\overline{M} _{\overline{x}} ^\mathrm{gr}$
is surjective
(following Ogus's terminology appearing in \cite[II.2.2.10]{Ogus-Logbook}, 
this means $\alpha \colon L \to M _{\overline{x}} ^\mathrm{gr}$ is a markup of $M _{\overline{x}}$).
We put 
$P := L \times _{M ^\mathrm{gr} _{\overline{x}}} M _{\overline{x}}$.
Since $M _{\overline{x}}$ is integral, then 
$M _{\overline{x}}
\to 
\overline{M} _{\overline{x}}$
is exact 
(see \cite[I.4.1.3.1]{Ogus-Logbook}).
Hence, we get the equality
$P:=L \times _{M ^\mathrm{gr} _{\overline{x}}} M _{\overline{x}}
=
L \times _{\overline{M} ^\mathrm{gr} _{\overline{x}}} \overline{M} _{\overline{x}}$.
From 
\cite[I.2.1.9.6]{Ogus-Logbook},
since $L$ and $\overline{M} _{\overline{x}}$ are fine and since $\overline{M} ^\mathrm{gr} _{\overline{x}}$ is integral, then 
$P=L \times _{\overline{M} ^\mathrm{gr} _{\overline{x}}} \overline{M} _{\overline{x}}$ is fine.

Let $P\to M _{\overline{x}}$ be the projection. 
Using \cite[II.2.2.5]{Ogus-Logbook}, there exist an étale neighborhood $u \colon U\to X$ of ${\overline{x}}$ 
and a morphism of monoids 
$P\to M (U)$ inducing $P \to M _{\overline{x}}$.
Let $\beta \colon P _U \to u ^* M$ be the corresponding morphism.
We get the factorization of $\beta$ of the form
$P _U \to P _U ^{\beta} \overset{\beta ^\mathrm{a}}{\longrightarrow}u ^* M$,
where $\beta ^\mathrm{a}$ is the sharp localisation of $\beta$.

We prove that, shrinking $U$ is necessary, 
the morphism $\beta ^\mathrm{a}$ is an isomorphism (and then $M$ is coherent).
Following \cite[I.4.1.2]{Ogus-Logbook}, 
for any geometric point $\overline{y}$ of $U$, 
since $\beta ^\mathrm{a} _{\overline{y}}$ is sharp and since 
$M _{\overline{y}}$ is quasi-integral, 
we check that the morphism
$\beta ^\mathrm{a} _{\overline{y}}
\colon 
(P _U ^{\beta} ) _{\overline{y}} \to M _{\overline{y}}$
is an isomorphism if and only if 
$\overline{\beta ^\mathrm{a}} _{\overline{y}}
\colon 
\colon 
\overline{(P _U ^{\beta} )} _{\overline{y}} 
\to \overline{M} _{\overline{y}}$ is an isomorphism
(recall $\overline{M} _{\overline{y}} =\overline{M _{\overline{y}} }$).
Using \cite[II.1.1.11.1]{Ogus-Logbook}, we check that the canonical morphism
$P / \beta _{\overline{y}} ^{-1} (M ^* _{\overline{y}})
\to
\overline{(P _U ^{\beta} )} _{\overline{y}} $
is an isomorphism.
Hence, $\beta ^\mathrm{a} _{\overline{y}}$ is an isomorphism if and only if the canonical morphism
$P / \beta _{\overline{y}} ^{-1} (M ^* _{\overline{y}})
\to \overline{M} _{\overline{y}}$
is an isomorphism.

Let $\beta _0$ be the composition of $\beta$ with 
$u ^* M \to u ^* N$.
We get the factorization of $\beta _0$ of the form
$P _U \to P _U ^{\beta _0} \overset{\beta _0 ^\mathrm{a}}{\longrightarrow}u ^* N$,
where $\beta _0 ^\mathrm{a}$ is the sharp localisation of $\beta _0$.
Since
$P:=L \times _{M ^\mathrm{gr} _{\overline{x}}} M _{\overline{x}}
=
L \times _{\overline{M} ^\mathrm{gr} _{\overline{x}}} \overline{M} _{\overline{x}}
=
L \times _{\overline{N} ^\mathrm{gr} _{\overline{x}}} \overline{N} _{{\overline{x}}}
=
L \times _{N _{{\overline{x}}} ^\mathrm{gr}} N _{{\overline{x}}}$,
since $N$ is fine, 
following \cite[II.2.2.11]{Ogus-Logbook}
(which is checked similarly than \cite[2.10]{Kato-logFontaine-Illusie}), 
replacing $U$ if necessary, 
we can suppose that $\beta ^\mathrm{a} _{0}$ is an isomorphism.
For any geometric point $\overline{y}$ of $U$, 
this yields that the morphism
$\beta ^\mathrm{a} _{0,\overline{y}}
\colon 
(P _U ^{\beta _0} ) _{\overline{y}} \to N _{\overline{y}}$
is an isomorphism.
Hence so is 
$\overline{\beta _0 ^\mathrm{a}} _{\overline{y}}
\colon 
\overline{(P _U ^{\beta _0} )} _{\overline{y}} 
\to 
\overline{N} _{\overline{y}}$, i.e., 
$P / \beta _{0,\overline{y}} ^{-1} (N ^* _{\overline{y}})
\to \overline{N} _{\overline{y}}$
is an isomorphism.

Since the morphism 
$M 
\to 
N$
is local, 
the induced morphism 
$M ^* \to M \times _{N} N ^*$
is an isomorphism. 
Hence, we get 
$M ^* _{\overline{y}}\to M _{\overline{y}} \times _{N _{\overline{y}} } N ^* _{\overline{y}} $, i.e. 
the morphism
$M _{\overline{y}}
\to 
N _{\overline{y}}$
is local.
Hence, we get 
$\beta _{0,\overline{y}} ^{-1} (N ^*_{\overline{y}})
=
\beta _{\overline{y}} ^{-1} (M ^* _{\overline{y}})$. 
Recalling that 
$\overline{M} _{\overline{y}}
=
\overline{N} _{\overline{y}}$, 
this implies that 
$P / \beta _{\overline{y}} ^{-1} (M ^* _{\overline{y}})
\to \overline{M} _{\overline{y}}$
is an isomorphism.
Hence, we are done.

\end{proof}

\begin{prop}
\label{inductivelimites-lem}
Let 
$(X _i) _{i\in \N}$
be a strict inductive systems of noetherian fine log schemes over $(S _i) _{i\in \N}$.
Then 
$\underrightarrow{\lim} _i \, X _i $ 
is a fine $\S$-log formal scheme.
Moreover, the canonical morphism
$X _i \to (\underrightarrow{\lim} _i \, X _i ) \times _{\S} S _i$
is an isomorphism of fine log schemes. 
\end{prop}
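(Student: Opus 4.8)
The plan is to construct $\underrightarrow{\lim} _i X _i$ explicitly: take the inductive limit of the underlying schemes, equip it with the \emph{inverse} limit of the log structures $M _{X _i}$, and then check that this log structure is fine and that base change to $S _i$ recovers $X _i$. First I would set $\underline{\X} := \underrightarrow{\lim} _i \underline{X} _i$ in the category of formal schemes. By \cite[I.10.6.1]{EGAI} (used as in the proof of \ref{inductivelimites}) this is a $p$-adic formal $\V$-scheme with $\underline{X} _i \riso \underline{\X} \times _{\Spf \V} \Spec (\V / \pi ^{i+1}\V)$; since $\underline{X} _0$ is noetherian and equals the special fibre, $\underline{\X}$ is noetherian, and $\underline{\X} \to \Spf \V$ exists because each $\underline{X} _i$ lies over $\Spec (\V / \pi ^{i+1}\V)$ compatibly. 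All the $\underline{X} _i$ and $\underline{\X}$ have the same underlying topological space and the same small étale site, namely that of $\underline{X} _0$.

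Next I would put a log structure on $\underline{\X}$. Since $X _i \hookrightarrow X _{i+1}$ is an exact closed immersion, the pullback of $M _{X _{i+1}}$ identifies with $M _{X _i}$; on the common étale site the structural map is the canonical epimorphism $M _{X _{i+1}} \to M _{X _{i+1}} \oplus _{\O ^{*} _{X _{i+1}}} \O ^{*} _{X _i} = M _{X _i}$, which is surjective because $\O ^{*} _{X _{i+1}} \to \O ^{*} _{X _i}$ is (its kernel $\pi ^{i+1}\O _{X _{i+1}}$ being nilpotent). Iterating these, I would set $M _{\X} := \underleftarrow{\lim} _i M _{X _i}$, a sheaf of monoids on the common site, with the map $\alpha _\X \colon M _\X \to \O _\X = \underleftarrow{\lim} _i \O _{X _i}$ induced by the $\alpha _{X _i}$. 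Since units lift $p$-adically one has $\O _\X ^{*} = \underleftarrow{\lim} _i \O _{X _i} ^{*}$, whence $\alpha _\X ^{-1}(\O _\X ^{*}) = \underleftarrow{\lim} _i \alpha _{X _i} ^{-1}(\O _{X _i} ^{*}) = \underleftarrow{\lim} _i \O _{X _i} ^{*} = \O _\X ^{*}$; thus $\alpha _\X$ is a log structure on $\underline{\X}$, and $M _\X$ is integral, being an inverse limit of integral monoids.

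The key step is the fineness of $M _\X$, for which I would invoke Lemma \ref{int2fine-lem} applied to the projection $M _\X \to M _{X _0}$: the source is integral, the target is fine, and the morphism is local (if a local section $(m _i) _i$ of $M _\X$ has $m _0$ a unit, then $\alpha _{X _0}(m _0) \in \O ^{*} _{X _0}$, hence $\alpha _{X _i}(m _i) \in \O ^{*} _{X _i}$ for all $i$ because $\O _{X _i} \to \O _{X _0}$ has nilpotent kernel, so $(m _i) _i \in \O ^{*} _\X = M ^{*} _\X$). It remains to check $\overline{M} _\X = \overline{M} _{X _0}$: an exact closed immersion preserves the characteristic monoid (\cite[1.4.1]{Kato-logFontaine-Illusie}, as used in \ref{Kerexactclosedimmer} and \ref{pistrict}), so $\overline{M} _{X _i} = \overline{M} _{X _0}$ compatibly in $i$, and combining this with the surjectivity of the transition maps $M _{X _{i+1}} \to M _{X _i}$ one gets, by a Mittag--Leffler argument parallel to the proof of \ref{inductivelimites}, the isomorphism $M _\X / \O ^{*} _\X \riso \underleftarrow{\lim} _i \overline{M} _{X _i} = \overline{M} _{X _0}$. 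Lemma \ref{int2fine-lem} then yields that $M _\X$ is fine, so $\X := (\underline{\X}, M _\X)$ is a fine $\S$-log formal scheme.

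Finally I would check that the canonical morphism $X _i \to \X \times _{\S} S _i$ is an isomorphism. Its underlying morphism of schemes is the identity of $\underline{X} _i$ by the first step, so it suffices to see that the canonical morphism $X _i \to \X$ (given on the common site by the projection $M _\X \to M _{X _i}$) is strict, i.e. that $M _\X \oplus _{\O ^{*} _\X} \O ^{*} _{X _i} \to M _{X _i}$ is an isomorphism. Surjectivity follows by lifting a local section of $M _{X _i}$ step by step along the surjections $M _{X _{j+1}} \to M _{X _j}$ for $j \geq i$. For injectivity, if $(m _j) _j$ and $(m' _j) _j$ project to the same section of $M _{X _i}$, then for $j \geq i$ the element $m _j (m' _j) ^{-1}$ lies in $\ker (M ^{\mathrm{gr}} _{X _j} \to M ^{\mathrm{gr}} _{X _i}) = \ker (\O ^{*} _{X _j} \to \O ^{*} _{X _i}) = 1 + \pi ^{i+1}\O _{X _j}$ by \ref{Kerexactclosedimmer}, so $(m _j) _j$ and $(m' _j) _j$ differ by a unit in $1 + \pi ^{i+1}\O _\X$ mapping to $1$ in $\O ^{*} _{X _i}$, hence they define the same class in the pushout. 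The universal property identifying $\X$ with $\underrightarrow{\lim} _i X _i$ in the category of fine $\S$-log formal schemes then follows formally from that of $\underline{\X}$ together with the inverse limit description of $M _\X$. I expect the main obstacle to be exactly the fineness of $M _\X$ and the identification $\overline{M} _\X = \overline{M} _{X _0}$ — this is precisely the role of Lemma \ref{int2fine-lem} — while the surjectivity and Mittag--Leffler statements about the sheaves of monoids run parallel to, and can be extracted from, the proof of Proposition \ref{inductivelimites}.
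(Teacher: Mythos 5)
Your proposal is correct and follows essentially the same route as the paper: construct $\underline{\X}$ as the limit of the underlying schemes, endow it with $M_{\X}=\underleftarrow{\lim}_i M_{X_i}$, verify it is an integral log structure, deduce fineness from Lemma \ref{int2fine-lem} applied to the local morphism $M_{\X}\to M_{X_0}$ after identifying $\overline{M}_{\X}$ with $\overline{M}_{X_0}$ via surjectivity of the transition maps and a Mittag--Leffler argument on affines, and finally check strictness of $X_i\to\X$. The only cosmetic difference is in the last step, where the paper reduces strictness to an isomorphism of characteristic monoids via \cite[I.4.1.2]{Ogus-Logbook} while you verify the pushout isomorphism by hand using Lemma \ref{Kerexactclosedimmer}; both work.
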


\begin{proof}
We already know that 
$\underline{\X}:= \underrightarrow{\lim} _i \, 
\underline{X} _i $ 
is a formal $\V$-scheme such that 
$\underline{X} _i \riso \underline{\X} \times _{\underline{\S} } \underline{S} _i$.
We have 
$\underrightarrow{\lim} _i \, X _i = (\underrightarrow{\lim} _i \, 
\underline{X} _i , \underleftarrow{\lim} _i \, 
M _{X _i} )$. 
Put $M :=
\underleftarrow{\lim} _i \, 
M _{X _i} $,
$\X:= \underrightarrow{\lim} _i \, X _i $. 
It remains to check that $M$ is fine log structure of $\underline{\X}$. 
This is checked in the step I). 

I )1) The canonical map 
$\theta \colon M \to \O _{\X}$,
canonically induced by the structural morphisms
$\theta _i \colon M _{X _i} \to \O _{X _i}$, is a log structure. 
Indeed, we compute $M^* (\U)= 
(\underleftarrow{\lim} _i \, 
M _{X _i} (U _i) )  ^*
=
\underleftarrow{\lim} _i \, 
(M _{X _i} (U _i) )  ^*
=
\underleftarrow{\lim} _i \, 
M ^* _{X _i} (U _i) 
=
\underleftarrow{\lim} _i \, 
\O ^* _{X _i} (U _i) 
=
\O ^* _{\X} (\U)$,
for any etale morphism $\U \to \X$.
Hence, $M ^* = \O ^* _{\X}$.
It remains to check that the 
morphism
$M ^* \to M \times _{\O _\X} \O _\X ^*$ is an isomorphism,
i.e. $M ^* (\U) \to M (\U) \times _{\O _\X (\U) } \O _\X (\U) ^*$ is an isomorphism.
Since
$M (\U) \times _{\O _\X (\U) } \O _\X (\U) ^* \subset M (\U)$, the injectivity is obvious. 
Let us check the surjectivity. 
Let $(a _i ) _{i\in\N} \in \underleftarrow{\lim} _i \, 
M _{X _i} (U _i)$ such that 
$(\theta _i (a _i ))_{i\in\N} \in \underleftarrow{\lim} _i \, 
\O ^* _{X _i} (U _i) $.
Since $M _{X _i}$ is a log structure of $X _i$, we get 
$a _i \in \O ^* _{X _i} (U _i) $, hence 
$(a _i ) _{i\in\N} \in M ^* (\U)$.

2) Let $\U \to \X$ be an etale morphism. Suppose $\U$ affine. 
We prove in this step that the canonical morphisms
$M ^\mathrm{gr} _{X _{i+1}} (U _{i+1})  
/
\O ^* _{X _{i+1}} (U _{i+1})
\to 
M ^\mathrm{gr} _{X _i} (U _i)  
/
\O ^* _{X _i} (U _i)$ 
and
$M _{X _{i+1}} (U _{i+1})  
/
\O ^* _{X _{i+1}} (U _{i+1})
\to 
M _{X _i} (U _i)  
/
\O ^* _{X _i} (U _i)$ 
are isomorphisms.
First, we remark that since $(\pi ^{i} \O _{X _{i+1}}) ^2=0$, then 
we have the canonical isomorphism of groups
$(1 + \pi ^{i} \O _{X _{i+1}}, \times) \riso
(\pi ^{i} \O _{X _{i+1}},+)$. 
Since $U _{i+1}$ is affine and $\pi ^{i} \O _{X _{i+1}}$ is quasi-coherent, this yields
$H ^{1} ( U _{i+1}, 1 + \pi ^{i} \O _{X _{i+1}})=0$.
Hence, we get the commutative diagram (we use in the proof multiplicative notation)
\begin{equation}
\label{inductivelimites-lem-diag1}
\xymatrix{
{}
&
{1}
&
{1}
\\
{1} 
\ar[r] ^-{}
& 
\ar[r] ^-{}
{\O ^* _{X _i} (U _i)}
\ar[u] ^-{} 
& 
{M ^\mathrm{gr} _{X _i} (U _i) } 
\ar[u] ^-{}
\ar[r] ^-{}
& 
{M ^\mathrm{gr} _{X _i} (U _i)  
/
\O ^* _{X _i} (U _i)} 
\ar[r] ^-{}
& 
{1} 
\\ 
{1} 
\ar[r] ^-{}
& 
\ar[r] ^-{}
{\O ^* _{X _{i+1}} (U _{i+1})} 
\ar[u] ^-{}
& 
{M ^\mathrm{gr} _{X _{i+1}} (U _{i+1}) } 
\ar[r] ^-{}
\ar[u] ^-{}
& 
{M ^\mathrm{gr} _{X _{i+1}} (U _{i+1})  
/
\O ^* _{X _{i+1}} (U _{i+1})} 
\ar[r] ^-{}
\ar[u] ^-{}
& 
{1} 
\\
{}
&
{1 + \pi ^{i} \O _{X _{i+1}} (U _{i+1})}
\ar@{=}[r] ^-{}
\ar[u] ^-{}
&
{1 + \pi ^{i} \O _{X _{i+1}} (U _{i+1})}
\ar[u] ^-{}
\\
{}
&
{1}
\ar[u] ^-{}
&
{1}
\ar[u] ^-{}
}
\end{equation}
whose two rows and two columns are exact. 
Hence, the morphism
$M ^\mathrm{gr} _{X _{i+1}} (U _{i+1})  
/
\O ^* _{X _{i+1}} (U _{i+1})
\to 
M ^\mathrm{gr} _{X _i} (U _i)  
/
\O ^* _{X _i} (U _i)$ 
is an isomorphism.
Since 
$M _{X _{i+1}} \to M _{X _{i}}$ is exact 
(see \cite[IV.2.1.2.4]{Ogus-Logbook}),
we have the surjective projection 
$M _{X _{i+1}} (U _{i+1})
=
M ^\mathrm{gr} _{X _{i+1}} (U _{i+1})
\times _{M ^\mathrm{gr} _{X _i} (U _i)}
M _{X _i} (U _i)
\to 
M _{X _i} (U _i)$.
Hence, 
$M _{X _{i+1}} (U _{i+1})  
/
\O ^* _{X _{i+1}} (U _{i+1})
\to 
M _{X _i} (U _i)  
/
\O ^* _{X _i} (U _i)$ 
is surjective. 
Since $M  _{X _i} (U _i)  $ and 
$M  _{X _{i+1}} (U _{i+1})  $
is integral, from 
\cite[I.1.2.1]{Ogus-Logbook}, the horizontal morphisms of the commutative diagram
\begin{equation}
\notag
\xymatrix{
{M  _{X _{i+1}} (U _{i+1})  
/
\O ^* _{X _{i+1}} (U _{i+1})}
\ar@{^{(}->}[r] ^-{} 
\ar[d] ^-{}
& 
{M ^\mathrm{gr} _{X _{i+1}} (U _{i+1})  
/
\O ^* _{X _{i+1}} (U _{i+1})} 
\ar[d] ^-{\sim}
\\ 
{M  _{X _i} (U _i)  
/
\O ^* _{X _i} (U _i)} 
\ar@{^{(}->}[r] ^-{} 
& 
{M ^\mathrm{gr} _{X _i} (U _i)  
/
\O ^* _{X _i} (U _i)} 
}
\end{equation}
are injective. 
Hence, 
$M _{X _{i+1}} (U _{i+1})  
/
\O ^* _{X _{i+1}} (U _{i+1})
\to 
M _{X _i} (U _i)  
/
\O ^* _{X _i} (U _i)$ 
is an isomorphism.

3) Using Mittag-Leffler condition, we get  the exact sequence 
$$1 \to 
\underleftarrow{\lim} _i \, 
\O ^* _{X _i} (U _i) 
\to 
\underleftarrow{\lim} _i \, 
M ^\mathrm{gr}_{X _i} (U _i) 
\to 
\underleftarrow{\lim} _i \, 
M _{X _i} ^\mathrm{gr} (U _i) /\O ^* _{X _i} (U _i) 
\to 1.$$
Since 
$\O _{\X} ^* (\U) 
= 
\underleftarrow{\lim} _i \, 
\O ^* _{X _i} (U _i) $,
using the step 2) we obtain 
$(\underleftarrow{\lim} _i \, 
M ^\mathrm{gr}_{X _i} (U _i)  )/\O _{\X} ^* (\U) 
\riso
M ^\mathrm{gr} _{X _0} (U _0) /\O ^* _{X _0} (U _0) $.
By considering the commutative diagram
\begin{equation}
\notag
\xymatrix{
{(\underleftarrow{\lim} _i \, 
M ^\mathrm{gr}_{X _i} (U _i)  )/\O _{\X} ^* (\U) }
\ar[r] ^-{\sim} 
& 
{M ^\mathrm{gr} _{X _0} (U _0) /\O ^* _{X _0} (U _0)  } 
\\ 
{M (\U)/\O _{\X} ^* (\U)
= 
(\underleftarrow{\lim} _i \, 
M  _{X _i} (U _i) )/\O _{\X} ^* (\U) } 
\ar@{^{(}->}[u] ^-{}
\ar[r] ^-{}
& 
{M_{X _0} (U _0) /\O ^* _{X _0} (U _0)  } 
\ar@{^{(}->}[u] ^-{}
}
\end{equation}
we get the injectivity of the map
$M (\U)/\O _{\X} ^* (\U) 
\to 
M_{X _0} (U _0) /\O ^* _{X _0} (U _0)  $.
Since 
the maps 
$M _{X _{i+1}} (U _{i+1})
\to 
M _{X _i} (U _i)$ are surjective (this is checked in the step 2), 
we get that 
$M (\U)\to M_{X _0} (U _0)$ is surjective and then 
so is $M (\U)/\O _{\X} ^* (\U) 
\to 
M_{X _0} (U _0) /\O ^* _{X _0} (U _0)  $.
Hence, the canonical morphism
$M (\U) /\O _{\X} ^* (\U) 
\to 
M _{X _0} (U _0) /\O ^* _{X _0} (U _0) $
is an isomorphism.
This yields
$\overline{M} = M / M ^* =
M _{X _0} / M ^* _{X _0}
=
\overline{M_{X _0}} $.

4) We compute that
the induced morphism 
$M ^* \to M \times _{M _{X _0}} M _{X _0} ^*$
is an isomorphism, 
i.e. that the morphism 
$M 
\to 
M _{X _0}$
is local. 
Hence, since 
$\overline{M} =
\overline{M_{X _0}} $ (see  step 3), 
since $M _{X _0}$ is fine and $M$ is integral, 
using Lemma \ref{int2fine-lem}, 
we get that $M$ is fine.

II) In this last step, we establish 
that $X _i \to \X \times _{\S} S _i$
is an isomorphism of fine log schemes.
Let $u _i \colon X _i \to \X$ be the canonical morphism. 
We already know that $\underline{X} _i \riso \underline{\X} \times _{\underline{\S} } \underline{S} _i$.
It remains to check that 
the morphism 
$u ^* _i M \to M _{X _i}$ 
is an isomorphism.
Since this is a morphism of fine log structures, then using 
\cite[I.4.1.2]{Ogus-Logbook},
this is equivalent to 
check the isomorphism 
$\overline{u ^* _i M} \riso \overline{M _{X _i}}$.
Following \cite[1.4.1]{Kato-logFontaine-Illusie},
$\overline{u ^* _i M} = \overline{M}$.
From the step I).3), we have $\overline{M}\riso\overline{M _{X _i}}$ and we are done.
\end{proof}

\begin{thm}
\label{inductivelimitesthm}
The functors $\X \mapsto (X _i) _{i\in\N}$
and 
$(X _i) _{i\in\N} \mapsto  \underrightarrow{\lim} _i \, X _i $ 
are quasi-inverse
equivalences of categories between the category of 
fine $\S$-log formal schemes to that of strict inductive systems of noetherian fine log schemes over $(S _i) _{i\in \N}$.
\end{thm}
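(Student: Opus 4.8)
The plan is to assemble the equivalence from the two halves already established: Proposition \ref{inductivelimites} shows that a fine $\S$-log formal scheme $\X$ is recovered as $\underrightarrow{\lim} _i \, X _i$ (with the $X_i$ forming a strict inductive system, since the $X_i \hookrightarrow X_{i+1}$ are exact closed immersions with $X_i \riso X_{i+1}\times_{S_{i+1}}S_i$ by construction of the $X_i$), and Proposition \ref{inductivelimites-lem} shows that for any strict inductive system $(X_i)_{i\in\N}$ the colimit $\underrightarrow{\lim} _i \, X_i$ is a fine $\S$-log formal scheme with $X_i \riso (\underrightarrow{\lim} _j \, X_j)\times_{\S}S_i$. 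So on objects the two functors are mutually quasi-inverse up to the canonical isomorphisms just quoted. The remaining content is to check that these assignments are functorial and that the unit and counit natural transformations are isomorphisms.

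First I would make the functors explicit. For $\X \mapsto (X_i)_{i\in\N}$: given a morphism $f\colon \X \to \Y$ of fine $\S$-log formal schemes, the induced $f_i\colon X_i \to Y_i$ (defined in the preamble to this section, $\underline{X}_i = \underline{\X}\times_{\Spf\V}\Spec(\V/\pi^{i+1}\V)$ with strict structure morphism) assemble into a morphism of strict inductive systems, using that base change along $S_i \hookrightarrow S_{i+1}$ is compatible with $f$; strictness of the transition immersions is inherited because $X_i \to X_{i+1}\times_{S_{i+1}}S_i$ is an isomorphism on underlying schemes (by \cite[I.10.6.1]{EGAI}) and strict on log structures by construction. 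For $(X_i)_i \mapsto \underrightarrow{\lim} _i \, X_i$: a morphism of systems is a compatible family $X_i \to Y_i$, and one takes the colimit in the category of fine $\S$-log formal schemes, which exists and has the expected underlying formal scheme and log structure by the proof of \ref{inductivelimites-lem}; the universal property of $\underrightarrow{\lim}$ makes this assignment functorial automatically. Both functoriality checks are routine once the underlying-scheme statements of \cite[I.10.6.1]{EGAI} and the limit description of the log structure from \ref{inductivelimites} and \ref{inductivelimites-lem} are in hand.

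Next I would produce the two natural isomorphisms. The counit $\underrightarrow{\lim} _i \, X_i \riso \X$, for $\X$ a fine $\S$-log formal scheme, is exactly the content of Proposition \ref{inductivelimites} (the underlying-scheme part is \cite[I.10.6.1]{EGAI}, the log part is the isomorphism $M_\X \riso \underleftarrow{\lim}_i M_{X_i}$ proved there); naturality in $\X$ follows since the isomorphism is the canonical one induced by the structure morphisms $X_i \to \X$. The unit $(X_i)_i \riso \bigl((\underrightarrow{\lim} _j \, X_j)\times_{\S}S_i\bigr)_i$ is the isomorphism $X_i \riso (\underrightarrow{\lim} _j \, X_j)\times_{\S}S_i$ furnished by Proposition \ref{inductivelimites-lem}; again naturality is immediate because it is the canonical comparison morphism. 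One should also remark that the $X_i$ are automatically noetherian in the object $\X \mapsto (X_i)_i$ direction, since $\underline{\X}$ is noetherian in our conventions and the $\underline{X}_i$ are its reductions, so the target category is correctly identified.

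The main obstacle here is not in this theorem itself but has already been absorbed into Propositions \ref{inductivelimites} and \ref{inductivelimites-lem}: the delicate point is the interplay of the inverse limit $\underleftarrow{\lim}_i M_{X_i}$ of monoid sheaves with the fine (integral plus coherent) condition, which requires the vanishing $H^1(U_{i+1},1+\pi^i\O_{X_{i+1}})=0$ on affines, the Mittag-Leffler argument giving the exact limit sequence, and Lemma \ref{int2fine-lem} to pass from ``integral with $\overline{M}$ fine'' to ``fine''. Granting those, the proof of Theorem \ref{inductivelimitesthm} is a formal bookkeeping exercise: define the two functors on morphisms, observe the unit and counit are the canonical isomorphisms supplied by the two preceding propositions, and check they are natural — all of which is straightforward diagram-chasing with no further geometric input.
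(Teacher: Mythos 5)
Your proposal is correct and takes essentially the same route as the paper: the paper's proof of Theorem \ref{inductivelimitesthm} consists precisely of the observation that it follows from Propositions \ref{inductivelimites} and \ref{inductivelimites-lem}, with the functoriality and unit/counit checks left implicit. You have simply made explicit the routine bookkeeping that the paper omits.
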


\begin{proof}
This is a consequence of Propositions \ref{inductivelimites} and \ref{inductivelimites-lem}.
\end{proof}

\begin{lem}
\label{lem-stri-formal}
Let $f \colon \X \to \Y$ be a morphism of fine $\S$-log formal schemes.
Then $f$ is strict if and only if, for any $i\in \N$, 
$f _i$ is strict.
\end{lem}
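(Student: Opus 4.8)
The plan is to treat the two implications separately, reducing both to the fact that passing from $\X$ to its special fibre $X _0$ does not change the reduced log structure $\overline{M} := M / M ^{*}$.

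First I would dispose of the direction ``$f$ strict $\Rightarrow$ $f _i$ strict''. Write $g _{\X,i} \colon X _i \to \X$ and $g _{\Y,i} \colon Y _i \to \Y$ for the canonical morphisms; they are strict by the very definition of $X _i$ and $Y _i$, and they satisfy $f \circ g _{\X,i} = g _{\Y,i} \circ f _i$. Using the pseudo-functoriality of the pullback of log structures together with the isomorphisms $g _{\X,i} ^{*} M _\X \cong M _{X _i}$, $g _{\Y,i} ^{*} M _\Y \cong M _{Y _i}$ and $f ^{*} M _\Y \cong M _\X$, one obtains $f _i ^{*} M _{Y _i} \cong f _i ^{*} g _{\Y,i} ^{*} M _\Y \cong g _{\X,i} ^{*} f ^{*} M _\Y \cong g _{\X,i} ^{*} M _\X \cong M _{X _i}$, so $f _i$ is strict. (Alternatively, $f _i$ is the base change of $f$ along $S _i \to \S$, and strictness is stable under base change.)

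For the converse, assume $f _i$ is strict for every $i$; in fact it suffices to use $f _0$. Since $M _\Y$ is fine, the pulled-back log structure $f ^{*} M _\Y$ is fine as well, so by \cite[I.4.1.2]{Ogus-Logbook} the canonical morphism $f ^{*} M _\Y \to M _\X$ is an isomorphism if and only if $\overline{f ^{*} M _\Y} \to \overline{M _\X}$ is, and by \cite[1.4.1]{Kato-logFontaine-Illusie} the source equals $f ^{-1} \overline{M _\Y}$. Now the special fibre morphisms $g _\X \colon X _0 \to \X$ and $g _\Y \colon Y _0 \to \Y$ are strict and induce equivalences of \'etale topoi; applying \cite[1.4.1]{Kato-logFontaine-Illusie} to them identifies $\overline{M _\X}$ with $\overline{M _{X _0}}$ and $\overline{M _\Y}$ with $\overline{M _{Y _0}}$, and under these equivalences $f$ is identified with $f _0$. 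Hence $\overline{f ^{*} M _\Y} \to \overline{M _\X}$ is identified with $f _0 ^{-1} \overline{M _{Y _0}} = \overline{f _0 ^{*} M _{Y _0}} \to \overline{M _{X _0}}$, which is an isomorphism exactly because $f _0$ is strict; therefore $f$ is strict.

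The only delicate point is bookkeeping — that the pullback of a fine log structure is fine, that $\overline{M _\X}$ depends only on the special fibre, and that the morphism of \'etale topoi attached to $f$ is the one attached to $f _0$ — all of which are standard, so no new computation is required and I do not anticipate a genuine difficulty.
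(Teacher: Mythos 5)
Your proof is correct, but the converse direction takes a genuinely different route from the paper's. The paper introduces the auxiliary fine $\S$-log formal scheme $\ZZ=(\underline{\X},f^{*}M_{\Y})$, observes that strictness of every $f_i$ forces $Z_i=X_i$ as fine log schemes, and then invokes Proposition \ref{inductivelimites} (the inductive-limit description $\X=\underrightarrow{\lim}_i X_i$) to conclude $\ZZ=\X$; this genuinely uses the hypothesis for all $i$. You instead reduce the isomorphy of $f^{*}M_{\Y}\to M_{\X}$ to an isomorphy of the sharpened sheaves $\overline{M}=M/M^{*}$ via \cite[I.4.1.2]{Ogus-Logbook} (exactly as the paper does inside the proof of \ref{inductivelimites-lem}, part II), and then use that the \'etale topos of $\X$ is that of its special fibre together with strictness of $X_0\to\X$ and $Y_0\to\Y$ and \cite[1.4.1]{Kato-logFontaine-Illusie} to identify $\overline{f^{*}M_{\Y}}\to\overline{M_{\X}}$ with $\overline{f_0^{*}M_{Y_0}}\to\overline{M_{X_0}}$. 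This buys a slightly stronger statement (strictness of $f_0$ alone suffices) and avoids the limit formalism, at the cost of the bookkeeping you mention; both arguments need the same preliminary fact that $f^{*}M_{\Y}$ is again fine (integrality of the associated log structure of an integral pre-log structure plus pullback of charts), which you should state is being used when you invoke \cite[I.4.1.2]{Ogus-Logbook}. The forward direction is essentially the paper's (base change along $S_i\hookrightarrow\S$).
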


\begin{proof}
If $f$ is strict then $f _i$, the base change of $f$ by $S _i \hookrightarrow \S$ is strict. 
Conversely, suppose that for any $i\in \N$, 
$f _i$ is strict.
Let $\ZZ$ be the fine $\S$-log formal scheme whose underlying fine formal $\S$-scheme is $\underline{\X}$ and whose log structure is $f ^{*} (M _\Y)$. Then $Z _i \to Y _i$ is strict and $\underline{Z _i} = \underline{X _i}$.
Hence, $Z _i= X _i$.
Using \ref{inductivelimites}, this yields that $\X = \ZZ$, i.e. $f$ is strict. 
\end{proof}

\subsection{Around log etaleness}
\begin{dfn}
\label{dfn-petaleformal}
Let $f\colon \X \to \Y$ be a morphism of fine $\S$-log formal schemes.
We say that $f$ is ``log étale'' (resp. ``log smooth, 
resp. ``formally log étale'', 
resp. ``log $p$-étale'', resp. ``formally log étale of level $m$'',
resp. ``log $p$-étale of level $m$'') 
if
for any integer $i\in \N$ the morphism
$f _i$ is
log étale (resp. log smooth, 
resp. fine formally log étale, 
resp. log $p$-étale, resp. formally log étale of level $m$,
resp. log $p$-étale of level $m$).
\end{dfn}

\begin{rem}
\label{comp-Shiho-formlogétale}
\begin{itemize}
\item We remark that our definition of log étaleness
was named by Shiho formal log étaleness (see \cite[2.2.2]{Shiho-log-isocI}).
We hope there will be no confusion.
\item It is also possible define some ``fine saturated'' definition similar to 
\ref{dfn-petaleformal} but we leave it to the interested reader. 
Since we only consider the ``fine'' case,  we have removed the word ``fine'' in the terminology of \ref{dfn-petaleformal}. 
We might also consider the notion of ``fine log relatively perfect'' morphism of fine $\S$-log formal schemes, but it seems useless for us. 
\end{itemize}

\end{rem}

\begin{empt}
The following diagram summarizes the relations between our definitions:
\begin{equation}
\label{linksbis}
\xymatrix{
{\text{log étale}}
\ar@{=>}[r] ^-{\ref{let-lpet}}
&
{\text{log $p$-étale}}
\ar@{=>}[d] ^-{\ref{logp-etal=logpetaleanym}}
\ar@{=}[r] ^-{\ref{logp-etal=logpetaleanym}}
&
{\text{$\forall m$, log $p$-étale of level $m$}}
\\
&
{\text{log $p$-étale of level $m$}}
\ar@{=>}[r] ^-{\ref{rem-etaleisnice}}
&
{\text{formally log étale of level $m$}}
\ar@{=>}[r] ^-{\ref{logp-etal=logpetaleanym}}
&
{\text{formally log étale}.}
}
\end{equation}
\end{empt}

\begin{prop}
Let $\Y$ be a  fine $\S$-log formal schemes.
Let $f _0\colon X _0 \to Y _0$ be a log smooth morphism of fine log $S _0$-schemes
such that $\underline{X _0}$ is affine. 
Then there exists a log smooth morphism of fine $\S$-log formal schemes of the form
$f\colon \X \to \Y$ whose reduction modulo $\pi$ is $f _0$.
We say that such morphism $f$ is a log smooth lifting of $f _0$. 
\end{prop}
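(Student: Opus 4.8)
The statement is the formal-scheme analogue of the classical infinitesimal lifting criterion: a log smooth morphism over the residue field, with affine source, extends to a compatible system of log smooth thickenings, and hence (by Theorem \ref{inductivelimitesthm}) to a log smooth morphism of fine $\S$-log formal schemes. So the plan is to construct, inductively on $i$, log smooth fine $S_i$-log schemes $X_i$ together with exact closed $S_i$-immersions $X_i \hookrightarrow X_{i+1}$ realizing a strict inductive system over $(S_i)_{i\in\N}$, each equipped with a log smooth $S_i$-morphism $f_i\colon X_i\to Y_i$ reducing to the given $f_0$ modulo $\pi$, and then set $f := \underrightarrow{\lim}_i f_i \colon \X \to \Y$ and invoke \ref{inductivelimitesthm} together with \ref{lem-stri-formal} to see that $f$ is a morphism of fine $\S$-log formal schemes; log smoothness of $f$ is then exactly \ref{dfn-petaleformal} since each $f_i$ is log smooth by construction.

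\textbf{The inductive step.} Suppose $f_i\colon X_i\to Y_i$ has been built. The passage from $Y_i$ to $Y_{i+1}$ is given by the exact closed $S_i$-immersion $Y_i\hookrightarrow Y_{i+1}$, whose ideal $\I$ satisfies $\I^{2}=0$ (it is generated by $\pi^{i+1}$, and $\pi^{2(i+1)}=0$ in $\O_{Y_{i+1}}$ up to the usual d\'evissage reducing to the square-zero case as in \ref{rem-immersion}). One wants to lift $X_i$, as a log scheme log smooth over $Y_{i+1}$ and reducing to $X_i$ over $Y_i$, i.e. to fill in
\begin{equation}
\notag
\xymatrix{
{X _i}
\ar[r] ^-{f _i}
\ar@{^{(}->}[d]
&
{Y _i}
\ar@{^{(}->}[d]
\\
{X _{i+1}}
\ar@{.>}[r] ^-{f _{i+1}}
&
{Y _{i+1}}
}
\end{equation}
with the left vertical a log thickening. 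Since $\underline{X_i}$ is affine (this is where affineness of the source is used), the obstruction to lifting a log smooth morphism along a square-zero thickening lives in an $H^{2}$ of a coherent sheaf on an affine scheme and hence vanishes; more precisely one uses the log-smooth deformation theory of Kato \cite{Kato-logFontaine-Illusie} (see also the log-smooth analogue of \cite[IV.3.2.4]{Ogus-Logbook}), which says that over an affine base the deformation exists, that $\underline{X_{i+1}}\to\underline{Y_{i+1}}$ can be taken affine hence $\underline{X_{i+1}}$ affine, and that $X_i\to X_{i+1}$ is an exact closed immersion whose ideal is annihilated by $\I$, so that $X_i\riso X_{i+1}\times_{Y_{i+1}}Y_i$. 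Concretely, one first chooses \'etale-locally on $X_i$ a log smooth chart, lifts it using \ref{inductivelimites-lem} / Theorem \ref{inductivelimitesthm} applied to affine pieces $A_P$-type models, and then glues; since $\underline{X_i}$ is affine the gluing obstruction (a Čech $H^{1}$, resp. the choice ambiguity an $H^{1}$) again vanishes, so a global lift $f_{i+1}$ exists. Iterating yields the strict inductive system $(X_i)_i$ with compatible log smooth $f_i$.

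\textbf{Conclusion and main obstacle.} Having the system $(X_i)_i \to (Y_i)_i$, Theorem \ref{inductivelimitesthm} gives $\X := \underrightarrow{\lim}_i X_i$, a fine $\S$-log formal scheme with $X_i \riso \X\times_\S S_i$, and the morphisms $f_i$ assemble to $f\colon \X\to\Y$ with $X_i \to Y_i$ identified with $f_i$; then $f$ is log smooth by Definition \ref{dfn-petaleformal}, and its reduction modulo $\pi$ is $f_0$ by construction. I expect the main obstacle to be purely bookkeeping in the inductive step: making the local log-smooth liftings genuinely compatible (both with the thickening structure in the $Y$-direction and with each other under the \'etale gluing in the $X$-direction) so that one really obtains a \emph{strict} inductive system in the sense of the definition preceding \ref{inductivelimites-lem} — i.e. checking $X_i\to X_{i+1}$ is exact closed and that $X_i\to X_{i+1}\times_{S_{i+1}}S_i$ is an isomorphism — rather than any deep input, since the vanishing of obstructions over an affine base is classical log-smooth deformation theory. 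One technical point worth isolating is that affineness propagates: one must arrange each $\underline{X_{i+1}}\to\underline{Y_{i+1}}$ (equivalently $\underline{X_{i+1}}\to\underline{X_i}$) to be affine so that $\underline{\X}$ is a genuine $p$-adic formal $\V$-scheme and each $\underline{X_i}$ stays affine, which is standard since square-zero thickenings do not change the underlying topological space.
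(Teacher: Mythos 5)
Your proposal is correct and follows essentially the same route as the paper: the paper simply invokes Kato's log smooth deformation theory (\cite[3.14.(1)]{Kato-logFontaine-Illusie}), which packages exactly the affine-base obstruction-vanishing and inductive lifting you spell out, to produce the compatible system $(f_i\colon X_i\to Y_i)_i$, and then assembles it via Theorem \ref{inductivelimitesthm}. The extra bookkeeping you isolate (strictness of the inductive system, propagation of affineness) is precisely what Kato's statement guarantees, so no new idea is needed.
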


\begin{proof}
From \cite[3.14.(1)]{Kato-logFontaine-Illusie}, 
there exists a unique up to isomorphism log smooth morphism of fine log $S_i$-schemes
$f _i \colon X _i \to Y _i$ endowed with an isomorphism
$X _0 \riso X _i \times _{Y _i} Y _0$.
Put $\Y := \underrightarrow{\lim} _i \, Y _i$.
Let $f \colon \Y \to \X$ be the induced morphism. 
Following Theorem
\ref{inductivelimitesthm}, 
$\Y$ is a fine $\S$-log formal schemes. 
By construction, $f$ is log smooth since $f _i$ is log smooth for any $i\in\N$. 
\end{proof}

\begin{prop}
Let $f\colon \X \to \Y$ be a morphism of fine $\S$-log formal schemes.
The morphism $f$ is
log étale 
(resp. log $p$-étale, resp. formally log étale of level $m$,
resp. log $p$-étale of level $m$)
if and only if 
$f$ is formally log étale and 
$f _0$ is
log étale 
(resp. log $p$-étale, resp. formally log étale of level $m$,
resp. log $p$-étale of level $m$).

\end{prop}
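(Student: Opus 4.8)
The plan is to reduce everything to the finite-level statements already proved, namely Lemma~\ref{p-etale-modp} for the two unleveled cases and Lemma~\ref{p-etale-modplevelm} for the two cases of level $m$. The starting point is that, by Definition~\ref{dfn-petaleformal}, each of the five properties in play (the four of the statement together with ``formally log \'etale'') holds for $f$ if and only if the corresponding property of Section~$1$ holds for $f _i\colon X _i\to Y _i$ for \emph{every} $i\in\N$. So it is enough to establish, for each fixed $i$, the equivalence ``$f _i$ has property $P$ $\iff$ $f _i$ is fine formally log \'etale and $f _0$ has property $P$'' for each of the four properties $P$; quantifying over $i$ and invoking Definition~\ref{dfn-petaleformal} once more then yields the proposition. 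I would record at the outset that, straight from the definition of $X _i$, one has $X _0=X _i\times _{S _i}S _0=X _i\times _{Y _i}Y _0$ and $Y _0=Y _i\times _{S _i}S _0$, where $S _0=\Spec k\hookrightarrow S _i=\Spec(\V/\pi ^{i+1}\V)$ is a nilpotent exact closed immersion (its ideal $\pi\V/\pi ^{i+1}\V$ being nilpotent, it lies moreover in $\mathscr{T}hick _{(p)}$); in particular the special fibre $f _0=f _i\times _{S _i}S _0$ does not depend on $i$.

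For the log $p$-\'etale case (and, with the obvious changes, its unramified variant), the fixed-$i$ equivalence above is literally Lemma~\ref{p-etale-modp} when $\V$ is absolutely unramified, since then reduction modulo $p$ coincides with reduction modulo $\pi$. In general I would observe that the proof of Lemma~\ref{p-etale-modp} applies verbatim with the nilpotent exact closed immersion of bases $S _0\hookrightarrow S _i$ in place of the reduction modulo $p$ used there: the only features of that reduction used in the argument are that $S _0\hookrightarrow S _i$ is a nilpotent exact closed immersion and that $X _0=X _i\times _{S _i}S _0$, $Y _0=Y _i\times _{S _i}S _0$ (so that a thickening $\iota\colon U\hookrightarrow T$ in $\mathscr{C} _{(p)}$ over $f _i$ may be reduced along $S _0\hookrightarrow S _i$, the resulting lift over $X _0$ then extended along the nilpotent thickening $T\times _{S _i}S _0\hookrightarrow T$ by fine formal log \'etaleness of $f _i$, and uniqueness deduced from fine formal log unramifiedness). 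The two cases of level $m$ are handled identically, replacing Lemma~\ref{p-etale-modp} by Lemma~\ref{p-etale-modplevelm}; its hypothesis that $J _{S _i}+p\O _{S _i}$ be locally principal is free of charge here, since $\underline{S _i}=\Spec(\V/\pi ^{i+1}\V)$ is the spectrum of a local ring, on which every ideal is principal. Assembling over all $i$ then gives the three corresponding equivalences.

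The log \'etale case is mildly apart, since by part~(3) of Definition~\ref{dfn-etale} log \'etaleness is fine formal log \'etaleness \emph{together with} the requirement that the underlying morphism of schemes be locally of finite presentation. The implication ``$f$ log \'etale $\Rightarrow$ $f$ formally log \'etale and $f _0$ log \'etale'' is immediate (take $i=0$). For the converse, assuming $f$ formally log \'etale and $f _0$ log \'etale, one gets that $f _i$ is fine formally log \'etale for all $i$, and it remains to see that $\underline{f _i}$ is locally of finite presentation; this I would deduce from the facts that $\underline{X _0}\hookrightarrow\underline{X _i}$ and $\underline{Y _0}\hookrightarrow\underline{Y _i}$ are closed immersions with nilpotent ideal inducing homeomorphisms on underlying spaces and that all the schemes in sight are noetherian (Theorem~\ref{inductivelimitesthm}), so that local finite presentation descends along these nilpotent thickenings from $\underline{f _0}$ (locally of finite presentation because $f _0$ is log \'etale) to $\underline{f _i}$. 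Hence each $f _i$ is log \'etale, i.e. $f$ is log \'etale.

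I expect the only step that is not pure bookkeeping to be the verification that the proofs of Lemmas~\ref{p-etale-modp} and~\ref{p-etale-modplevelm} use nothing about the mod-$p$ reduction beyond what the nilpotent exact closed immersion $S _0\hookrightarrow S _i$ — with the compatibilities $X _0=X _i\times _{S _i}S _0$ and $Y _0=Y _i\times _{S _i}S _0$ — already supplies; once this is granted, the rest is a formal unwinding of Definition~\ref{dfn-petaleformal} plus the classical stability of local finite presentation under nilpotent thickenings.
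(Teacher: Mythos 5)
Your proof is correct and follows the same route as the paper, which reduces the respective cases to the finite-level Lemmas \ref{p-etale-modp} and \ref{p-etale-modplevelm} and handles the log \'etale case by ascending local finite presentation along the nilpotent thickenings $\underline{X _0}\hookrightarrow \underline{X _i}$. You are in fact more careful than the paper on one point: those lemmas are stated for reduction modulo $p$ whereas the proposition concerns reduction modulo $\pi$, and these differ when $\V$ is ramified; your observation that their proofs apply verbatim to the nilpotent exact closed immersion $S _0\hookrightarrow S _i$ (with $X _0=X _i\times _{S _i}S _0$, $Y _0=Y _i\times _{S _i}S _0$) is exactly what is needed to close this gap, which the paper's one-line citation leaves implicit.
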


\begin{proof}
If $f _0$ is log étale then $\underline{f_0} $ is of finite type. This yields that
$\underline{f _i} $ is of finite type, which proves the non respective case. 
The respective cases are consequences of 
\ref{p-etale-modp} or \ref{p-etale-modplevelm}.
\end{proof}

\begin{empt}
\label{flatnessformalogmooth}
Let $f \colon \X \to \Y$ be a morphism of fine $\S$-log formal schemes.
We set 
$\Omega _{\X/\Y} ^1
:=
\underleftarrow{\lim} _i \, 
\Omega _{X _i/Y _i} ^1$.
When $f$ is log smooth, 
then from \cite[3.10]{Kato-logFontaine-Illusie}
the $\O _{\X}$-module $\Omega _{\X/\S} ^1$ is locally free of finite type. 
When $f$ log smooth and $\Y =\S$, 
then $f$ is flat.
Indeed, in that case, $f _i \colon X _i \to S _i$ is integral (use \cite[4.3]{Kato-logFontaine-Illusie})  and smooth
 and then 
flat (see \cite[4.4]{Kato-logFontaine-Illusie}).
Since $f$ is of finite type, then $f$ is flat.

\end{empt}

\begin{lem}
\label{lem-log-etale-modpi}
Let $f \colon \X \to \Y$, $g \colon \Y \to \ZZ$ be two morphisms of fine $\S$-log formal schemes
such that $\O _{\X}$ has no $p$-torsion, the structural morphism $g \circ f$ is log smooth
and $f _0 \colon X _0 \to Y _0 $ is log étale. 
Then $f$ is log étale. 
\end{lem}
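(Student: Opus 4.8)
The plan is to reduce, as everywhere in this section, to a statement about the reductions modulo $\pi^{i+1}$ via Theorem \ref{inductivelimitesthm}, and then to apply Lemma \ref{lem-log-etale-modpi} level by level. First I would observe that since $g \circ f$ is log smooth, each $(g\circ f)_i \colon X_i \to Z_i$ is log smooth; and since $\O_{\X}$ has no $p$-torsion, each $\O_{X_{i+1}}$ fits in the exact sequence $0 \to \pi^{i}\O_{X_{i+1}} \to \O_{X_{i+1}} \to \O_{X_i}\to 0$ with $\pi^i\O_{X_{i+1}}$ a square-zero ideal isomorphic as a sheaf to $\O_{X_0}$ (using flatness, which by \ref{flatnessformalogmooth} holds because $g\circ f$ is log smooth, hence integral and flat). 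So the point is to prove by induction on $i$ that $f_i \colon X_i \to Y_i$ is log �tale, starting from the hypothesis that $f_0$ is.

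The key step is the inductive one: assuming $f_i$ log �tale, show $f_{i+1}$ is log �tale. Here I would use that log �taleness is equivalent to ``$\underline{f_{i+1}}$ locally of finite presentation plus fine formally log �tale'', and that the finite presentation part is automatic because $(g\circ f)_{i+1}$ is log smooth hence of finite presentation and $g_{i+1}$ is (being a morphism of noetherian schemes, or by the hypotheses). For the fine formally log �tale part, I would invoke Lemma \ref{p-etale-modp} (with ``�tale'' in place of ``$p$-�tale'', i.e.\ its unramified/�tale analogue as stated there): $f_{i+1}$ is fine formally log �tale if and only if $f_{i+1}$ is fine formally log �tale relative to the square-zero thickening $X_i \hookrightarrow X_{i+1}$ — more precisely, one checks the lifting property against the square-zero ideal $\pi^i\O_{X_{i+1}}$, and this reduces to $f_i$ being fine formally log �tale together with a vanishing/bijectivity statement for the relevant $\Omega^1$ and its obstruction group. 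The crucial input is that $\Omega^1_{X_{i+1}/Y_{i+1}} = 0$: indeed $f_i$ log �tale gives $\Omega^1_{X_i/Y_i}=0$, and the conormal/base-change exact sequence together with log smoothness of $g\circ f$ and the no-$p$-torsion hypothesis forces $\Omega^1_{X_{i+1}/Y_{i+1}} = 0$ as well (this is exactly the mechanism of Lemma \ref{lem-log-etale-modpi}, applied with the role of ``$\X$'' played by $X_{i+1}$ and the base by $S_{i+1}$ or $Z_{i+1}$). One then concludes that the lifting in each square-zero step exists and is unique, i.e.\ $f_{i+1}$ is fine formally log �tale, hence log �tale.

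Having shown $f_i$ is log �tale for all $i$, Definition \ref{dfn-petaleformal} immediately gives that $f\colon \X \to \Y$ is log �tale, which is the claim.

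The main obstacle I anticipate is bookkeeping the square-zero reduction correctly in the logarithmic setting: one must make sure that the ideal $\pi^i\O_{X_{i+1}}$ carries the right structure as an $\O_{X_0}$-module (which uses the flatness of $g\circ f$ from \ref{flatnessformalogmooth}, hence the $p$-torsion-free hypothesis on $\O_\X$), and that the monoid sheaves behave well — but since all the log structures involved are fine and the thickenings $X_i \hookrightarrow X_{i+1}$ are strict exact closed immersions (by the definition of a strict inductive system), the log-theoretic parts are handled verbatim by the cited lemmas (\ref{p-etale-modp}, \ref{lem-log-etale-modpi}, \ref{Kerexactclosedimmer}). So the real content is just packaging Lemma \ref{lem-log-etale-modpi} into an induction, and the difficulty is low once the reduction to the reductions mod $\pi^{i+1}$ is set up.
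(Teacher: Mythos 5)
Your proposal has two problems, one formal and one substantive. The formal one: you invoke Lemma \ref{lem-log-etale-modpi} --- the very statement to be proved --- as an ingredient of its own proof (``apply Lemma \ref{lem-log-etale-modpi} level by level'', ``this is exactly the mechanism of Lemma \ref{lem-log-etale-modpi}''), and moreover you propose to apply this formal-scheme statement to the finite-level scheme $X_{i+1}$, whose structure sheaf is certainly not $p$-torsion-free. As written the argument is circular, and no finite-level substitute for the cited lemma is actually supplied.

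The substantive gap is in the inductive step. Your ``crucial input'' is the vanishing of $\Omega ^1 _{X _{i+1}/Y _{i+1}}$, deduced from $\Omega ^1 _{X _i/Y _i}=0$ by Nakayama across the square-zero thickening. That much is fine, but it only gives that $f _{i+1}$ is fine formally log unramified (cf.\ Proposition \ref{formally log unramified}); it says nothing about the existence of liftings --- a closed immersion also has vanishing $\Omega ^1$. To get log \'etaleness one must control the obstruction to lifting, which in Kato's differential criterion \cite[3.12]{Kato-logFontaine-Illusie} amounts to showing that $f _{i+1} ^{*}\Omega ^1 _{Y _{i+1}/Z _{i+1}} \to \Omega ^1 _{X _{i+1}/Z _{i+1}}$ is an isomorphism (the target being locally free because $g\circ f$ is log smooth). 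Vanishing of $\Omega ^1 _{X _{i+1}/Y _{i+1}}$ only gives the surjectivity of this map; the injectivity is precisely where the $p$-torsion-freeness of $\O _{\X}$ must enter, and it does not follow level by level. The paper's proof handles this by working on the formal scheme itself: the map $\phi \colon f ^{*}\Omega ^1 _{\Y/\ZZ}\to \Omega ^1 _{\X/\ZZ}$ has $p$-torsion-free target (local freeness of $\Omega ^1 _{\X/\ZZ}$ from log smoothness of $g\circ f$, see \ref{flatnessformalogmooth}) and is an isomorphism modulo $\pi$ because $f _0$ is log \'etale, hence is an isomorphism; reducing modulo $\pi ^{i+1}$ then gives the isomorphism at each level, and Kato's criterion concludes that each $f _i$ is log \'etale. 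You should replace your unramifiedness computation by this argument on the full map $\phi$.
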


\begin{proof}
We construct by $p$-adic completion the morphism 
$\phi \colon 
f ^* \Omega _{\Y/\ZZ} ^1
\to 
\Omega _{\X/\ZZ} ^1$,
where we put
$f ^* \Omega _{\Y/\ZZ} ^1:= 
\underleftarrow{\lim} _i \, 
f _i ^* \Omega _{Y _i/Z _i} ^1$.
Since $g \circ f\colon \X \to \ZZ$ is log smooth, 
the $\O _{\X}$-module $\Omega _{\X/\ZZ} ^1$ is locally free of finite type
(see \ref{flatnessformalogmooth}). In particular, 
$\Omega _{\X/\ZZ} ^1$ has no $p$-torsion.
The reduction of $\phi$ modulo $\pi$ is canonically isomorphic to 
$f _0 ^* \Omega _{Y _0/Z _0} ^1
\to 
\Omega _{X_0/Z _0} ^1$.
Since $f _0$ is log étale, 
this latter homomorphism is an isomorphism.
Since $\Omega _{\X/\ZZ} ^1$ has no $p$-torsion, 
this yields 
that $\phi$ is an isomorphism (e.g. use Lemma \cite[2.2.15]{caro_courbe-nouveau}).
This implies that the canonical morphism
$f _i ^* \Omega _{Y _i/Z _i} ^1
\to 
\Omega _{X_i/Z _i} ^1$
is an isomorphism.
Since $X _i \to Z _i$ is log smooth, 
from \cite[3.12]{Kato-logFontaine-Illusie}, 
we conclude that $f _i$ is log-étale.

\end{proof}

\begin{prop}
\label{prop-smoothiswsmooth}
Let $f \colon \X \to \Y$ be a morphism of fine $\S$-log formal schemes such that $\O _{\X}$ has no $p$-torsion.
The morphism $f$ is log smooth if and only if, 
étale locally on $\X$ there exists a log étale $\Y$-morphism 
of the form 
$\X \to \Y \times _{\V} \mathfrak{A} _{\N ^{r}}$.
\end{prop}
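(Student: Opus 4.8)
The plan is to reduce the statement to the level-by-level description of log smoothness over $\V/\pi^{i+1}\V$-schemes via Theorem \ref{inductivelimitesthm}, and then to combine the finite-level results from Chapter~1. Concretely, the ``only if'' direction is the substantive one. Assume $f\colon\X\to\Y$ is log smooth, i.e. each $f_i\colon X_i\to Y_i$ is log smooth in Kato's sense. Working \'etale locally on $\X$ (which by \ref{inductivelimitesthm} and \ref{lem-stri-formal} amounts to working \'etale locally on the strict system $(X_i)_i$), I would first invoke Kato's structure theorem \cite[3.5]{Kato-logFontaine-Illusie} for $f_0$ to obtain, \'etale locally, a chart $Q_0\to P_0$ and a factorization of $f_0$ through a strict \'etale morphism $X_0\to Y_0\times_{\Z/p\Z}A_{P_0}$ with suitable control on $\phi_0^{\gp}$; after replacing $P_0$ by $\N^{r}$ using that log smoothness gives $\Omega^1_{X_0/Y_0}$ locally free (see \ref{rem-omega-locfree} and \cite[3.12]{Kato-logFontaine-Illusie}), one gets a log \'etale $Y_0$-morphism $X_0\to Y_0\times_{\Z/p\Z}A_{\N^r}$.

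The key step is then to lift this picture to the formal level. Set $\Y\times_\V\mathfrak A_{\N^r}$; its reduction mod $\pi^{i+1}$ is $Y_i\times_{\Z/p^{i+1}\Z}A_{\N^r}$, which is log smooth over $Y_i$ by base change. Since $g\circ f_i=$ structural morphism to $Y_i$ is log smooth for each $i$, and $\underline\X$ (hence each $\underline{X_i}$) is affine after localizing, I would use \cite[3.14.(1)]{Kato-logFontaine-Illusie} (deformation of log \'etale morphisms, or rather the rigidity of log \'etale morphisms under infinitesimal thickening) together with the no-$p$-torsion hypothesis to produce a compatible system of log \'etale morphisms $X_i\to Y_i\times_{\Z/p^{i+1}\Z}A_{\N^r}$ extending the one on $X_0$. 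Taking the inductive limit via \ref{inductivelimitesthm} yields a $\Y$-morphism $\X\to\Y\times_\V\mathfrak A_{\N^r}$ which is log \'etale by Definition \ref{dfn-petaleformal}, since each $f_i$ is. Here the cleanest argument is exactly Lemma \ref{lem-log-etale-modpi}: once one has \emph{some} $\Y$-morphism $h\colon\X\to\Y\times_\V\mathfrak A_{\N^r}$ (constructed by lifting generators of the chart, using affineness and $p$-adic completeness as in \ref{inductivelimites-presurj}) whose reduction $h_0$ is log \'etale and such that the composite $\X\to\Y$ is log smooth, Lemma \ref{lem-log-etale-modpi} gives that $h$ itself is log \'etale.

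The ``if'' direction is routine: if \'etale locally on $\X$ there is a log \'etale $\Y$-morphism $\X\to\Y\times_\V\mathfrak A_{\N^r}$, then reducing mod $\pi^{i+1}$ gives $X_i\to Y_i\times_{\Z/p^{i+1}\Z}A_{\N^r}$ log \'etale, and since $Y_i\times_{\Z/p^{i+1}\Z}A_{\N^r}\to Y_i$ is log smooth, \cite[3.12]{Kato-logFontaine-Illusie} (stability of log smoothness under composition) shows $f_i$ is log smooth; as this holds for all $i$, $f$ is log smooth by Definition \ref{dfn-petaleformal}. One should note the \'etale-locality is harmless: log smoothness of $f$ can be checked \'etale locally on $\X$ by \ref{logpsmooth-stab} applied levelwise, together with \ref{lem-stri-formal} and \ref{inductivelimitesthm} to pass between formal and strict-system pictures.

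The main obstacle I expect is the construction and lifting step: producing the compatible system $X_i\to Y_i\times_{\Z/p^{i+1}\Z}A_{\N^r}$ and verifying it reduces correctly modulo $\pi$, while keeping track of the chart data. The no-$p$-torsion hypothesis on $\O_\X$ is essential precisely to invoke Lemma \ref{lem-log-etale-modpi} (which compares the sheaves $\Omega^1$ after $p$-adic completion and uses that $\Omega^1_{\X/\Y}$ has no $p$-torsion), so I would route the whole lifting argument through that lemma rather than re-deriving deformation theory by hand. Everything else — base change of $\mathfrak A_{\N^r}$, the identification of reductions mod $\pi^{i+1}$, and stability of log smoothness under composition — is formal and already available in the cited references.
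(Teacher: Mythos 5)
Your proposal is correct and follows essentially the same route as the paper: after using the structure theorem for the log smooth reduction $f_0$ to produce, \'etale locally, a log \'etale $Y_0$-morphism $X_0 \to Y_0 \times A_{\N^r}$, the paper lifts the chart morphism $X_0 \to A_{\N^r}$ to $\X \to \mathfrak{A}_{\N^r}$ via the surjectivity of $M_\X \to M_{X_0}$ (Lemma \ref{inductivelimites-presurj}) and concludes with Lemma \ref{lem-log-etale-modpi}, which is exactly the route you settle on after discarding the deformation-theoretic detour. The converse direction, which the paper leaves implicit, is the routine composition argument you give.
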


\begin{proof}
Suppose $f$ is log smooth.
Since $f _0$ is log smooth, when can suppose 
there exists a morphism 
$X _0 \to A _{\N ^{r}}$ such that the induced 
$Y _0$-morphism
$X _0 \to Y _0 \times A _{\N ^{r}}$
is log-étale. 
Using \ref{inductivelimites-presurj}, 
we can suppose that
$X _0 \to A _{\N ^{r}}$
has the lifting of the form 
$\X \to  \mathfrak{A} _{\N ^{r}}$.
We get the $\Y$-morphism
$\X \to \Y \times _{\V} \mathfrak{A} _{\N ^{r}}$.
We conclude by applying Lemma \ref{lem-log-etale-modpi}
that this latter morphism is log-étale.
\end{proof}

\subsection{Sheaf of differential operators over weakly log smooth $\S$-log formal scheme}

\begin{dfn}
As in \ref{dfnC}
we define the category $\mathfrak{C}$
of $\S$-immersions of fine $\S$-log formal schemes.
For any integer $n$,  we denote by
$\mathscr{C} _{n}$ the full subcategory
of $\mathscr{C}$ whose objects are exact closed immersions 
of order $n$.
\end{dfn}

\begin{lem}
\label{PDenvelopeformal}
The inclusion functor 
 $For  _n \colon \mathfrak{C} _n \to \mathfrak{C}$
 has a right adjoint functor which we will denote by
$P ^{n }
\colon \mathfrak{C} \to \mathfrak{C}  _n $.
Let  $u\colon \ZZ \hookrightarrow \X$ be an object of
$\mathfrak{C}$.
Then $\ZZ$ is also the source of $P ^{n} (u) $.
\end{lem}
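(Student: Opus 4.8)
The statement is the formal-scheme analogue of Proposition~\ref{PDenvelope}, and the natural strategy is to reduce everything to the already-established level of ordinary fine log schemes via the equivalence of Theorem~\ref{inductivelimitesthm}. First I would observe that if $u\colon\ZZ\hookrightarrow\X$ is an $\S$-immersion of fine $\S$-log formal schemes, then applying the reduction functor $\X\mapsto(X_i)_{i\in\N}$ produces a compatible family of $S_i$-immersions $u_i\colon Z_i\hookrightarrow X_i$ of noetherian fine log schemes. The plan is to form, for each $i$, the $n$th infinitesimal neighbourhood $P^n(u_i)$ provided by Proposition~\ref{PDenvelope}, and then glue these back into a fine $\S$-log formal scheme by Proposition~\ref{inductivelimites-lem} (or Theorem~\ref{inductivelimitesthm}). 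For this to make sense one must check that the $P^n(u_i)$ form a \emph{strict} inductive system over $(S_i)_{i\in\N}$, i.e.\ that $P^n(u_i)\to P^n(u_{i+1})\times_{S_{i+1}}S_i$ is an isomorphism; this follows from Lemma~\ref{prebasecgt-flat-env} applied to the strict cartesian morphism $u_i\to u_{i+1}$ (strictness being guaranteed because the immersions are exact and, after exactification, closed, so one can invoke the base change compatibility of $P^n$). Moreover, since $\underline{X}_i$ is noetherian, Proposition~\ref{PDenvelope} also gives that $\underline{P^n(u_i)}$ is noetherian, so the system lands in the correct category.

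Next I would set $P^n(u):=\underrightarrow{\lim}_i\, P^n(u_i)$, which by Proposition~\ref{inductivelimites-lem} is a fine $\S$-log formal scheme, fitting into a morphism $P^n(u)\to u$ of $\mathfrak{C}$ via the family $P^n(u_i)\to u_i$. It lies in $\mathfrak{C}_n$ because each $P^n(u_i)\hookrightarrow X_i$ is an exact closed immersion whose ideal has vanishing $(n+1)$st power, and these conditions pass to the limit (the ideal of the limiting immersion is $\underleftarrow{\lim}_i$ of the ideals, and its $(n+1)$st power vanishes termwise). Then I would verify the universal property: given any object $u''\colon\U''\hookrightarrow\T''$ of $\mathfrak{C}_n$ with a morphism $f\colon u''\to u$, reducing modulo $\pi^{i+1}$ gives $f_i\colon u''_i\to u_i$ in $\mathscr{C}_n$, hence by the universal property of $P^n(u_i)$ a unique factorization $u''_i\to P^n(u_i)$; these are automatically compatible with the transition maps by uniqueness, so by Theorem~\ref{inductivelimitesthm} they assemble into a unique morphism $u''\to P^n(u)$ in $\mathfrak{C}_n$ factoring $f$. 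This establishes that $For_n\colon\mathfrak{C}_n\to\mathfrak{C}$ has a right adjoint $P^n$, and that the source of $P^n(u)$ is $\ZZ=\underrightarrow{\lim}_i Z_i$, since the source of each $P^n(u_i)$ is $Z_i$ by Proposition~\ref{PDenvelope}.

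The main obstacle I anticipate is the bookkeeping needed to see that the $P^n(u_i)$ genuinely form a \emph{strict} inductive system in the sense required by Theorem~\ref{inductivelimitesthm} — that is, controlling both the underlying schemes \emph{and} the log structures under reduction modulo $\pi^{i+1}$. On the scheme side this is Lemma~\ref{prebasecgt-flat-env} (after exactifying, the morphism $u_i\to u_{i+1}$ is strict cartesian, and $\underline{P^n}$ commutes with flat strict base change by the construction in the proof of Proposition~\ref{PDenvelope}); on the log side one uses that $p_0^n,p_1^n$-type structural morphisms of infinitesimal neighbourhoods are strict (the argument of Lemma~\ref{pistrict}), so that the log structure on $P^n(u_i)$ is pulled back from $X_i$, and strictness of $u_i\to u_{i+1}$ then forces the required isomorphism $P^n(u_i)\riso P^n(u_{i+1})\times_{S_{i+1}}S_i$. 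Once this compatibility is in hand the rest is a routine limit argument using Theorem~\ref{inductivelimitesthm}, exactly parallel to how Proposition~\ref{inductivelimites-lem} was deduced from Propositions~\ref{inductivelimites} and~\ref{inductivelimites-lem}'s local computations.
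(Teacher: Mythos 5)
Your proposal is correct and follows essentially the same route as the paper: reduce modulo $\pi^{i+1}$, apply Proposition \ref{PDenvelope} to each $u_i$ (using the noetherianity and affineness statements there so that the system lands in the category of strict inductive systems), and reassemble via Theorem \ref{inductivelimitesthm}. The extra verifications you flag (strictness of the system via Lemma \ref{prebasecgt-flat-env}, and the levelwise universal property passing to the limit) are exactly what the paper's terse proof leaves implicit.
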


\begin{proof}
Let  $u\colon \ZZ \hookrightarrow \X$ be an object of
$\mathfrak{C}$.
Since $u _i \colon Z _i \hookrightarrow X _i$ is an object of 
$\mathscr{C} $, 
from \ref{PDenvelope},
we get the object
$P ^{n} (u _i)\colon Z _i \hookrightarrow P ^n (u _i)$ of 
$\mathscr{C} ^n$ such that 
$P ^{n} (u _i) \to X _i$ is affine and $P ^{n} (u _i)$ is noetherian.
Hence, using Theorem \ref{inductivelimitesthm}, 
we get
that 
$ \underrightarrow{\lim} _i \, P ^{n} (u _i)$ satisfies the universal property of 
$P ^{n} (u)$. 
\end{proof}

\begin{dfn}
\label{forlogbasisformal}
Let $f\colon \X \to \Y$ be a morphism of fine $\S$-log formal schemes.

\begin{enumerate}
\item 
We say that a finite set $(b  _{\lambda}) _{\lambda =1,\dots, r}$ of elements of $\Gamma ( \X, M _\X)$ is a 
`` formal log basis of $f$''
if the induced  $\Y$-morphism
$\X \to \Y \times _{\V} \mathfrak{A} _{\N ^{r}}$ is formally log étale
(concerning $\mathfrak{A} _{\N ^{r}}$, see the notation of \ref{formalcharts}).

\item We say that $f$ is ``weakly log smooth'' if, étale locally on $\X$,
$f$ has formal log bases.
Notice that this notion of weak log smoothness is étale local on $\Y$.
When $\Y= \S$, we say that $\X$ is a ``weakly log smooth $\S$-log formal scheme''
(following the terminology, the log structure of such $\X$ is understood to be fine).

\end{enumerate}

\end{dfn}

\begin{rem}
\label{rem-smoothiswsmooth}
Following Proposition \ref{prop-smoothiswsmooth},
a log smooth morphism is weakly log smooth which justifies the terminology.
\end{rem}

\begin{empt}
[$n$th infinitesimal neighborhood]
Let $\X$ be a weakly log smooth $\S$-log formal scheme.
Let $\Delta _{\X/\S}\colon \X \hookrightarrow \X \times _{\S} \X$ be the diagonal immersion.
Since $\Delta _{\X/\S}$ is not necessarily an object of $\mathfrak{C}$ 
(because $\X \times _\S \X$ is not noetherian in general), 
we can not use \ref{PDenvelopeformal} and we can not put 
$\Delta ^{n} _{\X/\S}:= P ^{n } ( \Delta _{\X/\S}) $. 
But this is possible to define $\Delta ^{n} _{\X/\S}$ by taking inductive limits as follows. 
From \ref{prebasecgt-flat-env}, we have
$\Delta _{X _i/S _i} ^n 
= \Delta _{X _{i+1}/S _{i+1}} ^n \times _{S _{i+1}} S _{i}$.
From \ref{PDenvelope}, since 
$\underline{\Delta} _{X _i/S _i} ^n$ are noetherian schemes, 
using Theorem \ref{inductivelimitesthm}, 
we get the fine $\S$-log formal schemes
$\Delta _{\X /\S} ^n$ by putting 
$\Delta _{\X /\S} ^n:= \underrightarrow{\lim} _i \, \Delta _{X _i/S _i} ^n$.
Taking the inductive limites to 
the strict morphisms of fine log schemes
$p ^n _0 \colon 
\Delta _{X _i/S _i} ^n
\to 
X _i$ 
(resp. $p ^n _1 \colon 
\Delta _{X _i/S _i} ^n
\to 
X _i$), 
using Lemma \ref{lem-stri-formal}
we get  
the strict morphism of fine log formal $\V$-schemes 
$p ^n _0\colon \Delta _{\X /\S} ^n \to \X$
(resp. $p ^n _1\colon \Delta _{\X /\S} ^n \to \X$).
Using the remark \ref{p1p0finite},
we check that the underlying morphism of formal $\V$-schemes of 
$p ^n _0\colon \Delta _{\X /\S} ^n \to \X$
and $p ^n _1\colon \Delta _{\X /\S} ^n \to \X$ are finite (more precisely, we can check 
the local description \ref{loc-desc-Pnfformal}).
Hence, we denote by 
$\PP ^{n} _{\X/\S}$ the coherent $\O _\X$-algebra such that 
$\Spf \PP ^{n} _{\X/\S} =  \underline{\Delta} ^n _{\X/\S}  $.

If $a \in M _\X$, we denote by
$\mu _{(m)}  (a)$ the unique section of
$\ker ( \O _{\Delta ^n _{\X/\S}} ^{*} \to  \O _{\X} ^{*} )$
such that we get in $M ^n _{\X/\S, (m)}$ the equality
$p _1 ^{n*} (a)= p _0 ^{n*} (a) \mu ^n  (a)$ (see \ref{Kerexactclosedimmer}).
We get 
$\mu  ^{n} \colon M  _\X \to \ker ( \O _{\Delta ^{n}  _{\X/\S}} ^{*} \to  \O _{\X} ^{*} )$
given by 
$a \mapsto \mu ^n  (a)$.

\end{empt}

\begin{prop}
[Local description of $\PP  ^n _{\X/\S} $]
\label{nota-etawtmformal}
Let $(a  _{\lambda}) _{\lambda =1,\dots, r}$ be a formal  log basis  of $f$. 
Put 
$\eta _{\lambda,n} := \mu ^n  ( a _{\lambda}) -1$.
We have  the following  isomorphism of $\O _{\X}$-algebras: 
   \begin{align}
   \notag
   \O _{\X} [T _1 ,\dots ,T _r] _n
   &\riso \PP  ^n  _{\X/\S}  \\
   \label{loc-desc-Pnfformal}
   T _\lambda &\mapsto
   \eta _{\lambda,n}.
   \end{align}

\end{prop}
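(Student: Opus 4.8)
The plan is to reduce the statement to its non-formal (mod $\pi^{i+1}$) counterpart, Proposition \ref{nota-etawtm}, and then pass to the inductive limit using Theorem \ref{inductivelimitesthm}. Since $f\colon\X\to\Y$ is weakly log smooth and $(a_\lambda)_{\lambda=1,\dots,r}$ is a formal log basis, for each $i\in\N$ the reduction $f_i\colon X_i\to Y_i$ is a weakly log smooth morphism of fine $S_i$-log schemes and the images of the $a_\lambda$ in $\Gamma(X_i,M_{X_i})$ form a formal log basis of $f_i$ (the corresponding $\Y$-morphism $\X\to\Y\times_\V\mathfrak{A}_{\N^r}$ being formally log \'etale, its reduction $X_i\to Y_i\times_{\Z/p^{i+1}\Z}A_{\N^r}$ is fine formally log \'etale by Definition \ref{dfn-petaleformal}, hence the images of the $a_\lambda$ are a formal log basis of $f_i$ in the sense of \ref{forlogbasis}).

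First I would record, as in the paragraph preceding the statement, the identification $\Delta^n_{\X/\S}=\underrightarrow{\lim}_i\,\Delta^n_{X_i/S_i}$ together with the compatibility $\Delta^n_{X_i/S_i}=\Delta^n_{X_{i+1}/S_{i+1}}\times_{S_{i+1}}S_i$ coming from \ref{prebasecgt-flat-env}; equivalently $\PP^n_{\X/\S}=\underleftarrow{\lim}_i\,\PP^n_{X_i/S_i}$ as coherent $\O_\X$-algebras. The section $\mu^n(a_\lambda)\in\ker(\O^*_{\Delta^n_{\X/\S}}\to\O^*_\X)$ is by construction compatible with the sections $\mu^n(a_\lambda)$ on each $\Delta^n_{X_i/S_i}$ (both are characterized by the same equation $p_1^{n*}(a_\lambda)=p_0^{n*}(a_\lambda)\mu^n(a_\lambda)$ in the relevant log structure, via Lemma \ref{Kerexactclosedimmer}), so the element $\eta_{\lambda,n}=\mu^n(a_\lambda)-1$ reduces to the corresponding element on each level $i$. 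Then Proposition \ref{nota-etawtm} applied to $f_i$ gives, for every $i$, an isomorphism of $\O_{X_i}$-algebras $\O_{X_i}[T_1,\dots,T_r]_n\riso\PP^n_{X_i/S_i}$ sending $T_\lambda$ to $\eta_{\lambda,n}$. These isomorphisms are compatible with the transition maps (since the $\eta_{\lambda,n}$ are, and the transition maps on $\PP^n_{X_i/S_i}$ are the obvious reductions), so passing to the projective limit over $i$ yields the isomorphism of $\O_\X$-algebras $\O_\X[T_1,\dots,T_r]_n\riso\PP^n_{\X/\S}$, $T_\lambda\mapsto\eta_{\lambda,n}$, as claimed; here one uses that $\underleftarrow{\lim}_i\,\O_{X_i}[T_1,\dots,T_r]_n=\O_\X[T_1,\dots,T_r]_n$ because the truncated polynomial algebra is finite free over the structure sheaf and $\O_\X=\underleftarrow{\lim}_i\,\O_{X_i}$.

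\textbf{Main obstacle.} The only slightly delicate point is to make sure that the level-by-level isomorphisms really are morphisms of inductive (resp. projective) systems, i.e. that they commute with the reduction maps; this is where one must invoke the precise compatibility of the $n$th infinitesimal neighborhoods under base change (\ref{prebasecgt-flat-env}) and the functoriality of the construction of $\mu^n$ and of the local description \ref{nota-etawtm}. Once that bookkeeping is in place, taking $\underleftarrow{\lim}_i$ is harmless since all the sheaves involved are quasi-coherent (indeed the $\O_{X_i}$-algebras $\PP^n_{X_i/S_i}$ are finite, by \ref{p1p0finite}, hence the limit commutes with the relevant exactness), and the desired isomorphism follows formally.
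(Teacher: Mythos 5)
Your proposal is correct and follows the same route as the paper, whose proof is the one-line reduction to Proposition \ref{nota-etawtm}: you simply make explicit the level-by-level compatibilities (of the infinitesimal neighborhoods via \ref{prebasecgt-flat-env}, of the sections $\mu^n(a_\lambda)$, and of the finite free truncated polynomial algebras under $\underleftarrow{\lim}_i$) that the paper leaves implicit from the construction of $\Delta^n_{\X/\S}$ as $\underrightarrow{\lim}_i\,\Delta^n_{X_i/S_i}$ in the preceding paragraph. Nothing further is needed.
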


\begin{proof}
This is a consequence of \ref{nota-etawtm}. 
\end{proof}

\begin{dfn}
\label{sheafdiffoperformal}
The sheaf of differential operators of order $\leq n$ of $f$
is defined by putting
$\D  _{\X/\S, n}:= \H om _{\O _\X} ( p _{0 *} ^{n} \PP ^{n} _{\X/\S} , \O _\X)$.
The sheaf of differential operators of $f$
is defined by putting
$\D  _{\X/\S}:= \cup _{n \in \N }\D  _{\X/\S, n}$.

Let $P \in \D  _{\X/\S, n}$, $P'  \in \D  _{\X/\S, n'}$.
We define the product
$P P' \in \D  _{\X/\S, n+n'}$
to be the composition
\begin{equation}
\label{dfn-prodformal}
P P' \colon \PP ^{n +n'} _{\X/\S}
\overset{\delta ^{n,n'}}{\longrightarrow}
\PP ^{n} _{\X/\S} \otimes _{\O _\X} \PP ^{n'} _{\X/\S}
\overset{\mathrm{Id}\otimes P'}{\longrightarrow}
\PP ^{n} _{\X/\S}
\overset{P}{\longrightarrow}
\O _{\X}.
\end{equation}
Similarly to \ref{ring-diffop}, we check that 
the sheaf $\D  _{\X/\S} $ is a sheaf of rings with the product as defined in \ref{dfn-prodformal}
\end{dfn}

\subsection{Sheaf of differential operators of level $m$ over weakly log smooth of level $m$ fine $\S$-log formal schemes}

Let $m\geq 0$ be an integer. 
The principal ideal $(p)$ of $\V$ is endowed with a canonical $m$-PD-structure, 
which we will denote by  $\gamma _\emptyset$.

\begin{dfn}
\label{dfnCmformal}
As in \ref{dfnCm},
we define the categories
$\mathfrak{C} _n ^{(m)}$
whose objects are pairs
$(u, \delta)$ where $u$ is an exact closed $\S$-immersion
of fine log $\S$-schemes and $\delta$ is an $m$-PD-structure on the ideal $\I$
defining $u$
(which is compatible with $\gamma _\emptyset$)
and such that $\I ^{\{n+1\} _{(m)}}=0$ and whose morphisms
$(u', \delta ') \to (u, \delta)$ are morphisms $u' \to u$ of $\mathfrak{C} $ 
which are compatible with the $m$-PD-structures $\delta$ and
$\delta '$.
\end{dfn}

\begin{prop}
\begin{enumerate}
\item The canonical functor  
$\mathfrak{C} _n ^{(m)} \to \mathfrak{C}$ has a right adjoint, which we
will denote by 
$P ^n _{(m)}
\colon \mathfrak{C} \to \mathfrak{C} _n ^{(m)} $.

\item Let $u$ be an object of $\mathfrak{C}$.
The source of
$P ^n _{(m)} (u)$ is the source of $u$.

\end{enumerate}
\end{prop}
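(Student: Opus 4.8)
The plan is to reduce the formal statement to the algebraic statement \ref{mPDenvelope} (together with the noetherianity clause of \ref{mPDenvelope}.4) via the equivalence of categories \ref{inductivelimitesthm}, exactly as was done for \ref{PDenvelopeformal} in the level-$0$ setting. First I would take an object $u\colon\ZZ\hookrightarrow\X$ of $\mathfrak{C}$ and pass to the associated strict inductive system $(u_i\colon Z_i\hookrightarrow X_i)_{i\in\N}$ via Theorem \ref{inductivelimitesthm}; each $u_i$ is an $S_i$-immersion of noetherian fine log schemes, hence an object of $\mathscr{C}$ (relative to $S_i$). Applying Proposition \ref{mPDenvelope} to each $u_i$ with respect to the $m$-PD ideal $((p),(p),\gamma_\emptyset)$ of $\O_{S_i}$ (which is legitimate since $p$ is nilpotent in $\O_{S_i}$ and $(p)$ is visibly locally principal, so \ref{mPDenvelope}.4 applies) yields an object $P^n_{(m)}(u_i)\colon Z_i\hookrightarrow P^n_{(m)}(u_i)$ of $\mathscr{C}^{(m)}_n$ whose underlying morphism $P^n_{(m)}(u_i)\to X_i$ is affine and whose target is a noetherian scheme.

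The key step is to check that $(P^n_{(m)}(u_i))_{i\in\N}$ forms a \emph{strict} inductive system of noetherian fine log schemes over $(S_i)_i$, i.e.\ that the canonical morphism $P^n_{(m)}(u_i)\to P^n_{(m)}(u_{i+1})\times_{S_{i+1}}S_i$ is an isomorphism. This is where I would invoke the base-change compatibility: the square exhibiting $u_i\to u_{i+1}$ is strict and cartesian (the system $(u_i)$ is strict by definition), and reduction mod $p^{i+1}$ is a flat base change, so \ref{basecgt-flat-env} — or rather its $n$-truncated consequence packaged in \ref{mPDenv-local-desc}/\ref{rightadjCmCmn}, applied through the relation $P^n_{(m),\gamma}=Q^n_{(m),\gamma}\circ P_{(m),\gamma}$ of \ref{mPDenvelope}.1 — gives precisely $P^n_{(m)}(u_i)=P^n_{(m)}(u_{i+1})\times_{u_{i+1}}u_i$, and strictness of the base change is preserved because the structural projections are strict (as in \ref{env-strict}). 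Then Theorem \ref{inductivelimitesthm} produces the fine $\S$-log formal scheme $\underrightarrow{\lim}_i P^n_{(m)}(u_i)$, equipped with the $m$-PD structure on its ideal of definition obtained by taking the (compatible) limit of the $m$-PD structures on the $\I_i$, and one declares this to be $P^n_{(m)}(u)$.

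It then remains to verify the universal property, i.e.\ that this object is right adjoint to the forgetful functor $\mathfrak{C}^{(m)}_n\to\mathfrak{C}$: given $(u',\delta')\in\mathfrak{C}^{(m)}_n$ and a morphism $u'\to u$ of $\mathfrak{C}$, one reduces level by level using Theorem \ref{inductivelimitesthm} and the universal property of $P^n_{(m)}(u_i)$ over $S_i$, and the resulting morphisms of the systems assemble to a unique morphism $(u',\delta')\to P^n_{(m)}(u)$ of $\mathfrak{C}^{(m)}_n$ — this is the limit of the adjunctions, formal once the strict-system compatibility is in place. Assertion (2), that the source of $P^n_{(m)}(u)$ equals the source of $u$, follows immediately from the corresponding statement in \ref{mPDenvelope}.2 (since $\gamma_\emptyset$ extends to $Z_i$ by \ref{lem-CsThickp}'s argument, or more simply because we are taking $n$-truncated envelopes and $Q^n$ preserves sources by \ref{rightadjCmCmn}) together with the fact that the source functor commutes with the inductive limit. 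The main obstacle I anticipate is not conceptual but bookkeeping: ensuring that the $m$-PD structures at each level are genuinely compatible under the transition maps (so that the limit $m$-PD structure is well-defined and matches the algebraic base-change output of \ref{basecgt-flat-env}), and confirming that ``exact closed immersion of order $n$'' is stable under the formal limit — both of which reduce to the strictness of $p^n_0,p^n_1$ and the already-established local description.
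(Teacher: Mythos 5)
Your proposal follows the paper's own proof exactly: the paper likewise obtains assertion (1) by combining the equivalence of categories of Theorem \ref{inductivelimitesthm} with the level-wise existence and noetherianity statements of Proposition \ref{mPDenvelope} (in particular \ref{mPDenvelope}.4), and deduces assertion (2) from the fact that $\gamma _\emptyset$ extends to any $\S$-log formal scheme because its ideal is locally principal. The one point to watch is that \ref{basecgt-flat-env} as stated requires flatness, which the reductions $S _i \hookrightarrow S _{i+1}$ do not satisfy, so the strictness of the system $(P ^n _{(m)} (u _i)) _i$ is better justified via the local description route you mention as an alternative — but this is precisely the level of detail the paper itself leaves implicit.
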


\begin{proof}
The first assertion is a consequence of \ref{inductivelimitesthm} and \ref{mPDenvelope} 
(we need in particular the \ref{mPDenvelope}.4).
Since $\gamma _\emptyset$ extends to any $\S$-log formal schemes 
(because the ideal of the $m$-PD-structure $\gamma _\emptyset$ is locally principal: see  \cite[1.3.2.c)]{Be1}),
we get the second assertion.
\end{proof}

\begin{empt}
Let $u$ be an object of $\mathfrak{C}$. We call $P ^n _{(m)}  (u)$ the
$m$-PD-envelope compatible 
of order $n$ of $u$.
We sometimes denote abusively by 
$P ^n _{(m)} (u)$ the target of the arrow
$P ^n _{(m)} (u)$.

\end{empt}

\begin{dfn}
Let $f\colon \X \to \Y$ be a morphism of fine $\S$-log formal schemes.
\begin{enumerate}
\item We say that a finite set $(b  _{\lambda}) _{\lambda =1,\dots, r}$ of elements of $\Gamma ( \X, M _\X)$ is 
a ``log $p$-basis of $f$'' (resp. ``formal log basis of level $m$ of $f$'', resp. 
``log $p$-basis of level $m$ of $f$'')
if the induced  $\Y$-morphism
$\X \to \Y \times _{\V} \mathfrak{A} _{\N ^{r}}$ is log $p$-étale
(resp. formally log étale of level $m$, resp. log $p$-étale of level $m$).

\item We say that $f$ is ``log $p$-smooth'' 
(resp. ``weakly log smooth of level $m$'', 
resp. ``log $p$-smooth of level $m$'')
 if, étale locally on $\X$,
$f$ has log $p$-bases
(resp. formal log bases of level $m$, 
resp. log $p$-bases of level $m$).
When $\Y = \S$, we say that
$\X$ is  a log $p$-smooth log-formal $\S$-scheme
(resp. a weakly log smooth of level $m$ log-formal $\S$-scheme, 
resp. a log $p$-smooth of level $m$ log-formal $\S$-scheme).
Remark that the log structure of such $\X$ is always fine following our terminology. 

\end{enumerate}

\end{dfn}

\begin{empt}
\label{linkster}
Using \ref{linksbis} and \ref{prop-smoothiswsmooth},
we get the following diagram 
summarizing the relations between our definitions:
\small
\begin{equation}
\notag
\xymatrix{
{\text{log smooth}}
\ar@{=>}[r] ^-{}
&
{\text{log $p$-smooth}}
\ar@{=>}[r] ^-{}
&
{\text{log $p$-smooth of level $m$}}
\ar@{=>}[r] ^-{}
&
{\text{weakly log smooth of level $m$}}
\ar@{=>}[r] ^-{}
&
{\text{weakly log smooth}}
}
\end{equation}
\normalsize
\end{empt}

\begin{empt}
Let $\X$ be a weakly log smooth of level $m$ log-formal $\S$-scheme.
Using  \ref{p1p0finite(m)}, we check the underlying scheme of $\Delta _{X _i/S _i, (m)} ^n$ is noetherian.
Moreover, from the local description \ref{loc-desc-Pn}, we get 
$\Delta _{X _i/S _i, (m)} ^n 
\riso \Delta _{X _{i+1}/S _{i+1}, (m)} ^n \times _{S _{i+1}} S _{i}$ (recall 
also that $p ^n _0\colon \Delta _{X _i/S _i, (m)} ^n \to X _i$ is strict).
Using Theorem \ref{inductivelimitesthm}, 
we get the fine $\S$-log formal schemes
$\Delta _{\X /\S, (m)} ^n$ by putting 
$\Delta _{\X /\S, (m)} ^n:= \underrightarrow{\lim} _i \, \Delta _{X _i/S _i, (m)} ^n$.
Let
$p  ^{n} _1,\, p ^{n} _0 \colon \Delta  ^{n} _{\X/\S, (m)} \to \X$ be the morphisms induced
respectively by 
$p  ^{n} _1,\, p ^n _0 \colon \Delta _{X _i/S _i, (m)} ^n \to X _i$.
From \ref{notaDeltanetc}, \ref{lem-stri-formal} and \ref{p1p0finite(m)}, the morphisms
$p ^{n} _1,\, p ^{n} _0 \colon \Delta  ^n _{\X/\S, (m)} \to \X$
are strict and finite (more precisely concerning the finiteness, we have the local description 
\ref{loc-desc-Pn-form}).

We denote by $M ^{n} _{\X/\S, (m)}$ the log structure of
$\Delta ^{n} _{\X/\S, (m)}$.
We denote by $\PP ^{n} _{\X/\S, (m)}$ the coherent $\O _\X$-algebra corresponding to the underlying formal $\V$-scheme of
$\Delta ^{n} _{\X/\S, (m)} $.
Hence, $\Delta ^{n} _{\X/\S, (m)}$ is an exact closed immersion of the form
$\Delta ^{n} _{\X/\S, (m)} \colon \X \hookrightarrow (\Spf \PP ^{n} _{\X/\S, (m)} , M ^{n} _{\X/\S, (m)})$.
We sometimes denote abusively by 
$\Delta ^{n} _{\X/\S, (m)}$ 
the target of the arrow
$\Delta ^{n} _{\X/\S, (m)}$.

As in paragraph \ref{sheafdiffoper(m)}, we can define
$\D ^{(m)} _{\X/\S}$, the sheaf of differential operator on $\X$ of level $m$.
\end{empt}

\begin{empt}
[Local description]
\label{nota-eta-formal}
Suppose in this paragraph that $\X \to \S$ is endowed with 
a formal log basis of level $m$ 
$(b  _{\lambda}) _{\lambda =1,\dots, r}$ of $f$.
Put
$\eta _{\lambda (m)} := \mu _{(m)} ^{n} ( b _{\lambda}) -1$
(or simply $\eta  _{\lambda}$),
where
$\mu _{(m)} ^{n} (a)$ is the unique section of
$\ker ( \O _{\Delta ^{n} _{\X/\S, (m)}} ^{*} \to  \O _{\X} ^{*} )$
such that we get in $M ^{n} _{\X/\S, (m)}$ the equality
$p _1 ^{n*} (a)= p _0 ^{n*} (a) \mu ^{n} _{(m)} (a)$.
Taking the limits to \ref{nota-eta},
we get the isomorphism of $m$-PD-$\O _{\X}$-algebras
   \begin{align}
   \notag
   \O _{\X} <T _1 ,\dots ,T _r> _{(m),n}
   &\riso \PP ^{n} _{\X/\S, (m)}  \\
   \label{loc-desc-Pn-form}
   T _\lambda &\mapsto
   \eta _{\lambda, (m)} ,
   \end{align}
where the first term is defined as in \ref{ntn-mPDtruncated}.
In particular, the elements
$\{\underline{\eta} ^{\{\underline{k} \} _{(m)}} \} _{| \underline{k}| \leq n}$
form an $\O _{\X}$-basis of
$\PP ^{n} _{\X/\S, (m)}$.
The corresponding dual basis of
$\D ^{(m)} _{\X/\S, n}$
will be denoted by
$\{ \underline{\partial} ^{< \underline{k}> _{(m)}} \} _{| \underline{k}| \leq n}$.
Let $\epsilon _1, \dots, \epsilon _r $  be the canonical basis of $\N ^{r}$,
 i.e. the coordinates of $\epsilon _i $ are $0$ except for the $i$th term which is $1$.
 We put $\partial _i := \smash{\underline{\partial} }^{< \epsilon _i> _{(m)}}$.
We can define the logarithmic transposition as in \ref{ntn-transp}
and we can check that the properties analogous to the subsection 
\ref{subsection2.4} are still satisfied in the formal context.
\end{empt}

We finish the subsection by the formal version of the definition appearing in \ref{finite-p-basis}. 
\begin{dfn}
\label{finite-p-basisformal}
Let $f\colon \X \to \Y$ be a morphism of fine $\S$-log formal schemes.
\begin{enumerate}
\item We say that a finite set $(t  _{\lambda}) _{\lambda =1,\dots, r}$ of elements of $\Gamma ( \X, \O _\X)$ are 
``log $p$-étale coordinates''
(resp. ``formal log étale coordinates'',
resp. ``formal log étale coordinates of level $m$'',
resp. ``log $p$-étale coordinates of level $m$''),
if the corresponding $\Y$-morphism
$\X \to \Y \times _{\S} \widehat{\A} _\S ^{r}$, where $\widehat{\A} _\S ^{r}$ is the 
$p$-adic completion of the $r$th affine space over $\V$ endowed 
with the trivial logarithmic structure,
is log $p$-étale
(resp. formally log étale, resp. formally log étale of level $m$, 
log $p$-étale of level $m$).

When $f$ is strict
we remove ``log'' in the terminology, e.g. we get the notion of ``$p$-étale coordinates''.
\item We say that $f$ is ``$p$-smooth'' 
(resp. ``weakly smooth'',
resp. ``weakly smooth of level $m$'',
resp. ``$p$-smooth of level $m$''),
if $f$ is strict and if, étale locally on $\X$,
$\underline{f}$ has 
$p$-étale coordinates''
(resp. ``formal étale coordinates'',
resp. ``formal étale coordinates of level $m$'',
resp. ``$p$-étale coordinates of level $m$'').
Notice that these notions 
are étale local on $Y$.
\end{enumerate}
\end{dfn}

\subsection{Sheaf of differential operators of finite level over log $p$-smooth $\S$-log formal schemes}

\begin{empt}
Let $\X$ be a log $p$-smooth $\S$-log formal schemes.
We denote by
$\widehat{\D} ^{(m)} _{\X/\S} $ the $p$-adic completion of
$\D ^{(m)} _{\X/\S}$.
As in the paragraph \ref{chgtbasis},
we check that
$\D ^{(m)} _{X _i /S _i}   \riso  \D ^{(m)} _{\X/\S} \otimes _{\V } \V / \pi ^{i+1}$.
Hence $\widehat{\D} ^{(m)} _{\X/\S}  \riso
\underleftarrow{\lim} _i \,
\D ^{(m)} _{X _i /S _i}$.
\end{empt}

\begin{empt}
Let $\X$ be a log $p$-smooth $\S$-log-formal schemes.
We put
$\D ^{\dag} _{\X/\S} :=
\underrightarrow{\lim} _{m}\,
\widehat{\D} ^{(m)} _{\X/\S} $.
This is the ``sheaf of differential operators of finite level of $\X/\S$''.
When $\X \to \S$ is endowed with a log $p$-basis $(b  _\lambda) _{\lambda =1,\dots, n}$,
we get the usual description
(\cite[2.4.4]{Be1}):
an operator $P$ of $\Gamma (\X, \D ^{\dag} _{\X/\S}) $ is of the form
$$ P = \sum _{\underline{k}\in \N ^{n}} a _{\underline{k}} \underline{\partial} ^{[\underline{k}]}$$
where $a _{\underline{k}} \in \Gamma (\X, \O _{\X})$ satisfy the condition :
there exist some constants $c, \eta \in \R$, with $\eta <1$, such that for any $\underline{k}\in \N ^{n}$ we have
$$\parallel a _{\underline{k}} \parallel \leq c \, \eta ^{| \underline{k}|},$$
where $\parallel \ \parallel$ is the $p$-adic norm i.e. whose basis of open neighbourhoods of $0$ is given by
$(p ^{n} \O _\X) _{n\in \N}$.
\end{empt}

\begin{empt}
Let $\X$ be a log $p$-smooth $\S$-log-formal scheme.
As in \cite[2.2.5]{Be1}, we check that if $\U$ is a Zariski open set of $\X$ having a log $p$-basis then
$\Gamma ( U _i, \D ^{(m)} _{X _i /S _i})$ is right and left noetherian.
As in  \cite[3.1.2]{Be1},  we check that the sheaf $\D ^{(m)} _{X _i /S _i}$ is coherent on the right and on the left.
As in  \cite[3.3.4]{Be1}, this yields that
$\widehat{\D} ^{(m)} _{\X /\S} $ is coherent on the right and on the left.
As in  \cite[3.4.2]{Be1}, this implies that $\widehat{\D} ^{(m)} _{\X /\S, \Q} $ is
coherent on the right and on the left.
By following the proof of  \cite[3.5.3]{Be1},
we prove that the extension
$\widehat{\D} ^{(m)} _{\X /\S, \Q}  \to \widehat{\D} ^{(m+1)} _{\X /\S, \Q} $ is flat on the right and on the left.
Hence, taking the inductive limits, we obtain the coherence on the right and on the left of
$\D ^{\dag} _{\X/\S} $.
Similarly, we have theorem of type $B$ as in \cite[3]{Be1}:
if $\underline{\X}$ is affine, for any integer $q \geq 1$ we have the vanishing
$H ^{q} (\X,  \D ^{(m)} _{X _i /S _i})= 0$,
$H ^{q} (\X,  \widehat{\D} ^{(m)} _{\X /\S} )= 0$,
$H ^{q} (\X,  \widehat{\D} ^{(m)} _{\X /\S,\Q} )= 0$,
$H ^{q} (\X,  \D ^{\dag} _{\X/\S} )= 0$.

\end{empt}

\begin{empt}
[Frobenius descent]
Let $\X$ be a $p$-smooth $\S$-log-formal scheme (in particular $\X /\S$ is strict). 
Using \ref{levelm-level0}, 
we remark that the results concerning Frobenius of \cite{Be2} (e.g. the fact that the functor $F _X ^*$ induces an equivalence 
of categories between coherent $\widehat{\D} ^{(m)} _{\X /\S}$-modules and 
coherent $\widehat{\D} ^{(m+1)} _{\X /\S}$-modules)
are still valid (indeed, we have only to copy 
word by word the arguments or computations) in the context of a $p$-smooth $\S$-log-formal scheme. 
Recall that when $\X/\S$ is not strict, 
these equivalence of categories involving Frobenius are false in general  (see \cite{these_montagnon}).

As in \cite[4.4]{Be2}, since $\D ^{(0)} _{\X/\S}$ has finite cohomological dimension (the proof is standard), 
this implies (by completion and Frobenius descent) that so are $\widehat{\D} ^{(m)} _{\X /\S}$ and 
then $\D ^\dag _{\X /\S, \Q}$ by passing to the limit.
Since $\D ^\dag _{\X /\S, \Q}$ is also coherent,
this yields that the derived category of perfect complexes of left 
$\D ^\dag _{\X /\S, \Q}$-modules and
that of bounded coherent complexes 
of left $\D ^\dag _{\X /\S, \Q}$-modules are the same.
\end{empt}

\begin{empt}
[Overconvergent isocrystals]
\label{isoc}
Let $\X$ be a log $p$-smooth $\S$-log-formal schemes 
and $Z$ be a Cartier divisor of $\underline{X _0} $.
As in \cite[4.2.3]{Be1} and with its notation, 
the commutative $\O _{X _i}$-algebra $\B ^{(m)} _{X _i} (Z) $ can be endowed with a (canonical) compatible 
structure of left $\D ^{(m)} _{X _i/S _i}$-module (see the definition \ref{dfn-algcomp-mod})
such that $\B ^{(m)} _{X _i} (Z) \to \B ^{(m+1)} _{X _i} (Z)$ is $\D ^{(m)} _{X _i/S _i}$-linear.
We get a structure of 
$\widehat{\D} ^{(m)} _{\X/\S}  $-module on 
$\widehat{\B} ^{(m)} _{\X} (Z)= 
\underleftarrow{\lim} _i \,
\B ^{(m)} _{X _i} (Z)$.
From 
\ref{prop-algBotimes}, we get 
the $\O _{\X}$-algebra
$\widehat{\B} ^{(m)} _{\X} (Z) \widehat{\otimes}\widehat{\D} ^{(m)} _{\X/\S}  $
such that the canonical map 
$\widehat{\D} ^{(m)} _{\X/\S}  
\to 
\widehat{\B} ^{(m)} _{\X} (Z) \widehat{\otimes}\widehat{\D} ^{(m)} _{\X/\S}  $
is a morphism of $\O _{\X}$-algebras.
We set 
$\O _{\X} (\hdag Z) 
: = 
\underrightarrow{\lim} \, _m 
\widehat{\B} ^{(m)} _{\X} (Z)$
and
$\D ^{\dag} _{\X /\S} (\hdag Z) 
: = 
\underrightarrow{\lim} \, _m 
\widehat{\B} ^{(m)} _{\X} (Z) \widehat{\otimes}\widehat{\D} ^{(m)} _{\X/\S}  $.
We define an isocrystal on $\X$ overconvergent along $Z$ 
to be a coherent 
$\D ^{\dag} _{\X /\S} (\hdag Z)  _{\Q}$-module 
which is also 
$\O _{\X} (\hdag Z) _{\Q}$-coherent (for the structure induced 
by the canonical morphism $\O _{\X} (\hdag Z) _{\Q} \to 
\D ^{\dag} _{\X /\S} (\hdag Z)  _{\Q}$).

\end{empt}

\subsection{Structure of right $\D ^{\dag} _{\X/\S} $-module on $\omega _{\X/\S}$}

\begin{prop}
[Structure of right $\D ^{(0)} _{X/S} $-module on $\omega _{X/S}$]
\label{rightD-mod}
Let $S$ be a fine log scheme  over $\Z / p ^{i+1} \Z$ and let $(I _S,J _S, \gamma)$  be a quasi-coherent $m$-PD-ideal of $\O _S$.
Let $f\colon   X \to     S$ be a weakly log smooth of level $m$ compatible with $\gamma$ morphism 
of fine log-schemes such that $\gamma$ extends to $X$.

We have a canonical structure of right $\D ^{(0)} _{X /S }$-module on
$\omega _{X/S}$ (see notation \ref{nota-omega}).
Locally, this structure is characterized by the following description.
Suppose that $X \to S$ is endowed with a formal log basis $(b  _i) _{i=1,\dots, n}$ of level $m$ compatible with $\gamma$.
Let $dlog\, b_i$ denotes the image of $\eta _i$ in $\Gamma(X,\Omega ^{1} _{X/S})$.
The action of $P \in \D ^{(0)} _{X/S} $ on the section $a\, dlog\, b_1 \wedge \cdots \wedge dlog\, b_n$,
where
$a$ is section of $\O _X$
is given by the formula
\begin{equation}
\label{rightD-modloc}
 (a\  dlog\, b_1 \wedge \cdots \wedge dlog\, b_n) \cdot P = \widetilde{P} (a) \ dlog\, b_1 \wedge \cdots \wedge dlog\, b_n .
 \end{equation}

\end{prop}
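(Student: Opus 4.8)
The statement asserts both existence of a canonical right $\D^{(0)}_{X/S}$-module structure on $\omega_{X/S}$ and a local formula; the strategy is to construct the structure locally via the formula, check independence of the chosen formal log basis, and then glue. First I would reduce to the case where $X\to S$ is endowed with a formal log basis of level $m$ compatible with $\gamma$, which exists \'etale locally (in our convention, Zariski locally after an \'etale quasi-compact descent) by definition of weak log smoothness of level $m$; recall from \ref{smooth=>psmooth} that such $f$ is also weakly log smooth so that $\Omega^1_{X/S}$ is locally free of rank $n$ with basis $dlog\, b_1,\dots, dlog\, b_n$ (see \ref{rem-omega-locfree} and \ref{nota-omega}), hence $\omega_{X/S}$ is free of rank one with basis $dlog\, b_1\wedge\cdots\wedge dlog\, b_n$. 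On such an open set I \emph{define} the right action by the formula \ref{rightD-modloc}, using the logarithmic transposition $P\mapsto\widetilde P$ of \ref{ntn-transp} (which is well defined since $X\to S$ has a formal log basis of level $m$). That this is a right module structure — i.e. that $(m\cdot P)\cdot Q = m\cdot(PQ)$ and $m\cdot 1 = m$ — follows at once from Proposition \ref{prop-tildePQ}, which gives $\widetilde{PQ}=\widetilde Q\,\widetilde P$: indeed $(a\,\omega\cdot P)\cdot Q = \widetilde Q(\widetilde P(a))\,\omega = \widetilde{PQ}(a)\,\omega$, and $\widetilde{1}=1$.

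\textbf{Independence of the formal log basis.} The key point is that the locally defined structure does not depend on the chosen formal log basis $(b_i)$, so that the local structures glue to a global one. Suppose $(b'_j)_{j=1,\dots,n}$ is another formal log basis of level $m$ on the same open set, with associated transposition $\widetilde{\ }'$ and generator $\omega' := dlog\, b'_1\wedge\cdots\wedge dlog\, b'_n$ of $\omega_{X/S}$. Then $\omega' = u\,\omega$ for a unit $u\in\Gamma(X,\O_X^*)$, and the claim is that the two prescriptions agree, i.e. for $a\in\O_X$ one has $\widetilde{P}{}'(a)\,\omega' = $ the action computed via $\omega$ of $a u$ times $\omega$, appropriately. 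Concretely, what must be checked is the compatibility $\widetilde{P}{}'(a)\, u = \widetilde{\,^{u}P\,}(au)$ for the appropriate "conjugated" expression; the natural way to handle this is to pass through the non-logarithmic picture when possible and otherwise to argue directly with the defining formula \ref{tildepartialdef}. I expect this independence to be cleanest to verify by a two-step argument: (i) when the $b_i$ and $b'_j$ are related by a change of \emph{coordinates} that is Zariski-local (using \ref{rem-lognologbasis} to reduce a general unit situation to the coordinate situation), reduce via \ref{rem-transp-log-nonlog} to Berthelot's non-logarithmic transposition ${}^tP$, for which the corresponding independence statement is classical (it is the compatibility underlying the right $\D^{(0)}$-module structure on $\omega$ in the non-log setting); (ii) handle the general case by noting that any two formal log bases differ, \'etale locally, by a composition of an automorphism of $A_{\N^n}$ and a multiplication by units, and that both operations are covered by step (i) together with the elementary transformation rule $\eta_i\mapsto \eta_i$ under the comparison maps of \ref{m2m'}.

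\textbf{Gluing and the main obstacle.} Once the local structure is shown to be canonical (independent of all choices), standard descent along the \'etale quasi-compact covering — in our convention reducing to Zariski gluing, cf. the arguments used repeatedly in \ref{PDenvelope}, \ref{mPDenvelope} — produces the global right $\D^{(0)}_{X/S}$-module structure on $\omega_{X/S}$, and by construction it is characterized locally by \ref{rightD-modloc}. The cocycle condition for gluing is exactly the independence statement applied on overlaps. The main obstacle is precisely the independence-of-basis verification: as noted in the introduction, one would like to deduce the right module structure on $\omega_{X/S}$ from Grothendieck's extraordinary pullback, but that tool is not available here, so this has to be done by the explicit local computation with $\widetilde{\ }$; the delicate bookkeeping is controlling how $\widetilde P$ transforms under a change of formal log basis, where the logarithmic transposition (as opposed to the naive ${}^tP$) was introduced exactly to make the formula \ref{rightD-modloc} work and to make this transformation tractable. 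The computations in \ref{prop-tildePQ} and \ref{tildetilde}, together with \ref{rem-transp-log-nonlog}, provide all the algebraic identities needed; the proof is then a matter of assembling them carefully rather than introducing a new idea.
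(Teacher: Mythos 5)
Your overall skeleton (define the action locally by \ref{rightD-modloc}, verify it is a right module structure via $\widetilde{PQ}=\widetilde{Q}\,\widetilde{P}$ from \ref{prop-tildePQ}, prove independence of the chosen formal log basis, then glue) is exactly the paper's, and the first two steps are fine. The gap is in the independence step, which is the entire content of the proof. You propose to reduce to the non-logarithmic transposition via \ref{rem-transp-log-nonlog} and then to invoke that any two formal log bases differ, \'etale locally, by an automorphism of $A_{\N^n}$ composed with multiplication by units. Neither half of this works as stated: \ref{rem-transp-log-nonlog} only applies when the $b_i$ lie in $\O_X^*$, which a general formal log basis (a family of sections of $M_X$) need not satisfy; and two formal log bases are merely two families each inducing a formally log \'etale morphism $X\to Y\times A_{\N^n}$, with no automorphism of $A_{\N^n}$ relating them --- already for ordinary \'etale coordinates on a smooth scheme the analogous claim fails. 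So the cocycle condition you need for gluing is not actually established.

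What the paper does instead is a direct computation. Writing $\partial_i=\sum_j a_{ij}\partial'_j$ with $A=(a_{ij})$ invertible (both families of derivations are bases of the dual of $\Omega^1_{X/S}$), one gets $dlog\, b_1\wedge\cdots\wedge dlog\, b_n=|A^{-1}|\, dlog\, b'_1\wedge\cdots\wedge dlog\, b'_n$; the commutativity $\partial_{i'}\partial_i=\partial_i\partial_{i'}$ yields the symmetry $\partial_{i'}(a_{ij})=\partial_i(a_{i'j})$, and one then reduces (by $\O_X$-linearity, symmetry in the indices, and the fact that $\D^{(0)}_{X/S}$ is generated by $\O_X$ and the $\partial_\lambda$) to the single identity $\sum_j\partial'_j\bigl(a_{1j}|A^{-1}|\bigr)=0$, proved by expanding $a_{1j}|A^{-1}|$ as a signed sum over permutations and pairing off terms using the symmetry relation. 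If you want to keep the reduction-to-the-non-log-case philosophy, note that the paper only uses it to prove the purely integral identities behind \ref{prop-tildePQ}; the basis-independence itself is not obtained that way.
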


 \begin{proof}
0) It is sufficient to check the independence of the formula \ref{rightD-modloc} with respect to the chosen 
formal log basis  of level $m$ compatible with $\gamma$.
Suppose that $X \to S$ is endowed with two formal log bases $(b  _{i}) _{i =1,\dots, n}$
and $(b  '_{i}) _{i =1,\dots, n}$  of level $m$ compatible with $\gamma$.

1) Let $A = (a _{i j}) \in M _n ( \O _{X}) $ (resp. $A '= (a '_{i j}) \in M _n ( \O _{X}) $) be the matrix such that
$\left (  \begin{smallmatrix}
\partial _1
\\
\vdots
\\
\partial _n
\end{smallmatrix}
\right )
= A
\left (
\begin{smallmatrix}
\partial _1 ^{\prime}
\\
\vdots
\\
\partial _n ^{\prime}
\end{smallmatrix}
\right ) $
(resp. $\left (
\begin{smallmatrix}
\partial _1 ^{\prime}
\\
\vdots
\\
\partial _n ^{\prime}
\end{smallmatrix}
\right )
= A ^{\prime}
\left (  \begin{smallmatrix}
\partial _1
\\
\vdots
\\
\partial _n
\end{smallmatrix}
\right ) $).
Hence, we get $A ' = A ^{-1}$,
$\left (
\begin{smallmatrix}
dlog\, b_1 ^{\prime}
\\
\vdots
\\
dlog\, b_n ^{\prime}
\end{smallmatrix}
\right )  ={} ^{t} A\left (
\begin{smallmatrix}
dlog\, b_1
\\
\vdots
\\
dlog\, b_n
\end{smallmatrix}
\right ) $
and then
$dlog\, b_1 \wedge \cdots \wedge dlog\, b_n
=
| A '| \, dlog\, b'_1 \wedge \cdots \wedge dlog\, b'_n $ .
We compute
$\partial  _{i '} \partial  _{i} =
\sum _{j = 1} ^{n} \partial  _{i '} a _{ij} \partial ^{\prime} _j
=
\sum _{j = 1} ^{n} a _{ij} \partial  _{i '}  \partial ^{\prime } _j
+
\sum _{j = 1} ^{n} \partial  _{i '} (a _{ij} )\partial ^{\prime} _j
=
\sum _{j , j'= 1} ^{n} a _{ij} a _{i'j'} \partial  ^{\prime }_{j '}  \partial ^{\prime} _j
+
\sum _{j = 1} ^{n} \partial  _{i '} (a _{ij} )\partial ^{\prime} _j $.
Since
$\partial  _{i '} \partial  _{i} =
\partial  _{i } \partial  _{i '} $
and
$\partial  ^{\prime }_{j '}  \partial ^{\prime} _j
 =
  \partial  ^{\prime }_{j}  \partial ^{\prime} _{j'} $,
 exchaging $i$ with $i'$ yields
$\sum _{j = 1} ^{n} \partial  _{i '} (a _{ij} )\partial ^{\prime} _j
=
\sum _{j = 1} ^{n} \partial  _{i } (a _{i'j} )\partial ^{\prime} _j $.
Hence, for any $i,i', j$, we have
$ \partial  _{i '} (a _{ij} )
=
\partial  _{i } (a _{i'j} )$
and then (by symmetry)
$ \partial ' _{i '} (a '_{ij} )
=
\partial ' _{i } (a '_{i'j} )$.

2) By symmetry and $\O _X$-linearity, it is sufficient to check
that both actions of $\partial _1$ on $dlog\, b_1 \wedge \cdots \wedge dlog\, b_n $
coincides.
With the first choice, this is straightforward that we get $0$.
Now, we consider the action $\partial _1$ on $dlog\, b_1 \wedge \cdots \wedge dlog\, b_n $ for the second choice of log $p$-basis.
Since $\partial _{1} = \sum _{j =1} ^{n} a _{1j} \partial ^{\prime} _{j}$,
we get
$dlog\, b_1 \wedge \cdots \wedge dlog\, b_n  \cdot \partial _{1}
=
(| A '| \, dlog\, b'_1 \wedge \cdots \wedge dlog\, b'_n )  \cdot \partial _{1}
=
 -\sum _{j =1} ^{n}  \partial ^{\prime} _{j} ( a _{1j} |A'|) dlog\, b'_1 \wedge \cdots \wedge dlog\, b'_n$.
 Hence, we have to check
 $\sum _{j =1} ^{n}   \partial ^{\prime} _{j} ( a _{1j} |A'|) =0$.

a) We compute
$a _{1j} |A'| = \sum _{\sigma \in S _n, \sigma (1)=j} (-1) ^{\epsilon (\sigma)} \prod _{i=2} ^{n} a ' _{\sigma (i) i}.$
Indeed, let $L ' _1, \dots, L ' _n$ be the rows of $A'$.
We remark that $a _{1j} |A'|$ is equal to the determinant of
the matrix $A'$ whose row $L' _j$ is replaced by
$a _{1 j} L' _j$ and then by
$\sum _{l = 1} ^{n}a _{1l } L ' _l$.
Since $A A ' = I _n$, we get
$\sum _{l = 1} ^{n}a _{1l } L ' _l
= ( 1, 0, \dots , 0)$.
This yields the desired formula.

b) We have
$\sum _{j =1} ^{n}   \partial ^{\prime} _{j} ( a _{1j} |A'|)
=
\sum _{\sigma \in S _n, l \in \{2,\dots, n\}} (-1) ^{\epsilon (\sigma)}\partial ' _{\sigma (1)} (a ' _{\sigma (l) l}) \prod _{i=2, i\not =l} ^{n} a ' _{\sigma (i) i}$.
Indeed, this is a consequence of the formula  $\partial ' _{\sigma (1)} (\prod _{i=2} ^{n} a ' _{\sigma (i) i})
=
\sum _{l=2} ^{n} \partial ' _{\sigma (1)} (a ' _{\sigma (l) l}) \prod _{i=2, i\not =l} ^{n} a ' _{\sigma (i) i}$,
and of that of part a).

c) We define on $S _n \times \{2,\dots, n \}$ the following equivalence relation.
Two elements $(\sigma, l)$ and $ ( \sigma ',l')$ of
$S _n \times \{2,\dots, n \}$ are equivalent if either $( \sigma ',l')= (\sigma, l)$
or
$( \sigma ',l') = (\sigma \circ (1,l), l)$.
Let 
$ (\sigma , l)$
and
$( \sigma ',l')= (\sigma \circ (1,l), l)$
be a class of 
$S _n \times \{2,\dots, n \}$.
Using the formula checked in the part 1) of the proof,
we get
$(-1) ^{\epsilon (\sigma ')}\partial ' _{\sigma '(1)} (a ' _{\sigma ' (l) l}) \prod _{i=2, i\not =l} ^{n} a ' _{\sigma '(i) i}
+
(-1) ^{\epsilon (\sigma)}\partial ' _{\sigma (1)} (a ' _{\sigma (l) l}) \prod _{i=2, i\not =l} ^{n} a ' _{\sigma (i) i}
=
0$.
Since we have a partition of $S _n \times \{2,\dots, n \}$
by its classes, this implies
$\sum _{\sigma \in S _n, l \in \{2,\dots, n\}} (-1) ^{\epsilon (\sigma)}\partial ' _{\sigma (1)} (a ' _{\sigma (l) l}) \prod _{i=2, i\not =l} ^{n} a ' _{\sigma (i) i}
=0$.
We conclude using 2.b).

 \end{proof}

\begin{prop}
\label{rightDdag-moduleOmega}
Let $\X$ be a log $p$-smooth $\S$-log-formal scheme.
We suppose that
$\underline \X$  
has no $p$-torsion.
We put $\omega _{\X/\S} := \underleftarrow{\lim} _i \,
\omega _{X _i /S _i}$.
There exists a canonical structure of right $\D ^{\dag} _{\X/\S} $-module on $\omega _{\X/\S}$.
It is characterized by the following local formula:
suppose that $\X$ is endowed with a log $p$-basis $(b  _{\lambda}) _{\lambda =1,\dots, n}$.
Let $d log b _\lambda$ be the image of $\eta _\lambda$ in $\Omega ^{1} _{\X/\S}$.
Then, for any integer $m$, for any differential operator $P \in \D ^{(m)} _{\X/\S}$ and $a \in \O _\X$ we have
\begin{equation}
\label{rightD-mod-omegaformal}
(a \ dlog\, b_1 \wedge \cdots \wedge dlog\, b_n) \cdot P := \widetilde{P} (a) \ dlog\, b_1 \wedge \cdots \wedge dlog\, b_n.
\end{equation}

\end{prop}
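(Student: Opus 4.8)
The strategy is to reduce to the level-$i$ situation already handled by Proposition~\ref{rightD-mod} and then glue and pass to the limit. First I would observe that since $\X$ is log $p$-smooth, for each $i$ the morphism $f_i\colon X_i\to S_i$ is log $p$-smooth, hence log $p$-smooth of level $m$ compatible with $\gamma_\emptyset$ for every $m$ (Proposition~\ref{smooth=>psmooth}), and in particular weakly log smooth of level $m$; moreover $\gamma_\emptyset$ extends to $X_i$ because the ideal $(p)$ is locally principal (\cite[1.3.2.c)]{Be1}), so the hypotheses of Proposition~\ref{rightD-mod} are met. Therefore for each $m$ and each $i$ we get a canonical right $\D^{(0)}_{X_i/S_i}$-module structure on $\omega_{X_i/S_i}$. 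Using the canonical ring homomorphisms $\rho_{m,0}\colon\D^{(0)}_{X_i/S_i}\to\D^{(m)}_{X_i/S_i}$ (see the paragraph after \ref{chgtbasis}, in the log formal context \ref{nota-eta-formal}) we restrict this to a right $\D^{(m)}_{X_i/S_i}$-module structure on $\omega_{X_i/S_i}$; the local formula \ref{rightD-modloc}, together with the compatibility of $\rho_{m,0}$ with the logarithmic transposition (\ref{ntn-transp} and the remark that $\widetilde{\ \cdot\ }$ commutes with $\D^{(m)}_{X/S}\to\D^{(m+1)}_{X/S}$), shows these structures are compatible as $m$ grows and independent of the chosen log $p$-basis.

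Next I would check compatibility in $i$. Since the transition maps $X_i\hookrightarrow X_{i+1}$ are exact closed immersions inducing $X_i\riso X_{i+1}\times_{S_{i+1}}S_i$, the base-change isomorphism $\D^{(m)}_{X_i/S_i}\riso\D^{(m)}_{X_{i+1}/S_{i+1}}\otimes_{\V}\V/\pi^{i+1}$ of \ref{chgtbasis} holds, and likewise $\omega_{X_i/S_i}$ is the reduction of $\omega_{X_{i+1}/S_{i+1}}$. The defining local formula \ref{rightD-modloc} is manifestly compatible with these reductions (it only involves $\widetilde{P}$ acting on sections of $\O$, and the transposition and the $\underline{\partial}^{<\underline k>_{(m)}}$ are defined by universal integral formulas stable under base change). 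Hence, forming $\omega_{\X/\S}:=\underleftarrow{\lim}_i\,\omega_{X_i/S_i}$ and recalling $\widehat{\D}^{(m)}_{\X/\S}\riso\underleftarrow{\lim}_i\,\D^{(m)}_{X_i/S_i}$, we obtain by passage to the projective limit a right $\widehat{\D}^{(m)}_{\X/\S}$-module structure on $\omega_{\X/\S}$; here I would use that $\underline\X$ has no $p$-torsion so that $\omega_{\X/\S}$ is the $p$-adic completion of a locally free module and the limit behaves well. Taking the inductive limit over $m$ of $\widehat{\D}^{(m)}_{\X/\S}$ gives $\D^{\dag}_{\X/\S}$, and the compatible family of module structures yields the desired right $\D^{\dag}_{\X/\S}$-module structure on $\omega_{\X/\S}$, characterized locally by \ref{rightD-mod-omegaformal}.

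Finally I would verify the characterization statement: locality of the construction means it suffices to exhibit the formula on a cover by opens $\U$ admitting a log $p$-basis, and there the formula \ref{rightD-mod-omegaformal} is exactly the limit of \ref{rightD-modloc}; independence of the log $p$-basis follows from the corresponding independence at each level $m$ and each $i$, proved in Proposition~\ref{rightD-mod}. Uniqueness is then clear since a right $\D^{\dag}_{\X/\S}$-module structure extending the $\O_\X$-module structure is determined by the action of the $\underline{\partial}^{[\underline k]}$, which is pinned down by \ref{rightD-mod-omegaformal} locally.

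\textbf{Main obstacle.} The only delicate point is the behaviour of the construction under the projective limit over $i$ and the inductive limit over $m$: one must make sure that the right module axioms, which are identities of maps of sheaves, survive taking $\underleftarrow{\lim}_i$. This is where the no-$p$-torsion hypothesis on $\underline\X$ is essential — it guarantees $\omega_{\X/\S}=\underleftarrow{\lim}_i\omega_{X_i/S_i}$ has no higher $\underleftarrow{\lim}^1$ pathologies and that $\omega_{\X/\S}$ is $p$-adically separated, so that an endomorphism determined modulo each $\pi^{i+1}$ is determined outright, and the associativity/unitality identities valid at each finite level lift. Everything else is a routine limit argument built on Proposition~\ref{rightD-mod} and the coherence/limit formalism of the previous subsections.
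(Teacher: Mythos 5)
There is a genuine gap at the central step of your argument. You propose to produce, at each finite level $i$, a right $\D ^{(m)} _{X _i/S _i}$-module structure on $\omega _{X _i/S _i}$ by ``restricting'' the right $\D ^{(0)} _{X _i/S _i}$-structure of Proposition \ref{rightD-mod} along the ring homomorphism $\rho _{m,0}\colon \D ^{(0)} _{X _i/S _i}\to \D ^{(m)} _{X _i/S _i}$. Restriction of scalars along a ring map $A\to B$ turns $B$-modules into $A$-modules, not the reverse; so from a $\D ^{(0)}$-module and the map $\D ^{(0)}\to \D ^{(m)}$ you get nothing on the $\D ^{(m)}$ side. Nor can this be repaired at finite level $i$: the comparison between levels goes through denominators (the local computation in \ref{m2m'} shows $\psi ^{n*} _{m}(\underline{\eta} ^{\underline{k}})=\underline{q}!\,\underline{\eta} _{(m)} ^{\{\underline{k} \} _{(m)}}$, so level-$m$ operators are level-$0$ operators divided by $\underline{q}!$), and on $X _i$ everything is killed by $p ^{i+1}$, so these denominators are not available. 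This is exactly why the paper's proof works in the opposite order: it first takes $\underleftarrow{\lim} _i$ at level $0$ to get a right $\widehat{\D} ^{(0)} _{\X}$-module structure on $\omega _{\X/\S}$, then tensors with $\Q$, uses the inclusion $\D ^{(m)} _{\X/\S}\subset \widehat{\D} ^{(0)} _{\X/\S,\Q}$ to act on $\omega _{\X/\S,\Q}$, verifies by the local formula \ref{rightD-mod-omegaformal} that the integral lattice $\omega _{\X/\S}$ is stable, and finally invokes a right log version of \cite[3.1.3]{Be0} to pass to $\widehat{\D} ^{(m)} _{\X}$. Note that the corollary following the proposition \emph{deduces} the finite-level $\D ^{(m)} _{X _i/S _i}$-structure from the formal one; your plan would need it as an input, which is circular with respect to the paper's logic and, more importantly, unavailable by the argument you give.

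Relatedly, your diagnosis of where the no-$p$-torsion hypothesis enters ($\underleftarrow{\lim}{} ^1$ issues, $p$-adic separatedness) misses its actual role: it is what guarantees that $\omega _{\X/\S}$ injects into $\omega _{\X/\S,\Q}$, so that the level-$m$ action defined rationally can be checked to preserve the lattice. If you want a proof that avoids inverting $p$, you would have to establish Proposition \ref{rightD-mod} directly for $\D ^{(m)}$ with $m>0$, i.e. prove that the formula $(a\,dlog\,b _1\wedge\cdots)\cdot P=\widetilde{P}(a)\,dlog\,b _1\wedge\cdots$ is independent of the chosen log $p$-basis at level $m$; the change-of-basis computation in the paper's proof of \ref{rightD-mod} relies on the level-$0$ operators $\partial _\lambda$ transforming linearly under change of basis, which fails for the divided-power generators $\partial _\lambda ^{<p ^{j}> _{(m)}}$, so this is not a routine extension.
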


\begin{proof}
Using \ref{rightD-mod}, we get a canonical structure of
right $\widehat{\D} ^{(0)} _{\X} $-module on $\omega _{\X/\S} = \underleftarrow{\lim} _i \,
\omega _{X _i /S _i}$.
Hence, we get a structure of right
$\widehat{\D} ^{(0)} _{\X,\Q} $-module on $\omega _{\X/\S,\Q}$.
Since $\D ^{(m)} _{\X/\S}  \subset \widehat{\D} ^{(0)} _{\X/\S,\Q} $,
we get a structure of right $\D ^{(m)} _{\X/\S}$-module on
$\omega _{\X/\S,\Q}$.
Let us check that $\omega _{\X/\S}$ is a sub
$\D ^{(m)} _{\X/\S}$-module of
$\omega _{\X/\S,\Q}$.
Since this is local,
we can suppose that $\X$ is endowed with a log $p$-basis $(b  _{\lambda}) _{\lambda =1,\dots, n}$.
We compute that the right $\D ^{(m)} _{\X/\S}$-action on
$\omega _{\X/\S,\Q}$ is given by the formula \ref{rightD-mod-omegaformal}.
This implies that $\omega _{\X/\S}$ is a sub $\D ^{(m)} _{\X/\S}$-module of
$\omega _{\X/\S,\Q}$.
Using (a right log version of) \cite[3.1.3]{Be0},
this yields that
$\omega _{\X/\S}$ is endowed with a canonical structure of
$\widehat{\D} ^{(m)} _{\X}$-module.
Since these structures are compatible with
$\widehat{\D} ^{(m)} _{\X} \to \widehat{\D} ^{(m+1)} _{\X}$, we are done.
\end{proof}

\begin{coro}
Let $\X$ be a log $p$-smooth $\S$-log-formal scheme such that
$\underline \X$  
has no $p$-torsion.
Let $i $ be an integer.
There exists a canonical structure of right $\D ^{(m)} _{X _i/S _i} $-module on $\omega _{X _i /S _i}$.
It is characterized by the following local formula:
suppose that $\X$ is endowed with a log $p$-basis $(b  _{\lambda}) _{\lambda =1,\dots, n}$.
Let $d log b _\lambda$ be the image of $\eta _\lambda$ in $\Omega ^{1} _{X _i/S _i}$.
Then, for any integer $m$, for any differential operator $P \in \D ^{(m)} _{X _i/S _i}$ and $a \in \O _{X _i}$ we have
\begin{equation}
\label{coro-rightD-mod-omegaformal}
(a \ dlog\, b_1 \wedge \cdots \wedge dlog\, b_n) \cdot P := \widetilde{P} (a) \ dlog\, b_1 \wedge \cdots \wedge dlog\, b_n.
\end{equation}
\end{coro}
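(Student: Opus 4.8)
The plan is to obtain the corollary as a reduction modulo $\pi ^{i+1}$ of Proposition \ref{rightDdag-moduleOmega}. First I would record two identifications. Since $\X$ is log $p$-smooth over $\S$, it is in particular weakly log smooth, so (by the local descriptions, cf. \ref{rem-omega-locfree} and \ref{flatnessformalogmooth}) $\Omega ^{1} _{\X/\S}$ is locally free of finite type and $\omega _{\X/\S} = \underleftarrow{\lim} _i \, \omega _{X _i /S _i}$ is an invertible $\O _\X$-module; in particular the canonical morphism $\omega _{\X/\S} \otimes _{\O _\X} \O _{X _i} \to \omega _{X _i /S _i}$ is an isomorphism. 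Likewise, as recalled in the subsection preceding \ref{rightDdag-moduleOmega} (compare \ref{chgtbasis}), the base change morphism yields $\D ^{(m)} _{X _i /S _i} \riso \D ^{(m)} _{\X/\S} \otimes _{\V} \V / \pi ^{i+1}$.

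Next I would observe that the proof of Proposition \ref{rightDdag-moduleOmega} in fact produces, for every integer $m$, a structure of right $\widehat{\D} ^{(m)} _{\X/\S}$-module on $\omega _{\X/\S}$, compatible with the morphisms $\widehat{\D} ^{(m)} _{\X/\S} \to \widehat{\D} ^{(m+1)} _{\X/\S}$ and inducing the formula \ref{rightD-mod-omegaformal} in local coordinates. Applying $- \otimes _{\V} \V / \pi ^{i+1}$ and using the two isomorphisms above, one gets a right $\D ^{(m)} _{X _i /S _i}$-module structure on $\omega _{X _i /S _i}$. To see that it is given by \ref{coro-rightD-mod-omegaformal}, one restricts a log $p$-basis $(b _{\lambda}) _{\lambda =1,\dots, n}$ of $\X$ to a log $p$-basis of $X _i$, and one notes that the logarithmic transposition $P \mapsto \widetilde{P}$ (defined as in \ref{ntn-transp}) is given by formulas with coefficients in $\Z$, hence commutes with the reduction $\D ^{(m)} _{\X/\S} \to \D ^{(m)} _{X _i /S _i}$; therefore \ref{rightD-mod-omegaformal} descends to \ref{coro-rightD-mod-omegaformal}.

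Finally, for canonicity I would argue as in part 0) of the proof of \ref{rightD-mod}: the right-hand side of \ref{coro-rightD-mod-omegaformal} is independent of the chosen log $p$-basis (the computation is the same as the one carried out there, or can be deduced from the independence already established at the formal level), so the locally defined actions glue to a well-defined right $\D ^{(m)} _{X _i /S _i}$-module structure on $\omega _{X _i /S _i}$ characterized by \ref{coro-rightD-mod-omegaformal}. I do not expect a genuine obstacle here: the only point needing a word is the compatibility of $\widetilde{P}$ with base change, which is immediate from its explicit definition, the corollary being essentially the mod-$\pi ^{i+1}$ shadow of \ref{rightDdag-moduleOmega}.
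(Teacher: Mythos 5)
Your proposal is correct and follows exactly the route the paper takes: the paper's entire proof is the single sentence that the corollary is a consequence of Proposition \ref{rightDdag-moduleOmega}, and your argument simply spells out why (reduction modulo $\pi^{i+1}$ of the $\widehat{\D}^{(m)}_{\X/\S}$-module structure on $\omega_{\X/\S}$, using the base-change identifications and the integrality of the coefficients in the logarithmic transposition). Nothing further is needed.
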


\begin{proof}
This is a consequence of  \ref{rightDdag-moduleOmega}.
\end{proof}

\begin{coro}
\label{twisting}
Let $\X$ be a log $p$-smooth $\S$-log-formal scheme such that
$\underline \X$  
has no $p$-torsion.
The functor
$-\otimes _{\O _{X _i}} \omega _{X _i}$
(resp. $-\otimes _{\O _\X} \omega _\X$) is an equivalence of categories
between that of
left $\D ^{(m)} _{X _i/S _i} $-modules
(resp. left $\D ^{\dag} _{\X/\S} $-modules)
and that of right $\D ^{(m)} _{X _i/S _i} $-modules
(resp. right $\D ^{\dag} _{\X/\S} $-modules).
The functor
$-\otimes _{\O _{X _i}} \omega _{X _i} ^{-1}$
(resp. $-\otimes _{\O _\X} \omega _\X ^{-1}$) is a quasi-inverse functor.
Both functors preserve the coherence.
These functors are the "twisted structures" of $\D$-module.
\end{coro}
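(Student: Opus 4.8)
The plan is to deduce the corollary formally from the preceding Proposition \ref{rightDdag-moduleOmega} (and its two corollaries) together with the standard ``swap sides'' yoga for twisting sheaves; no new geometry is needed. First I would fix the sheaf $\omega := \omega _{\X/\S}$ (resp. $\omega _{X _i}:= \omega _{X _i/S _i}$), which by Remark \ref{nota-omega} is an invertible $\O _\X$-module, and recall from \ref{rightDdag-moduleOmega} that it carries a canonical right $\D ^{\dag} _{\X/\S}$-module structure; passing to level $m$ through $\D ^{(m)} _{\X/\S} \to \widehat{\D} ^{(m)} _{\X/\S} \to \D ^{\dag} _{\X/\S,\Q}$ (as in the proof of \ref{rightDdag-moduleOmega}) gives the corresponding right $\D ^{(m)} _{X _i/S _i}$-module structure on $\omega _{X _i}$. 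Since $\omega$ is invertible, $\omega ^{-1}:= \H om _{\O _\X}(\omega, \O _\X)$ is well-defined and $\omega \otimes _{\O _\X}\omega ^{-1}\riso \O _\X$.

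Next I would define the two functors. Given a left $\D ^{(m)} _{X _i/S _i}$-module $\E$, one puts on $\E \otimes _{\O _{X _i}}\omega _{X _i}$ the right module structure determined locally, in a log $p$-basis $(b_\lambda)_{\lambda=1,\dots,n}$, by
\begin{equation}
\label{twistdef}
(x\otimes a\, dlog\, b_1\wedge\cdots\wedge dlog\, b_n)\cdot \partial _\lambda
:=
-\partial _\lambda x\otimes a\, dlog\, b_1\wedge\cdots\wedge dlog\, b_n
+ x\otimes (a\, dlog\, b_1\wedge\cdots\wedge dlog\, b_n)\cdot\partial _\lambda,
\end{equation}
and more generally by the Leibniz-type rule forced by the requirement that the operators $\partial _\lambda ^{<p^j>_{(m)}}$ act as derivations twisted by the right action on $\omega$; equivalently, this is the classical formula combining the left action on $\E$ and the right action \ref{coro-rightD-mod-omegaformal} on $\omega _{X _i}$, using \ref{Mon2.3.3} and \ref{2.5Montagnonsim} to check it is independent of the chosen basis and extends to all of $\D ^{(m)} _{X _i/S _i}$. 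Dually, one twists a right module by $\omega ^{-1}$ using the transposed formula. The same constructions over $\X$ with $\D ^{\dag} _{\X/\S}$ are obtained by taking inductive limits over $m$ after $p$-adic completion, exactly as $\omega _{\X/\S}$ itself is handled in \ref{rightDdag-moduleOmega}; here the $p$-torsion-freeness of $\underline\X$ guarantees $\omega _{\X/\S}$ injects into $\omega _{\X/\S,\Q}$, so the level-$m$ structures glue compatibly.

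Then I would verify that these are mutually quasi-inverse: the canonical isomorphism $(\E\otimes _{\O_{X_i}}\omega _{X_i})\otimes _{\O_{X_i}}\omega _{X_i}^{-1}\riso\E$ of $\O _{X_i}$-modules is checked to be $\D ^{(m)} _{X_i/S_i}$-linear by a local computation in a log $p$-basis, using $\widetilde{\widetilde{P}}=P$ (Proposition \ref{tildetilde}) and $\widetilde{PQ}=\widetilde{Q}\widetilde{P}$ (Proposition \ref{prop-tildePQ}) to match the two actions; the other composite is symmetric. Compatibility of twisting with $\D ^{(m)}\to\D ^{(m+1)}$ follows from the fact (stated at the end of subsection \ref{subsection2.4}) that logarithmic transposition commutes with $\D ^{(m)} _{X/S}\to\D ^{(m+1)} _{X/S}$, so everything passes to $\D ^\dag _{\X/\S}$ in the limit. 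Finally, both functors preserve coherence because $-\otimes _{\O}\omega ^{\pm 1}$ is, on underlying modules, tensoring with an invertible sheaf, hence exact and finiteness-preserving, and the (left or right) coherence of $\D ^{(m)} _{X_i/S_i}$, $\widehat{\D} ^{(m)} _{\X/\S}$ and $\D ^\dag _{\X/\S,\Q}$ has already been recorded. The only genuinely delicate point is the basis-independence and well-definedness of the twisted action in step two — i.e. that formula \ref{twistdef} really extends from the generators $\partial _\lambda ^{<p^j>_{(m)}}$ to a ring action — but this is precisely the content of \ref{prop-tildePQ}, \ref{2.5Montagnonsim} and the associativity diagram \ref{trans-deltann'}, so it amounts to a bookkeeping exercise rather than a new idea; one may also simply invoke the identical arguments of Berthelot in the non-logarithmic case, the logarithmic modifications being exactly those already made in subsection \ref{subsection2.4}.
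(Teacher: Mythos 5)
Your argument is correct, but there is in fact nothing in the paper to compare it with: Corollary \ref{twisting} is stated without any proof, the authors evidently regarding it as an instance of Berthelot's standard side-changing yoga (see \cite{Be2}, \cite{Beintro2}) once the right $\D$-module structure on $\omega$ is available from Propositions \ref{rightD-mod} and \ref{rightDdag-moduleOmega}. Your write-up therefore supplies details the paper takes for granted, and the overall strategy --- twist by the invertible right module $\omega$, verify the module axioms through the logarithmic transposition, deduce the quasi-inverse from $\widetilde{\widetilde{P}}=P$, and obtain preservation of coherence for free since the underlying $\O$-module operation is tensoring with an invertible sheaf --- is clearly the intended one. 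Two points deserve tightening. First, your displayed formula only pins down the action of the order-one operators $\partial _\lambda$, and the phrase ``Leibniz-type rule'' is misleading for the higher generators $\partial _\lambda ^{< p ^{j}> _{(m)}}$, which are not derivations: the general formula must go through the comultiplication $\delta ^{n,n'} _{(m)}$, i.e.\ a sum over $\underline{i}\leq\underline{k}$ weighted by the modified binomial coefficients of \ref{Mon2.3.3}. The cleanest packaging, which you essentially arrive at in your third step, is to trivialize $\E\otimes _{\O _{X _i}}\omega _{X _i}\cong\E$ locally by the basis $dlog\, b _1\wedge\cdots\wedge dlog\, b _n$ and to decree that $P$ acts on the right as $\widetilde{P}$ acts on the left; the right-module axiom is then literally Proposition \ref{prop-tildePQ}, basis-independence reduces to the computation already carried out in the proof of \ref{rightD-mod}, and the quasi-inverse property is Proposition \ref{tildetilde}. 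Second, for the $\D ^{\dag} _{\X/\S}$-statement you should say explicitly that the level-$m$ twists are compatible with the maps $\rho _{m+1,m}$ because the transposition commutes with $\D ^{(m)} _{X/S}\to\D ^{(m+1)} _{X/S}$ (end of subsection \ref{subsection2.4}), and that one first passes to the $p$-adic completion via \cite[3.1.3]{Be0} exactly as in the proof of \ref{rightDdag-moduleOmega} --- this is where the hypothesis that $\underline{\X}$ has no $p$-torsion is actually used --- before taking the inductive limit over $m$. Neither point is a gap; both are bookkeeping.
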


\subsection{Pushforwards, extraordinary pull-backs and duality}

Let $f \colon \X \to \Y$ be a morphism of log $p$-smooth $\S$-log-formal schemes.
We suppose that the formal $\V$-schemes
$\underline \X$ and $\underline \Y$ 
have no $p$-torsion.
We can follow Berthelot's construction of pushforwards, extraordinary pull-backs and duality
as explained in \cite{Beintro2} or \cite{Be2}.
For the reader, let's briefly sketch the construction.

\begin{empt}
By functoriality, the left $\D ^{(m)} _{X _i /S _i}$-module
$f ^{*} \D ^{(m)} _{Y _i /S _i}$ is in fact endowed with a structure of
$(\D ^{(m)} _{X _i /S _i}, f ^{-1}\D ^{(m)} _{Y _i /S _i}) $-bimodule
which we will denote by
$ \D ^{(m)} _{X _i \to Y _i /S _i}$.
By twisting (see \ref{twisting}), we get a
$(f ^{-1}\D ^{(m)} _{Y _i /S _i}, \D ^{(m)} _{X _i /S _i}) $-bimodule
 by setting
$  \D ^{(m)} _{Y _i \leftarrow X _i /S _i}
  :=
\omega _{X _i /S _i}    \otimes _{\O _{X _i}}  f ^{*} _\mathrm{g} (\D ^{(m)} _{Y _i /S _i}
  \otimes _{\O _{Y _i}} \omega _{Y _i /S _i} ^{-1})$,
  where the index $g$ means that we choose the left structure of the
  left $  \D ^{(m)} _{Y _i /S _i}$-bimodule
 $ \D ^{(m)} _{Y _i /S _i}
  \otimes _{\O _{Y _i}} \omega _{Y _i /S _i} ^{-1}$.
Next, we put
$ \widehat{\D} ^{(m)} _{\X\to \Y /\S}
:=
\underleftarrow{\lim} _i \,
\D ^{(m)} _{X _i \to Y _i/S _i}$ and
$ \widehat{\D} ^{(m)} _{\Y\leftarrow \X /\S}
:=
\underleftarrow{\lim} _i \,
\D ^{(m)} _{Y _i \leftarrow X _i/S _i}$.
Finally,
$\D ^{\dag} _{\X\to \Y /\S}
:=
\underrightarrow{\lim} _{m}\,
 \widehat{\D} ^{(m)} _{\X\to \Y /\S}$
and
$\D ^{\dag} _{\Y\leftarrow \X /\S}
:=
\underrightarrow{\lim} _{m}\,
\widehat{\D} ^{(m)} _{\Y\leftarrow \X /\S} $.
Let 
$d _{\X}$ (resp. $d _{\Y}$) be the Krull dimension of
$\underline{\X}$ (resp. $\underline{\Y}$).
We denote by 
$D ^\mathrm{b}
(\D ^{\dag} _{\X /\S,\Q} )$,
(resp. $D ^\mathrm{b} _\mathrm{coh}
(\D ^{\dag} _{\X/\S,\Q} )$,
resp. $D _\mathrm{parf}
(\D ^{\dag} _{\X /\S,\Q} )$)
the derived category of bounded (resp. and bounded and coherent, resp. perfect) complexes of left 
$\D ^{\dag} _{\X /\S,\Q}$-modules.
\begin{enumerate}
\item As in \cite[4.3.2.2]{Beintro2},
we get the functor
$f ^!  \colon
D ^\mathrm{b} _\mathrm{coh}
(\D ^{\dag} _{\Y/\S,\Q} )
\to
D ^\mathrm{b}
(\D ^{\dag} _{\X /\S,\Q} )$
by setting, for any object
$\FF$ of
$D ^\mathrm{b} _\mathrm{coh}
(\D ^{\dag} _{\Y/\S,\Q} )$,
$$ f ^{!} (\FF) :=
\D ^{\dag} _{\X\to \Y /\S,\Q}
\otimes ^{\L}  _{f ^{-1} \D ^{\dag} _{\Y/\S,\Q}  }
f ^{-1} \FF [ d _{\X} -d _\Y].$$

\item As in \cite[4.3.7.1]{Beintro2},
we get the functor
$f _+  \colon
D ^\mathrm{b} _\mathrm{coh}
(\D ^{\dag} _{\X/\S,\Q} )
\to
D ^\mathrm{b}
(\D ^{\dag} _{\Y /\S,\Q} )$
by setting, for any object
$\E$ of
$D ^\mathrm{b} _\mathrm{coh}
(\D ^{\dag} _{\X/\S,\Q} )$,
$$
 f _{+} (\E) := \R f _{*} (
\D ^{\dag} _{\Y\leftarrow  \X /\S,\Q}
\otimes ^{\L} _{\D ^{\dag} _{\X/\S,\Q}  }
\E ).$$

\item As in \cite[4.3.10]{Beintro2},
we get the functor
$\DD _{\X}  \colon
D _\mathrm{parf}
(\D ^{\dag} _{\X/\S,\Q} )
\to
D _\mathrm{parf}
(\D ^{\dag} _{\X /\S,\Q} )$
by posing, for any object
$\E$ of
$D _\mathrm{parf}
(\D ^{\dag} _{\X/\S,\Q} )$,
$$
 \DD _{\X} (\E) := \R \mathcal{H}om _{\D ^{\dag} _{\X/\S,\Q}}
(\E , \D ^{\dag} _{\X/\S,\Q} \otimes _{\O _{\X,\Q}} \omega ^{-1} _{\X/\S,\Q}) [ d _{\X}].$$

\end{enumerate}

\end{empt}

\small
\def\cprime{$'$}
\providecommand{\bysame}{\leavevmode ---\ }
\providecommand{\og}{``}
\providecommand{\fg}{''}
\providecommand{\smfandname}{et}
\providecommand{\smfedsname}{\'eds.}
\providecommand{\smfedname}{\'ed.}
\providecommand{\smfmastersthesisname}{M\'emoire}
\providecommand{\smfphdthesisname}{Th\`ese}

\bigskip
\noindent Daniel Caro\\
Laboratoire de Mathématiques Nicolas Oresme\\
Université de Caen
Campus 2\\
14032 Caen Cedex\\
France.\\
email: daniel.caro@unicaen.fr

\bigskip

\noindent David Vauclair\\
Laboratoire de Mathématiques Nicolas Oresme\\
Université de Caen
Campus 2\\
14032 Caen Cedex\\
France.\\
email: david.vauclair@unicaen.fr

\end{document}